\numberwithin{equation}{theorem}
\DeclareMathOperator{\Span}{span}
\newcommand{\p}{{\mathfrak p}}
\newcommand{\fm}{{\mathfrak m}}
\newcommand{\fP}{{\mathfrak P}}
\DeclareMathOperator{\ev}{eval}
\DeclareMathOperator{\Pure}{Pure}
\DeclareMathOperator{\cont}{c}
\DeclareMathOperator{\cl}{cl}
\newtheoremstyle{cited}{.5\baselineskip\@plus.2\baselineskip\@minus.2\baselineskip}{.5\baselineskip\@plus.2\baselineskip\@minus.2\baselineskip}{\itshape}{}{\bfseries}{\bfseries .}{5pt plus 1pt minus 1pt}{\thmname{#1}\thmnumber{ #2}\thmnote{ \normalfont#3}}
\theoremstyle{cited}
\newtheorem{citedthm}[theorem]{Theorem}
\newtheoremstyle{citeddef}{.5\baselineskip\@plus.2\baselineskip\@minus.2\baselineskip}{.5\baselineskip\@plus.2\baselineskip\@minus.2\baselineskip}{}{}{\bfseries}{\bfseries .}{5pt plus 1pt minus 1pt}{\thmname{#1}\thmnumber{ #2}\thmnote{ \normalfont#3}}
\theoremstyle{citeddef}
\def\@tocline#1#2#3#4#5#6#7{\relax
  \ifnum #1>\c@tocdepth 
  \else
    \par \addpenalty\@secpenalty\addvspace{#2}%
    \begingroup \hyphenpenalty\@M
    \@ifempty{#4}{%
      \@tempdima\csname r@tocindent\number#1\endcsname\relax
    }{%
      \@tempdima#4\relax
    }%
    \parindent\z@ \leftskip#3\relax \advance\leftskip\@tempdima\relax
    \rightskip\@pnumwidth plus4em \parfillskip-\@pnumwidth
    #5\leavevmode\hskip-\@tempdima
      \ifcase #1
       \or\or \hskip 1em \or \hskip 2em \else \hskip 3em \fi%
      #6\nobreak\relax
    \hfill\hbox to\@pnumwidth{\@tocpagenum{#7}}\par
    \nobreak
    \endgroup
  \fi}
\begin{document}

\title{Variants on Frobenius Intersection Flatness  \protect\\ and Applications to Tate Algebras}
\author{Rankeya Datta}
\address{Department of Mathematics, University of Missouri, Columbia, MO 65212 USA}
\email{rankeya.datta@missouri.edu}
\author{Neil Epstein}
\address{Department of Mathematical Sciences, George Mason University, Fairfax, VA 22030, USA}
\email{nepstei2@gmu.edu}
\author{Karl Schwede}
\address{Department of Mathematics, University of Utah, Salt Lake City, UT 84112, USA}
\email{schwede@math.utah.edu}
\author{Kevin Tucker}
\address{Department of Mathematics, University of Illinois at Chicago, Chicago, IL 60607, USA}
\email{kftucker@uic.edu}

\subjclass{Primary: 13A35, 13F40; Secondary: 12J25, 14G22}

\begin{abstract}
The theory of singularities defined by Frobenius has been extensively developed for $F$-finite rings and for rings that are essentially of finite type over excellent local rings.  However, important classes of non-local excellent rings, such as Tate algebras and their quotients (affinoid algebras) do not fit into either setting. We investigate here a framework for moving beyond the $F$-finite setting, developing the theory of three related classes of regular rings defined by properties of Frobenius. In increasing order of strength, these are Frobenius Ohm-Rush (FOR), Frobenius intersection flat, and Frobenius Ohm-Rush trace (FORT).
 We show that Tate algebras are Frobenius intersection flat, from which it follows that reduced affinoid algebras have test elements using a result of Sharp. We also deduce new cases of the openness of the $F$-pure locus.
\end{abstract}

\maketitle

\setcounter{tocdepth}{1}
\tableofcontents

\section{Introduction}

The primary tool to detect and measure singularities in prime characteristic $p > 0$ commutative algebra and algebraic geometry is the Frobenius or $p$-th power endomorphism. Indeed, this is centrally motivated by Kunz's fundamental result  that a Noetherian ring is regular if and only if Frobenius is flat \cite{KunzCharacterizationsOfRegularLocalRings}.
The introduction of Frobenius splitting techniques \cite{HochsterRobertsFrobeniusLocalCohomology,MehtaRamanathanFrobeniusSplittingAndCohomologyVanishing} and later Hochster and Huneke's celebrated theory of tight closure \cite{HochsterHunekeTCandBrianconSkoda,HochsterHunekeInfiniteIntegralExtensions} have led to widespread applications. Nonetheless, from the outset, much of the theory of $F$-singularities has necessarily been focused on the $F$-finite setting (i.e., where the Frobenius is a finite morphism) -- or more generally for those rings which can be reduced to the $F$-finite setting, such as for rings (essentially) of finite type over a quasi-excellent local ring. It is easy to see why this has long been the case. For example, $F$-finiteness guarantees excellence \cite{KunzOnNoetherianRingsOfCharP} and the existence of a dualizing complex \cite{Gabber.tStruc}, allows for the application of finite duality to Frobenius, and is often essential to guarantee compatibility with localization and completion. Our goal in this paper is to investigate a framework for moving beyond the $F$-finite setting, and, moreover, to exhibit a prominent class of rings not addressed by existing methods for which this framework applies.

Suppose $R = S/I$, where $S$ is a regular ring of prime characteristic $p > 0$. For an $S$-module $M$ denote by $F_*M$ the corresponding $S$-module given by restriction of scalars along the Frobenius endomorphism, and let $J^{[p]}$ denote the expansion of an ideal $J \subseteq S$ via Frobenius.
Leveraging \cite{FedderFPureRat}, it has been understood dating back to at least \cite{HochsterHunekeFRegularityTestElementsBaseChange} that many properties of the $F$-singularities of $R$ generalize beyond the $F$-finite setting if $S$ is Frobenius intersection flat, meaning roughly that expansions of modules over Frobenius commutes with arbitrary intersections. More precisely, for a submodule $U$ of a module $L$, let $U F_*S$ denote the image of $U \otimes_S F_*S$ in $L \otimes_S F_*S$. We say that $S$ is
\emph{Frobenius intersection flat} provided for each collection of submodules $\{ U_i \}_i$ of a finitely generated $S$-module $L$, we have that 
\begin{equation}
\label{eq:fifsubmodcondition}
\left(\bigcap_i U_i\right) F_*S = \bigcap_i (U_i F_*S).
\end{equation} 
Note that, in the important case where $L = S$ and $\{ J_i \}$ is a collection of ideals, we are simply asking that 
\begin{equation}
\label{eq:foridealcontition}
\bigg( \bigcap_i J_i \bigg)^{[p]} = \bigcap_i \big( J_i^{[p]} \big)
\end{equation}
as $JF_*S = F_*J^{[p]}$ for an ideal $J \subseteq S$.
As an explicit example of the utility of this condition, Sharp \cite{SharpBigTestElements} has shown that test elements exist for reduced quotients of excellent regular rings that are Frobenius intersection flat.

Our first main result is that Tate algebras are Frobenius intersection flat, in turn guaranteeing the existence of test elements for reduced affinoid algebras. We believe this result provides the first new class of reduced excellent rings for which one now knows the existence of test elements since the works of Hochster and Huneke \cite{HochsterHunekeTCandBrianconSkoda, HochsterHunekeFRegularityTestElementsBaseChange} and Sharp \cite{SharpTestElementsforFpure}.  The point is that affinoid algebras are not necessarily essentially of finite type over an excellent semi-local ring or $F$-pure.

\begin{theoremA*}[{\autoref{cor:Tate-char-p-FIF-FORT}, \autoref{cor:affinoid-rings-test-elements}}]
    For any non-Archimedean normed field $(k, |\cdot|)$ of characteristic $p > 0$, $T_n(k)$ is Frobenius intersection flat.  Hence, any reduced affinoid algebra over $k$ has a test element.  
    \label{thm:thmA}
\end{theoremA*}

Infact, we are able to prove the existence of big test elements for reduced algebras of finite type over an affinoid algebra and for ideal-adic completions of reduced affinoid algebras as well (\autoref{cor:affinoid-rings-test-elements}, \autoref{cor:ideal-adic-completion-affinoid}). Recall that Tate algebras are fundamental objects in rigid analytic geometry, acting as a counterpart to polynomial rings in classical algebraic geometry. First introduced by Tate in \cite{TateRigidAnalyticSpaces}, Tate algebras can be thought of as the regular functions on the closed unit polydisc in the context of non-Archimedean geometry. Precisely, associated to a non-Archimedean field $(k, |\cdot|)$, the Tate algebra $T_n(k)$ is the ring of restricted power series in $n \geq 1$ variables -- those series whose coefficients' norms go to zero. The Tate algebra $T_n(k)$ is an excellent regular ring \cite{Kiehl-Tate}, but generally not local. Affinoid algebras over $k$ are quotients of $T_n(k)$ and form the local models of classical rigid analytic spaces. For more on Tate algebras, see \cite{Boschrigid} and \autoref{subsec.BackgroundOnTate}. A Tate algebra $T_n(k)$ is $F$-finite if and only if $k$ is \cite[Lemma 3.3.3]{DattaMurayamaFsolidity}
but we are not aware of a method to pass to the $F$-finite case (such as an analog of the $\Gamma$-construction of \cite{HochsterHunekeFRegularityTestElementsBaseChange}) that works in this setting.
As such, at present we do not know an alternate argument to exhibit test elements for reduced affinoid algebras.
Moreover, our results on Tate algebras are in fact stronger than stated above: we are able to show that for any non-Archimedean field $k$, $T_n(k) \to T_n(\widehat{\overline{k}})$ satisfies the intersection flatness condition (\autoref{thm:IF-completion-algebraic-closure}), the property analogous to Frobenius intersection flatness for arbitrary ring maps (\autoref{def:IF}). As a consequence, for any algebraic extension $k \subseteq \ell$ of non-Archimedean fields of arbitrary characteristic, it follows that $T_n(k) \to T_n(\ell)$ is intersection flat (\autoref{cor:IF-algebraic-extensions}).

The first, second and fourth authors have also previously studied variations on the intersection flatness condition in \cite{DattaEpsteinTucker}. Building on this earlier work, in this article we systematically develop these notions in the context of the Frobenius endomorphism, giving rise to two variants of Frobenius intersection flatness. For the first of these, we say that a regular ring $S$ of prime characteristic $p > 0$ is \emph{Frobenius Ohm-Rush} (FOR) provided only that Frobenius expansions of collections of ideals of $S$ commute with arbitrary intersections -- so that \eqref{eq:foridealcontition} must hold, but \eqref{eq:fifsubmodcondition} may not. Thus, Frobenius intersection flat rings are automatically FOR, but the FOR condition is substantially weaker in general (see \autoref{thm:FORT-F-intersection-flat} \ref{thm:FORT-F-intersection-flat.4}). Nonetheless, FOR rings  satisfy a number of desirable properties. They were used in \cite{KatzmanLyubeznikZhangOnDiscretenessAndRationality}, and in forthcoming work \cite{DESTPhantom} the authors will show that reduced quotients of FOR excellent regular rings have test elements (generalizing the result from \cite{SharpBigTestElements} used above). Moreover, FOR rings feature prominently in the next main result of this article which exhibits new cases of the openness of the $F$-pure locus.

Recall that a ring $R$ of characteristic $p>0$ is said to be $F$-pure provided the Frobenius map $F \colon R \to F_* R$ is pure (also called universally injective \cite[\href{https://stacks.math.columbia.edu/tag/058H}{Tag 058H}]{stacks-project}) as a map of $R$-modules. Importantly, \mbox{$F$-purity} of Noetherian rings is closely related to the notion of log canonical singularities in characteristic zero \cite{HaraWatanabeFRegFPure}. When $R$ is $F$-finite, $R$ is $F$-pure if and only if it is $F$-split, from which it is straightforward to see that the $F$-pure locus of $R$ is open. Using the $\Gamma$-construction to reduce to the $F$-finite case, \cite[Corollary 3.5]{Murayama:TheGammaConstructionAndAsymptotic} showed that the $F$-pure locus of $R$ is open provided it is of finite type over an excellent local ring (or even a $G$-ring). More recently, \cite[Cor. 7.15]{HochsterYaoGenericLocalDuality} establishes that the $F$-pure locus of an $S_2$ ring that is a homomorphic image of an excellent Cohen-Macaulay ring is open.  Soon after, Lyu proved the result in the more general setting that $R/P$ is $J$-0 for all primes, \cite[Theorem A.2.4]{LyuUniformBounds}.
While our result is not as general as Lyu's (which in turn relies on \cite{HochsterYaoGenericLocalDuality}), the proof is different and may be of independent interest. In fact, our result originally appeared in the earlier arXiv versions of \cite{DattaEpsteinTucker} from 2023.  


\begin{theoremB*}[{\autoref{thm:FPureLocusOpenGeneral}}]
    Suppose $S$ is a regular ring of prime characteristic $p > 0$ and $R = S/I$.  Suppose that either: (1) $S$ is FOR, or (2) the regular locus of $\Spec S/\frq$ contains a non-empty open set for each $\frq \in \Spec S$ (for instance, if $S$ is excellent).
    Then the $F$-pure locus of $\Spec R$ is open.  
\end{theoremB*}
\noindent
\cite[Thm.\ 3.4]{EnescuYaoLowerSemicont} proves a result similar to part (2) of Theorem B but with more restrictive hypotheses; see \autoref{rem:EnescuYaoSemicont}. More generally, for any fixed $x \in R$, we show the locus of prime ideals of $R$ at which $R \xrightarrow{1 \mapsto F^e_* x} F^e_* R$ is pure is an open set under the above assumptions on the ambient ring $S$ (\autoref{thm:opennessforquotientsofregular}). Building on the work of \cite{HochsterYaoGenericLocalDuality}, \cite[Thm. A.2.4]{LyuUniformBounds} recovers \autoref{thm:opennessforquotientsofregular} when $S$ satisfies condition (2) of Theorem~B. Lyu's result implies in particular that all excellent rings of prime characteristic have open $F$-pure locus. Nevertheless, our proof of \autoref{thm:opennessforquotientsofregular} is simpler than that of \cite{HochsterYaoGenericLocalDuality,LyuUniformBounds} in the setting where these results overlap; see also \autoref{rem:openness-results-timeline}. The study of maps $R \xrightarrow{1 \mapsto F^e_* x} F^e_* R$ leads us to compare various notions of $F$-regularity outside the $F$-finite setting, see \autoref{subsec:variant-strong-F-regular} and \autoref{subsec:Artin-Rees}. In addition, the openness of pure loci of the maps  $R \xrightarrow{1 \mapsto F^e_* x} F^e_* R$ and the local-to-global techniques developed in \cite{DattaEpsteinTucker} imply that all excellent regular rings of prime characteristic are close to being FOR in the following sense.

\begin{theoremC*}[\autoref{thm:FOR-upto-radical}]
    Let $S$ be an excellent regular ring of prime characteristic $p > 0$. Then for any collection of radical ideals $\{J_i\}$ of $S$, we have $\bigcap_i J_i^{[p]} = (\bigcap_i J_i)^{[p]}$.
\end{theoremC*}

Moving on to address the second variant on Frobenius intersection flatness stemming from \cite{DattaEpsteinTucker}, again let $S$ be a regular ring of prime characteristic $p > 0$. Recall that the trace ideal of $F_*x \in F_*S$ is the image $\Tr_{F_* S}(F_*x)$ of $\Hom_S(F_* S, S) \xrightarrow{\text{eval@}F_*x} S$. We say that $S$ is \emph{Frobenius Ohm-Rush trace} (FORT) if for each $F_* x \in F_* S$, we have that 
\[
    F_* x \in \Tr_{F_* S}(F_*x) \cdot F_* S.
\]
FORT rings are Frobenius intersection flat by \cite[Proposition 4.3.8]{DattaEpsteinTucker}, but the FORT condition is substantially stronger in general. Indeed, note that FORT rings are necessarily $F$-split (as can be seen by taking $x = 1$).  However, in \cite{DattaMurayamaTate}, the first author and Murayama give an example of a non-Archimedean field $k$ 
where $\Hom_{T_n(k)}(F_* T_n(k), T_n(k)) = 0$ and hence $T_n(k)$ is then Frobenius intersection flat but not $F$-split or FORT. Returning to the topic of Tate algebras, we can also exhibit cases where they satisfy the stronger FORT condition.

\begin{theoremD*}[\autoref{cor:Tate-char-p-FIF-FORT}]
    Suppose $k$ is a non-Archimedean field of characteristic $p > 0$ such that either: (1) $k$ is spherically complete, or (2) $F_* k$ has a dense $k$-subspace with a countable basis.
    Then $T_n(k)$ is FORT. 
\end{theoremD*}

\noindent
Note that Theorem D is both important in its own right and is also central to the proof of Theorem~A. Moreover, once again our arguments yield yet stronger results still. For any extension of non-Archimedean fields $k \subseteq \ell$ with either $k$ spherically complete or where $\ell$ has a dense subspace with a countable $k$-basis, we will see that $T_n(k) \to T_n(\ell)$ satisfies a trace property analogous to FORT.

We have so far highlighted results on the variants of Frobenius intersection flatness for non-local Noetherian rings of prime characteristic. Our final feature result is that we can give a precise characterization of Frobenius intersection flatness in the local setting.

\begin{theoremE*}[\autoref{thm:FORT-F-intersection-flat}]
    Let $(R,\fm)$ be a regular local ring of prime characteristic $p > 0$. Let $F_{\widehat{R}/R} \colon F_*R \otimes_R \widehat{R} \to F_*\widehat{R}$ denote the relative Frobenius of the completion map $R \to \widehat{R}$. Then $R$ is Frobenius intersection flat if and only if $F_{\widehat{R}/R}$ is a pure map of $\widehat{R}$-modules.
\end{theoremE*}

\noindent
Recall that the behavior of $F_{\widehat{R}/R}$ controls how far $R$ is from being excellent. Indeed, as a consequence of work of Radu and Andr{\'e} \cite{RaduUneClasseDAnneaux,AndreHomomorphismsRegulariers} it follows that in the situation of Theorem E, $(R,\fm)$ is excellent if and only if $F_{\widehat{R}/R}$ is a faithfully flat ring map. However, faithful flatness of $F_{\widehat{R}/R}$ appears to be stronger than its $\widehat{R}$-purity. Therefore, Theorem E allows one to view the Frobenius intersection flatness condition as being related to, and potentially a weakening of, the notion of excellence for prime characteristic regular local rings. For DVRs, i.e. regular local rings of dimension $1$, Theorem E in fact provides a new characterization of the excellence property because we deduce that Frobenius intersection flatness of a prime characteristic DVR is equivalent to its excellence (\autoref{thm:FORT-F-intersection-flat}).  Moreover, as another consequence of Theorem E and descent properties of Frobenius intersection flatness
    (\autoref{subsec:Descent-FIF}), we can also show that if $(R,\fram)$ has geometrically regular formal fibers and $S$ is a regular ring that is essentially of finite type over $R$, then $S$ is Frobenius intersection flat (\autoref{thm:descent-F-intersection-flat}). This establishes another new case of Frobenius intersection flatness that to the best of our knowledge has not appeared in the literature.

\subsection*{History}

Large parts of this paper originally appeared in arXiv versions 1 and 2 of the paper \cite{DattaEpsteinTucker} which was originally posted to the arXiv in 2023, by the first, second and fourth authors.  In particular, many of the main results of \autoref{sec:Frobenius Ohm-Rush trace} and \autoref{sec:openness-of-pure-loci} originally appeared there.  Those sections were moved to this paper to make the topics of the two papers more coherent, with this paper being focused on characteristic $p > 0$ while relying on the characteristic-free results of \cite{DattaEpsteinTucker}.

\section{Preliminaries}
We begin by collecting some notation.

\subsection{Conventions and abbreviations}

All rings in this paper are commutative.
We will specify whenever Noetherian hypothesis is needed in the statements of our results.
When we use the term `regular ring' it is implicit that such rings are Noetherian. Additionally, for us a `local ring' is not necessarily Noetherian. However, when we say a local ring $(R, \fram)$ is `complete', we mean it is Noetherian and $\fram$-adically complete.

The following abbreviations are used freely in the text.
\begin{enumerate}
    \item ORT for Ohm-Rush trace (\autoref{def:ORT}),
    \item FOR for Frobenius Ohm-Rush (\autoref{def:FORT}),
    \item FORT for Frobenius Ohm-Rush trace (\autoref{def:FORT}).
\end{enumerate}

\subsection{Ohm-Rush, Ohm-Rush trace, and intersection flatness}
\label{subsec.OR-ORT-IF}

We briefly recall the definitions of Ohm-Rush, Ohm-Rush trace modules, and intersection flatness for an $R$-module $M$, and refer the reader to \cite{DattaEpsteinTucker} for a detailed discussion.  For the purposes of this article, we will primarily be interested in the case that $M = F^e_* R$.

\begin{definition}
\label{def:OR}
    For an $R$-module $M$, the \emph{content} of $x \in M$ is 
    \[
     c_M(x) := \bigcap_{x \in IM} I
    \]
    and gives rise to the \emph{content function (on $M$)}  $c_M : M \to \{\text{ideals of $R$}\}$. $M$ is said to be \emph{Ohm-Rush} if $x \in c_M(x)M$ for all $x \in M$. Equivalently, $M$ is Ohm-Rush if for all $x \in M$, the set of $I$ such that $x \in IM$ has a unique smallest element.  
\end{definition}

\begin{definition}
\label{def:ORT}
    For an $R$-module $M$, the \emph{trace} of $x\in M$ is
\[
    \Tr_M(x) := \Image\big( \Hom_R(M, R) \xrightarrow{f \mapsto f(x)} R\big).
\]
     $M$ is called \emph{Ohm-Rush trace} (or \emph{ORT}) if $x \in \Tr_M(x) M$ for all $x \in M$.
\end{definition}

\begin{definition}
    \label{def:IF}
    For $R$-modules $L$ and $M$ and a submodule $U \subseteq L$, we denote by $UM$ the image of $U \otimes_R M$ in $L \otimes_R M$. 
    An $R$-module $M$ is called \emph{intersection flat} if for any finitely generated $R$-module $L$ and any collection of submodules $\{U_i\}_i$ of $L$, we have that 
    \[
        \Big( \bigcap_i U_i \Big) M = \bigcap_i \big( U_i M \big).
    \]
\end{definition}

Note that an intersection flat module is in particular flat by \cite[Proposition 5.5]{HochsterJeffriesintflatness}. We have the following implications between the above notions.
\begin{equation}
\label{eq:ORTImpliesIFImpliesOR}
    \text{(ORT)} \Rightarrow \text{(intersection flat)} \Rightarrow \text{(Ohm-Rush and flat)}
\end{equation}
See \cite[Proposition 4.3.8]{DattaEpsteinTucker} and \cite[Remark 4.2.3(a)]{DattaEpsteinTucker} (see also \cite{rg71,OhmRu-content}); moreover, all of the above implications are strict (see \autoref{thm:FORT-F-intersection-flat}). In case $(R,\fram)$ is a complete local ring, all three coincide \cite[Theorem 4.3.12]{DattaEpsteinTucker}.

\begin{remark} 
These notions have other characterizations that we will not need.  
Notably, a flat module is intersection flat if and only if it is  Mittag-Leffler, see \cite[Theorem 4.3.1]{DattaEpsteinTucker}.
Furthermore, a flat module is ORT if and only if it is strictly Mittag-Leffler \cite[Part II, Prop. 2.3.4]{rg71}.
\end{remark}

\subsection{Pure maps}  Recall  that given a ring $R$, a map of $R$-modules $M \to N$ is \emph{(cyclically) pure} if for all (cyclic) $R$-modules $P$, the induced map $M \otimes_R P \to N \otimes_R P$ is injective. A ring homomorphism $R \to S$ is \emph{(cyclically) pure} if it is (cyclically) pure as map of $R$-modules. If $M$ is a submodule of an $R$-module $N$ such that the inclusion $M \hookrightarrow N$ is pure, then we will often say that $M$ is \emph{pure in $N$}. 

Pure maps of modules are also called \emph{universally injective} maps of modules in the literature. While the latter terminology is more descriptive, we will primarily use the former terminology for its brevity.

\subsection{Weak versions of the excellence property}

While Noetherian rings in general can be badly behaved (the singular loci may not be closed), the excellence property guarantees good geometric behavior, see \cite[\href{https://stacks.math.columbia.edu/tag/07QS}{Tag 07QS}]{stacks-project}.  We will need  weak versions of excellence, namely the J-0 and J-1 properties that we now recall.  
\begin{definition}
    \label{def:subexcellent}
    Let $A$ be a Noetherian ring. We say that $A$ is \emph{J-0} if the regular locus of $A$ contains a
        non-empty open set of $\Spec(A)$; $A$ is \emph{J-1} if the regular locus of 
        $A$ is open in $\Spec(A)$.
\end{definition}

We record a property of J-0 and J-1 rings that will be relevant for us in \autoref{sec:openness-of-pure-loci}.

\begin{proposition}
    \label{prop:J0-J1-equivalence}
    Let $R$ be a Noetherian ring. Then the following are equivalent:
    \begin{enumerate}[label=\textnormal{(\arabic*)}]
        \item For all $\p \in \Spec(R)$, $R/\p$ is J-0.\label{prop:J0-J1-equivalence.1}
        \item For all $\p \in \Spec(R)$, $R/\p$ is J-1.\label{prop:J0-J1-equivalence.2} 
    \end{enumerate}
    Moreover, the equivalent conditions imply that $R$ is J-1.
\end{proposition}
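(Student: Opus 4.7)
The plan is to dispose of (2)$\Rightarrow$(1) immediately, then establish the ``moreover'' assertion as the real content, and finally deduce (1)$\Rightarrow$(2) from it. First, (2)$\Rightarrow$(1) is trivial: $R/\p$ is a Noetherian domain, so its generic point $(0)$ lies in $\Reg(R/\p)$ (the fraction field is regular). Hence if $\Reg(R/\p)$ is open it is automatically non-empty and open.

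Next, for the ``moreover'' clause, I would invoke the Nagata openness criterion for the subset $\Reg(R) \subseteq \Spec R$. Stability under generalization is automatic because localizations of regular local rings are regular, so it suffices to check that for every $\q \in \Reg(R)$, the set $\Reg(R) \cap V(\q)$ contains a non-empty open subset of $V(\q) = \Spec(R/\q)$. Hypothesis (1) applied to $\q$ furnishes a non-empty open $U \subseteq V(\q)$ contained in $\Reg(R/\q)$.

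The key step is to upgrade points of $U$ to regular points of $R$. Since $R_\q$ is regular of dimension $n = \height(\q)$, its maximal ideal $\q R_\q$ is generated by $n$ elements; lifting these gives $x_1, \ldots, x_n \in \q$ with $\q R_\q = (x_1, \ldots, x_n) R_\q$. Applying this equality to a finite generating set of $\q$ and clearing denominators, there exists $s \in R \setminus \q$ with $s \cdot \q \subseteq (x_1, \ldots, x_n) R$. Setting $U' := U \cap D(s)$, still a non-empty open of $V(\q)$ (as $\q \in U \cap D(s)$), I claim $U' \subseteq \Reg(R)$. Indeed for $\p \in U'$, invertibility of $s$ in $R_\p$ gives $\q R_\p = (x_1, \ldots, x_n) R_\p$, so $\q R_\p$ is generated by $n$ elements, and $\height(\q R_\p) = \height(\q) = n$ (primes of $R$ below $\q$ are automatically contained in $\p$). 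Meanwhile $R_\p/\q R_\p = (R/\q)_{\p/\q}$ is regular by the choice of $U \subseteq \Reg(R/\q)$. A short count using $\dim R_\p \geq \dim R_\p/\q R_\p + \height(\q R_\p)$ and the subadditivity $\mathrm{embdim}(R_\p) \leq \mathrm{embdim}(R_\p/\q R_\p) + n$ (lift generators of the maximal ideal of $R_\p/\q R_\p$ and append the $x_i$) forces $\dim R_\p = \mathrm{embdim}(R_\p)$, i.e.\ $R_\p$ is regular.

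Finally, (1)$\Rightarrow$(2) follows by applying the ``moreover'' statement to the quotient $R/\p$: condition (1) is inherited, since a prime of $R/\p$ is of the form $\q/\p$ for $\q \supseteq \p$, and $(R/\p)/(\q/\p) = R/\q$ is J-0 by (1) for $R$. Thus $R/\p$ is J-1, which is (2). The only substantive obstacle is the lifting argument in the third paragraph: producing the auxiliary element $s$ so that $\q R_\p$ remains generated by $\height(\q)$ elements after passage to $R_\p$. Once that is secured, the dimension/embedding-dimension bookkeeping is entirely routine.
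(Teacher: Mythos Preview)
Your proof is correct, and the overall logical architecture matches the paper's: both note that (2)$\Rightarrow$(1) is immediate since the generic point is regular, and both reduce (1)$\Rightarrow$(2) and the ``moreover'' clause to the single assertion ``if every $R/\p$ is J-0 then $R$ is J-1,'' applied either to $R$ or to $R/\p$ (which inherits hypothesis (1) exactly as you observe).

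The difference is that the paper does not prove this assertion but simply cites it from the Stacks Project (Tag 07P9), whereas you supply a direct argument via Nagata's topological criterion together with the standard spreading-out of a regular system of parameters for $\q R_\q$. Your lifting/dimension-count argument is the standard proof of that tag, so this is not a genuinely different route, just a more self-contained one. The trade-off is the obvious one: the paper's version is a one-line citation, while yours avoids an external reference at the cost of a paragraph of routine local algebra.
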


\begin{proof}
    Since $R/\p$ is a domain, its regular locus is non-empty. Thus \ref{prop:J0-J1-equivalence.2}$\implies$\ref{prop:J0-J1-equivalence.1}. Conversely, suppose \ref{prop:J0-J1-equivalence.1} holds. Since every prime ideal $\mathfrak{P}$ of $R/\p$ is of the form $\bq/\p$ for a unique prime ideal $\bq$ of $R$ containing $\p$, we then get that $(R/\p)/\mathfrak{P} \cong R/\bq$ is J-0 for all prime ideals $\mathfrak{P}$ of $R/\p$. Then $R/\p$ is J-1 by \cite[\href{https://stacks.math.columbia.edu/tag/07P9}{Tag 07P9}]{stacks-project}, that is, \ref{prop:J0-J1-equivalence.1}$\implies$\ref{prop:J0-J1-equivalence.2}. The same result also implies that $R$ itself is J-1 if the equivalent conditions \ref{prop:J0-J1-equivalence.1} or \ref{prop:J0-J1-equivalence.2} hold.
\end{proof}

\subsection{The relative Frobenius} If $\varphi \colon R \to S$ is a homomorphism
of rings of prime characteristic $p > 0$, then for every integer $e \geq 0$, consider the co-Cartesian diagram
  \[
    \begin{tikzcd}[column sep=4em]
      R \rar{F_R^e}\dar[swap]{\varphi} & F^e_{R*}R
      \arrow[bend left=30]{ddr}{F^e_{R*}\varphi}
      \dar{\id_{F^e_{R*}R} \otimes_R \varphi }\\
      S \rar{F^e_R \otimes_R \id_S} \arrow[bend right=12,end
      anchor=west]{drr}[swap]{F^e_S} & F^e_{R*}R \otimes_R S
      \arrow[dashed]{dr}[description]{F^e_{S/R}}\\
      & & F^e_{S*}S,
    \end{tikzcd}
    \]
    where $F^e_R \colon R \to F^e_{R*}R$ (resp. $F^e_S \colon S \to F^e_{S*}S$)
    denote the $e$-th iterate of the Frobenius endomorphism on $R$ (resp. on $S$). 
    The \emph{$e$-th relative Frobenius homomorphism} associated to $\varphi$ is the ring homomorphism
  \[
    \begin{tikzcd}[column sep=1.475em,row sep=0]
      \mathllap{F^e_{S/R}\colon} F^e_{R*}R \otimes_R S \rar & F^e_{S*}S\\
      F^e_{R*}r \otimes s \rar[mapsto] & F^e_{S*}\varphi(r)s^{p^e}
    \end{tikzcd}
  \]
Here if $r \in R$, we denote the corresponding element of $F^e_*R$ by $F^e_*r$. If $e = 1$, we denote $F^1_{S/R}$ by $F_{S/R}$.

\begin{notation}
From now on, for a ring $R$ of prime characteristic $p > 0$, by $F^e_*R$ we will always 
mean $F^e_{R*} R$ for ease of notation. 
\end{notation}

The relative Frobenius map $F_{S/R}$ detects geometric properties of the fibers of
$\varphi \colon R \to S$ when $R$ and $S$ are Noetherian. This is summarized in the 
next result.

\begin{theorem}
\label{thm:Radu-Andre-Dumitrescu}
Let $\varphi \colon R \to S$ be a flat homomorphism of Noetherian rings of 
prime characteristic $p > 0$. Then we have the following:
\begin{enumerate}[label=\textnormal{(\arabic*)}]
    \item The fibers of $\varphi$ are geometrically regular (i.e. $\varphi$ is a 
    regular map) if and only if $F_{S/R}$ is flat.
    \label{thm:Radu-Andre-Dumitrescu.1}
    \item The fibers of $\varphi$ are geometrically reduced (i.e. $\varphi$ is a 
    reduced map) if and only if $F_{S/R}$ is pure as a map of $F_*R$-modules.
    \label{thm:Radu-Andre-Dumitrescu.2}
\end{enumerate}
\end{theorem}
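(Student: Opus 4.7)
The plan is to recognize that this theorem is the combined content of the Radu--Andr\'e theorem (part (1)) and Dumitrescu's analogous result for geometrically reduced fibers (part (2)), so my proof would consist of setting up the correct fiberwise reduction and then invoking these classical results. The key reduction is to pass from the global problem over $\varphi \colon R \to S$ to the fiber problem over each residue field $\kappa(\frp)$ for $\frp \in \Spec R$. Both flatness and purity of $F_{S/R}$ as an $F_*R$-module map base change well, and because $\varphi$ is flat and both rings are Noetherian, flatness (respectively, purity) of $F_{S/R}$ can be detected fiberwise over $\Spec R$; see the local criterion for flatness and the analogous local-cohomological criterion for purity. Likewise, geometric regularity (respectively, reducedness) of the fibers is a fiberwise statement by definition. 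Thus it suffices to prove the theorem when $R = k$ is a field.

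For part (1), I would invoke Kunz's theorem: a Noetherian ring $A$ of characteristic $p$ is regular if and only if $F_A$ is flat. Geometric regularity of $S$ over $k$ is equivalent to $S \otimes_k k^{1/p}$ being a regular ring, since every finite purely inseparable extension of $k$ sits inside $k^{1/p^n}$, and Kunz-style descent controls regularity under such base changes. The ring $F_*k \otimes_k S$ is naturally identified with an iterated base change related to $k^{1/p} \otimes_k S$, and the relative Frobenius $F_{S/k}$ fits into a factorization of $F_S$ through it. A diagram chase using the flatness of $F_R$ and $F_S$ (via Kunz on each) together with the 2-out-of-3 property for flatness in the composable chain $R \to F_*R \otimes_R S \xrightarrow{F_{S/R}} F_*S$ then yields the equivalence. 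For part (2), I would use the analogous criterion in the non-flat setting: geometric reducedness of $S/k$ is equivalent to $F_*k \otimes_k S \to F_*S$ being injective (indeed pure as an $F_*R$-module map), which is precisely Dumitrescu's reformulation. The key point is that purity upgrades reducedness on fibers to a robust statement preserved under base change, echoing the role flatness plays in part (1).

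The main obstacle will be the fiberwise reduction itself, specifically verifying that purity of $F_{S/R}$ as an $F_*R$-module map (as opposed to simple injectivity) descends from fibers in the presence of only Noetherian and flatness assumptions on $\varphi$; one must be careful that the correct base-change-friendly formulation of purity is used. Once this is pinned down, the remaining fiber computations are exactly the content of Radu \cite{RaduUneClasseDAnneaux}, Andr\'e \cite{AndreHomomorphismsRegulariers}, and Dumitrescu's work, and I would cite these rather than reprove them in full.
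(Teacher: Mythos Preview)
Your proposal is correct and aligns with the paper's approach: the paper treats this theorem as a known result and simply provides an ``Indication of proof'' consisting of bare citations to \cite[Thm.\ 4]{RaduUneClasseDAnneaux}, \cite[Thm.\ 1]{AndreHomomorphismsRegulariers} for part (1) and \cite[Thm.\ 3]{DumitrescuReduceness} for part (2). Your fiberwise sketch and invocation of Kunz's theorem go beyond what the paper offers, but since you conclude by citing the same classical sources rather than reproving them, the two approaches are effectively the same.
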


\begin{proof}[Indication of proof of \autoref{thm:Radu-Andre-Dumitrescu}]
   \ref{thm:Radu-Andre-Dumitrescu.1} follows by \cite[Thm.\ 4]{RaduUneClasseDAnneaux} and
    \cite[Thm.\ 1]{AndreHomomorphismsRegulariers} while \ref{thm:Radu-Andre-Dumitrescu.2} follows by 
    \cite[Thm.\ 3]{DumitrescuReduceness}.
\end{proof}

Let $(R, \fm)$ be a Noetherian local ring of prime characteristic $p > 0$ and let $\widehat{R}$ denote the
$\fm$-adic completion of $R$. We will now discuss the structure of the relative Frobenius
$F_{\widehat{R}/R}$ associated with the canonical map $R \to \widehat{R}$.

\begin{proposition}
    \label{prop:rel-Frob-completion}
    Let $(R, \fm)$ be a Noetherian local ring of prime characteristic $p > 0$.
    Consider the relative Frobenius
    \[
    F_{\widehat{R}/R} \colon F_*R \otimes_R \widehat{R} \to F_*\widehat{R}.
    \]
    We have the following:
    \begin{enumerate}[label=\textnormal{(\arabic*)}]
        \item The ring $F_*R \otimes_R \widehat{R}$ is local\footnote{Recall that a local ring for us is just a ring with unique maximal ideal, which is not necessarily Noetherian.}whose maximal ideal $\eta$ is the expansion of the maximal ideal $F_*\fm$ of $F_*R$ along the ring  homomorphism $F_*R \to F_*R \otimes_R \widehat{R}$. In particular, $\eta$ is finitely generated.\label{prop:rel-Frob-completion.1}

        \item $\eta^{[p]} = \fm(F_*R \otimes_R \widehat{R})$, and so, the $\eta$-adic completion of $F_*R \otimes_R \widehat{R}$ coincides with the $\fm$-adic completion of $F_*R \otimes_R \widehat{R}$ and also with the $\fm\widehat{R}$-adic completion of $F_*R \otimes_R \widehat{R}$. \label{prop:rel-Frob-completion.2}

        \item The relative Frobenius $F_{\widehat{R}/R}$ can be identified with the canonical map from $F_*R \otimes_R \widehat{R}$ to its $\eta$-adic (or $\fm$-adic or $\fm\widehat{R}$-adic) completion. \label{prop:rel-Frob-completion.3}

        \item $R \to \widehat{R}$ has geometrically regular fibers if and only if $F_*R \otimes_R \widehat{R}$ is Noetherian. \label{prop:rel-Frob-completion.4}

        \item If $R$ has Krull dimension $1$ and $\widehat{R}$ is a domain, then $R \to \widehat{R}$ has geometrically regular fibers if and only if $F_{\widehat{R}/R}$ is injective.\label{prop:rel-Frob-completion.5}
    \end{enumerate}
\end{proposition}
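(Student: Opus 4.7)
The plan is to prove the five assertions in the order \ref{prop:rel-Frob-completion.2}, \ref{prop:rel-Frob-completion.1}, \ref{prop:rel-Frob-completion.3}, \ref{prop:rel-Frob-completion.4}, \ref{prop:rel-Frob-completion.5}, each building on the previous. Setting $C := F_*R \otimes_R \widehat{R}$, the thread running through everything is the simple identity
\[
(F_*r \otimes 1)^p = F_*(r^p) \otimes 1 = 1 \otimes r \quad \text{in } C, \text{ for every } r \in R,
\]
which is immediate from the $R$-module structure $r \cdot F_*s = F_*(r^ps)$ on $F_*R$. Since $R$ is Noetherian, $\fm$ is finitely generated, say by $x_1, \ldots, x_n$, and hence so is $\eta = F_*\fm \cdot C$, by $\{F_*x_i \otimes 1\}$. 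Consequently $\eta^{[p]}$ is generated by $(F_*x_i \otimes 1)^p = 1 \otimes x_i$, yielding $\eta^{[p]} = \fm\widehat{R} \cdot C = \fm \cdot C$ and settling \ref{prop:rel-Frob-completion.2}; the equivalence of the three topologies follows from the pigeonhole inclusion $\eta^{n(p-1)+1} \subseteq \eta^{[p]}$.

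For \ref{prop:rel-Frob-completion.1}, direct computation gives $C/\eta = F_*R/F_*\fm \otimes_R \widehat{R} = F_*k \otimes_k k \cong F_*k$, which is a field, so $\eta$ is maximal. For the uniqueness of this maximal ideal, the key identity shows each $\widehat{R}$-algebra generator $F_*r \otimes 1$ of $C$ satisfies the monic polynomial $X^p - r \in \widehat{R}[X]$, so $C$ is integral over $\iota_2(\widehat{R})$. Since the composite $\widehat{R} \xrightarrow{\iota_2} C \xrightarrow{F_{\widehat{R}/R}} F_*\widehat{R}$ equals the Frobenius of $\widehat{R}$, we have $\ker(\iota_2) \subseteq \mathrm{nil}(\widehat{R})$, so $\widehat{R}/\ker(\iota_2)$ is local; lying-over for integral extensions then forces every maximal ideal of $C$ to contain $\fm C$. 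Finally, $C/\fm C = F_*R/\fm F_*R = F_*(R/\fm^{[p]})$ is, as a ring, the local Artinian $R/\fm^{[p]}$, whose unique maximal ideal pulls back to $\eta$ as the sole maximal ideal of $C$.

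For \ref{prop:rel-Frob-completion.3}, the same computation gives $C/\eta^n = F_*(R/\fm^n) \otimes_R \widehat{R}$. The $R$-algebra structure map on $F_*(R/\fm^n)$ has kernel $J_n := \{r \in R : r^p \in \fm^n\}$, which is $\fm$-primary (as $R/J_n$ embeds in the Artinian $R/\fm^n$). Using $\widehat{R}/J_n\widehat{R} = R/J_n$, the pushout description simplifies to $C/\eta^n \cong R/\fm^n$, and tracking $F_*r \otimes s \mapsto F_*(rs^p)$ on both sides confirms this identification is compatible with $F_{\widehat{R}/R}$. Taking the inverse limit yields $\varprojlim_n C/\eta^n = \widehat{R} = F_*\widehat{R}$, so $F_{\widehat{R}/R}$ is the canonical map to the $\eta$-adic completion of $C$; by \ref{prop:rel-Frob-completion.2}, this is also the $\fm$-adic or $\fm\widehat{R}$-adic completion.

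Finally, for \ref{prop:rel-Frob-completion.4} and \ref{prop:rel-Frob-completion.5}: by \ref{prop:rel-Frob-completion.3}, $F_{\widehat{R}/R}$ is the completion map of $C$ at $\eta$, and \autoref{thm:Radu-Andre-Dumitrescu}\ref{thm:Radu-Andre-Dumitrescu.1} gives that this is flat iff $R \to \widehat{R}$ has geometrically regular fibers. If $C$ is Noetherian, its completion is automatically flat. For the converse direction of \ref{prop:rel-Frob-completion.4}, I would invoke the classical result that a local ring with a finitely generated maximal ideal, whose maximal-ideal-adic completion is Noetherian and faithfully flat over it, is itself Noetherian---applied to $C$ with its finitely generated $\eta$ and Noetherian completion $F_*\widehat{R} = \widehat{R}$. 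This is the main obstacle I anticipate, as the descent of Noetherianness in this setting is a nontrivial classical point. For \ref{prop:rel-Frob-completion.5}, assume $\dim R = 1$ and $\widehat{R}$ is a domain. In the forward direction, regularity of $R \to \widehat{R}$ gives $C$ Noetherian local by \ref{prop:rel-Frob-completion.4}, so $\bigcap_n \eta^n = 0$ by Krull's intersection theorem, i.e., $F_{\widehat{R}/R}$ is injective. Conversely, if $F_{\widehat{R}/R}$ is injective, then tensoring with $K := \mathrm{Frac}(R)$ over $R$ (a flat localization) yields the relative Frobenius of $K \to L := \mathrm{Frac}(\widehat{R})$ injective, which for field extensions is equivalent to $L/K$ being separable, hence to geometric regularity of the only nontrivial fiber of $R \to \widehat{R}$.
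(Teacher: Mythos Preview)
Your argument is correct throughout. For \ref{prop:rel-Frob-completion.1}--\ref{prop:rel-Frob-completion.3} you supply the details that the paper omits as ``well known,'' and for \ref{prop:rel-Frob-completion.4} your argument coincides with the paper's: both directions go through Radu--Andr\'e, and the converse is exactly faithfully flat descent of Noetherianness (Stacks Tag 033E), so your flagged ``obstacle'' is in fact the standard citation the paper uses.

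The genuine divergence is in \ref{prop:rel-Frob-completion.5}. The paper argues that injectivity of $F_{\widehat R/R}$ makes $C$ a subring of the domain $F_*\widehat R$, hence a one--dimensional local domain with finitely generated maximal ideal; since its only primes $(0)$ and $\eta$ are both finitely generated, Cohen's theorem gives $C$ Noetherian, and then \ref{prop:rel-Frob-completion.4} finishes. You instead localize: tensoring $F_{\widehat R/R}$ with $K=\mathrm{Frac}(R)$ over $R$ (flat, hence injectivity is preserved) yields the relative Frobenius $F_*K\otimes_K L \to F_*L$ of the field extension $K\subseteq L=\mathrm{Frac}(\widehat R)$, whose injectivity is Mac Lane's criterion for separability of $L/K$; this handles the generic fiber, and the closed fiber $\widehat R/\fm\widehat R = k$ is trivially geometrically regular over $k$. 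Your route trades Cohen's theorem for Mac Lane's criterion and avoids the detour through \ref{prop:rel-Frob-completion.4}; the paper's route has the virtue of staying inside the Noetherian framework it just set up. Both are short and self--contained.
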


\begin{proof}
    Consider the diagram
   \[
    \begin{tikzcd}[column sep=4em]
      R \rar{F_R}\dar[swap]{\varphi} & F_{*}R
      \arrow[bend left=30]{ddr}{F_{*}\varphi}
      \dar{\id_{F_*R} \otimes_R  \varphi }\\
      \widehat{R} \rar{F_R \otimes_R \id_{\widehat R}} \arrow[bend right=12,end
      anchor=west]{drr}[swap]{F_{\widehat{R}}} & F_{*}R \otimes_R \widehat{R}
      \arrow[dashed]{dr}[description]{F_{\widehat{R}/R}}\\
      & & F_{*}\widehat{R},
    \end{tikzcd}
    \]
    where $\varphi \colon R \to \widehat{R}$ denotes the canonical map.
    Then $F_{\widehat{R}/R}$ is a map of $R$-algebras, $F_*R$-algebras as well 
    as $\widehat{R}$-algebras.

    We omit the proofs of \ref{prop:rel-Frob-completion.1}, \ref{prop:rel-Frob-completion.2} and \ref{prop:rel-Frob-completion.3} as they are well known consequences of the definitions and the diagram above.
    
    \ref{prop:rel-Frob-completion.4} Suppose $F_*R \otimes_R \widehat{R}$ is Noetherian. By \ref{prop:rel-Frob-completion.3} $F_{\widehat{R}/R}$ is faithfully flat since the map from a Noetherian local ring
    to its completion at the maximal ideal is always faithfully flat. Then 
    $R \to \widehat{R}$ has geometrically regular fibers by the Radu-Andr\'e theorem
    (\autoref{thm:Radu-Andre-Dumitrescu}\ref{thm:Radu-Andre-Dumitrescu.1}. Conversely, suppose 
    $R \to \widehat{R}$ has geometrically regular fibers. By \autoref{thm:Radu-Andre-Dumitrescu}\ref{thm:Radu-Andre-Dumitrescu.1}, $F_{\widehat{R}/R}$ is faithfully flat. 
    Since $F_*\widehat{R}$ is a Noetherian
    ring, 
    $F_*R \otimes_R \widehat{R}$ is Noetherian by
    faithfully flat descent of the Noetherian property
    \cite[\href{https://stacks.math.columbia.edu/tag/033E}{Tag 033E}]{stacks-project}.
    
    \ref{prop:rel-Frob-completion.5} If $R \to \widehat{R}$ has geometrically regular fibers, then
    $F_{\widehat{R}/R}$ is injective since it is faithfully flat by \autoref{thm:Radu-Andre-Dumitrescu}\ref{thm:Radu-Andre-Dumitrescu.1}. Conversely, suppose $F_{\widehat{R}/R}$ is 
    injective. Since $\widehat{R}$ is a domain,  $F_*R \otimes_R \widehat{R}$, which can be identified with a subring of $F_*\widehat{R}$, is also a domain. By \ref{prop:rel-Frob-completion.1}, $F_*R \otimes_R \widehat{R}$ is local
    with finitely generated maximal ideal. Furthermore, since $\Spec(\widehat{R})$ and
    $\Spec(F_*R \otimes_R \widehat{R})$ are homeomorphic, we have
   $
    \dim(F_*R \otimes_R \widehat{R}) = \dim(\widehat{R}) = 1.
    $
    Thus, $F_*R \otimes_R \widehat{R}$ is a local domain of Krull dimension $1$. Therefore it has two prime ideals, the maximal ideal and the $(0)$ ideal,
    which are both finitely generated. Then $F_*R \otimes_R \widehat{R}$ is Noetherian by
    a theorem of Cohen \cite[\href{https://stacks.math.columbia.edu/tag/05KG}{Tag 05KG}]{stacks-project}. By \ref{prop:rel-Frob-completion.4} we then get that $R \to \widehat{R}$ has
    geometrically regular fibers.
\end{proof}

We can use \autoref{prop:rel-Frob-completion} to deduce the following Corollary.

\begin{corollary}
    \label{cor:rel-Frob-completions}
    Let $(R, \fm)$ be a Noetherian local ring of prime characteristic $p > 0$. The following are equivalent:
    \begin{enumerate}[label=\textnormal{(\arabic*)}]
        \item $R \to \widehat{R}$ has geometrically regular fibers.\label{cor:rel-Frob-completions.1}
        \item $F_{\widehat{R}/R}$ is a flat ring map.\label{cor:rel-Frob-completions.2}
        \item $F_{\widehat{R}/R}$ is a pure ring map.\label{cor:rel-Frob-completions.3}
    \end{enumerate}
\end{corollary}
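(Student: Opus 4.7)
The plan is to close the cycle $(1) \Rightarrow (2) \Rightarrow (3) \Rightarrow (1)$ using the structural identifications in \autoref{prop:rel-Frob-completion} together with the Radu--Andr\'e theorem (\autoref{thm:Radu-Andre-Dumitrescu}\ref{thm:Radu-Andre-Dumitrescu.1}). For $(1) \Rightarrow (2)$, since $R \to \widehat{R}$ is faithfully flat, \autoref{thm:Radu-Andre-Dumitrescu}\ref{thm:Radu-Andre-Dumitrescu.1} applied to this map yields flatness of $F_{\widehat{R}/R}$ directly.

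For $(2) \Rightarrow (3)$, I will check that $F_{\widehat{R}/R}$ is a \emph{local} homomorphism: the source $A := F_*R \otimes_R \widehat{R}$ is local with maximal ideal $\eta$ by \autoref{prop:rel-Frob-completion}\ref{prop:rel-Frob-completion.1}, the target $F_*\widehat{R}$ is local since $\widehat{R}$ is, and the defining formula $F_*r \otimes s \mapsto F_*(\varphi(r) s^p)$ shows that $\eta$ is sent into the maximal ideal of $F_*\widehat{R}$. A flat local homomorphism is faithfully flat, hence pure, which gives $(3)$.

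The main step is $(3) \Rightarrow (1)$. By \autoref{prop:rel-Frob-completion}\ref{prop:rel-Frob-completion.3}, $F_{\widehat{R}/R}$ is identified with the completion map $A \to \widehat{A} = F_*\widehat{R}$, and the target $\widehat{A}$ is Noetherian (as $\widehat{R}$ is). The plan is to show that $A$ itself must be Noetherian, after which \autoref{prop:rel-Frob-completion}\ref{prop:rel-Frob-completion.4} delivers $(1)$. Since purity implies cyclic purity, we have $I \widehat{A} \cap A = I$ for every ideal $I \subseteq A$. Given an ascending chain $I_1 \subseteq I_2 \subseteq \cdots$ in $A$, the extended chain $I_1 \widehat{A} \subseteq I_2 \widehat{A} \subseteq \cdots$ stabilizes in the Noetherian ring $\widehat{A}$, and contracting via the cyclic purity equality forces the original chain in $A$ to stabilize as well.

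The main obstacle I anticipate is the ``purity transfers the Noetherian property from $\widehat{A}$ down to $A$'' observation in $(3) \Rightarrow (1)$; it is a standard cyclic-purity argument, but it is what keeps the implication from being a formal consequence of the definitions. Once this is in place, the rest of the proof is essentially a direct invocation of \autoref{prop:rel-Frob-completion} and \autoref{thm:Radu-Andre-Dumitrescu}.
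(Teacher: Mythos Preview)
Your proposal is correct and essentially follows the paper's own proof: both use Radu--Andr\'e for $(1)\Leftrightarrow(2)$, pass from flatness to faithful flatness (you via ``flat local homomorphism'', the paper via surjectivity of the relative Frobenius on $\Spec$) and hence to purity for $(2)\Rightarrow(3)$, and for $(3)\Rightarrow(1)$ both descend Noetherianness from $F_*\widehat{R}$ to $F_*R\otimes_R\widehat{R}$ along the pure map before invoking \autoref{prop:rel-Frob-completion}\ref{prop:rel-Frob-completion.4}. The paper simply asserts that Noetherianness descends along pure ring maps, whereas you spell out the standard cyclic-purity ascending-chain argument; this is the same idea made explicit.
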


\begin{proof}
    The equivalence of \ref{cor:rel-Frob-completions.1} and \ref{cor:rel-Frob-completions.2} is the Radu-Andr{\'e} theorem; see \autoref{thm:Radu-Andre-Dumitrescu}. The relative Frobenius is surjective on $\Spec$ \cite[\href{https://stacks.math.columbia.edu/tag/0BRA}{Tag 0BRA}]{stacks-project}. Thus \ref{cor:rel-Frob-completions.2} implies that $F_{\widehat{R}/R}$ is faithfully flat, and it is well-known that a faithfully flat ring map is also pure \cite[\href{https://stacks.math.columbia.edu/tag/05CK}{Tag 05CK}]{stacks-project}, that is, \ref{cor:rel-Frob-completions.2} $\implies$ \ref{cor:rel-Frob-completions.3}. Finally, assuming \ref{cor:rel-Frob-completions.3} we have that since $F_{*}\widehat{R}$ is a Noetherian ring, the ring $F_{*}R \otimes_R \widehat{R}$ is Noetherian because the property of being Noetherian descends along pure ring maps. Therefore \ref{cor:rel-Frob-completions.3} $\implies$ \ref{cor:rel-Frob-completions.1} by \autoref{prop:rel-Frob-completion}\ref{prop:rel-Frob-completion.4}.
\end{proof}

The following result that purity of certain maps can be checked after completion will also be useful to reduced to the complete local case.

\begin{lemma}
    \label{lem:F-purity-completions}
    Let $(R, \fm)$ be a Noetherian local ring of prime characteristic $p > 0$. Let $c \in R$. Then for an integer $e > 0$, consider the maps
    \[    \textrm{$\lambda^{R}_{c,e} : R \to F^e_{R*}R$ and  $\lambda^{\widehat{R}}_{c,e}: \widehat{R} \to F^e_{\widehat{R}*}\widehat{R}$},
    \]
    where in the first map $1 \mapsto F^e_{R*}c$ and in the second map $1 \mapsto F^e_{\widehat{R}*}c$ (by abuse of notation the image of $c$ in $\widehat{R}$ is also denoted as $c$). Then $\lambda^R_{c,e}$ is $R$-pure if and only
    if $\lambda^{\widehat R}_{c,e}$ is $\widehat{R}$-pure. As a consequence, $R$ is $F$-pure if and
    only if $\widehat R$ is $F$-pure.
\end{lemma}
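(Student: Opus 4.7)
The plan is to combine faithful flat descent of purity along $R \to \widehat R$ with the Matlis-duality criterion that, for any Noetherian local ring $(A, \fm_A, k)$ with injective hull $E = E_A(k)$ of the residue field, an $A$-module map $f \colon M \to N$ is $A$-pure if and only if $f \otimes_A E$ is injective. Since $E_R(k) = E_{\widehat R}(k) =: E$ (and hence $\widehat R \otimes_R E = E$), applying this criterion to $\lambda^R_{c,e}$ and $\lambda^{\widehat R}_{c,e}$ reduces the lemma to verifying that the two test maps
\[
    E \to E \otimes_R F^e_{R*}R \quad \text{and} \quad E \to E \otimes_{\widehat R} F^e_{\widehat R*}\widehat R,
\]
both sending $x \mapsto x \otimes F^e_* c$, are simultaneously injective.

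The heart of the proof is constructing a natural $R$-linear isomorphism $E \otimes_R F^e_{R*}R \xrightarrow{\sim} E \otimes_{\widehat R} F^e_{\widehat R*}\widehat R$ intertwining these test maps. Writing $E = \varinjlim_n T_n$ as the filtered colimit of its finite-length submodules $T_n := \Ann_E(\fm^n)$, and using that $\fm^n T_n = 0$, I would compute
\[
    T_n \otimes_R F^e_{R*}R = T_n \otimes_{R/\fm^n}\bigl(F^e_{R*}R / \fm^n F^e_{R*}R\bigr) = T_n \otimes_{R/\fm^n} F^e_{R*}(R/J_n),
\]
where $J_n = (\fm^n)^{[p^e]}$ is the $\fm$-primary ideal satisfying $\fm^n F^e_{R*}R = F^e_{R*}(J_n)$. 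An identical computation over $\widehat R$ gives $T_n \otimes_{\widehat R} F^e_{\widehat R*}\widehat R = T_n \otimes_{R/\fm^n} F^e_{\widehat R*}(\widehat R / J_n \widehat R)$, using $R/\fm^n = \widehat R / \fm^n \widehat R$. Since $J_n$ is $\fm$-primary, the canonical map $R/J_n \to \widehat R / J_n \widehat R$ is an isomorphism, so the Frobenius-compatible map $F^e_{R*}(R/J_n) \to F^e_{\widehat R*}(\widehat R / J_n \widehat R)$ induced by $R \to \widehat R$ is an $R$-module isomorphism; passing to the filtered colimit over $n$ yields the desired identification, visibly compatible with the test maps.

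The main obstacle lies in the careful bookkeeping of the twisted $R$-module structure on Frobenius pushforwards (where $r \in R$ acts on $F^e_* s$ via $r^{p^e}$) and the verification that all identifications respect these actions and the test element $F^e_* c$. Once the isomorphism $E \otimes_R F^e_{R*}R \cong E \otimes_{\widehat R} F^e_{\widehat R*}\widehat R$ is in hand, both purity criteria reduce to the same injectivity statement, proving the lemma. The consequence for $F$-purity follows by specializing to $c = 1$, in which case $\lambda^R_{1,1}$ and $\lambda^{\widehat R}_{1,1}$ are precisely the Frobenius maps of $R$ and $\widehat R$, whose purity defines $F$-purity.
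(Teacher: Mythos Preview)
Your proposal is correct and follows essentially the same approach as the paper: both reduce to the injective-hull criterion (an $A$-linear map $A \to N$ over a Noetherian local ring is pure iff $E_A(k) \to E_A(k) \otimes_A N$ is injective) and use that $E_R(k) = E_{\widehat R}(k)$. The only difference is in the final comparison step. The paper factors the test map for $\widehat R$ as
\[
E \xrightarrow{\id_E \otimes_R \lambda^R_{c,e}} E \otimes_R F^e_{R*}R \longrightarrow E \otimes_R F^e_{\widehat R*}\widehat R \cong E \otimes_{\widehat R} F^e_{\widehat R*}\widehat R,
\]
and argues the middle arrow is always injective because $F^e_{R*}R \to F^e_{\widehat R*}\widehat R$ is faithfully flat, hence $R$-pure by restriction of scalars. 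You instead show directly that this middle arrow is an \emph{isomorphism} via the filtered-colimit computation $E = \varinjlim_n \Ann_E(\fm^n)$ together with $R/J_n \cong \widehat R/J_n\widehat R$ for the $\fm$-primary ideals $J_n = (\fm^n)^{[p^e]}$. Your argument proves a slightly stronger intermediate statement at the cost of a bit more bookkeeping; the paper's route is marginally slicker but needs the (easy) observation that purity restricts along scalars. Both are entirely valid.

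One minor remark: your statement of the Matlis criterion for a general map $f \colon M \to N$ is not quite standard---the ``tensor with $E$'' test is specific to maps out of $A$ itself (or more generally maps from finitely presented modules). This does not affect your proof, since you only apply it to $\lambda^R_{c,e} \colon R \to F^e_{R*}R$ and $\lambda^{\widehat R}_{c,e} \colon \widehat R \to F^e_{\widehat R*}\widehat R$.
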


\begin{proof}
The second assertion about $F$-purity follows from the first upon taking $c = 1$ and $e = 1$. So we prove the first assertion. Recall that if $M$ is a module over a Noetherian local ring $(R, \fm)$ then an $R$-linear
map
$
R \to M
$
is pure if and only if the induced map $E \to E \otimes_R M$ is injective, where $E = E_R(R/\fm)$ is and injective hull of the residue field of $R$. 
Moreover, $E$ naturally has the structure of a $\widehat{R}$-module because every element of $E$ is annihilated by a power of $\fm$, and with this module structure, we also have $E \cong E_{\widehat{R}}(\widehat{R}/\fm\widehat{R})$. 

Then by \cite[\href{https://stacks.math.columbia.edu/tag/0BNK}{Tag 0BNK}]{stacks-project}, for all
$\widehat{R}$-modules $M$ we get
\[
E \otimes_R M \cong E \otimes_R (\widehat{R} \otimes_{\widehat{R}} M) \cong (E \otimes_R \widehat{R}) \otimes_{\widehat{R}}M \cong E \otimes_{\widehat{R}} M.
\]
Now note that $\id_E \otimes_{\widehat R} \lambda^{\widehat R}_{c,e}$ can be expressed
as the composition
\[
E \xrightarrow{\id_E \otimes_R \lambda^{R}_{c,e}} E \otimes_R F^e_{R*}R \to E \otimes_R F^e_{\widehat{R}*}\widehat{R} \cong E \otimes_{\widehat{R}} F^e_{\widehat{R}*}\widehat{R}.
\]
The map $F^e_{R*}R \to F^e_{\widehat{R}*}\widehat{R}$ is a faithfully flat ring homomorphism, and hence pure as a map of $F^e_{R*}R$-modules. By restriction of scalars, $F^e_{R*}R \to F^e_{\widehat{R}*}\widehat{R}$ is then also pure as a map of
$R$-modules. Thus,
$
E \otimes_R F^e_{R*}R \to E \otimes_R F^e_{\widehat{R}*}\widehat{R}
$
is always an injective map of $R$-modules. It then follows that $\id_E \otimes_{\widehat{R}}\lambda^{\widehat{R}}_{c,e}$ is injective if and only if 
$\id_E \otimes_R \lambda^{R}_{c,e}$ is injective. Equivalently, this shows that
$\lambda^{R}_{c,e}$ is ${R}$-pure if and only if $\lambda^{\widehat{R}}_{c,e}$ is $\widehat{R}$-pure.
\end{proof}

\section{Frobenius Ohm-Rush (trace) and \emph{F}-intersection flatness}
\label{sec:Frobenius Ohm-Rush trace}

\subsection{Definitions and first properties}
 Both the intersection flatness condition
and the Ohm Rush condition have been explored in the theory of $F$-singularities
in the past \cite{HochsterHunekeFRegularityTestElementsBaseChange, BlickleMustataSmithDiscretenessAndRationalityOfFThresholds,KatzmanParameterTestIdealOfCMRings, KatzmanLyubeznikZhangOnDiscretenessAndRationality, SharpBigTestElements, HochsterJeffriesintflatness}. More recently, these notions were revisited and further developed in \cite{DattaEpsteinTucker}. Our goal now is to investigate this circle of ideas in the specific case of the Frobenius endomorphism
of a ring of prime characteristic.

\begin{definition}
\label{def:FORT}
     If $R$ is a ring of prime characteristic $p > 0$, we say that $R$ is 
     \begin{itemize}
     \item[$\bullet$] \emph{Frobenius Ohm-Rush trace} (abbrv. FORT) if $F_*R$ is
    an ORT $R$-module; 
    \item[$\bullet$] \emph{$F$-intersection
    flat} if $F_*R$ is an intersection flat $R$-module; 
    \item[$\bullet$] \emph{Frobenius
    Ohm-Rush} (abbrv. FOR) if $F_*R$ is an Ohm-Rush $R$-module.
    \end{itemize}
\end{definition}

\begin{lemma}
    \label{lem:FORT-FIF-FOR-iterated-Frobenius}
    Let $R$ be a ring of characteristic $p > 0$ and let $e \in \mathbb{Z}_{>0}$. We have the following:
    \begin{enumerate}[label=\textnormal{(\arabic*)}]
        \item $R$ is FORT $\implies F^e_*R$ is an ORT $R$-module.\label{lem:FORT-FIF-FOR-iterated-Frobenius.1} 
        \item $R$ is $F$-intersection flat $\implies F^e_*R$ is an intersection flat $R$-module.\label{lem:FORT-FIF-FOR-iterated-Frobenius.2}
        \item $R$ is FOR $\implies F^e_*R$ is an Ohm-Rush $R$-module.\label{lem:FORT-FIF-FOR-iterated-Frobenius.3}
    \end{enumerate}
\end{lemma}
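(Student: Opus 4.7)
My approach is induction on $e$. The base case $e = 1$ is exactly the definition of FORT, $F$-intersection flat, and FOR, respectively. For the inductive step, I factor the $e$-th Frobenius as a tower of ring maps
\[
    R \xrightarrow{F_R} F_{*}R \xrightarrow{\; F^{e-1}_{F_*R}\;} F^{e}_{*}R,
\]
where the second arrow is the $(e-1)$-st iterated Frobenius of the ring $F_*R$ (isomorphic to $R$ as a ring), and where $F^e_*R$ is identified with $F^{e-1}_{F_*R *}(F_*R)$ made into an $R$-module by restriction of scalars along $F_R$ (a direct check, using $(r^{p})^{p^{e-1}} = r^{p^e}$). Since $F_*R \cong R$ as rings, the hypothesis on $R$ transfers verbatim to $F_*R$, so the inductive hypothesis applied to $F_*R$ yields that $F^e_*R$ is ORT/intersection flat/Ohm-Rush as an $F_*R$-module. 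Combined with the base case ($F_*R$ is ORT/intersection flat/Ohm-Rush as an $R$-module), the lemma reduces to the transitivity of each of these three properties along a tower $R \to S \to T$.

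Each transitivity is elementary. For ORT: given $t \in T$, ORT of $T$ over $S$ gives $t = \sum_i \phi_i(t)\,u_i$ with $\phi_i \in \Hom_S(T,S)$ and $u_i \in T$, and ORT of $S$ over $R$ gives $\phi_i(t) = \sum_j \psi_{ij}(\phi_i(t))\,s_{ij}$ with $\psi_{ij} \in \Hom_R(S,R)$; the compositions $\psi_{ij} \circ \phi_i \in \Hom_R(T,R)$ witness $t \in \Tr_T(t) \cdot T$ over $R$. For intersection flatness: given a finitely generated $R$-module $L$ with submodules $\{U_i\}$, flatness of $S/R$ identifies $U_i \otimes_R S$ with its image $U_iS \subseteq L \otimes_R S$, so applying intersection flatness of $T/S$ to the $S$-submodules $\{U_iS\}$ of the finitely generated $S$-module $L \otimes_R S$, and then intersection flatness of $S/R$ to $\{U_i\}$, yields $\bigcap_i (U_iT) = (\bigcap_i U_i)\,T$. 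For the Ohm-Rush property: given $x \in T$ with $S$-content $I := c_T(x)$, the candidate $R$-content is $K := \sum_{s \in I} c_S(s)$; since $s \in c_S(s)\,S$ for each $s \in I$ we have $x \in IT \subseteq KT$, and any $R$-ideal $L$ with $x \in LT = (LS)T$ forces $LS \supseteq I$, whence $L \supseteq c_S(s)$ for every $s \in I$ and thus $L \supseteq K$.

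The only real content is this transitivity statement in its three flavors, which I expect is already recorded in \cite{DattaEpsteinTucker} (in which case the proof is essentially a reference); if not, the three short direct arguments above suffice. No step presents a serious obstacle, but the OR case requires a touch more care than the other two, since it is phrased in terms of minimality of ideals rather than generation by traces.
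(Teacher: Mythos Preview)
Your proposal is correct and matches the paper's approach exactly: the paper proves the lemma by induction on $e$, citing \cite[Corollary 4.1.8]{DattaEpsteinTucker}, \cite[Remark 2.4.3(d)]{DattaEpsteinTucker}, and \cite[Corollary 3.4.9]{DattaEpsteinTucker} for the transitivity of ORT, intersection flatness, and Ohm-Rush respectively along a tower $R \to S \to T$. You correctly anticipated that these transitivity statements are in \cite{DattaEpsteinTucker}, and your direct elementary arguments for each are sound (and are essentially what those citations unpack).
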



\begin{proof}
    Using induction on $e$, \ref{lem:FORT-FIF-FOR-iterated-Frobenius.1} follows by \cite[Corollary 4.1.8]{DattaEpsteinTucker}, \ref{lem:FORT-FIF-FOR-iterated-Frobenius.2} follows by \cite[Remark 2.4.3 (d)]{DattaEpsteinTucker} and \ref{lem:FORT-FIF-FOR-iterated-Frobenius.3} follows by \cite[Corollary 3.4.9]{DattaEpsteinTucker}.
\end{proof}

We first provide alternate characterizations of the FORT and $F$-intersection flatness properties.  

\begin{theorem}
    \label{thm:FORT-F-intersection-flat}
    Let $R$ be a ring of prime characteristic $p > 0$. Then we have
    the following:
    \begin{enumerate}[label=\textnormal{(\arabic*)}]        
        \item \label{thm:FORT-F-intersection-flat.1} 
        Let $(R, \fm)$ be a Noetherian local ring. Let $\widehat{R}$ denote its $\fm$-adic completion and $F_{\widehat{R}/R}$ the relative Frobenius of the
        completion map $R \to \widehat{R}$. The following are equivalent:
         
        \begin{enumerate}[label=\textnormal{(1\alph*)}]
            \item \label{thm:FORT-F-intersection-flat.1a}  
            $R$ is $F$-intersection flat.
            
            \item \label{thm:FORT-F-intersection-flat.1b}  
            $F_*R$ is a flat $R$-module and $F_{\widehat{R}/R}$ is pure as a map of $\widehat{R}$-modules. 
            
            \item \label{thm:FORT-F-intersection-flat.1c}  
            $F_*R$ is a flat $R$-module and $F_{\widehat{R}/R}$
            is cyclically pure as a map of $\widehat{R}$-modules. 

            \item \label{thm:FORT-F-intersection-flat.1d}  
            $F_*R \otimes_R \widehat{R}$ is a flat Ohm-Rush $\widehat{R}$-module. 
            
            \item \label{thm:FORT-F-intersection-flat.1e} 
            $R$ is regular and for all cyclic $\widehat{R}$-modules $L$, $F_*R \otimes_R L$ is $\fm\widehat{R}$-adically (equivalently $\fm$-adically) separated. 
        \end{enumerate}

        \item \label{thm:FORT-F-intersection-flat.2} 
        If $R$ is an excellent regular local ring, then $R$ is 
        $F$-intersection flat.

        \item \label{thm:FORT-F-intersection-flat.3}
        Suppose $R$ is a regular local ring of Krull dimension $1$ (i.e. $R$ is a DVR). Then $R$ is $F$-intersection flat if and only if $R$ is excellent.

        \item \label{thm:FORT-F-intersection-flat.4}
        We have FORT $\implies$ $F$-intersection flat $\implies$ FOR. Moreover, all the implications are strict even for the class of regular local rings of Krull dimension $1$.

        \item \label{thm:FORT-F-intersection-flat.5}
        Suppose $R$ is local (but not necessarily Noetherian). Then we have the following:
        \begin{enumerate}[label=\textnormal{(5\alph*)}]
            \item \label{thm:FORT-F-intersection-flat.5a}
            Assume $F_*R$ is a flat $R$-module. Then $R$ is FOR if and only if for any $c \in R$ there exists a finitely generated free $R$-submodule $L$ of $F_*R$ containing $F_*c$ such that $L \hookrightarrow F_*R$ is pure as a map of $R$-modules.

            \item \label{thm:FORT-F-intersection-flat.5b}
            $R$ is $F$-intersection flat if and only if for any finitely generated submodule $M$ of $F_*R$, there exists a finitely generated free $R$-submodule $L$ of $F_*R$ containing $M$ such that $L \hookrightarrow F_*R$ is pure as $R$-modules. 

            \item \label{thm:FORT-F-intersection-flat.5c}
            $R$ is FORT if and only if for any finitely generated submodule $M$ of $F_*R$, there exists a finitely generated free $R$-submodule $L$ of $F_*R$ containing $M$ such that $L$ is a direct summand of $F_*R$.

        \end{enumerate}
        \item  \label{thm:FORT-F-intersection-flat.6} 
        Let $(R,\fm)$ be an $\fm$-adically complete Noetherian regular local ring. Then $R$ is FORT.

        \item \label{thm:FORT-F-intersection-flat.7} If $R$ is $F$-intersection flat and $x_1,\dots,x_n$ are indeterminates, then for any multiplicative set $W$ of $R[x_1,\dots,x_n]$, we have that $W^{-1}(R[x_1,\dots,x_n])$ is $F$-intersection flat.
    \end{enumerate}
\end{theorem}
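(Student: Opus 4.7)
The plan is to let part (1) serve as the technical core, from which (2), (3), and (6) follow quickly, while parts (4), (5), and (7) are more self-contained. The key structural input throughout is \autoref{prop:rel-Frob-completion}, which identifies $F_{\widehat{R}/R}$ with the completion map of the (generally non-Noetherian) local ring $F_*R\otimes_R\widehat R$ at its finitely generated maximal ideal, whose completion coincides with $F_*\widehat R$.

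For part (1), I would first pass to the complete local ring $\widehat R$, where \cite[Theorem 4.3.12]{DattaEpsteinTucker} collapses Ohm-Rush, intersection flat, and ORT into a single condition. Since intersection flatness descends along the faithfully flat map $R\to\widehat R$ (\autoref{subsec:Descent-FIF}), $F_*R$ is $R$-intersection flat iff $F_*R\otimes_R\widehat R$ is $\widehat R$-intersection flat, giving (1a)$\Leftrightarrow$(1d). The equivalence (1d)$\Leftrightarrow$(1b) then follows from the completion criterion that a flat module $M$ over a Noetherian local ring is intersection flat iff its completion map $M\to\widehat M$ is pure; applied to $M=F_*R\otimes_R\widehat R$, whose $\widehat R$-completion is $F_*\widehat R$ by \autoref{prop:rel-Frob-completion}\ref{prop:rel-Frob-completion.3}, this gives purity of $F_{\widehat R/R}$. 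Equivalence (1b)$\Leftrightarrow$(1c) comes from the interchangeability of cyclic purity and purity when the target is a Noetherian (approximately Gorenstein) ring, here $F_*\widehat R$. Finally, (1a)$\Leftrightarrow$(1e) rewrites the Ohm-Rush property of $F_*R\otimes_R\widehat R$ as separation of its cyclic quotients in the maximal-ideal-adic topology, together with the fact that intersection flatness forces flatness of $F_*R$ and hence regularity of $R$ by Kunz.

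With (1) in hand, parts (2), (3), (6) are immediate. For (2), excellence gives geometrically regular formal fibers, so $F_{\widehat R/R}$ is faithfully flat by Radu--Andr\'e (\autoref{thm:Radu-Andre-Dumitrescu}\ref{thm:Radu-Andre-Dumitrescu.1}), hence pure, and (1b) applies. Part (3) combines (1b) with \autoref{prop:rel-Frob-completion}\ref{prop:rel-Frob-completion.5}: for DVRs, purity of $F_{\widehat R/R}$ forces injectivity, which is equivalent to geometrically regular fibers and hence to excellence. Part (6) is immediate: when $R=\widehat R$, the map $F_{\widehat R/R}$ is the identity, trivially pure, so (1b) applies, and over the complete local $\widehat R$ intersection flatness coincides with FORT. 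For part (5), each statement translates the standard characterization of flat Ohm-Rush, intersection flat ($=$ Mittag--Leffler), and ORT ($=$ strictly Mittag--Leffler) modules in terms of finitely generated pure free submodules or direct summands (\cite[Part II, Prop. 2.3.4]{rg71} and \cite[Theorem 4.3.1]{DattaEpsteinTucker}) to $M=F_*R$. For part (7), I would observe that $F_*(R[x_1,\dots,x_n])$ is a free $R[x_1,\dots,x_n]$-module of rank $p^n$ (via the Frobenius monomial basis $\{x_1^{a_1}\cdots x_n^{a_n}: 0\le a_i<p\}$) tensored over $R$ with $F_*R$, so intersection flatness is preserved under polynomial extension; the standard persistence of intersection flatness under localization then finishes.

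The hardest part will lie in (4). While the implications are immediate from \eqref{eq:ORTImpliesIFImpliesOR}, strictness within the DVR class requires two constructions: a DVR that is $F$-intersection flat yet not FORT, which I would realize by taking a complete DVR whose residue field $k$ has $[k:k^p]=\infty$, so that $F_*R$ is not $R$-free and the trace map fails to witness $F_*c$ for a suitable $c$; and a DVR that is FOR yet not $F$-intersection flat, which by (3) must be a non-excellent DVR, constructed so that Frobenius expansions of ideals still commute with arbitrary intersections despite the failure of purity of $F_{\widehat R/R}$. Verifying FOR in the second example without access to excellence is expected to be the main technical obstacle. A secondary delicate point is the completion criterion underpinning (1b): since $F_*R\otimes_R\widehat R$ is generally non-Noetherian, genuine purity (not just injectivity) into its completion must be checked carefully.
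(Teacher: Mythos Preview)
Your approach to parts (1), (2), (3), (5), (6), and (7) is essentially the same as the paper's, with only cosmetic differences in the order of implications and the exact citations used.

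However, your plan for part (4) contains a genuine error. For the strictness of FORT $\Rightarrow$ $F$-intersection flat, you propose a \emph{complete} DVR with $[k:k^p]=\infty$. But this contradicts part (6) of the very theorem you are proving: every complete regular local ring is FORT, regardless of the residue field. The paper's example is instead an excellent \emph{Henselian} (but not complete) DVR $R$ with $\Hom_R(F_*R,R)=0$, whose existence is a nontrivial construction from \cite{DattaMurayamaFsolidity}. Such an $R$ is $F$-intersection flat by part (2), but cannot be FORT since FORT forces the canonical map $F_*R \to \Hom_R(\Hom_R(F_*R,R),R)$ to be injective.

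You also misjudge the difficulty of the second strictness. You expect verifying FOR for a non-excellent DVR to be ``the main technical obstacle,'' but in fact \emph{every} DVR is FOR: over a DVR, any $\fm$-adically separated module is Ohm-Rush by \cite[Prop.\ 2.1]{OhmRu-content}, and $F_*R$ is certainly $\fm$-adically separated. So any non-excellent DVR of characteristic $p$ (and these exist in abundance, e.g.\ \cite{DattaSmithExcellence}) is automatically FOR but not $F$-intersection flat by part (3). No delicate construction is needed here.
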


Note that \ref{thm:FORT-F-intersection-flat.2} improves on \cite[Theorem 4.1]{KatzmanLyubeznikZhangOnDiscretenessAndRationality}, and \ref{thm:FORT-F-intersection-flat.6} improves on \cite[Proposition 5.3]{KatzmanParameterTestIdealOfCMRings}.

\begin{proof}
  \ref{thm:FORT-F-intersection-flat.1} 
  Recall that the relative Frobenius $F_{\widehat{R}/R} \colon F_*R \otimes_R \widehat{R} \to F_*\widehat{R}$  
  can be identified with the canonical map from $F_*R \otimes_R \widehat{R}$ to its $\fm\widehat{R}$-adic completion by \autoref{prop:rel-Frob-completion}~\ref{prop:rel-Frob-completion.3}.
  
  We first prove the equivalence of \ref{thm:FORT-F-intersection-flat.1a}  and \ref{thm:FORT-F-intersection-flat.1b}. Note that $(R, \fm)$ is $F$-intersection flat if and only if 
  $F_*R \otimes_R \widehat{R}$ is an intersection flat $\widehat{R}$-module by base change (see \cite[Theorem 4.3.1]{DattaEpsteinTucker}) and pure descent of intersection flatness (see \cite[Corollary 4.3.2]{DattaEpsteinTucker}). 
  Since $\widehat{R}$ is complete, \cite[Theorem 4.3.12]{DattaEpsteinTucker} implies that the intersection flatness of $F_*R \otimes_R \widehat{R}$ is equivalent to the $\widehat{R}$-purity of the  the canonical map
  from $F_*R \otimes_R \widehat{R}$ to the $\fm\widehat{R}$-adic completion
  of $F_*R \otimes_R \widehat{R}$
  . 
  But the latter canonical map can be identified with the relative Frobenius
  $F_{\widehat{R}/R}$. 
  
  The implication $\ref{thm:FORT-F-intersection-flat.1b} \implies \ref{thm:FORT-F-intersection-flat.1c}$ follows because purity is a stronger 
  condition than cyclic purity. For $\ref{thm:FORT-F-intersection-flat.1c} \implies \ref{thm:FORT-F-intersection-flat.1b}$, the hypothesis that
  $F_*R$ is a flat $R$-module implies that $R$ is regular \cite{KunzCharacterizationsOfRegularLocalRings}. Thus, $\widehat{R}$ is also a 
  regular ring, and $F_*\widehat{R}$ is a flat $\widehat{R}$-module. Then
  $\widehat{R}$-cyclic purity of $F_{\widehat{R}/R} \colon F_*R \otimes_R \widehat{R} \to F_*\widehat{R}$ implies $\widehat{R}$-purity by 
  \cite[Lemma 3.3.1]{DattaEpsteinTucker}. Thus \ref{thm:FORT-F-intersection-flat.1b} and \ref{thm:FORT-F-intersection-flat.1c} are equivalent.

  For $\ref{thm:FORT-F-intersection-flat.1c} \implies \ref{thm:FORT-F-intersection-flat.1d}$ note that $F_*R \otimes_R \widehat{R}$ is $\widehat R$-flat by base change. Then $F_*R \otimes_R \widehat{R}$ is an Ohm-Rush $\widehat R$-module by \cite[Theorem 4.3.12, $(6) \implies (5)$]{DattaEpsteinTucker}. 
  Conversely, if $F_*R \otimes_R \widehat{R}$ is a flat $\widehat{R}$-module, then by faithfully flat descent of flatness, $F_*R$ is a flat $R$-module. Furthermore, $F_{\widehat{R}/R}$ is a cyclically pure map of $\widehat R$-modules by \cite[Theorem 4.3.12, $(5) \implies (6)$]{DattaEpsteinTucker} 
  because $F_*R \otimes_R \widehat{R}$ is a flat Ohm-Rush $\widehat R$-module. Thus, \ref{thm:FORT-F-intersection-flat.1c} and \ref{thm:FORT-F-intersection-flat.1d} are equivalent.

  Finally, the equivalence of 
  \ref{thm:FORT-F-intersection-flat.1d} and \ref{thm:FORT-F-intersection-flat.1e}
  follows by \cite[Theorem 4.3.12, $(5) \Longleftrightarrow (10)$]{DattaEpsteinTucker}. This completes the proof of \ref{thm:FORT-F-intersection-flat.1}.

  \ref{thm:FORT-F-intersection-flat.2} 
  Since $R$ is excellent and local, the
  relative Frobenius $F_{\widehat{R}/R}$ is a faithfully flat ring homomorphism by the Radu-Andr\'e theorem; see \autoref{thm:Radu-Andre-Dumitrescu}.
  Since faithfully flat ring maps are pure, $F_{\widehat{R}/R}$ is also pure ring homomorphism.
  By restriction of scalars, it follows that $F_{\widehat{R}/R}$ is then also pure as a $\widehat{R}$-algebra homomorphism. We then get that
  $R$ is $F$-intersection flat by \ref{thm:FORT-F-intersection-flat.1}.

  \ref{thm:FORT-F-intersection-flat.3} 
  The implication $\Leftarrow$ follows by \ref{thm:FORT-F-intersection-flat.2}. So assume $(R,\fm)$ is a DVR which is $F$-intersection flat. Since $F_*R$ is a flat $R$-module by Kunz's theorem \cite{KunzCharacterizationsOfRegularLocalRings}, we get $F_{\widehat{R}/R}$ is $\widehat{R}$-pure by \ref{thm:FORT-F-intersection-flat.1} and hence an injective map. Then $R \to \widehat{R}$ has geometrically regular fibers by \autoref{prop:rel-Frob-completion}~\ref{prop:rel-Frob-completion.5}.  
  Consequently, $R$ is excellent by \cite[Prop.\ 5.5.1(ii)]{GabberWeakUniformization}
  as a Cohen-Macaulay ring is always universally catenary.

  \ref{thm:FORT-F-intersection-flat.4} 
  The implications hold due to \autoref{eq:ORTImpliesIFImpliesOR}.
  The first implication is not an equivalence because there exists
  an excellent Henselian regular local ring $(R, \fm)$ of Krull dimension $1$ such that $\Hom_R(F_*R, R) = 0$ \cite{DattaMurayamaFsolidity}. Such a ring $R$ is $F$-intersection flat by \ref{thm:FORT-F-intersection-flat.2}, but not FORT. This is because the FORT property implies that the canonical map $F_*R \to \Hom_R(\Hom_R(F_*R,R),R)$ is 
  cyclically pure (\cite[Lemma 4.1.3]{DattaEpsteinTucker}) 
  and 
  hence injective, which is impossible
  if $\Hom_R(F_*R,R) = 0$. 

  To show the second implication is strict, suppose $R$ is a DVR of characteristic $p > 0$. 
  Then any $\fm$-adically separated $R$-module $M$ is Ohm-Rush by \cite[Prop.\ 2.1]{OhmRu-content}.
    Taking $M = F_*R$, we then get that $R$ is FOR. However, we have seen in \ref{thm:FORT-F-intersection-flat.3} that $R$ is $F$-intersection flat precisely when $R$ is excellent. Hence any non-excellent DVR of prime characteristic $p > 0$ is FOR but not $F$-intersection flat. For an abundance of examples of non-excellent DVRs of prime characteristic, even in the function field of $\mathbb{P}^2$, the reader can consult \cite{DattaSmithExcellence}.

    \ref{thm:FORT-F-intersection-flat.5a} follows by 
    \cite[Corollary 3.4.32~(2)]{DattaEpsteinTucker}. 
    Since $F$-intersection flatness is equivalent to $F_*R$ being flat and Mittag-Leffler by \cite[Theorem 4.3.1]{DattaEpsteinTucker}, 
    we get \ref{thm:FORT-F-intersection-flat.5b}  
    by 
    \cite[Proposition 3.2.6~(1)]{DattaEpsteinTucker}.
    Since FORT is equivalent to $F_*R$ being flat and strictly Mittag-Leffler by \cite[Part II, Prop. 2.3.4]{rg71}, 
    \ref{thm:FORT-F-intersection-flat.5c}
    follows by 
    \cite[Proposition 3.2.6~(b)]{DattaEpsteinTucker}.

    \ref{thm:FORT-F-intersection-flat.6} The $\fm$-adic completion of $F_*R$ coincides with $F_*\widehat{R}$. Since $R$ is $\fm$-adically complete, it follows that $F_*R$ is $\fm$-adically complete. Moreover, $F_*R$ is a flat $R$-module because $R$ is regular. Thus, \ref{thm:FORT-F-intersection-flat.6} follows by 
    \cite[Corollary 4.3.14]{DattaEpsteinTucker}.

    \ref{thm:FORT-F-intersection-flat.7} Since $R$ is $F$-intersection flat, $R$ is reduced (since Frobenius of $R$ is flat and hence injective). Thus, the Frobenius map on $R[x_1,\dots,x_n]$ can be expressed as the composition $R[x_1,\dots,x_n] \hookrightarrow R^{1/p}[x_1,\dots,x_n] \hookrightarrow R^{1/p}[x_1^{1/p},\dots,x_1^{1/p}]$. Since $R^{1/p}[x_1,\dots,x_n] \hookrightarrow R^{1/p}[x_1^{1/p},\dots,x_1^{1/p}]$ is a free extension, it is ORT, and hence, intersection flat by \cite[Proposition 4.3.8]{DattaEpsteinTucker}. Moreover, since $R \hookrightarrow R^{1/p}$ is intersection flat, $R[x_1,\dots,x_n] \hookrightarrow R^{1/p}[x_1,\dots,x_n]$ is intersection flat by base change (see \cite[Theorem 4.3.1]{DattaEpsteinTucker}). Thus, the composition is intersection flat by \cite[Remark 4.2.3]{DattaEpsteinTucker}.  That is, $R[x_1,\dots,x_n]$ is $F$-intersection flat. As a consequence, $W^{-1}(R[x_1,\dots,x_n])$ is $F$-intersection flat again by base change (here we use that localization commutes with $F_*$).
\end{proof}

\begin{example}
    \label{ex:OR-not-preserved-base-change-completion}
    Let $R$ be a non-excellent DVR of characteristic $p > 0$. Consider the $R$-module $F_*R$. Then $F_*R$ is an Ohm-Rush $R$-module but $F_*R$ is not an intersection flat $R$-module, as shown in the proof of \autoref{thm:FORT-F-intersection-flat}\ref{thm:FORT-F-intersection-flat.4} above.
Thus, the flat $\widehat{R}$-module $F_*R \otimes_R \widehat{R}$ cannot be an Ohm-Rush $\widehat{R}$-module by $\ref{thm:FORT-F-intersection-flat.1a} \Longleftrightarrow \ref{thm:FORT-F-intersection-flat.1d}$ of \autoref{thm:FORT-F-intersection-flat}. In other words, this example demonstrates that the flat Ohm-Rush property is not preserved by base change along the completion of a local ring.
\end{example}

\begin{remark}
    \label{rem:base-change-Frobenius-equivalences}
    {\*}
    \begin{enumerate}
        \item One can add further equivalences to the list of statements in part \ref{thm:FORT-F-intersection-flat.1} of \autoref{thm:FORT-F-intersection-flat} using \cite{DattaEpsteinTucker}. Indeed, via \cite[Theorem 4.3.12]{DattaEpsteinTucker}, $F_*R \otimes_R \widehat{R}$ being a flat Ohm-Rush $\widehat{R}$-module is equivalent to it being flat and intersection flat (resp, Mittag-Leffler, strictly Mittag-Leffler, ORT) as a $\widehat{R}$-module.  

        \item Let $(R, \fm)$ be an excellent Henselian DVR of characteristic $p > 0$ such that $\Hom_R(F_*R,R)$ is trivial \cite{DattaMurayamaTate}. Then clearly, the flat $R$-module $F_*R$ is neither ORT nor, equivalently, is $F_*R$ strictly Mittag-Leffler as an $R$-module. However, the faithfully flat base change $F_*R \otimes_R \widehat{R}$ is Ohm-Rush, Mittag-Leffler, strictly Mittag-Leffler, ORT and intersection flat by (a) because $F_{\widehat{R}/R}$ is faithfully flat and hence $\widehat{R}$-pure by restriction of scalars. This shows that the properties of being ORT and strictly Mittag-Leffler do not satisfy pure/faithfully flat descent, unlike the intersection flatness \cite[Corollary 4.3.2]{DattaEpsteinTucker}, Ohm-Rush and Mittag-Leffler conditions \cite[Theorem 3.5.4]{DattaEpsteinTucker}.

        \item If $(R,\fm)$ is a Noetherian local ring of characteristic $p > 0$, then $R$ is $F$-intersection flat if and only if $F_*R$ is \emph{universally Ohm-Rush} in the sense that for all $R$-algebras $S$, $F_*R \otimes_R S$ is an Ohm-Rush $S$-algebra. This follows from \autoref{thm:FORT-F-intersection-flat}~\ref{thm:FORT-F-intersection-flat.1a}$\Longleftrightarrow$\ref{thm:FORT-F-intersection-flat.1d}.
    \end{enumerate}
\end{remark}

\begin{corollary}
    \label{cor:global-F-intersection-flatness}
    Let $R$ be a locally excellent (i.e. $R_\p$ is excellent for all prime ideals $\p$) Noetherian regular ring of prime characteristic $p > 0$. We have the following:
    \begin{enumerate}[label=\textnormal{(\arabic*)}]      
        \item $R$ is $F$-intersection flat if and only if for all free $R$-modules $L$ of finite rank and linear maps $\varphi \colon L \to F_*R$, the cyclically pure locus of $\varphi$ is open in $\Spec(R)$. \label{cor:global-F-intersection-flatness.1}
        \item Suppose $R$ is a domain. Then $R$ is $F$-intersection flat if and only if for all free $R$-modules $L$ of finite rank and injective linear maps $\varphi \colon L \to F_*R$, \textrm{$\{\p \in \Spec(R)\colon \varphi \otimes_R \id_{R/\p}$ is injective$\}$} is open in $\Spec(R)$.\label{cor:global-F-intersection-flatness.2}
    \end{enumerate}
\end{corollary}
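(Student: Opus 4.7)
The plan is to use the local criterion \autoref{thm:FORT-F-intersection-flat}\ref{thm:FORT-F-intersection-flat.5b}, which says that over a local ring, $F$-intersection flatness is equivalent to every finitely generated submodule of $F_*R$ being contained in a finitely generated free submodule that is pure in $F_*R$. Together with \autoref{thm:FORT-F-intersection-flat}\ref{thm:FORT-F-intersection-flat.2}, the local excellence hypothesis ensures this criterion is satisfied at each prime $\p$ of $R$, giving local pure free submodules containing any given finitely generated submodule of $F_*R_\p$.

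For the forward direction of (1), starting from $\varphi \colon L \to F_*R$ with $L$ finitely generated free, I would first use the $F$-intersection flatness of $R$ to produce a finitely generated free $R$-submodule $L' \subset F_*R$ with $\varphi(L) \subseteq L'$ and $L' \hookrightarrow F_*R$ pure as $R$-modules, by adapting the Mittag--Leffler analysis of \cite[\S4]{DattaEpsteinTucker} to the global setting. Factoring $\varphi$ as $L \to L' \hookrightarrow F_*R$, and using that pure maps are cyclically pure at every prime, the cyclically pure locus of $\varphi$ coincides with that of the map $f \colon L \to L'$ between two finitely generated free $R$-modules. This locus equals the intersection of the injectivity locus of $f$ (the complement of the support of the finitely generated kernel $\ker f$) with the flat locus of the finitely presented cokernel $L'/f(L)$. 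Both are open in $\Spec R$ over the Noetherian base, so the cyclically pure locus of $\varphi$ is open.

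For the reverse direction of (1), assume the openness hypothesis. Given any finitely generated submodule $M \subset F_*R$, I aim to show $M$ lies in a finitely generated free submodule pure in $F_*R$, which will imply global $F$-intersection flatness via a global strengthening of \ref{thm:FORT-F-intersection-flat}\ref{thm:FORT-F-intersection-flat.5b}. At each $\p$, local $F$-intersection flatness gives a finitely generated free $L_\p \subset F_*R_\p$ with $M_\p \subseteq L_\p$ and $L_\p \hookrightarrow F_*R_\p$ pure. Spreading out yields a finitely generated free submodule $L \subset F_*R$ defined on an affine neighborhood $U_\p$ of $\p$, still containing $M$ over $U_\p$, and with $L_\p \hookrightarrow F_*R_\p$ pure. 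Applying the openness hypothesis to a presentation $R^n \twoheadrightarrow L \hookrightarrow F_*R$ of this inclusion, one shrinks $U_\p$ if necessary so that the composition is cyclically pure over $U_\p$; by flatness of $F_*R$ and an argument in the spirit of \cite[Lemma 3.3.1]{DattaEpsteinTucker}, cyclic purity upgrades to purity in this context. Noetherian quasi-compactness then covers $\Spec R$ by finitely many such neighborhoods, and a gluing argument on the direct sum of the local free submodules produces the desired global finitely generated free submodule of $F_*R$ containing $M$ and pure in $F_*R$.

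Part (2) should follow by an analogous argument. For a domain $R$ and an injective $\varphi \colon L \to F_*R$, the locus where $\varphi \otimes_R \id_{R/\p}$ is injective is controlled by the associated primes of the cokernel of $\varphi$, and is closely related to the cyclically pure locus of $\varphi$. The principal obstacle I anticipate is the globalization step in the reverse directions: since $F_*R$ is not finitely generated over $R$, extending local pure free submodules to pure free submodules on open neighborhoods and then gluing them into a single global free submodule containing the prescribed $M$ requires careful bookkeeping of containments, freeness, and the compatibility of the openness hypothesis under presentations by finitely generated free modules.
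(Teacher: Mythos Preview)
The paper's proof is a two-sentence application of a prepackaged local-to-global criterion, \cite[Theorem 3.7.1]{DattaEpsteinTucker}: since $R$ is regular, $F_*R$ is flat by Kunz; since each $R_\p$ is excellent regular local, $F_*(R_\p)$ is intersection flat over $R_\p$ by \autoref{thm:FORT-F-intersection-flat}\ref{thm:FORT-F-intersection-flat.2}; and the cited theorem asserts precisely that a flat, locally intersection flat module is globally intersection flat if and only if the openness conditions of parts \ref{cor:global-F-intersection-flatness.1} and \ref{cor:global-F-intersection-flatness.2} hold. Your proposal amounts to reproving that theorem from scratch in the special case $M = F_*R$.

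The reverse direction of your sketch has a genuine gap, not merely a bookkeeping issue. The local free pure submodules you construct live inside $(F_*R)|_{U_i}$ for varying open affines $U_i$ and bear no canonical relation on overlaps, so there is no evident way to assemble them into a single $R$-submodule of $F_*R$ that is simultaneously finitely generated, free, and pure. Lifting generators of each $L_i$ to global elements of $F_*R$ and taking the submodule they span destroys both freeness and purity in general, while a direct sum of the $L_i$ does not naturally map into $F_*R$ in a way that is pure. This is exactly the difficulty \cite[Theorem 3.7.1]{DattaEpsteinTucker} is designed to handle, and its proof does not proceed by manufacturing a global pure free hull. Your forward direction is closer to workable, but you invoke the local-only criterion \autoref{thm:FORT-F-intersection-flat}\ref{thm:FORT-F-intersection-flat.5b} to produce a finite \emph{free} pure submodule over a non-local base; justifying that over a general Noetherian $R$ requires the global Mittag--Leffler structure theory directly, not an appeal to \ref{thm:FORT-F-intersection-flat.5b}, and you have not supplied that argument.
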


\begin{proof}
    Since $R$ is regular, $F_*R$ is a flat $R$-module. Since $R$ is locally excellent, for all $\p \in \Spec(R)$, $R_\p$ is $F$-intersection flat by \autoref{thm:FORT-F-intersection-flat}. Thus, \ref{cor:global-F-intersection-flatness.1} and \ref{cor:global-F-intersection-flatness.2} both follow by \cite[Theorem 3.7.1]{DattaEpsteinTucker}.
\end{proof}

If $R$ is a FOR ring of characteristic $p > 0$, the condition that $F^e_*R$ is an Ohm-Rush $R$-module for all integers $e > 0$ is easily seen to be equivalent to the following assertion: for all ideals $\ba$ of $R$, the collection of ideals $I$ of $R$ such that $
\ba \subseteq I^{[p^e]}
$
has a smallest ideal (under inclusion). Following \cite{BlickleMustataSmithDiscretenessAndRationalityOfFThresholds}, who studied the existence of these smallest ideals in the regular $F$-finite setting, it is customary to denote this ideal by $\ba^{[1/p^e]}$. Note that $\ba^{[1/p^e]} = c_{F^e_*R}(F^e_*\ba)$, the content of $F^e_*\ba$. One can use properties of the content function to deduce standard properties of the $[1/p^e]$-operator. Here we isolate one property that follows by general results on flat Ohm-Rush modules. 

\begin{proposition}
    \label{prop:[1/p^e]-completion}
   Let $(R,\fm)$ be a regular local ring of prime characteristic $p > 0$. Let $\widehat{R}$ denote the $\fm$-adic completion of $R$. If $R$ is FOR, then for all integers $e > 0$ we have the following:
   \begin{enumerate}[label=\textnormal{(\arabic*)}] 
    \item For all ideals $\ba$ of $R$, $\ba^{[1/p^e]}\widehat{R} = (\ba\widehat{R})^{[1/p^e]}$. \label{prop:[1/p^e]-completion.1}
    \item For all ideals $\bb$ of $\widehat{R}$, $(\bb \cap R)^{[p^e]} = \bb^{[p^e]} \cap R$. \label{prop:[1/p^e]-completion.2}
   \end{enumerate}
\end{proposition}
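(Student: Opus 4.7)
The plan is to establish (2) first and then deduce (1). The crucial observation for (2) is that an arbitrary ideal $\bb$ of $\widehat{R}$ can be represented as an intersection of extended ideals from $R$; combined with the intersection-commutativity built into the FOR property of both $R$ and $\widehat{R}$, this reduces the statement to a direct computation involving faithful flatness.

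For (2), the inclusion $(\bb \cap R)^{[p^e]} \subseteq \bb^{[p^e]} \cap R$ is immediate. For the reverse, I would observe that since $\widehat{R}/\bb$ is a Noetherian local ring, Krull's intersection theorem yields $\bb = \bigcap_n (\bb + \fm^n \widehat{R})$. Each $\bb + \fm^n \widehat{R}$ contains $\fm^n \widehat{R}$, so under the canonical isomorphism $R/\fm^n \xrightarrow{\sim} \widehat{R}/\fm^n \widehat{R}$ it equals the extension $J_n \widehat{R}$ of the ideal $J_n := (\bb + \fm^n \widehat{R}) \cap R$ of $R$; thus $\bb = \bigcap_n J_n \widehat{R}$. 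Since $\widehat{R}$ is a complete regular local ring, it is FORT by \autoref{thm:FORT-F-intersection-flat}\ref{thm:FORT-F-intersection-flat.6}, and in particular FOR. The intersection-commutativity of FOR for $\widehat{R}$ then gives
\[
\bb^{[p^e]} = \bigcap_n (J_n \widehat{R})^{[p^e]} = \bigcap_n J_n^{[p^e]} \widehat{R}.
\]
Intersecting with $R$ and using faithful flatness of $R \hookrightarrow \widehat{R}$ (so $J \widehat{R} \cap R = J$ for every ideal $J$ of $R$) yields $\bb^{[p^e]} \cap R = \bigcap_n J_n^{[p^e]}$. Applying FOR for $R$ then gives $\bigcap_n J_n^{[p^e]} = (\bigcap_n J_n)^{[p^e]}$, and a further use of faithful flatness together with our presentation of $\bb$ shows $\bigcap_n J_n = (\bigcap_n J_n \widehat{R}) \cap R = \bb \cap R$. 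This completes (2).

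For (1), only $\ba^{[1/p^e]} \widehat{R} \subseteq (\ba \widehat{R})^{[1/p^e]}$ needs argument. Setting $\widehat{J} := (\ba \widehat{R})^{[1/p^e]}$, one has $\ba \subseteq \widehat{J}^{[p^e]} \cap R$, which by (2) equals $(\widehat{J} \cap R)^{[p^e]}$; by minimality of $\ba^{[1/p^e]}$ in $R$ we conclude $\ba^{[1/p^e]} \subseteq \widehat{J} \cap R$, whence $\ba^{[1/p^e]} \widehat{R} \subseteq \widehat{J}$.

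The only substantive step is the representation $\bb = \bigcap_n J_n \widehat{R}$ as an intersection of extended ideals: this is what allows one to transport the intersection past Frobenius expansion on the $\widehat{R}$ side, contract back to $R$ via faithful flatness, and then apply FOR of $R$. Once this presentation is in place the argument is essentially mechanical.
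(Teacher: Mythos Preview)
Your argument is correct but proceeds in the opposite order from the paper. The paper proves \ref{prop:[1/p^e]-completion.1} first, by invoking a general result on how the content function of a flat Ohm-Rush module behaves under completion (\cite[Corollary 3.4.30]{DattaEpsteinTucker}), applied to $F^e_*R$; this immediately gives $c_{F^e_*R}(F^e_*\ba)\widehat{R} = c_{F^e_*\widehat{R}}(F^e_*(\ba\widehat{R}))$, and \ref{prop:[1/p^e]-completion.2} is then a short consequence of \ref{prop:[1/p^e]-completion.1} via the $[1/p^e]$-operator. Your approach instead proves \ref{prop:[1/p^e]-completion.2} directly, by writing $\bb$ as an intersection of extended ideals $J_n\widehat{R}$ via Krull's intersection theorem, pushing $[p^e]$ through the intersection using FOR on both $\widehat{R}$ and $R$, and contracting via faithful flatness; \ref{prop:[1/p^e]-completion.1} then follows formally. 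The paper's route is shorter but leans on an external black box about content and completion; your route is entirely self-contained, using only FOR for $R$ and $\widehat{R}$ and standard facts about completions, and it makes transparent exactly where each FOR hypothesis enters.
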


\begin{proof}
    \ref{prop:[1/p^e]-completion.1} By \autoref{lem:FORT-FIF-FOR-iterated-Frobenius} we have that for all integers $e > 0$, $F^e_{R*}R$ is a flat Ohm-Rush $R$-module. Note that the $\fm$-adic completion of $F^e_{R*}R$ can be canonically identified with $F^e_{\widehat{R}*}\widehat{R}$. 
    
    Then applying \cite[Corollary 3.4.30]{DattaEpsteinTucker} to the $R$-algebra $F^e_*R$, we get
    \[
    \ba^{[1/p^e]}\widehat{R} = c_{F^e_{R*}R}(F^e_{R*}\ba)\widehat{R} = c_{F^e_*\widehat{R}}(F^e_{R*}\ba \cdot F^e_{\widehat{R}*}\widehat{R}) = c_{F^e_{\widehat{R}*}\widehat{R}}(F^e_{\widehat{R}*}(\ba\widehat{R})). 
    \]
    But $\widehat{R}$ is FOR from \autoref{thm:FORT-F-intersection-flat}, and so, $c_{F^e_{\widehat{R}*}\widehat{R}}(F^e_{\widehat{R}*}(\ba\widehat{R})) = (\ba\widehat{R})^{[1/p^e]}$.

    \ref{prop:[1/p^e]-completion.2} The inclusion $(\bb \cap R)^{[p^e]} \subseteq \bb^{[p^e]} \cap R$ is clear. Let $x \in \bb^{[p^e]} \cap R$. Let $\varphi \colon R \to \widehat{R}$ be the canonical map. Then $\varphi(x) \in \bb^{[p^e]}$, and so,
    $
    (xR)^{[1/p^e]}\widehat{R} \stackrel{\ref{prop:[1/p^e]-completion.1}}{=} (\varphi(x)\widehat{R})^{[1/p^e]} \subseteq \bb.
    $
    Contracting back to $R$ gives us
    $
    (xR)^{[1/p^e]} \subseteq \bb \cap R,    
    $
    and so,
    \[
    x \in xR \stackrel{\textrm{$R$ is FOR}}{\subseteq} ((xR)^{[1/p^e]})^{[p^e]} \subseteq (\bb \cap R)^{[p^e]}.
    \]
    This establishes $(\bb^{[p^e]} \cap R) \subseteq (\bb \cap R)^{[p^e]}$, which is the inclusion we wanted to show.
\end{proof}

\begin{remark}
The surprising aspect of \autoref{prop:[1/p^e]-completion} is that one does not need any niceness assumptions on the fibers of the completion map $R \to \widehat{R}$. This is in contrast with \cite[Lem.\ 6.6]{Lyubeznik-Smith-Test-Ideal}, which can also be used to deduce \autoref{prop:[1/p^e]-completion} but only in the excellent setting.
\end{remark}

\subsection{FORT rings} Of the three notions introduced in Definition \ref{def:FORT}, the FORT condition has not been previously explored in the $F$-singularity literature. We now discuss this notion in more detail, beginning with some
elementary observations.

\begin{remark}
\label{rem:FORT-properties}
{\*}
    \begin{enumerate}
        \item If $R$ is FORT, then $R$ is 
        Frobenius split. This follows by \cite[Remark 4.1.2]{DattaEpsteinTucker}.
        Thus, FORT rings are reduced.\label{rem:FORT-properties.a}
        
        \item \label{rem:FORT-properties.b}
        Unravelling the definition, we see that a reduced ring $R$ is FORT if and only if for all $x \in R$ we have
        \begin{equation}
            \label{eq:FORT-membership}
            F_*x \in \Tr_{F_*R}(F_*x)\cdot F_*R = F_*(\Tr_{F_*R}(F_*x)^{[p]}),
        \end{equation}
        where 
        $
            \Tr_{F_*R}(F_*x) = \sum_{\phi \in \Hom_R(F_*R,R)}\phi(F_*x)R.
        $
        The set membership in \autoref{eq:FORT-membership} is the same as saying $x \in \Tr_{F_*R}(F_*x)^{[p]}$, and so, $R$ is FORT precisely when for all $x \in R$,
        \[
        x \in \left(\sum_{\phi \in \Hom_R(F_*R,R)}\phi(F_*x)R\right)^{[p]}.    
        \]
        Moreover, in this case for all ideals $\ba \subseteq R$, we have
        \begin{equation*}
            \ba \subseteq \left(\sum_{\phi \in \Hom_R(F_*R,R)} \phi(F_*\ba) \right)^{[p]}.
        \end{equation*}
        In fact, since $F^e_*R$ is an ORT $R$-module for all integers $e > 0$ by \autoref{lem:FORT-FIF-FOR-iterated-Frobenius}, it follows that
        \begin{equation*}
            \ba \subseteq \left(\sum_{\phi \in \Hom_R(F^e_*R,R)} \phi(F^e_*\ba) \right)^{[p^e]}.
        \end{equation*} 

        \item \label{rem:FORT-properties.c} If $R$ is FORT, it is also FOR,  hence it follows that 
        $
        \left(\bigcap_i \ba_i\right)^{[p^e]} = \bigcap_i \ba_i^{[p^e]}.
        $ for any collection $\{\ba_i\}_i$ of ideals of $R$.

        \item \label{rem:FORT-properties.d} If $R$ is FORT and Noetherian, then $F_*R$ is a flat $R$-module by \cite[Remark 4.1.2]{DattaEpsteinTucker}. Thus, $R$ is regular by Kunz's theorem \cite{KunzCharacterizationsOfRegularLocalRings}. In light of this, we will assume that our rings are regular whenever we discuss the FORT property in a Noetherian setting.
    \end{enumerate}
\end{remark}

\begin{remark}
\label{rem:intersection-flatness-regularity}
A much weaker condition than FORT is often sufficient for flatness of Frobenius in the Noetherian setting. From \cite[Thm.\ 1]{JensenFlatness} (in the domain case), \cite[Main Thm.]{Epsteinregularitybracket} and \cite{KunzCharacterizationsOfRegularLocalRings}, it follows that if
$R$ is a Noetherian reduced ring of prime characteristic characteristic $p > 0$, then 
the following are equivalent:
\begin{enumerate}
    \item $R$ is regular. 
    \item $F_*R$ is a flat $R$-module. 
    \item For any two ideals $\ba_1, \ba_2$ of $R$, 
    $\ba_1^{[p]} \cap \ba_2^{[p]} = (\ba_1 \cap \ba_2)^{[p]}$.
\end{enumerate}
As a consequence, a Noetherian reduced FOR ring is regular. In fact, at the expense of replacing `reduced' by `domain', \cite[Thm.\ 1]{JensenFlatness} is substantially more general. Namely, it implies that if $R$ is any domain of characteristic $p > 0$ (Noetherian or not) such that $\ba_1^{[p]} \cap \ba_2^{[p]} = (\ba_1 \cap \ba_2)^{[p]}$ for any two ideals $\ba_1,\ba_2$ of $R$, then $F_*R$ is a flat $R$-module. Hence, FOR integral domains have flat Frobenius.
\end{remark}

\begin{lemma}
\label{lem:trace-equals-content}
    Suppose that $R$ is a 
    ring of prime characteristic $p > 0$ that is FORT. If $\ba \subseteq R$ is an ideal, then for all integers $e > 0$, and all ideals $\bb$ of $R$,
    \begin{equation*}
        \sum_{\phi \in \Hom_R(F^e_*R,R)} \phi(F^e_*\ba) = \bigcap_{\ba \subseteq \bb^{[p^e]}} \bb = c_{F^e_*R}(F^e_*\ba).
    \end{equation*}
    Thus, $\sum_{\phi \in \Hom_R(F^e_*R,R)} \phi(F^e_*\ba)$ is the unique smallest ideal $\bb$ of $R$ such that $\ba \subseteq \bb^{[p^e]}$.
\end{lemma}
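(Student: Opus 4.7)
The plan is to recognize the lemma as the submodule-version of the standard fact that for an ORT module $M$, the trace function coincides with the content function. Since $R$ is FORT, \autoref{lem:FORT-FIF-FOR-iterated-Frobenius} gives that $F^e_*R$ is an ORT $R$-module for every $e > 0$, so the lemma should follow by applying this principle with $M = F^e_*R$ and the submodule $N = F^e_*\ba$, together with the identification $I \cdot F^e_*R = F^e_*(I^{[p^e]})$ (so that $F^e_*\ba \subseteq I \cdot F^e_*R$ if and only if $\ba \subseteq I^{[p^e]}$). Setting $T := \sum_{\phi \in \Hom_R(F^e_*R,R)} \phi(F^e_*\ba)$, I need to show $T$ equals both $\bigcap_{\ba \subseteq \bb^{[p^e]}} \bb$ and $c_{F^e_*R}(F^e_*\ba)$.

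For the inclusion $T \subseteq \bigcap_{\ba \subseteq \bb^{[p^e]}} \bb$, I would argue directly: whenever $\ba \subseteq \bb^{[p^e]}$, we have $F^e_*\ba \subseteq \bb \cdot F^e_*R$, so for any $\phi \in \Hom_R(F^e_*R,R)$ the containment $\phi(F^e_*\ba) \subseteq \phi(\bb \cdot F^e_*R) \subseteq \bb$ holds, and summing over $\phi$ yields $T \subseteq \bb$. For the reverse inclusion, the strategy is to show that $\bb := T$ is itself one of the ideals appearing in the intersection, i.e.\ that $\ba \subseteq T^{[p^e]}$, equivalently $F^e_*\ba \subseteq T \cdot F^e_*R$. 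This is where the ORT hypothesis does the work: for every element $a \in \ba$, ORT of $F^e_*R$ gives
\[
F^e_*a \;\in\; \Tr_{F^e_*R}(F^e_*a) \cdot F^e_*R \;\subseteq\; T \cdot F^e_*R,
\]
since $\Tr_{F^e_*R}(F^e_*a) \subseteq T$ by construction. Therefore $F^e_*\ba \subseteq T \cdot F^e_*R$, which gives $\bigcap_{\ba \subseteq \bb^{[p^e]}} \bb \subseteq T$ and also verifies that $T$ is the unique smallest such $\bb$.

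The equality with the content $c_{F^e_*R}(F^e_*\ba)$ is then essentially a matter of unraveling definitions: by the discussion preceding the lemma we already have $c_{F^e_*R}(F^e_*\ba) = \bigcap_{F^e_*\ba \subseteq I \cdot F^e_*R} I$, and the identification $I \cdot F^e_*R = F^e_*(I^{[p^e]})$ translates this precisely into $\bigcap_{\ba \subseteq I^{[p^e]}} I$, matching the middle expression.

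No step presents a serious obstacle; the content of the argument is the ORT-to-submodule bootstrap in the second paragraph, which is a direct consequence of the elementwise ORT condition and the fact that $\Tr_{F^e_*R}(F^e_*a) \subseteq T$ for each $a \in \ba$. Everything else is a straightforward translation between $F^e_*$-notation and bracket-power notation together with the definitions of trace and content.
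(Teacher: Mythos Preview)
Your proposal is correct and follows essentially the same route as the paper's proof: both establish $T \subseteq \bb$ for every $\bb$ with $\ba \subseteq \bb^{[p^e]}$ by the direct linearity argument, then show $\ba \subseteq T^{[p^e]}$ via the ORT condition (the paper cites this containment from \autoref{rem:FORT-properties}\eqref{rem:FORT-properties.b}, whereas you unpack it elementwise), and finally identify the content expression by translating $I \cdot F^e_*R = F^e_*(I^{[p^e]})$.
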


\begin{proof}
    If $\bb \subseteq R$ is any ideal, then $\ba \subseteq \bb^{[p^e]}$ if and only if $F^e_*\ba \subseteq \bb \cdot F^e_*R$. Thus, if $\ba \subseteq \bb^{[p^e]}$, then
    \begin{equation*}
        \sum_{\phi \in \Hom_R(F^e_*R,R)} \phi(F^e_*\ba) \subseteq \sum_{\phi \in \Hom_R(F^e_*R,R)} \phi(\bb \cdot F^e_*R) \subseteq \bb.
    \end{equation*}
    The first equality is immediate by \autoref{rem:FORT-properties}(b) because $\ba \subseteq (\sum_{\phi \in \Hom_R(F^e_*R,R)} \phi(F^e_*\ba))^{[p^e]}$. Note that $c_{F^e_*R}(F^e_*\ba)$ is the intersection of all ideals $\bb$ of $R$ such that $F^e_*\ba \subseteq \bb \cdot F^e_*R = F^e_*(\bb^{[p^e]})$, which is the same as $\ba \subseteq \bb^{[p^e]}$. Thus, $\bigcap_{\ba \subseteq \bb^{[p^e]}} \bb = c_{F^e_*R}(F^e_*\ba)$. 
\end{proof}

The next result should be viewed as a generalization of the well-known fact that if $R$ is an $F$-finite regular ring, then $R$ is FORT \cite{BlickleMustataSmithDiscretenessAndRationalityOfFThresholds,KatzmanParameterTestIdealOfCMRings}.

\begin{lemma}
\label{lem:projective-regular-rings}
(cf. \cite[Proposition 5.4]{KatzmanParameterTestIdealOfCMRings})
    Suppose $R$ is a ring of prime characteristic $p > 0$.
    Then we have the following:
    \begin{enumerate}[label=\textnormal{(\arabic*)}] 
        \item If $F_*R$ is a projective $R$-module, then $R$ is FORT. \label{lem:projective-regular-rings.1}
        \item Suppose $F_*R$ is a countably generated $R$-module. Then $F_*R$ is a projective $R$-module if and only if $R$ is FORT. \label{lem:projective-regular-rings.2}
    \end{enumerate} 
\end{lemma}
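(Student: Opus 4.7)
For part \ref{lem:projective-regular-rings.1}, the plan is to verify the ORT property for projective $R$-modules directly and then conclude for $F_*R$. If $F_*R$ is projective, write it as a direct summand of a free $R$-module $L = \bigoplus_{i \in I} R \cdot e_i$, so that there is an $R$-linear section of the projection $L \twoheadrightarrow F_*R$. Using the dual basis lemma, for each $x \in F_*R$ there is a finite subset $J \subseteq I$ such that $x = \sum_{i \in J} \phi_i(x) \cdot \tilde{e}_i$, where $\phi_i \in \Hom_R(F_*R, R)$ are obtained by restricting the coordinate projections of $L$ and $\tilde{e}_i \in F_*R$ is the image of $e_i$. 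Since each $\phi_i(x) \in \Tr_{F_*R}(x)$, this exhibits $x \in \Tr_{F_*R}(x) \cdot F_*R$, so $F_*R$ is ORT, i.e., $R$ is FORT.

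For part \ref{lem:projective-regular-rings.2}, one direction is immediate from \ref{lem:projective-regular-rings.1}. For the converse, suppose $R$ is FORT and $F_*R$ is countably generated. By the implications \autoref{eq:ORTImpliesIFImpliesOR}, $F_*R$ is flat. Moreover, being ORT is equivalent to being strictly Mittag-Leffler for flat modules by \cite[Part II, Prop. 2.3.4]{rg71}, so $F_*R$ is a countably generated flat strictly Mittag-Leffler $R$-module. The plan then is to invoke the theorem of Raynaud-Gruson (\cite[Part II, Thm. 2.2.2]{rg71}, \emph{cf.} \cite[Proposition 3.2.6]{DattaEpsteinTucker}) which asserts that a countably generated flat strictly Mittag-Leffler module is projective. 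This yields that $F_*R$ is projective.

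The main obstacle is the second direction in \ref{lem:projective-regular-rings.2}, since passing from ``flat $+$ ORT $+$ countably generated'' to ``projective'' is genuinely nontrivial and relies on the Raynaud-Gruson machinery rather than being a direct manipulation. Everything else is essentially formal from the dual basis lemma and the implications among ORT, intersection flatness, and flatness already recorded in the paper.
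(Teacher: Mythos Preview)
Your proposal is correct and follows essentially the same route as the paper's proof: the paper simply cites \cite[Lemma 4.1.5]{DattaEpsteinTucker} for \ref{lem:projective-regular-rings.1} (which is precisely your dual basis argument) and \cite[Remark 3.2.4]{DattaEpsteinTucker} for \ref{lem:projective-regular-rings.2} (which packages the Raynaud--Gruson step you spell out). The only minor imprecision is that the Raynaud--Gruson theorem is usually stated for Mittag-Leffler rather than strictly Mittag-Leffler modules, but since strictly Mittag-Leffler implies Mittag-Leffler this is harmless.
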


\begin{proof}
\ref{lem:projective-regular-rings.1} follows by \cite[Lemma 4.1.5]{DattaEpsteinTucker}. For \ref{lem:projective-regular-rings.2}, 
by \cite[Remark 3.2.4]{DattaEpsteinTucker} we know that countably generated FORT modules are projective.
\end{proof}

\begin{proposition}
    \label{prop:polynomialmaps}
    Let $R$ be a ring of prime characteristic $p > 0$ that is FORT. Then the 
    polynomial ring $R[x_1,\dots,x_n]$ is FORT. If $R$ is Noetherian, then the power series ring $R\llbracket x_1,\dots,x_n\rrbracket $
    is FORT. 
\end{proposition}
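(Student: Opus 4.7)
The plan is to factor the Frobenius of each ring through an intermediate subring and verify that each of the two resulting module extensions is ORT; the composition of ORT extensions is again ORT.

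\textbf{Polynomial case.} Let $S := R[x_1, \ldots, x_n]$. Because $R$ is FORT, $R$ is reduced by \autoref{rem:FORT-properties}(\ref{rem:FORT-properties.a}), so the Frobenius $F_S$ is injective and factors as
\[
    S \hookrightarrow R^{1/p}[x_1,\ldots,x_n] \hookrightarrow R^{1/p}[x_1^{1/p}, \ldots, x_n^{1/p}] = F_*S.
\]
I would argue that the outer extension is free of rank $p^n$ with basis $\{x_1^{a_1/p}\cdots x_n^{a_n/p} : 0 \le a_i < p\}$, hence ORT by \cite[Proposition 4.3.8]{DattaEpsteinTucker}. The inner extension is the base change of the ORT map $R \hookrightarrow R^{1/p}$ (ORT by the FORT hypothesis on $R$) along the flat map $R \to R[x_1, \ldots, x_n]$; ORT is preserved under base change, since for flat modules it coincides with being strictly Mittag--Leffler. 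Composing two ORT maps yields that $F_*S$ is ORT over $S$, so $S$ is FORT. This closely mirrors the proof of \autoref{thm:FORT-F-intersection-flat}(\ref{thm:FORT-F-intersection-flat.7}), with \emph{intersection flat} replaced by \emph{ORT} throughout.

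\textbf{Power series case.} Let $T := R\llbracket x_1, \ldots, x_n \rrbracket$. Since $R$ is Noetherian and FORT, $R$ is regular by \autoref{rem:FORT-properties}(\ref{rem:FORT-properties.d}), and hence $T$ is Noetherian and regular. Factoring $F_T$ analogously gives
\[
    T \hookrightarrow R\llbracket x_1^{1/p}, \ldots, x_n^{1/p} \rrbracket \hookrightarrow R^{1/p}\llbracket x_1^{1/p}, \ldots, x_n^{1/p} \rrbracket = F_*T.
\]
The first extension is again finite free of rank $p^n$ on the same fractional monomial basis, hence ORT. Writing $y_i := x_i^{1/p}$, the proof thus reduces to showing that $R\llbracket y \rrbracket \hookrightarrow R^{1/p}\llbracket y \rrbracket$ is ORT.

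\textbf{Main obstacle.} The hard part is this last extension: unlike in the polynomial case, $R^{1/p}\llbracket y \rrbracket$ is the $(y)$-adic completion of $R^{1/p} \otimes_R R\llbracket y \rrbracket$ rather than the bare tensor product, so one cannot directly transfer ORT by base change. My plan is to reduce to the complete local case. For each maximal ideal $\fn$ of $T$, the $\fn$-adic completion $\widehat{T_{\fn}}$ is a complete Noetherian regular local ring and is therefore FORT by \autoref{thm:FORT-F-intersection-flat}(\ref{thm:FORT-F-intersection-flat.6}); combining this with the local characterization in \autoref{thm:FORT-F-intersection-flat}(\ref{thm:FORT-F-intersection-flat.5c}) together with faithful flatness of $T_\fn \to \widehat{T_\fn}$, one lifts FORT back to $T$. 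Alternatively, one can argue directly by extending any $R$-linear trace $\phi : R^{1/p} \to R$ coefficientwise to an $R\llbracket y \rrbracket$-linear map $\widetilde{\phi} : R^{1/p}\llbracket y \rrbracket \to R\llbracket y \rrbracket$ and invoking FORT of $R$ coefficient by coefficient, using Noetherianity to assemble finitely many such maps into a single ORT witness for a given element of $R^{1/p}\llbracket y \rrbracket$.
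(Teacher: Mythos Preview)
Your polynomial case is essentially the paper's proof: the same factorization $R[x] \to (F_*R)[x] \to F_*(R[x])$, with the first factor ORT by base change and the second free, hence ORT.

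For the power series case there is a real gap. Your Approach~1 does not work: the paper explicitly notes (\autoref{rem:FORT}) that FORT is \emph{not} a local property, and in \autoref{rem:base-change-Frobenius-equivalences}(2) that ORT/strictly Mittag--Leffler do \emph{not} descend along faithfully flat maps. So knowing that each $\widehat{T_\fn}$ is FORT gives you neither that $T_\fn$ is FORT nor, even if it did, that $T$ is. Your Approach~2 is pointed in the right direction but is not a proof as stated: for a given $f=\sum_\nu a_\nu y^\nu\in R^{1/p}\llbracket y\rrbracket$ the finitely many trace maps witnessing $a_\nu\in\Tr(a_\nu)\cdot R^{1/p}$ vary with $\nu$, and ``using Noetherianity to assemble finitely many such maps'' needs a genuine argument.

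The paper sidesteps this entirely by invoking \cite[Proposition~4.1.9]{DattaEpsteinTucker}, which says directly that the ORT property is preserved not only under base change but also under $I$-adic completion over a Noetherian ring. With that in hand, the power series proof is identical in shape to the polynomial one: factor $R\llbracket x\rrbracket \to (F_*R)\llbracket x\rrbracket \to F_*(R\llbracket x\rrbracket)$, with the first map ORT by that proposition (it is the $(x)$-adic completion of the ORT map $R[x]\to(F_*R)[x]$) and the second free. This is exactly the ``hard map'' you isolated; the missing ingredient is simply the completion-stability of ORT.
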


\begin{proof}
   Frobenius on $R[x_1,\dots,x_n]$ factors as
   $
   R[x_1,\dots,x_n] \to (F_*R)[x_1,\dots,x_n] \to F_*(R[x_1,\dots,x_n]).
   $
   Here the first map is ORT 
   by base change \cite[Proposition 4.1.9]{DattaEpsteinTucker}, and the second map is ORT by \cite[Lemma 4.1.5]{DattaEpsteinTucker}
   because $F_*(R[x_1,\dots,x_n])$ is a free $(F_*R)[x_1,\dots,x_n]$-module. Therefore the composition is ORT by \cite[Lemma 4.1.7]{DattaEpsteinTucker}, which is precisely what it means for 
   $R[x_1,\dots,x_n]$ to be FORT.
   
   The proof that $R\llbracket x_1,\dots,x_n\rrbracket $ is FORT when $R$ is
   Noetherian follows by an analogous argument using
   the factorization 
   \[
   R\llbracket x_1,\dots,x_n\rrbracket  \to (F_*R)\llbracket x_1,\dots,x_n\rrbracket  \to F_*(R\llbracket x_1,\dots,x_n\rrbracket ).
   \]
   and \cite[Proposition 4.1.9]{DattaEpsteinTucker}. As in the polynomial 
   case, $F_*(R\llbracket x_1,\dots,x_n\rrbracket )$ is a free $(F_*R)\llbracket x_1,\dots,x_n\rrbracket $-module.
\end{proof}

\begin{corollary}
\label{cor:variables-FORT}
Let $p > 0$ be a prime number. The following rings are FORT:
\begin{enumerate}[label=\textnormal{(\arabic*)}] 
    \item Polynomial and power series rings in finitely many indeterminates over fields of characteristic $p$.\label{cor:variables-FORT.1}
    \item Power series rings over rings from \ref{cor:variables-FORT.1}.\label{cor:variables-FORT.2}
    \item Complete regular local rings of characteristic $p$.\label{cor:variables-FORT.3}
    \item Polynomial rings over complete regular local rings
    of characteristic $p$.\label{cor:variables-FORT.4}
\end{enumerate}
\end{corollary}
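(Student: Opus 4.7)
The plan is to assemble \autoref{cor:variables-FORT} as a straightforward bookkeeping exercise combining the three main tools already established: \autoref{lem:projective-regular-rings}\ref{lem:projective-regular-rings.1} (projectivity implies FORT), \autoref{prop:polynomialmaps} (passage to polynomial and power series rings preserves FORT in the Noetherian case), and \autoref{thm:FORT-F-intersection-flat}\ref{thm:FORT-F-intersection-flat.6} (complete regular local rings are FORT). There is no substantive obstacle; the only thing to be careful about is organizing the four cases so that each one is reduced to an already-proved item.

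First I would dispatch the base case that a field $k$ of characteristic $p$ is FORT. Indeed, $F_* k$ is a $k$-vector space, hence free and in particular projective as a $k$-module, so $k$ is FORT by \autoref{lem:projective-regular-rings}\ref{lem:projective-regular-rings.1}. From this, \ref{cor:variables-FORT.1} follows immediately: the polynomial ring $k[x_1,\dots,x_n]$ is FORT by the first half of \autoref{prop:polynomialmaps}, and the power series ring $k\llbracket x_1,\dots,x_n\rrbracket$ is FORT either by the second half of \autoref{prop:polynomialmaps} (noting that the field $k$ is Noetherian) or, alternatively, because $k\llbracket x_1,\dots,x_n\rrbracket$ is itself a complete regular local ring of characteristic $p$ and is thus covered by \autoref{thm:FORT-F-intersection-flat}\ref{thm:FORT-F-intersection-flat.6}.

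Next, \ref{cor:variables-FORT.3} is exactly \autoref{thm:FORT-F-intersection-flat}\ref{thm:FORT-F-intersection-flat.6}. Once this is in hand, both remaining cases follow at once from \autoref{prop:polynomialmaps}: for \ref{cor:variables-FORT.2}, the rings in \ref{cor:variables-FORT.1} are all Noetherian and FORT, so power series in finitely many variables over them remain FORT by the second half of \autoref{prop:polynomialmaps}; and for \ref{cor:variables-FORT.4}, the rings in \ref{cor:variables-FORT.3} are FORT, so polynomial rings in finitely many variables over them remain FORT by the first half of \autoref{prop:polynomialmaps}. This completes the proof, with the only mild subtlety being to observe that all the base rings encountered here are Noetherian, which is needed to invoke the power series statement of \autoref{prop:polynomialmaps}.
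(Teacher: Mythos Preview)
Your proof is correct and largely parallels the paper's. The one point of divergence is part \ref{cor:variables-FORT.3}: you invoke \autoref{thm:FORT-F-intersection-flat}\ref{thm:FORT-F-intersection-flat.6} directly, whereas the paper instead appeals to Cohen's structure theorem to identify a complete regular local ring of characteristic $p$ with a power series ring over its residue field, thereby reducing \ref{cor:variables-FORT.3} to \ref{cor:variables-FORT.1}. The paper explicitly flags this as giving a second, more elementary proof of the complete regular local case, ``very different in flavor'' from the less direct argument behind \autoref{thm:FORT-F-intersection-flat}\ref{thm:FORT-F-intersection-flat.6}. Your route is shorter bookkeeping but forfeits this independent derivation; the paper's route makes the corollary self-contained once one knows fields are FORT and \autoref{prop:polynomialmaps} holds.
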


\begin{proof}
   By \autoref{prop:polynomialmaps} and Cohen's structure theorem, it
   suffices to show that a field $k$ of characteristic $p$ is FORT. But this is true because $F_*k$ is a free $k$-module. Note that proof here that complete regular ring of characteristic $p > 0$ are FORT is very different in flavor from the argument in \autoref{thm:FORT-F-intersection-flat}, which is less direct.
\end{proof}

 We now show some permanence properties for FORT
rings. Recall that a ring homomorphism $A \to B$ is \emph{weakly \'etale}
or \emph{absolutely flat} if both $A \to B$ and 
\begin{align*}
    B \otimes_A B &\to B\\
    b \otimes c &\mapsto bc
\end{align*}
are flat ring maps.

\begin{proposition}
    \label{prop:ascent-FORT-weakly-etale}
    Let $R$ be a FORT (resp. $F$-intersection flat) ring of prime characteristic $p > 0$ and suppose
    $R \to S$ is a ring map. Then $S$ is FORT (resp. $F$-intersection flat) in each of the following
    cases:
    \begin{enumerate}[label=\textnormal{(\arabic*)}]
        \item $R \to S$ is weakly \'etale. \label{prop:ascent-FORT-weakly-etale.1}
        \item $R \to S$ is smooth. \label{prop:ascent-FORT-weakly-etale.2}
    \end{enumerate}
    Thus, Henselizations and strict Henselizations of FORT (resp. $F$-intersection flat) local rings are FORT (resp. $F$-intersection flat).
\end{proposition}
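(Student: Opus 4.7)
The plan is to prove both parts by factoring the Frobenius of $S$ through the relative Frobenius $F^e_{S/R} \colon F^e_*R \otimes_R S \to F^e_*S$ and analyzing each piece using the base-change and transitivity properties of ORT/intersection flat modules developed in \cite{DattaEpsteinTucker}. By \autoref{lem:FORT-FIF-FOR-iterated-Frobenius}, the hypothesis on $R$ gives us that $F^e_*R$ is ORT (resp.\ intersection flat) over $R$ for every $e \geq 1$, which is what I will actually use.

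For part (1), my first step would be to invoke the standard fact that whenever $R \to S$ is weakly \'etale (more generally, formally \'etale), the relative Frobenius $F^e_{S/R}$ is an isomorphism of $S$-algebras (cf.\ \cite[\href{https://stacks.math.columbia.edu/tag/0F6W}{Tag 0F6W}]{stacks-project}). This identifies $F^e_*S$ with the base change $F^e_*R \otimes_R S$ as an $S$-module. The ORT property is stable under arbitrary base change by \cite[Proposition 4.1.9]{DattaEpsteinTucker}, and intersection flatness is stable under arbitrary base change by \cite[Theorem 4.3.1]{DattaEpsteinTucker}, so the conclusion is immediate. The consequence for Henselizations and strict Henselizations of local rings then follows from part (1), because these maps are weakly \'etale.

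For part (2), I would write the absolute Frobenius as the composition
\[ F^e_S \colon S \xrightarrow{F^e_R \otimes_R \id_S} F^e_*R \otimes_R S \xrightarrow{F^e_{S/R}} F^e_*S. \]
The first arrow is the base change of $F^e_R$, so ORT (resp.\ intersection flat) by the same citations as in part (1). The key step is to show that when $R \to S$ is smooth, the second arrow $F^e_{S/R}$ makes $F^e_*S$ a finite locally free module over $F^e_*R \otimes_R S$. The structure theorem for smooth morphisms gives, Zariski-locally on $\Spec S$, a factorization $R \to R[x_1,\dots,x_n] \to S$ with the second map \'etale; applying the weakly \'etale case to this second map identifies $F^e_*S$ locally with $F^e_*R[x_1^{1/p^e},\dots,x_n^{1/p^e}] \otimes_{R[x_1,\dots,x_n]} S$, which is free of rank $p^{en}$ over $F^e_*R \otimes_R S$. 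Thus globally $F^e_*S$ is finitely presented and flat over $F^e_*R \otimes_R S$, hence finite projective by \cite[\href{https://stacks.math.columbia.edu/tag/00NX}{Tag 00NX}]{stacks-project}; finite projective modules are ORT (and \emph{a fortiori} intersection flat) by \cite[Lemma 4.1.5]{DattaEpsteinTucker}. I would then conclude by transitivity of ORT along compositions of ring maps (\cite[Lemma 4.1.7]{DattaEpsteinTucker}); the analogous transitivity for intersection flatness is a direct consequence of the defining property and base change.

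The main obstacle I anticipate is the smooth case, specifically the verification that $F^e_{S/R}$ is globally a finite locally free ring map when $R \to S$ is smooth, given that we are not assuming any Noetherian or finite-type hypotheses on $R$ beyond the FORT (resp.\ $F$-intersection flat) assumption. The local calculation over a standard \'etale presentation is straightforward, but care is required to globalize to produce a finitely presented $F^e_*R \otimes_R S$-module structure on $F^e_*S$; this could also be handled by an explicit gluing argument or by citing the well-known fact that relative Frobenius for smooth morphisms is finite locally free of rank $p^{en}$. The weakly \'etale and Henselization statements are then essentially formal consequences of the isomorphism of relative Frobenius and base change.
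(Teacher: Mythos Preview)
Your proposal is correct and follows essentially the same strategy as the paper: factor $F_S$ through the relative Frobenius, use base change for the first factor, and show the relative Frobenius itself is ORT/intersection flat for the second. The only notable difference is in part~(2): the paper dispatches the claim that $F_*S$ is finite projective over $F_*R \otimes_R S$ by citing \cite[\href{https://stacks.math.columbia.edu/tag/0FW2}{Tag 0FW2}]{stacks-project} (relative Frobenius of a smooth map is finite syntomic, hence finite flat of finite presentation, hence $F_*S$ is finitely presented and thus projective), whereas you propose to verify finite local freeness by hand via the local factorization of smooth maps as \'etale over a polynomial extension. Both routes work without Noetherian hypotheses; the paper's citation is cleaner and avoids the gluing concern you flagged, while your argument has the advantage of being self-contained and making the rank $p^{en}$ explicit. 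For the Henselization consequence, the paper also notes that $R \to R^h$ and $R^h \to R^{sh}$ are ind-\'etale hence weakly \'etale, matching your reasoning.
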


\begin{proof}
    \ref{prop:ascent-FORT-weakly-etale.1} If $R \to S$ is weakly \'etale, then the relative Frobenius 
    $
    F_{S/R} \colon F_*R \otimes_R S \to F_*S
    $
    is an isomorphism by \cite[\href{https://stacks.math.columbia.edu/tag/0F6W}{Tag 0F6W}]{stacks-project}. Thus,  
    $
    F_S \colon S \to F_*S
    $
    can be identified
    as the base change of 
    $
    F_R \colon R \to F_*R
    $
    along $R \to S$. If $R$ is FORT (resp. is $F$-intersection flat), then $F_*R$ is an ORT $R$-module (resp. an intersection flat $R$-module) and we know by
    \cite[Proposition 4.1.9]{DattaEpsteinTucker} that the ORT property (resp. by \cite[Theorem 4.3.1]{DattaEpsteinTucker} for the intersection flatness property)
    is preserved under arbitrary base change. Thus, $F_*S \cong F_*R \otimes_R S$ is an ORT (resp. intersection flat) $S$-module, that is, $S$ is FORT (resp. $F$-intersection flat).
    
    \ref{prop:ascent-FORT-weakly-etale.2} Since $R \to S$ is smooth, the relative Frobenius $F_{S/R}$ is finite and  syntomic \cite[\href{https://stacks.math.columbia.edu/tag/0FW2}{Tag 0FW2}]{stacks-project} (note that this part of the argument in \cite{stacks-project} does not need $R \to S$ to be smooth of a fixed relative dimension, and in any case, one can always work on a distinguished affine open cover of $S$ where the relative dimension is fixed on the individual subsets of the cover). In particular, a syntomic map by definition is flat and of finite presentation as a ring homomorphism \cite[\href{https://stacks.math.columbia.edu/tag/00SL}{Tag 00SL}]{stacks-project}. Thus, $F_{S/R} \colon F_*R \otimes_R S \to F_*S$ is a flat, finite and finitely presented ring map. Then $F_*S$ is finitely presented as a $F_*R \otimes_R S$-module \cite[\href{https://stacks.math.columbia.edu/tag/0564}{Tag 0564}]{stacks-project}, and consequently, $F_*S$ is a projective $F_*R \otimes_R S$-module (this assertion is also stated without proof in \cite[1. Notations et rappels]{DeligneIllusieAlgebraicdeRham}). This implies that $F_*S$ is an ORT $F_*R \otimes_R S$-module by 
    \cite[Lemma 4.1.5]{DattaEpsteinTucker}
    since projective modules are always ORT, and hence $F_*S$ is also an intersection flat $F_*R \otimes_R S$-module since ORT modules are intersection flat 
    \cite[Proposition 4.3.8]{DattaEpsteinTucker}.
    Furthermore, since by assumption $F_*R$ is an ORT (resp. intersection flat) $R$-module, we get that $F_*R \otimes_R S$ is an ORT (resp. intersection flat) $S$-module by base change again. But a composition of ORT (resp. intersection flat) ring maps is ORT 
    \cite[Lemma 4.1.7]{DattaEpsteinTucker}
    (resp. is intersection flat \cite[Prop.\ 5.7(a)]{HochsterJeffriesintflatness}), and so, $F_*S$ is an ORT (resp. intersection flat) $S$-module.

    If $(R, \fm)$ is a local ring, and $R^h$ (resp. $R^{sh}$) is the
    Henselization (resp. strict Henselization) of $R$, then the 
    canonical maps $R \to R^h$ and $R^h \to R^{sh}$ are both ind-\'etale (filtered colimit of \'etale maps), and hence, both weakly 
    \'etale by \cite[\href{https://stacks.math.columbia.edu/tag/097N}{Tag 097N}]{stacks-project}. Thus, if $R$ is FORT (resp. $F$-intersection flat), then $R^h$ and $R^{sh}$ are
    FORT (resp. $F$-intersection flat)  by $(1)$.
\end{proof}

\begin{remark}
    \label{rem:FORT-FIF-regular-ascent}
    The reader may now naturally wonder if the FORT and $F$-intersection flatness properties ascend under a regular map (i.e. a flat map of Noetherian rings with geometrically regular fibers). We first observe that no such general result is possible. Indeed, let $R$ be a regular $\mathbb{F}_p$-algebra. Then the canonical map $\mathbb{F}_p \to R$ is a regular map because $\mathbb{F}_p$ is perfect and a regular ring over a perfect field is always geometrically regular by \cite[\href{https://stacks.math.columbia.edu/tag/0382}{Tag 0382}, \href{https://stacks.math.columbia.edu/tag/0381}{Tag 0381}]{stacks-project}. However, if $R$ is a non-excellent DVR, then $R$ is not $F$-intersection flat and hence also not FORT by \autoref{thm:FORT-F-intersection-flat}. The key point in this example is that the relative Frobenius of $\mathbb{F}_p \to R$ is just the Frobenius on $R$. Thus, in general if one hopes to ascend the FORT and $F$-intersection flatness properties, one needs strong assumptions on the relative Frobenius. For instance, here is a positive result:

\begin{quote}
    \emph{Let $R \to S$ be a regular map of Noetherian rings of prime characteristic $p > 0$. If $R$ is FORT (resp. $F$-intersection flat) and the relative Frobenius $F_{S/R}$ is FORT (resp. $F$-intersection flat), then $S$ is FORT (resp. $F$-intersection flat).}
\end{quote}

    \noindent This assertion follows because $F_R \otimes_R \id_S$ is ORT (resp. intersection flat) by base change and the composition of ORT (resp. intersection flat) ring maps is ORT (resp. intersection flat).
\end{remark}

\begin{remark}
\label{rem:FORT}
        The property of being FORT is not 
        local, that is, if $R_\p$ is FORT for every
        prime ideal $\p$ of $R$, then it is not true that $R$ is FORT, even when $R$ is a principal ideal domain (PID). 
        Heitmann has constructed an example of a PID $R$ defined over a countable 
        algebraically closed field $K$ of characteristic $p > 0$ such that $R$ has
        countably many non-zero prime ideals $f_iR$, $i \in \mathbb{Z}_{>0}$. Moreover,
        for each $i \in \mathbb{Z}_{>0}$, 
        $R_{(f_i)} \cong K[x,y_{i}]_{(g_i)},$    
        for suitably chosen 
        indeterminates $x, y_1, y_2, \dots$ and a height $1$ prime $(g_i)$ of $K[x,y_i]$ \cite{Heitmann-Nagata-not-local}. Thus $R$ is locally $F$-finite and hence is locally FORT. However, Heitmann shows that $R$ is not excellent because
        it fails to be a Nagata ring. In particular, since $\Frac(R)$ is $F$-finite,
        it follows by \cite[Thm.\ 3.2]{DattaSmithExcellence} that $\Hom_R(F_*R,R) = 0$, that is, $R$ is not FORT.
\end{remark}

\begin{corollary}
    \label{cor:FORT-localization}
    Let $R$ be a FORT ring of prime characteristic $p > 0$. Then for any multiplicative set $S$ of $R$, $S^{-1}R$ is FORT. Moreover, for any ideal $\ba$ of $R$ and for all $e \in \mathbb{Z}_{>0}$,
    \[
    \left(\sum_{\phi \in \Hom_R(F^e_*R,R)} \phi(F^e_*\ba)\right)(S^{-1}R) = \sum_{\varphi \in \Hom_{S^{-1}R}(F^e_*(S^{-1}R), S^{-1}R)} \varphi(F^e_*(\ba S^{-1}R)).    
    \]
\end{corollary}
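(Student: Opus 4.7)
The plan is to leverage two earlier ingredients: the ascent of FORT along weakly \'etale maps (Proposition \ref{prop:ascent-FORT-weakly-etale}), and the content--trace identity from Lemma \ref{lem:trace-equals-content}.

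First, I would observe that the localization map $R \to S^{-1}R$ is weakly \'etale: it is flat, and the multiplication map $S^{-1}R \otimes_R S^{-1}R \to S^{-1}R$ is an isomorphism and hence trivially flat. Proposition \ref{prop:ascent-FORT-weakly-etale}\ref{prop:ascent-FORT-weakly-etale.1} then immediately gives that $S^{-1}R$ is FORT, which handles the first assertion. Moreover, weak \'etaleness ensures that the relative Frobenius is an isomorphism \cite[\href{https://stacks.math.columbia.edu/tag/0F6W}{Tag 0F6W}]{stacks-project}, so we may canonically identify $F^e_*(S^{-1}R) \cong F^e_*R \otimes_R S^{-1}R$ as $S^{-1}R$-algebras; iterating (or applying Lemma \ref{lem:FORT-FIF-FOR-iterated-Frobenius}) ensures this identification at every level $e$.

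Second, since both $R$ and $S^{-1}R$ are FORT, Lemma \ref{lem:trace-equals-content} applies on both sides of the desired equality to rewrite them as content ideals:
\[
\textrm{LHS} = c_{F^e_*R}(F^e_*\ba) \cdot S^{-1}R, \qquad \textrm{RHS} = c_{F^e_*(S^{-1}R)}\bigl(F^e_*(\ba S^{-1}R)\bigr).
\]
Thus the claim collapses to showing that content commutes with localization for the flat Ohm-Rush (in fact ORT) $R$-algebra $F^e_*R$, that is,
\[
c_{F^e_*R}(F^e_*\ba) \cdot S^{-1}R = c_{F^e_*R \otimes_R S^{-1}R}\bigl((F^e_*\ba) \cdot (F^e_*R \otimes_R S^{-1}R)\bigr).
\]
This is precisely the base-change behavior of content for flat Ohm-Rush algebras recorded in \cite[Corollary 3.4.30]{DattaEpsteinTucker} (already used in the proof of Proposition \ref{prop:[1/p^e]-completion} for the case of completion), applied here to the flat base change $R \to S^{-1}R$.

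There is no real obstacle: the entire technical content is already packaged in the cited base-change result and in Lemma \ref{lem:trace-equals-content}; the only thing to verify carefully is that \cite[Corollary 3.4.30]{DattaEpsteinTucker} applies to an arbitrary localization, which follows because localization is flat (and in fact weakly \'etale) and because $F^e_*R$ is a flat Ohm-Rush $R$-algebra under the FORT hypothesis.
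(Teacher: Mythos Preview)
Your approach is essentially identical to the paper's: both use weak \'etaleness of localization plus Proposition~\ref{prop:ascent-FORT-weakly-etale} for the first claim, then reduce the ideal equality via Lemma~\ref{lem:trace-equals-content} to the statement that content commutes with localization for the Ohm-Rush module $F^e_*R$. The only discrepancy is that the paper invokes \cite[Proposition~3.4.14]{DattaEpsteinTucker} (the localization-specific content result) rather than \cite[Corollary~3.4.30]{DattaEpsteinTucker}; you should double-check that the latter actually covers arbitrary flat base change and not just completion, but in any case the correct citation exists and the argument goes through.
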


\begin{proof}
    That $S^{-1}R$ is FORT follows by \autoref{prop:ascent-FORT-weakly-etale}(1) because $R \to S^{-1}R$ is weakly \'etale. Moreover, for all $e \in \mathbb{Z}_{. 0}$, $S^{-1}F^e_*R$ is canonically isomorphic to $F^e_*(S^{-1}R)$ (the relative Frobenius $F^e_{S^{-1}R/R}$ is an isomorphism because $R \to S^{-1}R$ is weakly \'etale) and under this isomorphism, $S^{-1}F^e_*\ba$ is identified with $F^e_*(\ba S^{-1}R)$. Thus,
    \begin{align*}
        \left(\sum_{\phi \in \Hom_R(F^e_*R,R)} \phi(F^e_*\ba)\right)(S^{-1}R) &= c_{F^e_*R}(F^e_*\ba)S^{-1}R\\
        &= c_{S^{-1}F^e_*R}(S^{-1}(F^e_*\ba))\\
        &= c_{F^e_*(S^{-1}R)}(F^e_*(\ba S^{-1}R))\\
        &= \sum_{\varphi \in \Hom_{S^{-1}R}(F^e_*(S^{-1}R), S^{-1}R)} \varphi(F^e_*(\ba S^{-1}R)).
    \end{align*}
    Here the first and fourth equalities follow by \autoref{lem:trace-equals-content} because $R$ and $S^{-1}R$ are FORT, the second equality follows because the content function behaves well with respect to localization for Ohm-Rush modules by 
    \cite[Proposition 3.4.14]{DattaEpsteinTucker} 
    (noting $F^e_*R$ is Ohm-Rush because it is ORT), and the third equality follows by the identifications mentioned above via the relative Frobenius $F^e_{S^{-1}R/R}$.
\end{proof}

\begin{proposition}
    \label{prop:FORT-pure-split-locus}
Let $R$ be a FORT ring of prime characteristic $p > 0$. Then for all finitely presented $R$-modules $P$, for all $e \in \mathbb{Z}_{>0}$ and for all $R$-linear maps 
\[
f \colon P \to F^e_*R,
\]
we have the following:
\begin{enumerate}[label=\textnormal{(\arabic*)}]
    \item $f$ is pure if and only if $f$ splits. \label{prop:FORT-pure-split-locus.1}
    \item $\Pure(f) = \{\p \in \Spec(R) \colon f_\p$ splits as $R_\p$-modules$\}$, In other words, the pure locus of $f$ coincides with its split locus. \label{prop:FORT-pure-split-locus.2}
    \item $\Pure(f)$ is open in $\Spec(R)$.\label{prop:FORT-pure-split-locus.3}
\end{enumerate}
\end{proposition}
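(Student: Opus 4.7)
The plan is to reduce all three statements to a single factorization of $f$ through a finitely generated free direct summand of $F^e_*R$. Since $f$ is pure, it is injective, hence $f(P)\subseteq F^e_*R$ is finitely generated. By \autoref{lem:FORT-FIF-FOR-iterated-Frobenius}\ref{lem:FORT-FIF-FOR-iterated-Frobenius.1}, $F^e_*R$ is an ORT $R$-module, so applying \autoref{thm:FORT-F-intersection-flat}\ref{thm:FORT-F-intersection-flat.5c} (with $F^e_*R$ in place of $F_*R$) yields a finitely generated free $R$-submodule $L\subseteq F^e_*R$ containing $f(P)$ that is a direct summand of $F^e_*R$. Write $f=i\circ f'$ with $f'\colon P\to L$ the corestriction and $i\colon L\hookrightarrow F^e_*R$ the split inclusion.

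For part (1), I will first observe that $f'$ inherits purity from $f$: for any $R$-module $N$, $f'\otimes_R N$ factors the injection $f\otimes_R N$ through the split injection $L\otimes_R N\hookrightarrow F^e_*R\otimes_R N$, so $f'\otimes_R N$ is itself injective. The cokernel $L/f'(P)$ is then finitely presented, being a quotient of the finitely presented $L$ by the finitely generated submodule $f'(P)$. Applying the classical fact that a pure injection with finitely presented cokernel splits (\cite[\href{https://stacks.math.columbia.edu/tag/058L}{Tag 058L}]{stacks-project}), the embedding $f'(P)\hookrightarrow L$ splits. Composing a retraction $L\to f'(P)\cong P$ with the projection $F^e_*R\to L$ then gives the desired splitting of $f$. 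The converse direction, that split implies pure, is standard.

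For part (2), I will apply (1) after localization at each $\p\in\Spec(R)$. Since $R\to R_\p$ is weakly \'etale, \autoref{prop:ascent-FORT-weakly-etale}\ref{prop:ascent-FORT-weakly-etale.1} shows $R_\p$ is FORT; moreover $(F^e_*R)_\p\cong F^e_*(R_\p)$, and $P_\p$ is finitely presented over $R_\p$, so applying (1) to $f_\p$ gives the required equality of loci.

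For part (3), the factorization gives $\Pure(f)=\Pure(f')$ because $i$ is a split injection and this splitting is preserved under localization. Since $L\cong R^n$ is $R$-flat, the injectivity of $f'$ is preserved by localization, and for any $\p$ the Tor long exact sequence attached to $0\to f'(P)\to L\to\coker(f')\to 0$ shows that $f'_\p$ is $R_\p$-pure if and only if $\coker(f')_\p$ is $R_\p$-flat. Because $\coker(f')$ is finitely presented, its flat locus is open in $\Spec(R)$, which proves that $\Pure(f)$ is open. The main technical point I anticipate is coordinating a \emph{single} choice of free summand $L$ with the localizations required in part (3); this is mitigated by the fact that $L\hookrightarrow F^e_*R$ is globally split, so $L_\p\hookrightarrow F^e_*(R_\p)$ remains a split inclusion into a finitely generated free $R_\p$-summand for every $\p\in\Spec(R)$, making the reduction uniform across the spectrum.
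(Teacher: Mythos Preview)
There is a genuine gap. Your entire argument rests on invoking \autoref{thm:FORT-F-intersection-flat}\ref{thm:FORT-F-intersection-flat.5c} to produce a finitely generated free direct summand $L$ of $F^e_*R$ containing $f(P)$, but part \ref{thm:FORT-F-intersection-flat.5} of that theorem begins with the standing hypothesis ``Suppose $R$ is local.'' The proposition you are proving carries no such hypothesis, so you cannot appeal to \ref{thm:FORT-F-intersection-flat.5c} to obtain a \emph{global} free summand $L$. This is not a cosmetic issue: the characterization of ORT via finite free summands in \ref{thm:FORT-F-intersection-flat.5c} comes from \cite[Proposition 3.2.6]{DattaEpsteinTucker}, which genuinely uses that finitely generated projectives over a local ring are free. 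Over a non-local ring there is no reason to expect such an $L$ to exist.

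This undercuts all three parts as you have written them. For \ref{prop:FORT-pure-split-locus.1} and \ref{prop:FORT-pure-split-locus.2} you could try to salvage the idea by localizing first (since $R_\p$ is FORT by \autoref{prop:ascent-FORT-weakly-etale}), applying your argument locally, and then globalizing; but splitting is not a local property in general, so you would need an additional argument to pass from ``$f_\p$ splits for all $\p$'' back to ``$f$ splits.'' For \ref{prop:FORT-pure-split-locus.3} the damage is worse: your openness argument depends on reducing to the flat locus of the single finitely presented module $\coker(f')$, and without a global $L$ you have no single such module---only a family of local ones with no evident compatibility. The paper instead cites \cite[Corollary 4.3.9]{DattaEpsteinTucker} for \ref{prop:FORT-pure-split-locus.1} and \ref{prop:FORT-pure-split-locus.2} and \cite[Corollary 3.6.3]{DattaEpsteinTucker} for \ref{prop:FORT-pure-split-locus.3}, which handle ORT modules over arbitrary rings directly.
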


\begin{proof}
    Since $R$ is FORT, for all $e \in \mathbb{Z}_{>0}$ and for all prime ideals $\p$ of $R$, $F^e_*R$ (resp. $F^e_*R_\p$) is an ORT $R$-module (resp. $R_\p$-module) by \autoref{lem:FORT-FIF-FOR-iterated-Frobenius} and \autoref{cor:FORT-localization}. 
    Then \ref{prop:FORT-pure-split-locus.1} and \ref{prop:FORT-pure-split-locus.2} follow by 
    \cite[Corollary 4.3.9]{DattaEpsteinTucker}. Furthermore, $\Pure(f)$ is open in $\Spec(R)$ by 
    \cite[Corollary 3.6.3]{DattaEpsteinTucker}, proving \ref{prop:FORT-pure-split-locus.3}.
\end{proof}

The next result talks about how the FORT property behaves with respect to completions. In comparison with an analogous result for the Ohm-Rush property \cite[Corollary 3.4.30]{DattaEpsteinTucker}, we are even able to make an assertion about arbitrary ideal-adic completions.

\begin{proposition}
    \label{prop:FORT-completions}
    Let $R$ be a Noetherian ring of prime characteristic $p > 0$ that is FORT. For any ideal $I$ of $R$, the $I$-adic completion $\widehat{R}^I$ is FORT. Moreover, for all ideals $\ba$ of $R$ and $e \in \mathbb{Z}_{> 0}$, we have 
        \[
        \left(\sum_{\phi \in \Hom_R(F^e_*R,R)} \phi(F^e_*\ba)\right)\widehat{R}^I = \sum_{\varphi \in \Hom_{\widehat{R}^I}(F^e_*\widehat{R}^I,\widehat{R}^I)} \varphi(F^e_*(\ba\widehat{R}^I)).
        \]
\end{proposition}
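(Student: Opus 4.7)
The plan is to mirror the strategy of \autoref{prop:[1/p^e]-completion} and \autoref{cor:FORT-localization}, translating trace-ideal sums into content functions via \autoref{lem:trace-equals-content} and then exploiting the good behavior of the content function under $I$-adic completion. The proof thus reduces to two steps: (a) showing that $\widehat{R}^I$ is itself FORT, and (b) showing the content identity
\[
c_{F^e_*R}(F^e_*\ba)\widehat{R}^I = c_{F^e_*\widehat{R}^I}(F^e_*(\ba\widehat{R}^I))
\]
for every ideal $\ba$ of $R$ and every $e > 0$.

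For (a), since $R$ is Noetherian and FORT, it is regular by \autoref{rem:intersection-flatness-regularity}, hence so is the Noetherian ring $\widehat{R}^I$; in particular $F_*\widehat{R}^I$ is $\widehat{R}^I$-flat. The key observation I will use is that $F_*\widehat{R}^I$ is $I\widehat{R}^I$-adically complete as a $\widehat{R}^I$-module: one checks that $(I\widehat{R}^I)^n \cdot F_*\widehat{R}^I = F_*(I^{[p]n}\widehat{R}^I)$, and the filtrations $\{I^{[p]n}\widehat{R}^I\}_n$ and $\{(I\widehat{R}^I)^n\}_n$ on $\widehat{R}^I$ are cofinal, so $I\widehat{R}^I$-adic completeness of $F_*\widehat{R}^I$ descends from $I\widehat{R}^I$-adic completeness of $\widehat{R}^I$. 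Since $I\widehat{R}^I$ automatically lies in the Jacobson radical of $\widehat{R}^I$, \cite[Corollary 4.3.14]{DattaEpsteinTucker} then implies that $F_*\widehat{R}^I$ is a flat ORT $\widehat{R}^I$-module, i.e., $\widehat{R}^I$ is FORT.

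For (b), the same cofinality argument canonically identifies the $I$-adic completion of $F^e_*R$ (a flat Ohm-Rush $R$-algebra by \autoref{lem:FORT-FIF-FOR-iterated-Frobenius}) with $F^e_*\widehat{R}^I$. Then \cite[Corollary 3.4.30]{DattaEpsteinTucker}, applied to the $R$-algebra $F^e_*R$ and the ideal $\ba$, produces the desired content identity. Finally, applying \autoref{lem:trace-equals-content} on both sides (at $R$ and at the now-FORT ring $\widehat{R}^I$) converts each content back into a trace-ideal sum, completing the chain of equalities.

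The main obstacle is confirming that \cite[Corollary 4.3.14]{DattaEpsteinTucker} and \cite[Corollary 3.4.30]{DattaEpsteinTucker}, which in the paper are explicitly applied only in the local $\fm$-adic setting (\emph{cf.} \autoref{thm:FORT-F-intersection-flat}\ref{thm:FORT-F-intersection-flat.6} and the proof of \autoref{prop:[1/p^e]-completion}), extend to the present non-local $I$-adic setting. This should go through with essentially the same proofs once $I\widehat{R}^I$ is observed to lie in the Jacobson radical of $\widehat{R}^I$; everything else is a careful bookkeeping of content functions and Frobenius filtrations.
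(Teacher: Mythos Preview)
Your proposal is correct in outline but differs from the paper's proof in both halves, and the paper's route sidesteps exactly the obstacle you flag.

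For step (a), the paper does not invoke \cite[Corollary 4.3.14]{DattaEpsteinTucker} at all. Instead it uses \cite[Proposition 4.1.9]{DattaEpsteinTucker}, which says directly that the $I$-adic completion $\widehat{F_R}^I \colon \widehat{R}^I \to \widehat{F_*R}^I$ of an ORT map is again ORT. The cofinality argument you give is then used only to identify $\widehat{F_*R}^I$ with $F_*\widehat{R}^I$, so that $\widehat{F_R}^I$ becomes the Frobenius of $\widehat{R}^I$. This is cleaner: it uses the FORT hypothesis on $R$ essentially (whereas your argument for (a) really only uses that $R$ is regular), and it avoids any question about whether \cite[Corollary 4.3.14]{DattaEpsteinTucker} applies outside the local setting.

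For step (b), the paper also avoids \cite[Corollary 3.4.30]{DattaEpsteinTucker}. Instead, after using \autoref{lem:trace-equals-content} to reinterpret each side as a ``smallest ideal $\bb$ with $\ba \subseteq \bb^{[p^e]}$'' (resp.\ smallest $\bc$ with $\ba\widehat{R}^I \subseteq \bc^{[p^e]}$), it proves the two inclusions by hand: one from $\ba \subseteq \bb^{[p^e]} \Rightarrow \ba\widehat{R}^I \subseteq (\bb\widehat{R}^I)^{[p^e]}$, and the other by observing that every $\phi \in \Hom_R(F^e_*R,R)$ completes to $\widehat{\phi}^I \in \Hom_{\widehat{R}^I}(F^e_*\widehat{R}^I,\widehat{R}^I)$ with $\phi(F^e_*\ba)\widehat{R}^I \subseteq \widehat{\phi}^I(F^e_*(\ba\widehat{R}^I))$. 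So the paper's proof is entirely elementary here, while yours is more conceptual but depends on the general-$I$ version of \cite[Corollary 3.4.30]{DattaEpsteinTucker} that you are unsure of.
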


\begin{proof}
    By \cite[Proposition 4.1.9]{DattaEpsteinTucker}, the $I$-adic completion of Frobenius on $R$ is ORT, that is,
    $
    \widehat{F_R}^I \colon \widehat{R}^I \to \widehat{F_*R}^I    
    $
    is ORT. Since $I$ is finitely generated, the collection of ideals $\{(I^n)^{[p^e]} \colon n \in \mathbb{Z}_{>0}\}$ is cofinal with $\{I^n \colon n \in \mathbb{Z}_{>0}\}$ because $(I^n)^{[p^e]} = (I^{[p^e]})^n$ and $I$, $I^{[p^e]}$ have the same radical. This immediately implies that
    $
    \widehat{F_*R}^I \cong F_*\widehat{R^I}.    
    $
    Under this identification, the map $\widehat{F_R}^I$ can also be identified with the Frobenius $F_{\widehat{R}^I}$ on $\widehat{R}^I$. Thus, $\widehat{R}^I$ is FORT.

    Since $R$ and $\widehat{R}^I$ are FORT, by \autoref{lem:trace-equals-content} we have that 
    $\sum_{\phi \in \Hom_R(F^e_*R,R)} \phi(F^e_*\ba)$ 
    is the smallest ideal $\bb$ of $R$ such that $\ba \subseteq \bb^{[p^e]}$, and similarly,  
    $\sum_{\varphi \in \Hom_{\widehat{R}^I}(F^e_*\widehat{R}^I,\widehat{R}^I)} \varphi(F^e_*(\ba\widehat{R}^I))$ 
    is the smallest ideal $\bc$ of $\widehat{R}^I$ such that $\ba\widehat{R}^I \subseteq \bc^{[p^e]}$. 

    If $\ba \subseteq \bb^{[p^e]}$, then $\ba \widehat{R}^I \subseteq (\bb\widehat{R}^I)^{[p^e]} = \bb^{[p^e]}\widehat{R}^I$. Thus, 
    \[
        \sum_{\varphi \in \Hom_{\widehat{R}^I}(F^e_*\widehat{R}^I,\widehat{R}^I)} \varphi(F^e_*(\ba\widehat{R}^I)) \subseteq  \left(\sum_{\phi \in \Hom_R(F^e_*R,R)} \phi(F^e_*\ba)\right)\widehat{R}^I.
    \]
    Now note that every $\phi \in \Hom_R(F^e_*R,R)$ induces, after taking the $I$-adic completion, a $\widehat{R}^I$-linear map 
    $
    \widehat{\phi}^I \colon F^e_*\widehat{R}^I \to \widehat{R}^I    
    $
    such that the following diagram with the canonical vertical maps commutes:
    \[
        \xymatrix{F^e_*R \ar[r]^{\phi} \ar[d] &  R \ar[d] \\  F^e_*\widehat{R}^I \ar[r]^{\widehat{\phi}^I} &  \widehat{R}^I.}  
    \]
    Thus, $\phi(F^e_*\ba)\widehat{R} \subseteq \widehat{\phi}^I(F^e_*(\ba\widehat{R}^I))$, and so,
    \[
        \left(\sum_{\phi \in \Hom_R(F^e_*R,R)} \phi(F^e_*\ba)\right)\widehat{R}^I \subseteq \sum_{\phi \in \Hom_R(F^e_*R,R)} \widehat{\phi}^I(F^e_*(\ba\widehat{R}^I)) \subseteq \sum_{\varphi \in \Hom_{\widehat{R}^I}(F^e_*\widehat{R}^I,\widehat{R}^I)} \varphi(F^e_*(\ba\widehat{R}^I)).
    \]
    This proves the desired equality.
\end{proof}

\subsection{Quotients of FORT rings} 
Let $S$ be a ring of prime characteristic $p > 0$ that is FORT. Then we have seen in \autoref{lem:trace-equals-content} that for an ideal $\ba$ of  and for $e \in \mathbb{Z}_{>0}$, the content $c_{F^e_*S}(F^e_*\ba)$ can be completely characterized in terms of maps $F^e_*S \to S$. 
Now suppose $R = S/I$ is a quotient of the FORT ring $S$. Then for any $\phi \in \Hom_S(F^e_*S,S)$ and for all $c \in (I^{[p^e]}\colon_S I)$, the composition
$
    F^e_*S \xrightarrow{F^e_*c \cdot} F^e_*S \xrightarrow{\phi} S    
$
is an $S$ linear map that sends $F^e_*I$ into $I$. In other words, $\phi \circ (F^e_*c \cdot)$ induces an $R$-linear map $F^e_*R \to R$. 

Note that $\phi \circ (F^e_*c \cdot)$ is just the action of $F^e_*c$ on $\phi$ via the $F^e_*S$-module structure of $\Hom_S(F^e_*S,S)$. Thus, we see that there is a natural map from the $F^e_*S$-submodule $F^e_*(I^{[p^e]}\colon_S I) \cdot \Hom_S(F^e_*S, S)$ to $\Hom_R(F^e_*R,R)$ upon taking quotients.

\begin{lemma}
    \label{lem:FORT-quotient-single-element}
    Let $S$ be a ring of prime characteristic $p > 0$ that is FORT. Suppose $R = S/I$ is a quotient of $S$ and $x \in R$ with a lift $\tilde{x}$ to $S$ (i.e. $x = \tilde{x} + I$). Then we have the following:
    \begin{enumerate}[label=\textnormal{(\arabic*)}]
        \item \label{lem:FORT-quotient-single-element.1} If $\tilde{\tilde x}$ is another lift of $x$ to $S$ then for all $e \in \mathbb{Z}_{> 0}$,
        \[
            c_{F^e_*S}(F^e_*\tilde{x}(I^{[p^e]}:_S I))^{[p^e]} + I = c_{F^e_*S}(F^e_*\tilde{\tilde{x}}(I^{[p^e]}:_S I))^{[p^e]} + I.
        \]
         
        \item \label{lem:FORT-quotient-single-element.2} If $\tilde{x} \in c_{F^e_*S}(F^e_*\tilde{x}(I^{[p^e]}:_S I))^{[p^e]} + I$, then $x \in \Tr_{F^e_*R}(F^e_*x)^{[p^e]}$.
    \end{enumerate}
\end{lemma}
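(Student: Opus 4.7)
The plan for part (1) is to exploit that $\tilde{\tilde x} - \tilde x \in I$. Writing $\tilde{\tilde x} = \tilde x + i$ with $i \in I$, we have $i \cdot (I^{[p^e]}\colon_S I) \subseteq I^{[p^e]}$, so as ideals of $S$,
\[
    \tilde{\tilde x}\,(I^{[p^e]}\colon_S I) \subseteq \tilde x\,(I^{[p^e]}\colon_S I) + I^{[p^e]}.
\]
Applying $F^e_*$ and taking contents, together with the monotonicity and sub-additivity of $c_{F^e_*S}$ on submodules of the Ohm-Rush $S$-module $F^e_*S$ (which is Ohm-Rush since $S$ is FORT, hence FOR, by \autoref{thm:FORT-F-intersection-flat}~\ref{thm:FORT-F-intersection-flat.4}), together with the identity $c_{F^e_*S}(F^e_*(I^{[p^e]})) = I$ (because $F^e_*(I^{[p^e]}) = I \cdot F^e_*S$), we obtain
\[
    c_{F^e_*S}\bigl(F^e_*\tilde{\tilde x}(I^{[p^e]}\colon_S I)\bigr) \subseteq c_{F^e_*S}\bigl(F^e_*\tilde x(I^{[p^e]}\colon_S I)\bigr) + I.
\]
Raising to $[p^e]$-powers using the characteristic-$p$ identity $(A+B)^{[p^e]} = A^{[p^e]} + B^{[p^e]}$, and adding $I$ to both sides (which absorbs $I^{[p^e]}$), yields the containment from left to right; swapping the roles of $\tilde x$ and $\tilde{\tilde x}$ gives the reverse containment, producing the equality in part (1).

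For part (2), the key input is \autoref{lem:trace-equals-content} applied to the FORT ring $S$ with the ideal $\mathfrak a := \tilde x\,(I^{[p^e]}\colon_S I)$, which gives
\[
    c_{F^e_*S}\bigl(F^e_*\tilde x(I^{[p^e]}\colon_S I)\bigr) \;=\; \sum_{\phi \in \Hom_S(F^e_*S,\,S)}\phi\bigl(F^e_*(\tilde x(I^{[p^e]}\colon_S I))\bigr) \;=\; \sum_{\phi,\,c}\phi\bigl(F^e_*(c\tilde x)\bigr)\cdot S,
\]
where the final sum ranges over $\phi \in \Hom_S(F^e_*S,S)$ and $c \in (I^{[p^e]}\colon_S I)$. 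By the discussion immediately preceding the lemma, each such pair $(\phi,c)$ determines an $R$-linear map $\overline{\phi_c}\colon F^e_*R \to R$ (the descent of $\phi\circ(F^e_*c\cdot)$ along $S\twoheadrightarrow R$), with $\overline{\phi_c}(F^e_*x) = \phi(F^e_*(c\tilde x)) + I$. Thus the image of $c_{F^e_*S}(F^e_*\tilde x(I^{[p^e]}\colon_S I))$ in $R$ is contained in $\Tr_{F^e_*R}(F^e_*x)$. Since $[p^e]$-expansion is compatible with the surjection $S\twoheadrightarrow R$, the image of $c_{F^e_*S}(F^e_*\tilde x(I^{[p^e]}\colon_S I))^{[p^e]} + I$ in $R$ lies in $\Tr_{F^e_*R}(F^e_*x)^{[p^e]}$, and the hypothesis $\tilde x \in c_{F^e_*S}(F^e_*\tilde x(I^{[p^e]}\colon_S I))^{[p^e]} + I$ immediately forces $x \in \Tr_{F^e_*R}(F^e_*x)^{[p^e]}$.

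I do not anticipate substantial obstacles. Part (1) is a routine manipulation of contents and Frobenius powers modulo $I$. The only point demanding care in part (2) is establishing the descent dictionary sending $(\phi,c)$ to $\overline{\phi_c}$ and noting that \autoref{lem:trace-equals-content} precisely matches the content appearing in the hypothesis with the trace sum that generates $\Tr_{F^e_*R}(F^e_*x)$; once this identification is in place, the conclusion follows by chasing the hypothesis through the surjection $S\twoheadrightarrow R$.
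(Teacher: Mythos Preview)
Your proposal is correct and follows essentially the same route as the paper's proof: for part~(1) you use the inclusion $\tilde{\tilde x}(I^{[p^e]}\!:\!I)\subseteq \tilde x(I^{[p^e]}\!:\!I)+I^{[p^e]}$, apply (sub)additivity of content together with $c_{F^e_*S}(F^e_*I^{[p^e]})=I$, then take $[p^e]$-th powers modulo $I$; for part~(2) you invoke \autoref{lem:trace-equals-content} and the descent $(\phi,c)\mapsto\overline{\phi_c}$ exactly as the paper does. The only place the paper is marginally more explicit is in justifying $c_{F^e_*S}(F^e_*I^{[p^e]})=I$, where flatness of Frobenius is used to get the containment $I\subseteq c_{F^e_*S}(F^e_*I^{[p^e]})$; your parenthetical ``because $F^e_*(I^{[p^e]})=I\cdot F^e_*S$'' gives only the reverse containment, but the missing direction is immediate from faithful flatness of $F^e$.
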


\begin{proof}
    \ref{lem:FORT-quotient-single-element.1} Since $\tilde{x} = \tilde{\tilde{x}} + i$ for some $i \in I$, for all $e \in \mathbb{Z}_{> 0}$ we have
    \[
    \tilde{x}(I^{[p^e]}:_S I) \subseteq \tilde{\tilde{x}}(I^{[p^e]}:_S I) + i(I^{[p^e]}:_SI) \subseteq \tilde{\tilde{x}}(I^{[p^e]}:_S I) + I^{[p^e]}.    
    \]
    Similarly,
    \[
        \tilde{\tilde{x}}(I^{[p^e]}:_S I)  \subseteq \tilde{x}(I^{[p^e]}:_S I) + (-i)(I^{[p^e]}:_S I) \subseteq \tilde{x}(I^{[p^e]}:_S I) + I^{[p^e]}.   
    \]
    Thus,
    \begin{equation}
        \label{eq:inclusion-1}
    c_{F^e_*S}(F^e_*\tilde{x}(I^{[p^e]}:_S I)) \subseteq c_{F^e_*S}(F^e_*\tilde{\tilde{x}}(I^{[p^e]}:_S I)) + c_{F^e_*S}(F^e_*I^{[p^e]})    
    \end{equation}
    and
    \begin{equation}
        \label{eq:inclusion-2}
        c_{F^e_*S}(F^e_*\tilde{\tilde{x}}(I^{[p^e]}:_S I)) \subseteq   c_{F^e_*S}(F^e_*\tilde{x}(I^{[p^e]}:_S I)) + c_{F^e_*S}(F^e_*I^{[p^e]}),  
    \end{equation}
    where we are using additivity of content over submodules \cite[Corollary 3.4.5]{DattaEpsteinTucker} because $F^e_*S$ is ORT (by \autoref{lem:FORT-FIF-FOR-iterated-Frobenius}) and hence Ohm-Rush.
    
    But $c_{F^e_*S}(F^e_*I^{[p^e]}) = I$. Indeed, it is clear that $c_{F^e_*R}(F^e_*I^{[p^e]}) \subseteq I$. Let $J$ be an ideal of $R$ such that $F^e_*I^{[p^e]} \subseteq JF^e_*R = F_*J^{[p^e]}$. Then by flatness of Frobenius we get $I \subseteq J$. Since this holds for all ideals $J$, we then have $I \subseteq c_{F^e_*R}(F^e_*I^{[p^e]})$. 
    Using $c_{F^e_*S}(F^e_*I^{[p^e]}) = I$ we now get
    \[
        c_{F^e_*S}(F^e_*\tilde{x}(I^{[p^e]}:_S I)) + I \stackrel{\autoref{eq:inclusion-1}}{\subseteq} c_{F^e_*S}(F^e_*\tilde{\tilde{x}}(I^{[p^e]}:_S I)) + I  \stackrel{\autoref{eq:inclusion-2}}{\subseteq}  c_{F^e_*S}(F^e_*\tilde{x}(I^{[p^e]}:_S I)) + I,
    \]
    that is,
    $
        c_{F^e_*S}(F^e_*\tilde{x}(I^{[p^e]}:_S I)) + I =  c_{F^e_*S}(F^e_*\tilde{\tilde{x}}(I^{[p^e]}:_S I)) + I.
    $
    Raising everything to $[p^e]$-powers and then adding $I$ now gives \ref{lem:FORT-quotient-single-element.1}.

   \ref{lem:FORT-quotient-single-element.2} By \autoref{lem:trace-equals-content}, 
   \begin{align*}
   c_{F^e_*S}(F^e_*\tilde{x}(I^{[p^e]}:_S I)) &= \sum_{\phi \in \Hom_S(F^e_*S,S)}\phi(F^e_*\tilde{x}(I^{[p^e]}:_S I))\\ 
   &= \sum_{\phi \in \Hom_S(F^e_*S,S)}\phi(F^e_*(I^{[p^e]}:_S I)F^e_*\tilde{x})\\
   &= \{\phi \circ (F^e_*c \cdot)(F^e_*\tilde{x}) \colon c \in (I^{[p^e]}:_S I)\}.
   \end{align*}
   Let $\overline{\phi_c}$ be the $R$-linear map $F^e_*R \to R$ induced by $\phi  \circ (F^e_*c \cdot) \colon F^e_*S \to S$. If $\pi \colon S \twoheadrightarrow R$ is the canonical projection, we get a commutative diagram
   \[
        \xymatrix@R+1pc@C+1pc{F^e_*S \ar[r]^{\phi \circ (F^e_*c \cdot)} \ar[d]_{F^e_*\pi} &  S \ar[d]^{\pi} \\  F^e_*{R} \ar[r]_{\overline{\phi_c}} & {R}.}  
\]
   Then in $R = S/I$,
   $
    \phi \circ (F^e_*c \cdot)(F^e_*\tilde{x}) + I = \pi \circ (\phi \circ (F^e_*c \cdot))(F^e_*\tilde{x}) = \overline{\phi_c} \circ F^e_*\pi (F^e_*\tilde{x}) = \overline{\phi_c}(F^e_*x).
   $
   The last equality follows because $\tilde{x} + I = x$ by hypothesis.
   This shows that the expanded ideal $c_{F^e_*S}(F^e_*\tilde{x}(I^{[p^e]}:_S I)) R$ is contained in 
   \[
    \Tr_{F^e_*R}(F^e_*x) = \im(\Hom_R(F^e_*R,R) \xrightarrow{\ev @ F^e_*x} R).    
   \]
   Thus, if $\tilde{x} \in c_{F^e_*S}(F^e_*\tilde{x}(I^{[p^e]}:_S I))^{[p^e]} + I$, then 
   $
   x = \tilde{x} + I \in (c_{F^e_*S}(F^e_*\tilde{x}(I^{[p^e]}:_S I))^{[p^e]} + I)R = (c_{F^e_*S}(F^e_*\tilde{x}(I^{[p^e]}:_S I))R)^{[p^e]} \subseteq \Tr_{F^e_*R}(F^e_*x)^{[p^e]},
   $
   completing the proof.
\end{proof}

We now come to the main result of this subsection.

\begin{theorem}
    \label{thm:regular-quotient-FORT}
    Let $S$ be a Noetherian regular ring of prime characteristic $p > 0$ that is FORT. Let $R = S/I$ be a quotient of $S$. If $R$ is regular, then $R$ is FORT.
\end{theorem}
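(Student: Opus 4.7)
The plan is to apply \autoref{lem:FORT-quotient-single-element}\ref{lem:FORT-quotient-single-element.2}. Given $x \in R$ with a lift $\tilde x \in S$, it suffices to show
\[
\tilde x \in \bb^{[p]} + I, \qquad \text{where } \bb = c_{F_*S}\bigl(F_*\bigl(\tilde x (I^{[p]}:_S I)\bigr)\bigr).
\]
Since this is an ideal membership in $S$, it can be checked after localizing at each prime $\bq \in \Spec S$. For $\bq \not\supseteq I$ the containment is trivial because $IS_\bq = S_\bq$, so I restrict to primes $\bq \supseteq I$. By \autoref{cor:FORT-localization} and \autoref{prop:[1/p^e]-completion}, the content commutes with both localization at $\bq$ and completion with respect to $\bq S_\bq$; combined with faithful flatness of $S_\bq \hookrightarrow \widehat{S_\bq}$, this reduces the problem to verifying the analogous containment in $\widehat{S_\bq}$.

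In this complete local setting, Cohen's structure theorem produces a coefficient field $k$ and an isomorphism $\widehat{S_\bq} \cong k\llbracket f_1, \ldots, f_n\rrbracket$; since $R_\bq$ is regular, the generators of $I$ can be arranged so that $I \widehat{S_\bq} = (f_1, \ldots, f_r)$ with $f_1, \ldots, f_r$ a regular sequence extending to a regular system of parameters, so that $\widehat{R_\bq} \cong k\llbracket f_{r+1}, \ldots, f_n\rrbracket$. The standard colon formula for regular sequences yields
\[
(I\widehat{S_\bq})^{[p]} :_{\widehat{S_\bq}} I\widehat{S_\bq} \;=\; I^{[p]}\widehat{S_\bq} + \bigl((f_1 \cdots f_r)^{p-1}\bigr),
\]
and by \autoref{lem:FORT-quotient-single-element}\ref{lem:FORT-quotient-single-element.1} I may change the lift of $x$ to $c_0 \in \widehat{R_\bq} \subseteq \widehat{S_\bq}$, i.e.\ a power series in $f_{r+1}, \ldots, f_n$ alone. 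Since $(f_1 \cdots f_r)^{p-1}$ lies in the colon ideal above, it follows that $c_0(f_1 \cdots f_r)^{p-1} \in (\bb\widehat{S_\bq})^{[p]}$.

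The heart of the argument (and the main obstacle) is to extract $c_0$ from this containment via a coefficient projection. Let $\pi \colon \widehat{S_\bq} \twoheadrightarrow \widehat{R_\bq}$ denote the quotient by $I\widehat{S_\bq}$ (equivalently, $f_j \mapsto 0$ for $j \leq r$). Any element of $(\bb\widehat{S_\bq})^{[p]}$ has the form $\sum_i r_i b_i^p$ with $r_i \in \widehat{S_\bq}$ and $b_i \in \bb\widehat{S_\bq}$. Expanding $r_i = \sum_\alpha r_{i,\alpha} f^\alpha$ and $b_i = \sum_\beta b_{i,\beta} f^\beta$ over $\widehat{R_\bq}$ in the variables $f = (f_1, \ldots, f_r)$, and using the characteristic-$p$ identity $b_i^p = \sum_\beta b_{i,\beta}^p f^{p\beta}$, the coefficient of $f_1^{p-1} \cdots f_r^{p-1}$ in $\sum_i r_i b_i^p$ equals $\sum_i r_{i,(p-1,\ldots,p-1)} \pi(b_i)^p$, because the only decomposition $(p-1,\ldots,p-1) = \alpha + p\beta$ with $\alpha, \beta \in \mathbb{Z}_{\geq 0}^r$ is $\alpha = (p-1,\ldots,p-1)$, $\beta = 0$. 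This sum manifestly lies in $\pi(\bb\widehat{S_\bq})^{[p]}$. Applied to $g = c_0(f_1 \cdots f_r)^{p-1} \in (\bb\widehat{S_\bq})^{[p]}$, whose coefficient of $f_1^{p-1}\cdots f_r^{p-1}$ is visibly $c_0$, I conclude $c_0 \in \pi(\bb\widehat{S_\bq})^{[p]} = \pi\bigl((\bb\widehat{S_\bq})^{[p]}\bigr)$ in $\widehat{R_\bq}$. Since $\pi$ is surjective with kernel $I\widehat{S_\bq}$, this lifts to $c_0 \in (\bb\widehat{S_\bq})^{[p]} + I\widehat{S_\bq}$ inside $\widehat{S_\bq}$, which descends through the faithfully flat reductions to the desired $\tilde x \in \bb^{[p]} + I$ in $S$, completing the proof.
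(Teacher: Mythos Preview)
Your proof is correct and follows the paper's reduction to the complete local case almost verbatim, including the change of lift to $c_0 \in k\llbracket f_{r+1},\dots,f_n\rrbracket$. The difference is in the final extraction step. The paper invokes FORT of the quotient $A = k\llbracket f_{r+1},\dots,f_n\rrbracket$ to produce an explicit $\phi \in \Hom_A(F_*A,A)$ with $c_0 = \phi(F_*c_0)^p$, extends it to $(F_*A)\llbracket f_1,\dots,f_r\rrbracket \to \widehat{S_\bq}$, and precomposes with the free-basis projection $F_*\widehat{S_\bq} \to (F_*A)\llbracket f_1,\dots,f_r\rrbracket$ picking off $F_*(f_1\cdots f_r)^{p-1}$; this places $c_0$ in the trace ideal's $[p]$-power via a constructed map. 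You instead stay at the ideal level: from $c_0(f_1\cdots f_r)^{p-1} \in (\bb')^{[p]}$ you read off the $f^{(p-1,\dots,p-1)}$-coefficient using the splitting $\widehat{S_\bq} = \widehat{R_\bq}\llbracket f_1,\dots,f_r\rrbracket$ and the identity $(\sum_\beta b_\beta f^\beta)^p = \sum_\beta b_\beta^p f^{p\beta}$. Your route is slightly more elementary since it avoids constructing Hom-maps and does not separately appeal to FORT of $\widehat{R_\bq}$; the paper's route makes the trace characterization visible throughout. One notational wrinkle: after changing the lift via \autoref{lem:FORT-quotient-single-element}\ref{lem:FORT-quotient-single-element.1}, the ideal you continue to call $\bb\widehat{S_\bq}$ should be the content computed with $c_0$ rather than with $\tilde x$; this is harmless since the two agree modulo $I\widehat{S_\bq}$ after taking $[p]$-powers, which is all the argument needs.
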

\vspace{-2ex}
\begin{proof}
    Let $x \in R$. We have to show that $F_*x \in \Tr_{F_*R}(F_*x)F_*R = F_*(\Tr_{F_*R}(F_*x)^{[p]})$, or equivalently, that $x \in \Tr_{F_*R}(F_*x)^{[p]}$. Let $\tilde{x}$ be a lift of $x$ to $S$. By \autoref{lem:FORT-quotient-single-element}\ref{lem:FORT-quotient-single-element.2}, it suffices to show that
    \begin{equation}
        \label{eq:content-membership}
    \tilde{x} \in c_{F_*S}(F_*\tilde{x}(I^{[p]}:_S I))^{[p]} + I.    
    \end{equation}
    Note that \autoref{eq:content-membership} holds if and only if for all prime ideals $\p$ of $S$,
    \begin{equation}
        \label{eq:content-membership-completion}
    \tilde{x} \in (c_{F_*S}(F_*\tilde{x}(I^{[p]}:_S I))^{[p]} + I)\widehat{S_\p},
    \end{equation}
    where $\widehat{S_\p}$ denotes the $\p S_\p$-adic completion of $S_\p$. Here by abuse of notation, we are using $\tilde{x}$ to also denote the image of $\tilde{x}$ in $\widehat{S_\p}$. If $I \nsubseteq \p$, then $I\widehat{S_\p} = \widehat{S_\p}$ and \autoref{eq:content-membership-completion} is automatically satisfied. Thus, we may assume $I \subseteq \p$.
    In addition, we have 
    \begin{align*}
        c_{F_*S}(F_*\tilde{x}(I^{[p]}:_S I))^{[p]}\widehat{S_\p} &= (c_{F_*S}(F_*\tilde{x}(I^{[p]}:_S I))\widehat{S_\p})^{[p]}\\
        &=\left(\left(\sum_{\phi \in \Hom_S(F_*S,S)} \phi(F_*\tilde{x}(I^{[p]}:_S I))\right)\widehat{S_\p}\right)^{[p]}\\
        &=\left(\left(\sum_{\varphi \in \Hom_{S_\p}(F_*{S_\p},S_\p)} \varphi(F_*(\tilde{x}(I^{[p]}:_S I)S_\p))\right)\widehat{S_\p}\right)^{[p]}\\
        &=\left(\sum_{\psi \in \Hom_{\widehat{S_\p}}(F^e_*\widehat{S_\p}, \widehat{S_\p})}\psi(F^e_*(\tilde{x}(I{\widehat{S_\p}}^{[p]}:_{\widehat{S_\p}} I\widehat{S_\p})))\right)^{[p]}.\\
        &= c_{F_*\widehat{S_\p}}(F^e_*\tilde{x}(I\widehat{S_\p}^{[p]}\colon_{\widehat{S_\p}} I\widehat{S_\p}))^{[p]}
    \end{align*}    
    The first equality follows because taking $[p]$-powers of ideals commutes with expansions, the second equality follows by \autoref{lem:trace-equals-content} because $S$ is ORT, the third equality follows by \autoref{cor:FORT-localization}, the fourth equality follows by \autoref{prop:FORT-completions} because $S_\p$ is FORT by \autoref{cor:FORT-localization}, and the final equality follows by \autoref{lem:trace-equals-content} because $\widehat{S_\p}$ is FORT by \autoref{prop:FORT-completions}.

    Hence we have shown that
    \[
        \textrm{$\tilde{x} \in c_{F_*S}(F_*\tilde{x}(I^{[p]}:_SI))^{[p]} + I \Longleftrightarrow$ for all $\p \in \mathbf{V}(I),  \tilde{x} \in  c_{F_*\widehat{S_\p}}(F_*\tilde{x}(I\widehat{S_\p}^{[p]}\colon_{\widehat{S_\p}} I\widehat{S_\p}))^{[p]} + I\widehat{S_\p}$.}
    \]
    Replacing $S$ by $\widehat{S_\p}$ and $R$ by $\widehat{S_\p}/I\widehat{S_\p}$, we may assume that $S$ is complete regular local and $R$ is a regular quotient of $S$. Then $I$ must be generated by part of a regular system of parameters of $S$. Let $\kappa$ be the residue field of $S$. By Cohen's structure theorem we may further assume that $S = \kappa\llbracket x_1,\dots,x_d\rrbracket $ and $I = (x_1,\dots,x_c)$ for some $0 \leq c \leq d$. 
    
    Since $x \in R = S/I$, any lift $\tilde{x}$ of $x$ is of the form $\tilde{\tilde{x}} + i$, for some $i \in I$ and some 
    $
    \tilde{\tilde{x}} \in \kappa\llbracket x_{c+1},\dots,x_d\rrbracket  \subseteq \kappa\llbracket x_1,\dots,x_d\rrbracket $ 
    that is also a lift of $x$. Then by \autoref{lem:FORT-quotient-single-element}\ref{lem:FORT-quotient-single-element.1}, we have
    \begin{equation}
        \label{eq:equality-content-mod-I}
    c_{F_*S}(F_*\tilde{x}(I^{[p]}\colon_S I))^{[p]} + I  
    =c_{F_*S}(F_*\tilde{\tilde{x}}(I^{[p]}\colon_S I))^{[p]} + I.    
    \end{equation}

    Since $i \in I$, the upshot is that to show
    $
    \tilde{x} = \tilde{\tilde{x}} + i \in c_{F_*S}(F_*\tilde{x}(I^{[p]}\colon_S I))^{[p]} + I
    $
    it suffices to show
    $
    \tilde{\tilde{x}} \in c_{F_*S}(F_*(\tilde{\tilde{x}}(I^{[p]}\colon_S I)))^{[p]} + I.
    $
    The reason for comparing different lifts of $x \in R$ to $S$ is because in our situation, the lift $\tilde{x}$ comes globally (before localizing and completing) while we can choose the more special lift $\tilde{\tilde{x}}$ using Cohen's structure theorem after localization and completion.

    We will show something stronger. Namely, we know that 
    $
    x_1^{p-1}\cdots x_c^{p-1} \in (I^{[p]}:_SI).    
    $
    Thus, it is enough to show that 
    $
     \tilde{\tilde{x}} \in c_{F_*S}(F_*\tilde{\tilde{x}}x_1^{p-1}\cdots x_c^{p-1})^{[p]} + I
    $
    because $c_{F_*S}(F_*\tilde{\tilde{x}}x_1^{p-1}\cdots x_c^{p-1}) \subseteq c_{F_*S}(F_*\tilde{\tilde{x}}(I^{[p]}:I))$.

    Since $A \coloneqq \kappa\llbracket x_{c+1},\dots,x_n\rrbracket $ is FORT, we have 
    $
    F_{A*}\tilde{\tilde{x}} \in \Tr_{F_{A*}A}(F_{A*}\tilde{\tilde{x}}) \cdot F_{A*}A = F_{A*}\Tr_{F_{A*}A}(F_{A*}\tilde{\tilde{x}})^{[p]}.
    $ 
    Hence, there exists $\phi \colon F_{A*}A \to A$ such that 
    $
    F_{A*}\tilde{\tilde{x}} = F_{A*}\phi(F_{A*}\tilde{\tilde{x}})^p,    
    $
    or equivalently, $\tilde{\tilde{x}} = \phi(F_{A*}\tilde{\tilde{x}})^p$. Consider the $S = A\llbracket x_1,\dots,x_c\rrbracket $-linear map
    $
    \phi\llbracket x_1,\dots,x_c\rrbracket  \colon (F_{A*}A)\llbracket x_1,\dots,x_c\rrbracket  \to S
    $
    that extends $\phi$.
    We again have
    $
        \tilde{\tilde{x}} = \phi(F_{A*}\tilde{\tilde{x}})^p = \phi\llbracket x_1,\dots,x_c\rrbracket (F_{A*}\tilde{\tilde{x}})^p .   
    $
    Now observe that $F_{S*}S$ is a free $(F_{A*}A)\llbracket x_1,\dots,x_c\rrbracket $-module with basis
    $
    \{F_{S*}x^{\alpha_1}_1 \cdots x^{\alpha_c}_c \colon \alpha_i \in \mathbb{Z}, 0 \leq \alpha_i \leq p-1\}.
    $
    Let $\pi \colon F_{S_*}S \to (F_{A*}A)\llbracket x_1,\dots,x_c\rrbracket $ be any $(F_{A*}A)\llbracket x_1,\dots,x_c\rrbracket $-linear map that sends 
    $
    F_{S*}x^{p-1}_1 \cdots x^{p-1}_c \mapsto 1.
    $
    Then
    $
    \pi(F_{S*}\tilde{\tilde{x}}x^{p-1}_1 \cdots x^{p-1}_c) = \pi(F_{A*}\tilde{\tilde{x}} \cdot F_{S*}x^{p-1}_1 \cdots x^{p-1}_c) = F_{A*}\tilde{\tilde{x}}    
    $
    by linearity. Thus, the composition
    \[
    F_{S*}S \xlongrightarrow {\pi} (F_{A*}A)\llbracket x_1,\dots,x_c\rrbracket  \xlongrightarrow{\phi\llbracket x_1,\dots,x_c\rrbracket } S    
    \]
    is an $S$-linear map that sends $F_{S*}\tilde{\tilde{x}}x^{p-1}_1 \cdots x^{p-1}_c \mapsto \phi(F_{A*}\tilde{\tilde{x}})$. Consequently,
    \begin{align*}
    \tilde{\tilde{x}} = (\phi\llbracket x_1,\dots, x_c\rrbracket  \circ \pi(F_{S*}\tilde{\tilde{x}}x^{p-1}_1 \cdots x^{p-1}_c))^p &\in \Tr_{F_{S*}S}(F_{S*}\tilde{\tilde{x}}x^{p-1}_1 \cdots x^{p-1}_c)^{[p]}\\ 
    &= c_{F_{S*}S}(F_{S*}\tilde{\tilde{x}}x^{p-1}_1 \cdots x^{p-1}_c)^{[p]}\\
    &\subseteq  c_{F_{S*}S}(F_{S*}\tilde{\tilde{x}}x^{p-1}_1 \cdots x^{p-1}_c)^{[p]} + I,
    \end{align*}
as desired. Here the equality between trace and content again follows because $F_*S$ is an ORT $S$-module; see \cite[Remark 4.1.2]{DattaEpsteinTucker}. 
\end{proof}

\subsection{Descent of \emph{F}-intersection flatness and Frobenius Ohm-Rush}
\label{subsec:Descent-FIF}

Let $\varphi \colon R \to S$ be a homomorphism of rings of prime characteristic $p > 0$ such that $S$ is $F$-intersection flat. It would be desirable to have conditions on $\varphi$ that imply that $R$ is also $F$-intersection flat. We note that $\varphi$ has to satisfy a stronger condition than just faithful flatness. Indeed, if $(R,\fm)$ is a non-excellent DVR of characteristic $p > 0$, then we have seen that $R$ is not $F$-intersection flat (\autoref{thm:FORT-F-intersection-flat}). However, $\widehat{R}$ is always $F$-intersection flat and $R \to \widehat{R}$ is faithfully flat. The condition we want will be given in terms of an appropriate purity assumption on the relative Frobenius
$
F_{\varphi} \colon F_*R \otimes_R S \to F_*S
$
of $\varphi$. 

Purity of the relative Frobenius as a ring map has been studied before by Hashimoto \cite{HashimotoF-pure-homomorphisms}. Recall, that a homomorphism $\varphi \colon R \to S$ of Noetherian rings of prime characteristic $p > 0$ is called \emph{F-pure} (this is the relative version of an $F$-pure ring) in \cite[(2.3)]{HashimotoF-pure-homomorphisms} if the relative Frobenius $F_\varphi$ is a pure ring map. By the Radu-Andr{\'e} theorem, any regular homomorphism of Noetherian rings is $F$-pure because the relative Frobenius is then faithfully flat and faithful flatness implies purity. However, the notion of an $F$-pure homomorphism is a lot weaker than the notion of a regular homomorphism. Indeed, for any Noetherian $F$-pure ring $R$, the canonical map $\mathbb{F}_p \to R$ is an $F$-pure homomorphism which is very far from being regular in general (otherwise all Noetherian $F$-pure rings would be regular!). At the same time, in some cases being $F$-pure is equivalent to being regular. For example, for a Noetherian local ring $(R,\fm)$, the canonical map $R \to \widehat{R}$ being $F$-pure is equivalent to $R \to \widehat{R}$ being regular by \autoref{cor:rel-Frob-completions}. 

Weaker than $F_{\varphi}$ being a pure ring map is the condition that $F_{\varphi}$ is pure as a map of $S$-modules. It turns out that this is the condition we will need for descent of intersection flatness.

\begin{theorem}
    \label{thm:descent-F-intersection-flat}
    Let $\varphi \colon R \to S$ be a pure homomorphism of rings of prime characteristic $p > 0$. Let
    \[
    F_{\varphi} \colon F_*R \otimes_R S \to F_*S
    \]
   be the relative Frobenius of $\varphi$. We have the following:
   \begin{enumerate}[label=\textnormal{(\arabic*)}]
    \item If $F_\varphi$ is a pure map of $S$-modules and $S$ is $F$-intersection flat, then $R$ is $F$-intersection flat.\label{thm:descent-F-intersection-flat.1}
    \item If $R$ and $S$ are Noetherian and $\varphi$ is an $F$-pure homomorphism (for example, if $\varphi$ is regular) and $S$ is $F$-intersection flat, then $R$ is $F$-intersection flat.\label{thm:descent-F-intersection-flat.2}
    \item Let $(A, \fm)$ be a Noetherian local ring of prime characteristic $p > 0$ such that $A$ is a $G$-ring. If $R$ is a regular ring that is essentially of finite type over $A$, then $R$ is $F$-intersection flat.\label{thm:descent-F-intersection-flat.3}  
   \end{enumerate}
\end{theorem}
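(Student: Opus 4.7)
\textbf{Proof plan for \autoref{thm:descent-F-intersection-flat}.} I would establish the three parts in sequence, with (2) and (3) each reducing to (1).

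For (1), the crux is the following lemma: if $N \hookrightarrow M$ is a pure inclusion of $R$-modules with $M$ intersection flat, then $N$ is intersection flat. To verify this, fix a finitely generated $R$-module $L$ and a family of submodules $\{U_i\}_i$ of $L$. Applying purity of $N \hookrightarrow M$ to both $L$ and $L/U_i$ gives a commutative square from which one deduces $U_i N = (L \otimes_R N) \cap U_i M$ inside $L \otimes_R M$. Intersecting over $i$ and invoking intersection flatness of $M$ then yields
\[
(\textstyle\bigcap_i U_i) N \;=\; (L \otimes_R N) \cap \textstyle\bigcap_i(U_i M) \;=\; \textstyle\bigcap_i\bigl((L \otimes_R N) \cap U_i M\bigr) \;=\; \textstyle\bigcap_i(U_i N).
\]
Applying this lemma to the pure $S$-module inclusion $F_\varphi\colon F_*R \otimes_R S \hookrightarrow F_*S$, with $F_*S$ intersection flat as $S$-module, shows $F_*R \otimes_R S$ is intersection flat as $S$-module. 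Then pure descent of intersection flatness along the pure ring map $\varphi\colon R \to S$ (cf.~\cite[Corollary 4.3.2]{DattaEpsteinTucker}) concludes that $F_*R$ is intersection flat as $R$-module.

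For (2), I would reduce to (1). The assumption that $\varphi$ is $F$-pure gives $F_\varphi$ pure as a ring map, hence pure as an $S$-module map by restriction of scalars. To extract purity of $\varphi$ itself, note that $S$ is regular (since $F$-intersection flatness implies flatness of Frobenius by Kunz), so $F_S\colon S \to F_*S$ is faithfully flat; then the factorization $F_S \circ \varphi = F_\varphi \circ (\id_{F_*R} \otimes \varphi) \circ F_R$ combined with purity of $F_\varphi$ as a ring map allows one to extract purity of $\varphi$. With both hypotheses of (1) verified, (1) applies.

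For (3), I would apply (2) to the base change map $R \to R' := R \otimes_A \widehat{A}$. Since $A$ is a local $G$-ring, $A \to \widehat{A}$ is regular and faithfully flat, so this base change is regular (hence $F$-pure by \autoref{thm:Radu-Andre-Dumitrescu}) and pure; by (2), it suffices to show $R'$ is $F$-intersection flat. Now $R'$ is regular (base change of a regular ring along a regular map) and essentially of finite type over the complete local, hence excellent, ring $\widehat{A}$, so $R'$ is itself excellent regular. Consequently, each localization $R'_{\mathfrak{q}}$ is excellent regular local, hence $F$-intersection flat by \autoref{thm:FORT-F-intersection-flat}. The main obstacle is then the final globalization step: I would appeal to \autoref{cor:global-F-intersection-flatness} to reduce global $F$-intersection flatness of $R'$ to openness of cyclic pure loci for linear maps from finite-rank free modules into $F_*R'$, with the requisite openness extracted from the openness-of-pure-loci results developed later in the paper.
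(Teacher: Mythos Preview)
Your arguments for (1) and (2) are essentially the paper's: for (1) the paper cites \cite[Remark 3.2.4, Theorem 4.3.1]{DattaEpsteinTucker} for precisely the lemma you prove directly (pure submodules of intersection flat modules are intersection flat), and then invokes pure descent \cite[Corollary 4.3.2]{DattaEpsteinTucker} exactly as you do. Your extra work in (2) to recover purity of $\varphi$ is unnecessary---purity of $\varphi$ is a standing hypothesis in the theorem statement---but harmless.

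Your approach to (3), however, diverges from the paper's and has a real gap in the final globalization step. You want to apply \autoref{cor:global-F-intersection-flatness}, which requires openness of the cyclically pure locus for \emph{every} linear map $L \to F_*R'$ with $L$ free of \emph{arbitrary} finite rank. The openness results in \autoref{sec:openness-of-pure-loci} (\autoref{thm:purelocusofregisopen}, \autoref{thm:opennessforquotientsofregular}) only treat the rank-one case $S \xrightarrow{1 \mapsto F^e_*x} F^e_*S$, and there is no evident reduction from higher rank to rank one. So the ``requisite openness'' you gesture at is not actually supplied by the paper.

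The paper sidesteps this issue entirely: rather than globalizing from local $F$-intersection flatness, it shows $R' = R \otimes_A \widehat{A}$ is FORT directly. Since $R'$ is regular and essentially of finite type over $\widehat{A}$, and $\widehat{A}$ is a quotient of a complete regular local ring, $R'$ is a regular quotient of a localization of a polynomial ring over a complete regular local ring. Such ambient rings are FORT (\autoref{cor:variables-FORT}, \autoref{cor:FORT-localization}), and regular quotients of Noetherian regular FORT rings are again FORT (\autoref{thm:regular-quotient-FORT}). Hence $R'$ is FORT, in particular $F$-intersection flat, and (2) applies. This route is both shorter and avoids the local-to-global obstruction you ran into.
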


\begin{proof}
    We will need the diagram for the relative Frobenius
        \[
        \begin{tikzcd}[column sep=4em]
          R \rar{F_R}\dar[swap]{\varphi} & F_{*}R
          \arrow[bend left=30]{ddr}{F_{*}\varphi}
          \dar{\id_{F_{*}R} \otimes_R \varphi }\\
          S \rar{F_R \otimes_R \id_S} \arrow[bend right=12,end
          anchor=west]{drr}[swap]{F_S} & F_{*}R \otimes_R S
          \arrow[dashed]{dr}[description]{F_{\varphi}}\\
          & & F_{*}S,
        \end{tikzcd}
        \]

        \ref{thm:descent-F-intersection-flat.1} Recall first that pure submodules of flat modules are flat \cite[Lemma 2.2.6]{DattaEpsteinTucker}, and so, $F_*R \otimes_R S$ is also a flat $S$-module.
        Since $S$ is $F$-intersection flat, we get that $F_*R \otimes_R S$ is also $F$-intersection flat by \cite[Remark 3.2.4, Theorem 4.3.1]{DattaEpsteinTucker}. Consequently, $F_*R$ is an intersection flat $R$-module by pure descent of intersection flatness \cite[Corollary 4.3.2]{DattaEpsteinTucker}.

        \ref{thm:descent-F-intersection-flat.2} follows from \ref{thm:descent-F-intersection-flat.1} because $\varphi$ being an $F$-pure homomorphism means that $F_\varphi$ is a pure ring map, and so, by restriction of scalars also a pure map of $S$-modules. 

        \ref{thm:descent-F-intersection-flat.3} Since $(A,\fm)$ is a $G$-ring, $A \rightarrow \widehat{A}$ is a regular map. Then for any essentially of finite type $A$-algebra $R$, the induced map $R \to R \otimes_A \widehat{A}$ is faithfully flat and regular as well (\cite[\href{https://stacks.math.columbia.edu/tag/07C1}{Tag 07C1}]{stacks-project} shows this when $R$ is of finite type, but a further localization of a regular ring map is clearly also regular because the fibers after localization remain the same as before one localizes). Note that $R \otimes_A \widehat{A}$ is a regular ring because $R$ is and the property of being regular ascends along regular maps \cite[\href{https://stacks.math.columbia.edu/tag/033A}{Tag 033A}]{stacks-project}. Thus, $R \otimes_A \widehat{A}$, which is essentially of finite type over $\widehat{A}$, is FORT by \autoref{thm:regular-quotient-FORT} (as well as \autoref{cor:variables-FORT} and \autoref{cor:FORT-localization}). In particular, $R \otimes_A \widehat{A}$ is $F$-intersection flat by  \autoref{thm:FORT-F-intersection-flat}. Since $R \to R \otimes_A \widehat{A}$ is faithfully flat and regular, by \ref{thm:descent-F-intersection-flat.2} we now get that $R$ is $F$-intersection flat.
\end{proof}

\begin{remark}
    \label{rem:cyclic-purity-purity-rel-Frob}
    When $F_*S$ is a flat $S$-module (as is the case when $S$ is $F$-intersection flat), the $S$-purity of the relative Frobenius $F_*R \otimes_R S \to F_*S$ is equivalent to the $S$-cyclic purity of the relative Frobenius by \cite[\href{https://stacks.math.columbia.edu/tag/0AS5}{Tag 0AS5}]{stacks-project}. Thus, we could have stated \autoref{thm:descent-F-intersection-flat} by requiring the relative Frobenius to be $S$-cyclically pure instead of $S$-pure.
\end{remark}

We obtain an analogous descent result for the Frobenius Ohm-Rush property.

\begin{theorem}
    \label{thm:descent-FOR}
    Let $\varphi \colon R \to S$ be a cyclically pure homomorphism of rings of characteristic $p$ such that $F_*R$ is a flat $R$-module. Suppose the relative Frobenius $F_\varphi$ is $S$-cyclically pure. If $S$ is FOR, then $R$ is FOR.
\end{theorem}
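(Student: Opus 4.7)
The plan is to work directly with the ideal-theoretic formulation of FOR: I must show that for any collection $\{I_\alpha\}$ of ideals of $R$ and any $F_*x \in F_*R$ with $F_*x \in I_\alpha F_*R$ for every $\alpha$, one has $F_*x \in (\bigcap_\alpha I_\alpha) F_*R$. I will lift this membership to $F_*R \otimes_R S$ and then to $F_*S$, use the FOR property of $S$ to collapse the intersection there, and finally descend back to $R$ using cyclic purity of $\varphi$ and flatness of $F_*R$.

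First, tensoring with $S$ and using that $F_*R$ is $R$-flat yields $F_*x \otimes 1 \in I_\alpha (F_*R \otimes_R S)$ for every $\alpha$; since the $R$-action on $F_*R \otimes_R S$ factors through $\varphi$, this equals $(I_\alpha S)(F_*R \otimes_R S)$. Applying the relative Frobenius gives $F_*\varphi(x) \in (I_\alpha S) F_*S$ for every $\alpha$, so $\varphi(x) \in \bigcap_\alpha (I_\alpha S)^{[p]}$. The FOR hypothesis on $S$ lets me commute the intersection with the Frobenius expansion, and writing $\bb := \bigcap_\alpha I_\alpha S$, I obtain $\varphi(x) \in \bb^{[p]}$, equivalently $F_*\varphi(x) \in \bb F_*S$. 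The $S$-cyclic purity of $F_\varphi$, applied to the $S$-ideal $\bb$, then pulls this back to $F_*x \otimes 1 \in \bb(F_*R \otimes_R S)$. By flatness of $F_*R$, and because $\bb$ acts on $F_*R \otimes_R S$ through the second tensor factor, this submodule is precisely the image of $F_*R \otimes_R \bb$ inside $F_*R \otimes_R S$.

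To descend to $R$, I invoke the $R$-cyclic purity of $\varphi$, which gives $I_\alpha S \cap R = I_\alpha$ for every $\alpha$ and hence $\bb \cap R = \bigcap_\alpha I_\alpha$; taking the $I = 0$ case also shows $\varphi$ is injective. By flatness of $F_*R$, the kernel of the composite $F_*R = F_*R \otimes_R R \to F_*R \otimes_R S \to F_*R \otimes_R (S/\bb)$ is precisely $F_*R \otimes_R (\bb \cap R) = (\bigcap_\alpha I_\alpha) F_*R$. Since $F_*x \otimes 1 \in F_*R \otimes_R \bb$ maps to zero in $F_*R \otimes_R (S/\bb)$, the element $F_*x$ lies in this kernel, completing the argument.

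The main obstacle, and the reason a two-layer descent is needed, is that $\bigcap_\alpha (I_\alpha S)$ is typically strictly larger than $(\bigcap_\alpha I_\alpha) S$, so one cannot hope to directly extract $\varphi(x) \in (\bigcap_\alpha I_\alpha)^{[p]} S$ from FOR on $S$ alone. The resolution is to avoid trying to relate the two expansions and instead to work with the larger $S$-ideal $\bb = \bigcap_\alpha I_\alpha S$ upstairs, then use cyclic purity of $\varphi$ only at the final step, where we contract from $S$-ideals to $R$-ideals. A minor bookkeeping check, immediate from $R$-flatness of $F_*R$, confirms the identification $\bb(F_*R \otimes_R S) = F_*R \otimes_R \bb$ that is used in the transition between the two layers.
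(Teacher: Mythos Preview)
Your argument is correct and follows the same two-layer descent structure as the paper: first use $S$-cyclic purity of $F_\varphi$ to pass from $F_*S$ to $F_*R \otimes_R S$, then use cyclic purity of $\varphi$ (together with $R$-flatness of $F_*R$) to pass from $F_*R \otimes_R S$ down to $F_*R$. The only difference is presentational: the paper dispatches both steps by citing general descent results for the Ohm-Rush property from \cite{DattaEpsteinTucker} (namely that Ohm-Rush descends along cyclically pure module maps, and that flat Ohm-Rush descends along cyclically pure ring maps), whereas you unwind those descent lemmas explicitly at the level of ideals and elements. Your hands-on version has the advantage of being self-contained, while the paper's version makes transparent that nothing specific to Frobenius is used beyond the module-theoretic hypotheses.
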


Before proving the theorem, we note that we do not require $S$ to have flat Frobenius in the generality in which the result is stated. As a consequence, $S$-cyclic purity of $F_\varphi$ is possibly weaker than $S$-purity (see \autoref{rem:cyclic-purity-purity-rel-Frob}). However, if $S$ is Noetherian or if $S$ is a domain, then flatness of Frobenius will follow from the other assumptions. If $S$ is a domain then $F_*S$ is a torsion-free $S$-module. As a consequence, if $S$ is FOR, then $F_*S$ is $S$-flat by \cite[Thm.\ 1]{JensenFlatness} (see \autoref{rem:intersection-flatness-regularity}). If $S$ is Noetherian, but not necessarily a domain, then the $S$-cyclic purity of $F_\varphi$ implies $F_\varphi$ is injective. Thus, $F_S = F_\varphi \circ (F_R \otimes_R \varphi)$ is injective as well since $F_R \otimes_R \varphi$ is a faithfully flat ring map. Thus, $S$ is reduced. But a reduced Noetherian FOR ring is regular by \cite[Main Thm.]{Epsteinregularitybracket}, and so, $F_*S$ is $S$-flat by \cite{KunzCharacterizationsOfRegularLocalRings}.

\begin{proof}[Proof of \autoref{thm:descent-FOR}]
$S$-cyclic purity of $F_\varphi \colon F_*R \otimes_R S \to F_*S$ implies that $F_*R \otimes_R S$ is an Ohm-Rush $S$-module since $F_*S$ is an Ohm-Rush $S$-module; see \cite[Proposition 3.4.14]{DattaEpsteinTucker}.
Then by descent \cite[Theorem 3.5.4]{DattaEpsteinTucker} we conclude that $F_*R$ is an Ohm-Rush $R$-module (we need $F_*R$ to be $R$-flat to apply cyclically pure descent of the Ohm-Rush property). In other words, $R$ is FOR.
\end{proof}

One can also use the purity of relative Frobenius to deduce an ascent statement for the content function of Frobenii.

\begin{proposition}
    \label{prop:ascent-content-charp}
    Let $\varphi \colon R \to S$ be a homomorphism of rings of prime characteristic $p > 0$ such that $F_*R$ is a flat $R$-module. Let $e > 0$ be an integer and suppose that the relative Frobenius $F^e_\varphi$ is a cyclically pure map of $S$-modules (for example, if $R, S$ are Noetherian and $\varphi$ is a regular map). Let $x \in R$ and $y \coloneqq \varphi(x)$. If $F^e_{R*}x \in c_{F^e_{R*}R}(F^e_{R*}x)F^e_{R*}R$, then $F^e_{S*}y \in c_{F^e_{S*}S}(F^e_{S*}y)F^e_{S*}S$ and $c_{F^e_{S*}S}(F^e_{S*}y) = c_{F^e_{R*}R}(F^e_{R*}x)S$.
\end{proposition}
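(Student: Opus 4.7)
The plan is to factor $F_S^e$ through the relative Frobenius $F^e_\varphi$ and use $S$-cyclic purity to reduce the computation of $c_{F^e_{S*}S}(F^e_{S*}y)$ to a content computation in $F^e_{R*}R\otimes_R S$. Let $I\coloneqq c_{F^e_{R*}R}(F^e_{R*}x)\subseteq R$, so by hypothesis $F^e_{R*}x\in I\cdot F^e_{R*}R$. The relative Frobenius diagram yields a factorization
\[
F^e_{R*}R \xrightarrow{\ \id_{F^e_{R*}R}\otimes_R\varphi\ } F^e_{R*}R\otimes_R S \xrightarrow{\ F^e_\varphi\ } F^e_{S*}S,
\]
sending $F^e_{R*}x\mapsto F^e_{R*}x\otimes 1\mapsto F^e_{S*}y$. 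Tensoring the containment $F^e_{R*}x\in I\cdot F^e_{R*}R$ with $S$ over $R$ and applying $F^e_\varphi$ already gives $F^e_{S*}y\in IS\cdot F^e_{S*}S$, hence $c_{F^e_{S*}S}(F^e_{S*}y)\subseteq IS$.

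For the reverse inclusion, $S$-cyclic purity of $F^e_\varphi$ provides, for every ideal $J\subseteq S$, an injection $(F^e_{R*}R\otimes_R S)/J(F^e_{R*}R\otimes_R S)\hookrightarrow F^e_{S*}S/JF^e_{S*}S$, so
\[
F^e_{S*}y\in JF^e_{S*}S \ \Longleftrightarrow\ F^e_{R*}x\otimes 1\in J(F^e_{R*}R\otimes_R S).
\]
Intersecting over all such $J$ yields $c_{F^e_{S*}S}(F^e_{S*}y)=c_{F^e_{R*}R\otimes_R S}(F^e_{R*}x\otimes 1)$, reducing the proposition to the identity $c_{F^e_{R*}R\otimes_R S}(F^e_{R*}x\otimes 1)=IS$.

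The heart of the matter is then a base-change-of-content claim: for any flat $R$-module $M$, any $m\in M$ with $m\in c_M(m)\cdot M$, and any ring map $R\to S$, one has $c_{M\otimes_R S}(m\otimes 1)=c_M(m)\cdot S$. The $\subseteq$ direction is immediate by tensoring the Ohm-Rush containment. For $\supseteq$, I would write $M$ as a filtered colimit $M=\colim L_\alpha$ of finitely generated free $R$-modules via Lazard's theorem and lift $m$ to some $\widetilde m\in L_\beta\cong R^n$. If $m\otimes 1\in K(M\otimes_R S)$, then $m\otimes\bar 1=0$ in $M\otimes_R(S/K)$, which after enlarging $\beta$ forces $\widetilde m\otimes\bar 1=0$ in $(S/K)^n$. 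Hence every coordinate of $\widetilde m$ maps into $K$ under $\varphi$, i.e.\ $\varphi(c_{R^n}(\widetilde m))\cdot S\subseteq K$; since content shrinks under the surjection $L_\beta\twoheadrightarrow$ image, we have $c_M(m)\subseteq c_{R^n}(\widetilde m)$, and so $c_M(m)\cdot S\subseteq K$. Applied to $M=F^e_{R*}R$ and $m=F^e_{R*}x$ this finishes the proof, and moreover gives $F^e_{S*}y\in c_{F^e_{S*}S}(F^e_{S*}y)F^e_{S*}S$ directly from Paragraph~1.

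The main obstacle is this last base-change-of-content step, where the $R$-flatness of $F^e_{R*}R$ is essential (without flatness one loses the Lazard approximation and content can genuinely collapse under base change). An analogous result should already be present in \cite{DattaEpsteinTucker}; compare \cite[Corollary 3.4.30]{DattaEpsteinTucker} used in the proof of \autoref{prop:[1/p^e]-completion}, which handles the special case $S=\widehat{R}$ of a completion. A direct citation from \emph{loc.\ cit.} would likely replace the Lazard argument above.
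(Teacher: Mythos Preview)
Your proposal is correct and follows the same route as the paper: factor $F^e_{S*}\varphi$ through the relative Frobenius, use $S$-cyclic purity of $F^e_\varphi$ to identify $c_{F^e_{S*}S}(F^e_{S*}y)$ with the content of $F^e_{R*}x\otimes 1$ in $F^e_{R*}R\otimes_R S$, and then compute the latter via base change. The only difference is packaging: the paper invokes \cite[Proposition~3.4.16]{DattaEpsteinTucker} directly (this is precisely the general base-change-of-content statement you formulate and prove, with $N=F^e_{R*}R$ flat, $M=F^e_{S*}S$, and $\tilde f=F^e_\varphi$ cyclically $S$-pure), whereas you supply an independent proof of that step via Lazard's theorem---exactly as you anticipated in your final paragraph, though the correct citation is 3.4.16 rather than 3.4.30.
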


\begin{proof}
    This is a straightforward application of \cite[Proposition 3.4.16]{DattaEpsteinTucker} upon taking $N = F^e_{R*}R$ (which is $R$-flat because Frobenius on $R$ is flat), $M = F^e_{S*}S$ and $f \colon N \to M = F^e_*\varphi$. Note that the relative Frobenius $F^e_{\varphi}$ is then precisely the $S$-linear map $\tilde{f} \colon N \otimes_R S \to M$ induced by $f$.
\end{proof}

\begin{remark}
    As far as we can tell, just assuming $S$-cyclic purity of $F_\varphi \colon F_*R \otimes_R S \to F_*S$ does not guarantee the $S$-cyclic purity of $F^e_\varphi$ for all integers $e > 0$. In addition, without assuming $R$ is Frobenius Ohm-Rush, it is not clear if $F_{R*}x \in c_{F_{R*}R}(F_{R*}x)F_{R*}R$ implies $F^e_{R*}x \in c_{F^e_{R*}R}(F^e_{R*}x)F^e_{R*}R$ for all integers $e > 0$.
\end{remark}

\section{On the openness of pure loci of \texorpdfstring{$p^e$}--linear maps}
\label{sec:openness-of-pure-loci}

\subsection{Openness of \emph{F}-pure locus} Our next goal is to show that for a large class of regular rings $S$ of prime characteristic $p > 0$ (including non-excellent ones) and a quotient $R = S/I$ of $S$, the pure locus of maps of the form $R \to F^e_*R$ are open for all integers $e > 0$ (\autoref{thm:opennessforquotientsofregular}). We will deduce the statement for all quotients of $S$ by first proving the statement when $R = S$ itself (\autoref{thm:purelocusofregisopen}). First, we need some preparatory lemmas.

\begin{lemma}
    \label{lem:filtration}
    Suppose $S$ is a commutative ring, $x_1, \ldots, x_c$ is a permutable regular sequence in $S$, and $b \in \mathbb{Z}_{>0}$. Consider the set $\sA = \{ (\alpha_1, \ldots, \alpha_c) \in \mathbb{Z}^{\oplus c} \mid 0 \leq \alpha_i < b \}$ with the lexicographic ordering $\leq_{\mathrm{lex}}$, and set
    \begin{equation*}
        I_\alpha := (x_1^b, \ldots, x_c^b) + \left( x_1^{a_1}\cdots x_c^{a_c} \mid \alpha \leq_{\mathrm{lex}} (a_1, \ldots, a_c) \in \sA \right)
    \end{equation*}
    For each $\alpha \leq_{\mathrm{lex}} \alpha'$ with $\alpha, \alpha' \in \sA$, it follows that
    \begin{equation*}
        I_\infty := (x_1^b, \ldots, x_c^b) \subset \cdots \subset I_{\alpha'} \subset I_\alpha \subset  \cdots \subset I_{(0,\ldots,0)} = S
    \end{equation*}
    and the successive quotients in this filtration are all isomorphic to $S / (x_1, \ldots, x_c)$.
\end{lemma}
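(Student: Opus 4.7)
The plan is to verify the chain of inclusions directly from the definition of $I_\alpha$, and then compute each successive quotient via an annihilator argument. The descending chain is immediate: as $\alpha$ increases in lex order, the defining set of monomial generators shrinks, so $I_\alpha$ shrinks. Moreover $I_{(0, \ldots, 0)}$ contains $x_1^0 \cdots x_c^0 = 1$, hence equals $S$. For each $\alpha = (a_1, \ldots, a_c) \in \sA$, let $\alpha^+$ denote its lex-successor in $\sA$ (with $\alpha^+ = \infty$ when $\alpha = (b-1, \ldots, b-1)$). Directly from the definitions,
\[
I_\alpha \;=\; I_{\alpha^+} + (x_1^{a_1} \cdots x_c^{a_c}),
\]
so $I_\alpha / I_{\alpha^+}$ is cyclic, generated by $m_\alpha := x_1^{a_1} \cdots x_c^{a_c} + I_{\alpha^+}$. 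It will suffice to show $\Ann_S(m_\alpha) = (x_1, \ldots, x_c)$.

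The easy containment $(x_1, \ldots, x_c) \subseteq \Ann_S(m_\alpha)$ is a direct calculation: for each $i$, if $a_i + 1 = b$, then $x_i \cdot x_1^{a_1} \cdots x_c^{a_c} \in (x_1^b, \ldots, x_c^b) \subseteq I_{\alpha^+}$; otherwise the shifted exponent vector $(a_1, \ldots, a_i+1, \ldots, a_c)$ still lies in $\sA$, is strictly lex-larger than $\alpha$, and hence its monomial is in $I_{\alpha^+}$.

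For the reverse containment I would pass to $\bar{S} := S/(x_1^b, \ldots, x_c^b)$. Since $x_1, \ldots, x_c$ is a permutable regular sequence, so is $x_1^b, \ldots, x_c^b$, and a standard iterative Koszul argument identifies $\bar{S}$ with a free module over $S_0 := S/(x_1, \ldots, x_c)$ with basis $\{\bar{x}_1^{a_1} \cdots \bar{x}_c^{a_c} : 0 \leq a_i < b\}$. Under this identification, the image $\bar{I}_{\alpha^+}$ coincides with the $S_0$-span of those basis monomials whose exponent vectors are strictly lex-greater than $\alpha$: indeed, any monomial generator $\bar{x}^{\alpha'}$ of $\bar{I}_{\alpha^+}$ (with $\alpha' >_{\mathrm{lex}} \alpha$) multiplied by an arbitrary basis monomial either vanishes in $\bar{S}$ or yields a basis monomial $\bar{x}^{\alpha''}$ with $\alpha'' \geq \alpha'$ componentwise, which is therefore lex-larger than $\alpha$. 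Consequently $\bar{I}_\alpha / \bar{I}_{\alpha^+}$ is free of rank one over $S_0$, generated by $\bar{x}_1^{a_1} \cdots \bar{x}_c^{a_c}$, yielding the desired isomorphism $I_\alpha / I_{\alpha^+} \cong S/(x_1, \ldots, x_c)$.

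The only place where care is required is the combinatorial identification in the last paragraph, specifically the observation that componentwise dominance by a lex-larger vector forces lex-dominance; everything else reduces to the well-known free-module structure of $S/(x_1^b, \ldots, x_c^b)$ over $S/(x_1, \ldots, x_c)$ associated to a regular sequence, for which permutability is precisely what permits iterative quotienting.
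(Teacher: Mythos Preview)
Your argument for the easy containment $(x_1,\ldots,x_c)\subseteq \Ann_S(m_\alpha)$ is fine and matches the paper. The gap is in the reverse containment: the assertion that $\bar S=S/(x_1^b,\ldots,x_c^b)$ is a free $S_0$-module, where $S_0=S/(x_1,\ldots,x_c)$, is not well-posed. In general $\bar S$ carries no $S_0$-module structure at all, since $(x_1,\ldots,x_c)$ does not annihilate $\bar S$. Concretely, take $S=\mathbb{Z}$, $c=1$, $x_1=2$, $b=2$: then $\bar S=\mathbb{Z}/4$ and $S_0=\mathbb{Z}/2$, and $\mathbb{Z}/4$ is not a $\mathbb{Z}/2$-module. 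The ``monomial basis'' picture you have in mind is valid only when $S_0$ sits inside $S$ as a subring (e.g.\ when $S$ is a polynomial ring in the $x_i$ over some base), which is precisely what fails in the general setting of a permutable regular sequence. Everything you write after that point rests on this false premise.

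The paper closes exactly this gap: it observes that the desired colon-ideal identity $(I_{\alpha'}:_S x^\alpha)=(x_1,\ldots,x_c)$ is one of the relations among powers of a permutable regular sequence that, by a result of Eagon--Hochster, can be checked in the universal case $S=\mathbb{Z}[x_1,\ldots,x_c]$ (and then after reduction mod $p$, over a field). In \emph{that} polynomial setting your intuition is correct---$\bar S$ genuinely is free over the base with the monomial basis---and the computation becomes the routine monomial colon you had in mind. So your strategy is salvageable, but it needs the reduction-to-polynomial step to make the free-basis argument legitimate; the permutability hypothesis is used precisely to enable that reduction, not to produce an $S_0$-module structure on $\bar S$ directly.
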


\begin{proof}
    Suppose $\alpha = (\alpha_1, \ldots, \alpha_c) \in \sA$ with immediate successor $\alpha' \in \sA \cup \{ \infty\}$. Since $I_{\alpha} = (I_{\alpha'}, x^\alpha)$ with $x^\alpha = x_1^{\alpha_1}\cdots x_c^{\alpha_c}$, it suffices to check that $(I_{\alpha'}:_S x^\alpha) = (x_1, \ldots, x_c)$. As $x_1, \ldots, x_c$ are a permutable regular sequence, by \cite{EagonHochster} (\textit{cf.} \cite[Proposition 7.4]{HochsterHunekeTCandBrianconSkoda}) we need only verify this statement for $S = \mathbb{Z}[x_1, \ldots, x_c]$. Furthermore, this result can be checked after reduction modulo $p \gg 0$.
    
    Thus, we may assume $S = \mathbb{K}[x_1, \ldots, x_c]$ for a field $\mathbb{K}$, and are left with a straightforward calculation of a colon of monomial ideals. As each of the monomials $x_1^{b_1}\cdots x_c^{b_c}$ generating $I_{\alpha'}$ necessarily have $b_i > \alpha_i$ for some $1 \leq i \leq c$, it follows that $(I_{\alpha'}:_S x^\alpha) \subseteq (x_1, \ldots, x_c)$. For the reverse containment, consider each $x_i$ with $1 \leq i \leq c$. If $\alpha_i = b - 1$, then $x_i x^\alpha \in (x_i^b) \subseteq I_{\alpha'}$. Else, if $\alpha_i < b-1$, then $(\alpha_1, \ldots, \alpha_{i-1}, \alpha_i + 1, \alpha_{i+1}, \ldots, \alpha_c) \in \sA$ and $\alpha <_{\mathrm{lex}} (\alpha_1, \ldots, \alpha_{i-1}, \alpha_i + 1, \alpha_{i+1}, \ldots, \alpha_c)$. Since $\alpha'$ is the immediate successor of $\alpha$, we have $\alpha' \leq_{\mathrm{lex}} (\alpha_1, \ldots, \alpha_{i-1}, \alpha_i + 1, \alpha_{i+1}, \ldots, \alpha_c)$ and it follows $x_i x^\alpha \in I_{\alpha'}$ as desired.
\end{proof}

\begin{lemma}
    \label{lem:extnofFedder}
    Suppose that $(S,\fram,K)$ is a regular local ring of characteristic $p > 0$, and $R = S/I$ for an ideal $I$ of $S$. Let $\tilde{r} \in S$ have image $r = \tilde{r} + I \in R$, and assume $e \in \mathbb{Z}_{>0}$. Then, the $R$-module homomorphism
    \begin{equation*}
        R \xrightarrow{1 \mapsto F^e_*r} F^e_*R
    \end{equation*}
    is pure if an only if $(I^{[p^e]}:_S I) \tilde{r} \not\subseteq \fram^{[p^e]}$.
    \end{lemma}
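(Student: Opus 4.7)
The plan is to reduce to the complete case and then apply an extension of Fedder's argument leveraging FORT of $S$.

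\emph{Reduction to the complete case.} By \autoref{lem:F-purity-completions} applied with $c = \tilde r$, the purity of $R \to F^e_*R$, $1 \mapsto F^e_*r$, is equivalent to the corresponding purity over $\widehat R = \widehat S/I\widehat S$. Since $I$ is finitely generated and $S \to \widehat S$ is faithfully flat, $(I^{[p^e]}:_S I)$ expands to $((I\widehat S)^{[p^e]}:_{\widehat S} I\widehat S)$, and containment in $\fram^{[p^e]}$ is equivalent to containment in $(\fram\widehat S)^{[p^e]}$. So we may assume $S$ is complete; by Cohen's structure theorem $S \cong K[[x_1,\ldots,x_d]]$, and then $S$ is FORT by \autoref{cor:variables-FORT} (or \autoref{thm:FORT-F-intersection-flat}\ref{thm:FORT-F-intersection-flat.6}). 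In the complete local setting, a map from $R$ is pure if and only if it splits (by Matlis duality), so the task reduces to relating the splitting of $R \to F^e_*R$, $1 \mapsto F^e_*r$, to the colon condition.

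\emph{The ``if'' direction.} Given $c \in (I^{[p^e]}:_S I)$ with $c\tilde r \notin \fram^{[p^e]}$, FORT of $S$ together with \autoref{lem:trace-equals-content} gives $c_{F^e_*S}(F^e_*(c\tilde r)) = \Tr_{F^e_*S}(F^e_*(c\tilde r))$, an ideal not contained in $\fram$ (otherwise $c\tilde r \in \fram^{[p^e]}$) and hence equal to $S$ by locality. Pick $\phi \in \Hom_S(F^e_*S, S)$ with $\phi(F^e_*(c\tilde r)) = 1$ and set $\psi(F^e_*s) := \phi(F^e_*(cs))$, which is $S$-linear. Because $c \cdot I \subseteq I^{[p^e]}$, for each $i \in I$ we have $\psi(F^e_*i) \in \phi(I \cdot F^e_*S) \subseteq I$, so $\psi$ descends to a splitting $\bar\psi \colon F^e_*R \to R$ with $\bar\psi(F^e_*r) = 1$.

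\emph{The ``only if'' direction and main obstacle.} The converse is subtler because a splitting $\bar\phi \colon F^e_*R \to R$ need not lift to an element of $\Hom_S(F^e_*S, S)$ when $F^e_*S$ is not projective over $S$ (e.g., when $S$ is not $F$-finite). I would proceed contrapositively: if $(I^{[p^e]}:_S I)\tilde r \subseteq \fram^{[p^e]}$, then FORT of $S$ yields $\Tr_{F^e_*S}(F^e_*((I^{[p^e]}:_S I)\tilde r)) \subseteq \fram$, so $\phi(F^e_*(c\tilde r)) \in \fram$ for every $\phi \in \Hom_S(F^e_*S, S)$ and every $c \in (I^{[p^e]}:_S I)$. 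The remaining step is to show that every $\bar\phi \in \Hom_R(F^e_*R, R)$ satisfies $\bar\phi(F^e_*r) \in \fram_R$, which would contradict the existence of a splitting $\bar\phi(F^e_*r) = 1$. This is the non-$F$-finite analog of Fedder's classical identification of $\Hom_R(F^e_*R, R)$ with the image of $F^e_*(I^{[p^e]}:_S I) \cdot \Hom_S(F^e_*S, S)$; my plan is to establish it by combining FORT of $S$ with the filtration from \autoref{lem:filtration} applied to the regular system of parameters $x_1, \ldots, x_d$ of $S$ with $b = p^e$, which reduces the identification modulo $\fram^{[p^e]}$ to a finite-dimensional computation over the Artinian ring $S/\fram^{[p^e]}$ with its monomial basis $\{x^{\alpha}\}$, where each socle piece of the quotient can be matched with an evaluation at $F^e_*\tilde r$ of an element of $(I^{[p^e]}:_S I) \cdot \Hom_S(F^e_*S, S)$ modulo $I$.
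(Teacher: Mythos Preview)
Your reduction to the complete case and your ``if'' direction are correct and match the paper's first move. The divergence is in how you handle the complete case: the paper does \emph{not} attempt to prove a Fedder-type surjectivity directly via FORT. Instead, after completing, it makes a second reduction to the case where the residue field $K$ is algebraically closed (by choosing a coefficient field, passing to $\overline{S} = \overline{K}\llbracket x_1,\ldots,x_n\rrbracket$, and comparing injective hulls along the flat local map $R \to \overline{R}$). Once $K = \overline{K}$ is perfect and $S$ is complete, $S$ is $F$-finite, and the statement becomes the classical Fedder criterion as in Glassbrenner.

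Your ``only if'' direction has a genuine gap. You correctly observe that $(I^{[p^e]}:_S I)\tilde r \subseteq \fram^{[p^e]}$ forces $\phi(F^e_*(c\tilde r)) \in \fram$ for every $\phi \in \Hom_S(F^e_*S,S)$ and every $c \in (I^{[p^e]}:_S I)$. But to conclude that no splitting $\bar\phi \colon F^e_*R \to R$ exists, you need that every $\bar\phi \in \Hom_R(F^e_*R,R)$ arises (at least modulo $\fram$) from the image of $F^e_*(I^{[p^e]}:_S I)\cdot\Hom_S(F^e_*S,S)$. This is exactly Fedder's identification, whose classical proof lifts $\bar\phi$ to $\Hom_S(F^e_*S,S)$ using projectivity of $F^e_*S$---unavailable when $S$ is not $F$-finite. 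Your proposed fix via \autoref{lem:filtration} does not address this lifting problem: that filtration only records that $S/\fram^{[p^e]}$ has a monomial $K$-basis, which says nothing about whether an arbitrary $R$-linear map $F^e_*R \to R$ is induced from $S$. FORT of $S$ controls elements of $F^e_*S$ via their traces, but gives no handle on $\Hom_R(F^e_*R,R)$ for a quotient $R$. The paper's algebraic-closure reduction is precisely the device that sidesteps this obstruction.
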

    
    \begin{proof}
       Using \autoref{lem:F-purity-completions}, we may immediately reduce to the case where $R$ and $S$ are complete, and next reduce to the case where the residue field $K$ is algebraically closed. Picking a coefficient field and identifying $S$ with $K\llbracket x_1, \ldots, x_n \rrbracket$, let $\overline{S} = \overline{K}\llbracket x_1, \ldots, x_n \rrbracket$ and $\overline{R} = \overline{S}/IS$. The map $R \to \overline{R}$ is flat with closed fiber a field $\overline{R}/\fram \overline{R} = \overline{K}$ a field, whence the injective hull of the residue field of  $R$ base changes to the injective hull of the residue field of $\overline{R}$, or symbolically $\overline{R}\otimes_R E_R(K) = E_{\overline{R}}(\overline{K})$ (more generally, see \cite[Lemma 7.10]{HochsterHunekeFRegularityTestElementsBaseChange}). Additionally, if $\delta \in E_R(K)$ generates the socle, then $1 \otimes \delta \in E_{\overline{R}}(\overline{K})$ generates the socle. Consider the diagram
    \begin{equation}
    \label{eq:forR}
    \begin{tikzcd}
        R && {F^e_*R} \\
        {\overline{R}} && {F^e\overline{R}}
        \arrow["{1 \mapsto F^e_*r}", from=1-1, to=1-3]
        \arrow["{1 \mapsto F^e_*r}"', from=2-1, to=2-3]
        \arrow[from=1-1, to=2-1]
        \arrow[from=1-3, to=2-3]
    \end{tikzcd}.
    \end{equation}
    Tensoring over $R$ with $E_R(K)$ gives
    \[\begin{tikzcd}
        & {E_R(K)} && {E_R(K) \otimes_R F^e_*R} \\
        {\overline{R} \otimes_R E_R(K) } & {E_{\overline{R}}(\overline{K})} && {E_{\overline{R}}(\overline{K}) \otimes_{\overline{R}}F^e\overline{R}} & {E_R(K) \otimes_R F^e_*\overline{R}}
        \arrow["{1 \mapsto F^e_*r}", from=1-2, to=1-4]
        \arrow["{1 \mapsto F^e_*r}"', from=2-2, to=2-4]
        \arrow[from=1-2, to=2-2]
        \arrow[from=1-4, to=2-4]
        \arrow["{=}"{description}, draw=none, from=2-1, to=2-2]
        \arrow["{=}"{description}, draw=none, from=2-4, to=2-5]
    \end{tikzcd}\]
    where the downward arrows are injective. Since $\delta$ maps to zero on the top row if and only if $1 \otimes \delta$ maps to zero along the bottom row, it follows that the top row of \eqref{eq:forR} is pure as a map of $R$-modules if and only if the bottom row of \eqref{eq:forR} is pure as a map of $\overline{R}$-modules.
    
    Thus, we are reduced to the case where $R$ and $S$ are complete with algebraically closed residue field, and so in particular $F$-finite. Here, the statement is well known; see \cite[Lemma 2.2]{Glassbrenner}.
    \end{proof}

    The next result, which holds over any commutative ring $S$ of prime characteristic with flat Frobenius, gives an ideal-theoretic criterion for the purity of the maps $\lambda_{x,e}$. In particular, we get an ideal-theoretic purity criterion for the maps $\lambda_{x,e}$ when $S$ is a regular ring.

    \begin{proposition}
        \label{prop:regular-bracket-powers}
        Let $S$ be a ring of prime characteristic $p > 0$ such that $F_*S$ is a flat $S$-module. Let $\p$ be a prime ideal of $S$, $x \in S$ and 
        \begin{align*}
            S &\xrightarrow{\lambda_{x,e}} F^e_*S\\ 
            1 &\mapsto F^e_*x
            \end{align*}
         be the unique $S$-linear map for any integer $e > 0$.  Then we have the following:
        \begin{enumerate}[label=\textnormal{(\arabic*)}]
            \item For all integers $e > 0$, $x \in \p^{[p^e]}$ if and only if $x/1 \in \p^{[p^e]}S_\p$. \label{prop:regular-bracket-powers.1}
            \item $\lambda_{x,e}$ is pure if and only if for all maximal ideals $\fm$ of $S$, $x \notin \fm^{[p^e]}$.  \label{prop:regular-bracket-powers.2}
        \end{enumerate}
    \end{proposition}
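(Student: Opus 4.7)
The plan is to derive (1) from the torsion-freeness of flat modules over domains, and then to deduce (2) from (1) combined with \autoref{lem:extnofFedder} after localizing at each maximal ideal.

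For part (1), the forward implication is clear. For the converse, I would iterate flatness of $F_*S$ to obtain $S$-flatness of $F^e_*S$, so that the base change
\[
F^e_*S \otimes_S S/\p \;\cong\; F^e_*(S/\p^{[p^e]})
\]
is a flat $S/\p$-module, hence $S/\p$-torsion-free because $S/\p$ is a domain. Consequently the natural localization map
\[
F^e_*(S/\p^{[p^e]}) \longrightarrow F^e_*\bigl(S_\p/(\p S_\p)^{[p^e]}\bigr)
\]
is injective. Since $(\p S_\p)^{[p^e]} = \p^{[p^e]}S_\p$, and since $x \in \p^{[p^e]}$ (respectively $x/1 \in \p^{[p^e]}S_\p$) is equivalent to $F^e_*x$ (respectively $F^e_*(x/1)$) vanishing in the source (respectively target) of this map, the equivalence in (1) follows.

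For part (2), the forward direction is direct: for each maximal $\fm$, tensoring $\lambda_{x,e}$ with $S/\fm$ produces the map $S/\fm \to F^e_*(S/\fm^{[p^e]})$ sending $1 \mapsto F^e_*(x + \fm^{[p^e]})$, and purity forces this to be injective, so $x \notin \fm^{[p^e]}$. For the reverse, I would use that purity of $\lambda_{x,e}$ can be tested after localizing at each maximal ideal, reducing to the case where $(S,\fm)$ is local. By part (1), the hypothesis $x \notin \fm^{[p^e]}$ then translates to $x/1 \notin (\fm S_\fm)^{[p^e]}$. Flatness of $F_*S$ together with Kunz's theorem makes $S_\fm$ a regular local ring, and \autoref{lem:extnofFedder} applied with $I = 0$ identifies purity of $\lambda_{x,e}$ with the condition $(0 :_{S_\fm} 0)\cdot(x/1) = (x/1)S_\fm \not\subseteq (\fm S_\fm)^{[p^e]}$, which holds by assumption.

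The main subtlety I foresee is justifying the appeal to \autoref{lem:extnofFedder} after the local reduction; that lemma requires a regular local ambient ring, which is produced by Kunz's theorem under the flat Frobenius hypothesis provided $S$ is (at least locally) Noetherian. In the fully non-Noetherian setting one would instead need to argue directly in the local case, e.g.\ by unpacking cyclic purity of $\lambda_{x,e}$ as the implication $a^{p^e}x \in I^{[p^e]} \Rightarrow a \in I$ for finitely generated ideals $I$, and verifying this using the content-function machinery of \autoref{lem:trace-equals-content} together with the Ohm-Rush structure of $F^e_*S$.
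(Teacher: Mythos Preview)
Your argument for part (1) is correct and amounts to the same computation as the paper's, just repackaged: the paper argues directly via colon ideals (from $tx \in \p^{[p^e]}$ with $t \notin \p$ deduce $t^{p^e}x \in \p^{[p^e]}$, then $x \in (\p^{[p^e]}:_S t^{p^e}) = (\p:_S t)^{[p^e]} = \p^{[p^e]}$ by flatness), whereas you phrase this as torsion-freeness of the flat $S/\p$-module $F^e_*(S/\p^{[p^e]})$. These are the same fact viewed two ways.

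For part (2) there is a genuine gap. The proposition is stated for an arbitrary ring $S$ with flat Frobenius, with no Noetherian hypothesis, and the paper handles the backward implication by invoking \cite[Corollary 3.4.37]{DattaEpsteinTucker}, which applies in that generality. Your route through Kunz's theorem and \autoref{lem:extnofFedder} requires $S_\fm$ to be Noetherian, as you yourself note; so it proves the proposition only in the locally Noetherian case (which, to be fair, covers every application in the paper). The proposed non-Noetherian workaround does not close the gap: \autoref{lem:trace-equals-content} assumes $R$ is FORT, and the ``Ohm-Rush structure of $F^e_*S$'' is not available under the mere hypothesis that Frobenius is flat. So as written you have not established the backward direction of (2) in the stated generality.
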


    \begin{proof}
        For all integers $e > 0$, $F^e_*S$ is a flat $S$-module. 

        \ref{prop:regular-bracket-powers.1} The non-trivial implication is to show that if $x/1 \in \p^{[p^e]}S_\p$, then $x \in \p^{[p^e]}$. By assumption, there exists $t \notin \p$ such that $tx \in \p^{[p^e]}$. Then $t^{p^e}x \in \p^{[p^e]}$ as well. Thus,
        $
        x \in (\p^{[p^e]} \colon_S \hspace{1mm} t^{p^e}) = (\p \colon_S \hspace{1mm} t)^{[p^e]} = \p^{[p^e]},    
        $
        where in the first equality we are using that $F^e_*S$ is a flat $S$-module (see \cite[3H]{MatsumuraCommutativeAlgebra}). The final equality follows because $\p$ is prime and $t \notin \p$.

        \ref{prop:regular-bracket-powers.2} The implication $\implies$ is clear because pure maps are cyclically pure. For $\impliedby$, we apply \cite[Corollary 3.4.37]{DattaEpsteinTucker} to the map $\lambda_{x, e}$ to get the desired result.
    \end{proof}

    We can show the openness of loci of $p^e$-linear maps for a large class of prime characteristic regular rings.

\begin{theorem}[c.f. {\cite[Cor. 7.15]{HochsterYaoGenericLocalDuality}, \cite[Cor.\ 2.18]{HochsterYaoSFRsmallCM}, \cite[Thm. A.2.4]{LyuUniformBounds}}]
\label{thm:purelocusofregisopen}
    Suppose that $S$ is a regular ring of characteristic $p > 0$ so that the regular locus of $S/\bp$ contains a non-empty open subset of $\Spec(S/\p)$ for all $\bp \in \Spec(S)$ (i.e. $S/\p$ is J-0 for all prime ideals $\p$ of $S$). For any $x \in S$ and $e \in \mathbb{Z}_{>0}$, the pure locus of 
    \begin{align*}
    S &\xrightarrow{\lambda_{x,e}} F^e_*S\\ 
    1 &\mapsto F^e_*x
    \end{align*}
    is open.
\end{theorem}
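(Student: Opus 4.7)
The plan is to identify the pure locus via \autoref{prop:regular-bracket-powers} and then prove its openness through a Noetherian argument that uses the J-0 hypothesis to invoke the FORT property of completions at dense regular points.

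By \autoref{prop:regular-bracket-powers}~\ref{prop:regular-bracket-powers.2} applied to $S_\p$ (whose unique maximal ideal is $\p S_\p$) combined with \autoref{prop:regular-bracket-powers}~\ref{prop:regular-bracket-powers.1}, the map $(\lambda_{x,e})_\p$ is pure over $S_\p$ if and only if $x \notin \p^{[p^e]}$. So the pure locus is $U = \{\p \in \Spec S : x \notin \p^{[p^e]}\}$, and its complement $V$ is closed under specialization since $\p \subseteq \bq$ implies $\p^{[p^e]} \subseteq \bq^{[p^e]}$.

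To show $V$ is closed, let $J = \bigcap_{\bq \in V} \bq$, a radical ideal. Since $S$ is Noetherian, $J$ has finitely many minimal primes $\p_1, \ldots, \p_n$, and it suffices to show each $\p_i \in V$: combined with specialization closure of $V$, this gives $V = V(\p_1) \cup \cdots \cup V(\p_n) = V(J)$, which is closed. For a fixed minimal prime $\p$ of $J$, I first observe $V \cap V(\p)$ is dense in $V(\p)$; this is a purely formal consequence of $\p$ being a minimal prime of $J$, since if $V \cap V(\p)$ were contained in $V(L)$ with $L \supsetneq \p$, a computation with the minimal prime decomposition of $J$ (writing $V \subseteq V(L) \cup V(\p_2) \cup \cdots \cup V(\p_n)$ and comparing the resulting containments of ideals) would contradict the incomparability of the minimal primes of $J$.

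Apply J-0 to $S/\p$ to obtain a non-empty open $W \subseteq V(\p)$ where $S/\p$ is regular, and by density pick $\bq \in V \cap W$. Then $\bq \supseteq \p$, $S_\bq/\p S_\bq$ is regular local, $x \in \bq^{[p^e]}$, and the completion $\widehat{S_\bq}$ is complete regular local, hence FORT by \autoref{cor:variables-FORT}. Cohen's structure theorem provides a regular system of parameters $t_1, \ldots, t_d$ of $\widehat{S_\bq}$ with $\p \widehat{S_\bq} = (t_1, \ldots, t_c)$, and via \autoref{prop:regular-bracket-powers}~\ref{prop:regular-bracket-powers.1} and faithful flatness of $S \to \widehat{S_\bq}$, the conditions $\bq \in V$ and $\p \in V$ translate to $x \in (t_1^{p^e}, \ldots, t_d^{p^e})$ and $x \in (t_1^{p^e}, \ldots, t_c^{p^e})$ in $\widehat{S_\bq}$, respectively. \textbf{The main obstacle} is the final descent: combining the FORT property of $\widehat{S_\bq}$ (via the trace/content characterization of \autoref{lem:trace-equals-content}) with the density of $V \cap W$ in $V(\p)$ to conclude the trace ideal of $F^e_*x$ is ``supported in'' $\p\widehat{S_\bq}$, yielding $x \in \p^{[p^e]}$. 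This is where the J-0 hypothesis is essential, providing local regularity at a dense set of $\bq$ so Cohen's structure theorem can be invoked throughout.
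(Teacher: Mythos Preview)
Your identification of the pure locus as $U = \{\p : x \notin \p^{[p^e]}\}$ is correct, and the reduction to showing that each minimal prime $\p$ of $J = \bigcap_{\bq \in V}\bq$ lies in $V$ is a reasonable strategy. However, the step you flag as the main obstacle is a genuine gap, and I do not see how to close it along these lines. Knowing $x \in \bq^{[p^e]}$ for a single $\bq \supseteq \p$ (even one where $S_\bq/\p S_\bq$ is regular) tells you only that the content ideal $c_{F^e_*\widehat{S_\bq}}(F^e_*x)$ is contained in the maximal ideal $\bq\widehat{S_\bq}$; to conclude $x \in \p^{[p^e]}$ you would need it contained in the smaller prime $\p\widehat{S_\bq}$. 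Density of $V \cap W$ in $V(\p)$ is a statement about varying $\bq$, while the FORT computation lives at one fixed completion, and there is no mechanism to transport information between different $\widehat{S_\bq}$'s. Topologically, density plus specialization-closedness of $V$ does not force $V \cap V(\p) = V(\p)$: the set of closed points in $\Spec \mathbb{Z}$ is dense and specialization-closed but misses the generic point.

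The paper takes the dual route. Rather than showing the complement is closed, it uses Nagata's topological criterion: for each $\p \in U$ it exhibits a nonempty open subset of $V(\p)$ inside $U$. After localizing via J-0 so that $\p = (x_1,\dots,x_c)$ with the $x_i$ a permutable regular sequence, it filters $S/\p^{[p^e]}$ by the monomial ideals of \autoref{lem:filtration}, locates $x$ at a specific step $I_\alpha$ of this filtration, inverts the resulting unit so that $x \equiv x^\alpha \pmod{I_{\alpha'}}$, and then for any $\bq \supseteq \p$ extends the regular sequence and uses the compatible filtration on $\bq^{[p^e]}S_\bq$ to see directly that $x \notin \bq^{[p^e]}S_\bq$. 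This is a hands-on argument with explicit filtrations and no appeal to completions or FORT; the J-0 hypothesis is used only to arrange that $\p$ is generated by a regular sequence after localization.
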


\begin{proof}
    Let $U \subseteq \Spec(S)$ be the pure locus of $\iota$. Since purity passes to localizations, we have that $\bp \subseteq \bq$ and $\bq \in U$ implies $\bp \in U$ for $\bp,\bq \in \Spec(S)$. Thus, by Nagata's criterion \cite[\href{https://stacks.math.columbia.edu/tag/0541}{Tag 0541}]{stacks-project} (see also \cite[Lem.\ 22.B]{MatsumuraCommutativeAlgebra}), it suffices to show for all $\bp \in U$ that $U$ contains a non-empty open subset of $\Spec(S/\bp)$. 
    
    By assumption, the regular locus of $\Spec(S/\bp)$ contains a non-empty open set. Thus, inverting an element of $S \setminus \bp$ and replacing $S$ with this localization, we may assume that $S/\bp$ is regular. 
    Furthermore, since $S_\p$ is regular, after inverting an element of $S \setminus \bp$ and replacing $S$ with this localization we may further assume that $\bp = (x_1, \ldots, x_c)$ for a permutable regular sequence $x_1, \ldots, x_c$ in $S$. Indeed, $S_\p$ is regular so $\p S_\p$ is generated by regular system of parameters $x_1,\dots,x_c$ of $S_\p$ which we may assume are all in the image of $S \to S_\p$. Now the property of being a regular sequence spreads to an open neighborhood $D(f)$ of $\p$ for some $f \notin \p$ \cite[\href{https://stacks.math.columbia.edu/tag/061L}{Tag 061L}]{stacks-project}. So spreading out the property of being a regular sequence for all permutations of $x_1,...,x_c$ (which remain regular sequences over $S_\p$) and taking the intersection of these open neighborhoods gives us the desired result.

    Since $(\lambda_{x,e})_{\bp}$ is pure, we have that $x \not\in \bp^{[p^e]}$. Consider now the filtration of $\bp^{[p^e]}$ given in \autoref{lem:filtration}, with the ideal $I_{(a_1, \ldots, a_c)}$ of $S$ generated by $x_1^{p^e},\ldots, x_c^{p^e}$ and all monomials in $x_1, \ldots, x_c$ with exponent vectors greater than or equal to $(a_1, \ldots, a_c)$ in the lexicographic order and with entries at most $p^e$ for $(a_1, \ldots, a_c) \in \sA = \{ (\alpha_1, \ldots, \alpha_c) \in \mathbb{Z}^{\oplus c} \mid 0 \leq \alpha_i < p^e \}$. If $\alpha \in \sA$ is smallest so that $x \in I_\alpha$, it follows that there is some $u \in S \setminus \bp$ so that $x - u x^\alpha \in I_{\alpha'}$. Replacing $S$ by $S[u^{-1}]$ and $x$ by $u^{-1}x$, we may assume $x - x^\alpha \in I_{\alpha'}$ and argue that $\Spec(S/\bp)$ is contained in $U$. Suppose $\bq \in \Spec(S)$ with $\bp \subseteq \bq$ and $\dim(S_\bq) = d$. Lifting a regular system of parameters of $(S/\bp)_\bq$ to $S_\bq$, we can find $x_{c+1}, \ldots, x_d \in S_\bq$ so that $x_1, \ldots, x_c, x_{c+1}, \ldots x_d$ are a regular system of parameters of $S_\bq$. Again, $x_1, \ldots, x_d$ are a permutable regular sequence, and we may consider the filtration of $\bq^{[p^e]}S_\bq$ given in \autoref{lem:filtration}, with the ideal $J_{(b_1, \ldots, b_d)}$ of $S_\bq$ generated by $x_1^{p^e},\ldots, x_d^{p^e}$ and all monomials in $x_1, \ldots, x_d$ with exponent vectors greater than or equal to $(b_1, \ldots, b_d)$ in the lexicographic order and with entries at most $p^e$ for $(b_1, \ldots, b_d) \in \{ (\beta_1, \ldots, \beta_d) \in \mathbb{Z}^{\oplus d} \mid 0 \leq \beta_i < p^e \}$.
    Since the function
    \begin{eqnarray*}
        \sA \xrightarrow{(a_1, \ldots, a_c) \mapsto (a_1, \ldots, a_c,0, \ldots, 0)} \sB
    \end{eqnarray*}
    is order preserving, if $\beta = (\alpha_1, \ldots, \alpha_c, 0, \ldots, 0) \in \sB$ with immediate successor $\beta' \in \sB \cup \{\infty\}$, it follows that $I_\alpha S_\bq \subseteq J_\beta$ and $I_{\alpha'}S_\bq \subseteq J_{\beta'}$. In particular, we see that the image of $x$ in $S_\bq$ lies in $J_\beta \setminus J_{\beta'}$, whence the image of $x$ is not in $\bq^{[p^e]}S_\bq$ and so $(\lambda_{x,e})_\bq$ is pure by \autoref{prop:regular-bracket-powers}~\ref{prop:regular-bracket-powers.2} as desired.
\end{proof}

\begin{remark}
    Recall that by \autoref{prop:J0-J1-equivalence}, the assumption of $S/\p$ being J-0 for all prime ideals $\p$ of $S$ is equivalent to $S/\p$ being J-1 (i.e. $S/\p$ having open regular locus) for all prime ideals $\p$ of $S$. 
\end{remark}

\begin{remark}
\label{rem:EnescuYaoSemicont}
In the notation of \autoref{thm:purelocusofregisopen} and when $x = 1$, i.e. for the iterates of Frobenius themselves, the above argument is similar to \cite[Section 3]{EnescuYaoLowerSemicont}. Indeed, tracing through the proof of \cite[Theorem 3.4]{EnescuYaoLowerSemicont}, the stronger condition of excellent and locally equidimensional (rather than requiring $S/\p$ to be J-0 for all prime ideals $\p$ of $S$) is essentially only used to control the normalizing factor on the splitting numbers. Since these factors do not affect positivity, one can also deduce an openness result from the lower semicontinuity of the first splitting number. 
\end{remark}

As an immediate consequence, we are able to recover a previous result of Epstein and Shapiro. An application of \cite[Thm.\ 3.4]{EpsteinShapiroOhmRushII} to the Frobenius map shows that Dedekind domains of prime characteristic are always Frobenius Ohm-Rush, regardless of whether they are excellent. Alternatively, the result also follows directly from \autoref{thm:purelocusofregisopen}.

\begin{corollary}
    \label{cor:Dedekind-domains-FOR}
    Let $S$ be a Dedekind domain of prime characteristic $p > 0$. Then $S$ is Frobenius Ohm-Rush.
\end{corollary}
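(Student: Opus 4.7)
The plan is to leverage unique factorization of ideals in a Dedekind domain to compute the content of $F_*x$ explicitly. For a nonzero $x \in S$, set $a_\fm := v_\fm(x)$ where $v_\fm$ is the valuation associated to the DVR $S_\fm$; since $xS$ has finite prime factorization, $a_\fm = 0$ for almost all maximal ideals $\fm$. My candidate for the content of $F_*x$ is the finite product
\[
    I := \prod_\fm \fm^{\lfloor a_\fm/p \rfloor}.
\]

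The key computation is identifying $I$ as the smallest ideal $J$ with $x \in J^{[p]}$. I would first note that $\fm^{[p]} = \fm^p$ for any maximal $\fm$, by passing to the DVR $S_\fm$ where $\fm S_\fm$ is principal and observing that the $p$-th power of a uniformizer generates both ideals. This extends multiplicatively to $J^{[p]} = \prod \fm^{p c_\fm}$ for any nonzero ideal $J = \prod \fm^{c_\fm}$. The condition $x \in J^{[p]}$ then becomes the numerical inequality $a_\fm \geq p c_\fm$ for all $\fm$, equivalently $c_\fm \leq \lfloor a_\fm/p \rfloor$, equivalently $J \supseteq I$. Specializing to $J = I$ verifies the Ohm-Rush inclusion $x \in I^{[p]}$, and intersecting over all such $J$ identifies $I$ with $c_{F_*S}(F_*x)$.

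To give the proof the flavor promised in the surrounding text, I would observe that \autoref{thm:purelocusofregisopen} applies directly to $S$: every prime quotient of $S$ is regular (either a residue field or $S$ itself), hence J-0. For each $x \in S$ and $e > 0$, the pure locus of $\lambda_{x,e}$ is then open, so by \autoref{prop:regular-bracket-powers}\ref{prop:regular-bracket-powers.2} its complement $\{\fm : x \in \fm^{[p^e]}\}$ is closed in $\Spec S$, and hence a finite set of maximal ideals since $S$ is one-dimensional. Combined with the fact that each DVR $S_\fm$ is FOR (as noted in the proof of \autoref{thm:FORT-F-intersection-flat}\ref{thm:FORT-F-intersection-flat.4} via $\fm S_\fm$-adic separatedness of $F_*S_\fm$), together with the standard fact that a nonzero ideal of a Noetherian domain equals the intersection of its localizations at maximal ideals, one glues the local contents $\fm^{\lfloor a_\fm/p\rfloor} S_\fm$ into the global ideal $I$ above.

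I do not anticipate a serious obstacle: the whole argument reduces to the explicit equality $\fm^{[p]} = \fm^p$ in each localization plus numerical bookkeeping on exponents of prime factorizations. The only subtlety is that $S$ need not be excellent or $F$-finite, so one must avoid results that implicitly assume either; the DVR structure at each maximal ideal, together with the openness of pure loci guaranteed by \autoref{thm:purelocusofregisopen}, is precisely what lets us bypass those hypotheses.
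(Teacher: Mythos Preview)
Your proposal is correct, and your direct computation is genuinely different from the paper's argument. You exploit the Dedekind-specific identity $J^{[p]} = J^p$ (checked locally at each DVR $S_\fm$, where both sides localize to the $p$-th power of a uniformizer) together with unique factorization of ideals to write down the content $c_{F_*S}(F_*x) = \prod_\fm \fm^{\lfloor v_\fm(x)/p \rfloor}$ explicitly. This is completely elementary and self-contained.

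The paper instead argues abstractly: it verifies the J-0 hypothesis on prime quotients so that \autoref{thm:purelocusofregisopen} gives openness of the pure loci of each $\lambda_{c,1}$, observes that $F_*S$ is locally Ohm-Rush (each $S_\fm$ a field or DVR), and then invokes a local-to-global result \cite[Proposition 3.7.2]{DattaEpsteinTucker} valid over Pr\"ufer domains for torsion-free modules. Your second paragraph gestures at this route but stops short of citing the precise local-to-global input; your direct gluing is really just a restatement of the explicit computation you already gave.

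What each approach buys: yours is shorter and needs no machinery beyond ideal factorization in Dedekind domains, but it does not generalize. The paper's argument is tailored to illustrate that the corollary falls out of the general framework (openness of pure loci plus the local-to-global principle), which is the point being made in context --- the result was already known from \cite[Thm.\ 3.4]{EpsteinShapiroOhmRushII}, and the paper is advertising a new derivation.
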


\begin{proof}
    Since a prime ideal $\p$ of $S$ is either the zero ideal or a maximal ideal, it follows that $S/\p$ is always regular, and hence has open regular locus. Therefore for any $c \in S$, the map
    \begin{align*}
        S &\rightarrow F_*S\\
        1 &\mapsto F_*c
    \end{align*}
has open pure locus by \autoref{thm:purelocusofregisopen}. Furthermore, since $S_\p$ is a field or a DVR, $(F_*S)_\p = F_*(S_\p)$ is always an Ohm-Rush $S_\p$-module, that is, $F_*S$ is a locally Ohm-Rush $S$-module (the fact that $F_*(S_\p)$ is Ohm-Rush over $S_\p$ when $S_\p$ is a DVR follows by \cite[Prop.\ 2.1]{OhmRu-content}).
Since Dedekind domains are Pr{\"u}fer and $F_*S$ is torsion-free over $S$, we now get that $F_*S$ is Ohm-Rush by \cite[Proposition 3.7.2]{DattaEpsteinTucker}.
\end{proof}

Furthermore, we are able to show that excellent regular rings of prime characteristic satisfy the Frobenius Ohm-Rush property up to radical ideals. 

\begin{theorem}
    \label{thm:FOR-upto-radical}
    Let $S$ be a regular ring of prime characteristic $p > 0$ such that $S/\p$ is J-0 for all prime ideals $\p$ of $S$. If $S$ is locally FOR (for example, if $S$ is locally excellent), then for all $x \in S$ and for all $e \in \mathbb{Z}_{>0}$, there exists a smallest radical ideal $I$ of $S$ such that $x \in I^{[p^e]}$. Equivalently, for any collection of radical ideals $\{I_\alpha\}_{\alpha \in A}$ of $S$, 
    $
    \bigcap_{\alpha \in A} I_\alpha^{[p^e]} = \left(\bigcap_{\alpha \in A} I_\alpha\right)^{[p^e]}.    
    $    
\end{theorem}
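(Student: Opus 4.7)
The plan is to first reduce to producing, for each $x \in S$ and $e > 0$, a smallest radical ideal $J_x \subseteq S$ with $x \in J_x^{[p^e]}$. The equivalence with the intersection statement is formal: for any family $\{I_\alpha\}_\alpha$ of radical ideals, the inclusion $\bigl(\bigcap_\alpha I_\alpha\bigr)^{[p^e]} \subseteq \bigcap_\alpha I_\alpha^{[p^e]}$ is immediate, and conversely, if $x \in \bigcap_\alpha I_\alpha^{[p^e]}$, then each $I_\alpha$ is a radical ideal with $x$ in its $[p^e]$-power, so by minimality $J_x \subseteq I_\alpha$ for all $\alpha$; hence $J_x \subseteq \bigcap_\alpha I_\alpha$ and $x \in J_x^{[p^e]} \subseteq \bigl(\bigcap_\alpha I_\alpha\bigr)^{[p^e]}$.

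For existence I would define $J_x := \bigcap_{\p \in \Spec(S),\, x \in \p^{[p^e]}} \p$. This is plainly radical, and any radical ideal $I$ with $x \in I^{[p^e]}$ contains $J_x$: writing $I = \bigcap_{\p \supseteq I} \p$, each such prime $\p$ satisfies $x \in I^{[p^e]} \subseteq \p^{[p^e]}$, so it appears in the intersection defining $J_x$. The entire content of the theorem is therefore the assertion $x \in J_x^{[p^e]}$.

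I would verify this membership locally at every prime $\p$ of $S$. Applying \autoref{prop:regular-bracket-powers}~\ref{prop:regular-bracket-powers.2} to the local ring $S_\p$ shows that $(\lambda_{x,e})_\p$ is pure iff $x \notin \p^{[p^e]}$, and \autoref{thm:purelocusofregisopen} (whose J-0 hypothesis on quotients of $S$ is precisely what is assumed) makes this condition define an open subset of $\Spec(S)$. So $\{\p : x \in \p^{[p^e]}\}$ is closed, and as it has the form $V(K)$ with $\sqrt{K} = \bigcap_{x \in \p^{[p^e]}} \p = J_x$, it equals $V(J_x)$. For $\p \notin V(J_x)$ the claim $x/1 \in (J_x S_\p)^{[p^e]} = S_\p$ is trivial. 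For $\p \in V(J_x)$ I would use that $S_\p$ is FOR: let $c_\p$ be the smallest ideal of $S_\p$ with $x/1 \in c_\p^{[p^e]}$. For each prime $\p' \subseteq \p$ of $S$ with $x \in \p'^{[p^e]}$, \autoref{prop:regular-bracket-powers}~\ref{prop:regular-bracket-powers.1} gives $x/1 \in (\p' S_\p)^{[p^e]}$, hence $c_\p \subseteq \p' S_\p$. Because $J_x S_\p$ is radical and equals the intersection of the primes of $S_\p$ above it---which are precisely the $\p' S_\p$ with $\p' \subseteq \p$ and $x \in \p'^{[p^e]}$---it follows that $c_\p \subseteq J_x S_\p$ and hence $x/1 \in c_\p^{[p^e]} \subseteq J_x^{[p^e]} S_\p$.

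The main obstacle I anticipate is the comparison between the global ideal $J_x$ (defined as an infinite intersection of primes) and its extension $J_x S_\p$; infinite intersections need not commute with localization in general, but only the radical structure of $J_x S_\p$ inside $S_\p$ is actually needed here, and this is preserved along the flat localization map. Once the local assertion is established at every prime, the global conclusion $x \in J_x^{[p^e]}$ is immediate because ideal membership can be checked locally.
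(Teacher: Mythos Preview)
Your argument is correct and uses the same two ingredients as the paper's proof: the openness of the pure locus of $\lambda_{x,e}$ (\autoref{thm:purelocusofregisopen}) and the locally FOR hypothesis. The paper's version is a one-line citation of \cite[Proposition 3.7.3]{DattaEpsteinTucker} after noting that $F^e_*S$ is flat and locally Ohm-Rush; your proposal essentially unpacks what that cited proposition must establish, by defining $J_x$ as the intersection of primes $\p$ with $x \in \p^{[p^e]}$, using openness to identify this set with $V(J_x)$, and then checking $x \in J_x^{[p^e]}$ locally via the content ideal $c_\p$ in each $S_\p$. Your handling of the potential obstacle is exactly right: you never need localization to commute with the infinite intersection defining $J_x$, only that $J_x S_\p$ is radical (which follows since $S/J_x$ is reduced and localization preserves this), so that $J_x S_\p$ coincides with the intersection of the finitely or infinitely many primes of $S_\p$ lying over it.
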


\begin{proof}
    By assumption and \autoref{lem:FORT-FIF-FOR-iterated-Frobenius}, for all $e \in \mathbb{Z}_{> 0}$, $F^e_*S$ is a flat and locally Ohm-Rush $S$-module. Thus, the Theorem follows by \autoref{thm:purelocusofregisopen} and \cite[Proposition 3.7.3]{DattaEpsteinTucker}. The latter needs $F^e_*S$ to be locally Ohm-Rush over $S$. Note that locally excellent regular rings of prime characteristic are locally FOR by \autoref{thm:FORT-F-intersection-flat}.
\end{proof}

\begin{theorem}
\label{thm:opennessforquotientsofregular}
    Suppose that $S$ is a regular ring of prime characteristic $p > 0$. Let $R = S/I$ for some ideal $I \subseteq S$. For any $r \in R$ and $e \in \mathbb{Z}_{>0}$, the pure locus of 
    \begin{align*}
    R &\xrightarrow{\lambda_{r,e}} F^e_*R\\
    1 &\mapsto F^e_*r
    \end{align*} 
    is open in $\Spec(R)$ in each of the following cases:
    \begin{enumerate}[label=\textnormal{(\arabic*)}]
        \item $S/\bp$ is J-0 for all $\bp \in \Spec(S)$. \label{thm:opennessforquotientsofregular.1}
        \item $S$ is FOR. \label{thm:opennessforquotientsofregular.2}
    \end{enumerate}
\end{theorem}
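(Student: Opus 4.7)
My plan is to reduce the question of openness in $\Spec(R)$ to a question of closedness in $\Spec(S)$ via the Fedder-type criterion of \autoref{lem:extnofFedder}, and then invoke \autoref{thm:purelocusofregisopen} or the existence of the $[1/p^e]$-content depending on which hypothesis is in force.

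Fix a lift $\tilde{r} \in S$ of $r$, and set $K \coloneqq (I^{[p^e]}:_S I)\tilde{r}$. Given $\bq \in \Spec(R)$ with preimage $\mathfrak{Q} \in \mathbf{V}(I) \subseteq \Spec(S)$, note that $\lambda_{r,e}$ is pure at $\bq$ if and only if its localization $(\lambda_{r,e})_\bq$ is pure as a map of $R_\bq$-modules. Applying \autoref{lem:extnofFedder} to the regular local ring $S_\mathfrak{Q}$ with quotient $R_\bq = S_\mathfrak{Q}/IS_\mathfrak{Q}$ (using that $S$ is Noetherian, so colon ideals commute with localization and $(IS_\mathfrak{Q})^{[p^e]}:_{S_\mathfrak{Q}} IS_\mathfrak{Q} = (I^{[p^e]}:_S I) S_\mathfrak{Q}$), combined with \autoref{prop:regular-bracket-powers}\ref{prop:regular-bracket-powers.1} to move bracket-power membership between $S$ and $S_\mathfrak{Q}$, we see that purity at $\bq$ is equivalent to $K \not\subseteq \mathfrak{Q}^{[p^e]}$. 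Under the identification $\Spec(R) \cong \mathbf{V}(I)$, the non-pure locus of $\lambda_{r,e}$ in $\Spec(R)$ is then
\[
 Z \;=\; \mathbf{V}(I) \cap \bigcap_{y \in K} Z_y, \qquad \text{where} \qquad Z_y \coloneqq \{\mathfrak{Q} \in \Spec(S) : y \in \mathfrak{Q}^{[p^e]}\}.
\]

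It therefore suffices to show that each $Z_y$ is closed in $\Spec(S)$; the desired conclusion then follows as $Z$ is an intersection of closed sets. Under hypothesis (1), \autoref{thm:purelocusofregisopen} gives that the pure locus of $\lambda_{y,e}\colon S \to F^e_*S$ is open, and this pure locus equals $\Spec(S) \setminus Z_y$ by \autoref{lem:extnofFedder} applied with $I = 0$ (or equivalently by \autoref{prop:regular-bracket-powers}\ref{prop:regular-bracket-powers.2} together with \ref{prop:regular-bracket-powers.1}); hence $Z_y$ is closed. Under hypothesis (2), $S$ is FOR, so $F^e_*S$ is Ohm-Rush by \autoref{lem:FORT-FIF-FOR-iterated-Frobenius}, yielding a smallest ideal $c^{[1/p^e]}(y) \coloneqq c_{F^e_*S}(F^e_*y)$ of $S$ with $y \in c^{[1/p^e]}(y)^{[p^e]}$. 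Thus $y \in \mathfrak{Q}^{[p^e]}$ iff $c^{[1/p^e]}(y) \subseteq \mathfrak{Q}$, giving $Z_y = \mathbf{V}(c^{[1/p^e]}(y))$ closed.

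I do not expect a serious obstacle: the heavy lifting has already been done in \autoref{lem:extnofFedder} and \autoref{thm:purelocusofregisopen}, so the present argument is largely organizational. The one subtle point is ensuring that the Fedder criterion transfers cleanly from $R$ to each localization $R_\bq$, which relies both on the localization-compatibility of colon ideals (Noetherianity of $S$) and on the content-insensitivity of bracket-power membership provided by \autoref{prop:regular-bracket-powers}\ref{prop:regular-bracket-powers.1}.
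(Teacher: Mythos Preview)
Your proposal is correct and follows essentially the same approach as the paper: both reduce to the Fedder-type criterion \autoref{lem:extnofFedder} to identify the pure locus in $\Spec(R)$ with $\mathbf{V}(I)\cap\bigcup_{y\in K}U_y$ (equivalently, your complementary description via the closed sets $Z_y$), and then invoke \autoref{thm:purelocusofregisopen} for case~(1) and the content ideal $(y)^{[1/p^e]}$ for case~(2). The paper cites \cite[Lemma~3.6.4]{DattaEpsteinTucker} for the identification $Z_y=\mathbf{V}((y)^{[1/p^e]})$ in case~(2), which is exactly the equivalence you spell out directly.
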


\begin{proof}
\ref{thm:opennessforquotientsofregular.1} Let $\tilde{r} \in S$ be a lift of $R$ in $S$, so that $\tilde{r} + I = r$. 
For each $x \in (I^{[p^e]}:_S I)\tilde{r}$, let $U_x \subseteq \Spec(S)$ denote the pure locus of $S \xrightarrow{1 \mapsto F^e_*x} F^e_*S$ as a map of $S$-modules. By \autoref{thm:purelocusofregisopen}, $U = \bigcup_{x \in (I^{[p^e]}:_S I)\tilde{r}} U_x$ is open in $\Spec(S)$.

For each $\bq \in \Spec(S)$ with $I \subseteq \bq$, it follows from \autoref{lem:extnofFedder} that $(\lambda_{r,e})_\bq$ is pure if and only if $(I^{[p^e]}:_S I)\tilde{r} \not\subseteq \bq^{[p^e]}$ (here we are implicitly using \autoref{prop:regular-bracket-powers}~\ref{prop:regular-bracket-powers.1} because Frobenius is flat on S). Equivalently, we have that $\bq$ is in the pure locus of $\lambda_{r,e}$ if and only if there exists some $x \in (I^{[p^e]}:_S I)\tilde{r}$ with $\bq \in U_x$ by \autoref{prop:regular-bracket-powers}. Thus, identifying $\Spec(R)$ as a closed subset of $\Spec(S)$ with the subspace topology, $\Spec(R) \cap U$ is the pure locus of $\lambda_{r,e}$ and is thus open in $\Spec(R)$.

\ref{thm:opennessforquotientsofregular.2} If $S$ is FOR, then for any $x \in S$, the pure locus of $S \xrightarrow{1 \mapsto F^e_*x} F^e_*S$ is 
$
\Spec(S) \setminus \mathbf{V}((x)^{[1/p^e]}),   
$
by \cite[Lemma 3.6.4]{DattaEpsteinTucker}. Recall that $(x)^{[1/p^e]}$ denotes the smallest ideal $J$ of $S$ such that $x \in J^{[p^e]}$. In other words,
$
(x)^{[1/p^e]} = c_{F^e_*S}(F^e_*x).    
$
With this fact, one can now repeat the proof of \ref{thm:opennessforquotientsofregular.1} verbatim to obtain~\ref{thm:opennessforquotientsofregular.2}.
\end{proof}

\begin{remark}
\label{rem:openness-results-timeline}
    In \cite{HochsterYaoGenericLocalDuality}, Hochster and Yao have proved \autoref{thm:opennessforquotientsofregular} in the setting where $S$ is an excellent Cohen-Macaulay ring and $R$ is a quotient of $S$ that is $S_2$ (a version of this result was shared by Hochster-Yao in private communication with us a few years ago). In our setup, we do not need to impose any restrictions on $R$, although in the excellent case our $S$ is more restrictive because it is regular (as opposed to being CM). Very recently, in \cite{LyuUniformBounds}, Lyu builds on \cite{HochsterYaoGenericLocalDuality} to remove their assumptions that $R$ is $S_2$ or a quotient of an excellent Cohen-Macaulay ring (see \cite[A.2.4]{LyuUniformBounds}). Thus, Lyu's result recovers \autoref{thm:opennessforquotientsofregular}~\ref{thm:opennessforquotientsofregular.1}. However, our proof of \autoref{thm:opennessforquotientsofregular} is simpler than the techniques of \cite{HochsterYaoGenericLocalDuality,LyuUniformBounds} and may be of independent interest. Additionally, \autoref{thm:opennessforquotientsofregular} appeared initially in the first version of \cite{DattaEpsteinTucker} in 2023, which we subsequently split into two papers the second of which is the current one.
\end{remark}

\begin{theorem}
    \label{thm:FPureLocusOpenGeneral}
    Suppose that $S$ is a regular ring of characteristic $p > 0$. If $R = S/I$ for some ideal $I \subseteq S$, then the locus in $\Spec(R)$ where $R$ is $F$-pure is open in each of the following cases:
    \begin{enumerate}[label=\textnormal{(\arabic*)}]
        \item $S/\p$ is J-0 for all prime ideals $\p$ of $S$. \label{thm:FPureLocusOpenGeneral.1}
        \item $S$ is FOR. \label{thm:FPureLocusOpenGeneral.2}
    \end{enumerate}
\end{theorem}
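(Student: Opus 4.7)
The plan is to deduce this directly from \autoref{thm:opennessforquotientsofregular} by specializing to the case $r = 1$ and $e = 1$. Recall that $R$ is $F$-pure at a prime $\p \in \Spec(R)$ precisely when the Frobenius $R_\p \to F_*R_\p$ is pure as a map of $R_\p$-modules; equivalently, the $F$-pure locus of $R$ coincides with the pure locus of the $R$-linear map
\[
    \lambda_{1,1} \colon R \to F_*R, \quad 1 \mapsto F_*1,
\]
since purity localizes and the formation of $F_*$ commutes with localization. Thus, once I identify the $F$-pure locus with the pure locus of $\lambda_{1,1}$, the openness assertions in both \ref{thm:FPureLocusOpenGeneral.1} and \ref{thm:FPureLocusOpenGeneral.2} will follow immediately by applying \autoref{thm:opennessforquotientsofregular} with $r = 1$ and $e = 1$, under the respective hypotheses on the ambient regular ring $S$.

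The only verification worth spelling out is the equivalence between $F$-purity at $\p$ and purity of $(\lambda_{1,1})_\p$ at the level of $R_\p$-modules; this is standard and follows from the compatibility of the Frobenius endomorphism with localization. Since there is no real obstacle, the theorem is essentially a reformulation of \autoref{thm:opennessforquotientsofregular} in the special case $r = 1$, $e = 1$, applied separately under each of the two hypotheses on $S$.
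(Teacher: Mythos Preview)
Your proposal is correct and matches the paper's own proof, which simply states that the result is immediate from \autoref{thm:opennessforquotientsofregular}. Your identification of the $F$-pure locus with the pure locus of $\lambda_{1,1}$ is exactly the intended specialization.
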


\begin{proof}
    That the $F$-pure locus of $R$ is open is immediate from \autoref{thm:opennessforquotientsofregular}. 
\end{proof}

We next state a consequence of the results established thus far, which was shown in \cite[Corollary 3.5]{Murayama:TheGammaConstructionAndAsymptotic} using the $\Gamma$-construction. The point of reproving Murayama's result is to illustrate how the techniques of this paper allow one to avoid using the $\Gamma$-construction. Results in a similar vein will also appear in forthcoming work \cite{DESTPhantom}. See also \autoref{subsec:Kevin-hates-Gamma}.

\begin{corollary}\cite[Corollary 3.5]{Murayama:TheGammaConstructionAndAsymptotic}
    \label{cor:F-pure-locus-eft-G-ring}
    Let $R$ be essentially of finite type over a Noetherian local $G$-ring $(A, \fm)$
    of prime characteristic $p > 0$. Then the $F$-pure locus of $R$ is open.
\end{corollary}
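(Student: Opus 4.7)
The plan is to reduce to \autoref{thm:FPureLocusOpenGeneral}\ref{thm:FPureLocusOpenGeneral.2} via faithfully flat base change along $A \to \widehat{A}$. First I set $C := R \otimes_A \widehat{A}$, where $\widehat{A}$ is the $\fm$-adic completion of $A$. Since $A$ is a $G$-ring, $A \to \widehat{A}$ is regular, and hence so is the base change $R \to C$; in particular $R \to C$ is faithfully flat with geometrically regular fibers. By Cohen's structure theorem, there is a surjection $T := k\llbracket y_1, \ldots, y_m \rrbracket \twoheadrightarrow \widehat{A}$ from a complete regular local ring of characteristic $p > 0$, so $C$ is essentially of finite type over $T$ and may be written as $C = \tilde{S}/J$ with $\tilde{S}$ a localization of $T[z_1, \ldots, z_n]$. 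The polynomial ring $T[z_1, \ldots, z_n]$ is FORT by \autoref{cor:variables-FORT}\ref{cor:variables-FORT.4}, so its localization $\tilde{S}$ is regular and $F$-intersection flat (hence FOR) by \autoref{thm:FORT-F-intersection-flat}\ref{thm:FORT-F-intersection-flat.7}. Thus \autoref{thm:FPureLocusOpenGeneral}\ref{thm:FPureLocusOpenGeneral.2} implies the $F$-pure locus of $C$ is open in $\Spec C$, with closed complement $Z \subseteq \Spec C$.

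Next I transfer openness to $R$ via $g \colon \Spec C \to \Spec R$, which is faithfully flat and quasi-compact. The key claim is that for $\p \in \Spec R$ and any $\p' \in g^{-1}(\p)$, the local ring $R_\p$ is $F$-pure if and only if $C_{\p'}$ is $F$-pure. For $(\Rightarrow)$: the induced map $R_\p \to C_{\p'}$ remains regular (being a further localization of the regular map $R \to C$), so by \autoref{thm:Radu-Andre-Dumitrescu}\ref{thm:Radu-Andre-Dumitrescu.1} the relative Frobenius $F_* R_\p \otimes_{R_\p} C_{\p'} \to F_* C_{\p'}$ is faithfully flat as a ring map, which implies it is pure as a map of $C_{\p'}$-modules; composing this with the $C_{\p'}$-pure map $C_{\p'} \to F_* R_\p \otimes_{R_\p} C_{\p'}$ obtained by base change of the $F$-purity witness $R_\p \to F_* R_\p$ shows $C_{\p'}$ is $F$-pure. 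For $(\Leftarrow)$: the composition $R_\p \to C_{\p'} \to F_* C_{\p'}$ is pure (faithfully flat followed by $F$-purity) and factors through $R_\p \to F_* R_\p$, which therefore must also be pure.

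Granted the claim, $Z$ is $g$-saturated (membership in $Z$ depends only on $g(\p')$), so the $F$-pure locus $U_R$ of $R$ equals $\Spec R \setminus g(Z)$. Because $g$ is faithfully flat and quasi-compact, its underlying continuous map is submersive, and hence the image $g(Z)$ of the saturated closed set $Z$ is closed in $\Spec R$; thus $U_R$ is open. The main obstacle in this plan is the $(\Rightarrow)$ direction of the claim: one needs the full strength of \autoref{thm:Radu-Andre-Dumitrescu}\ref{thm:Radu-Andre-Dumitrescu.1} (flatness, and hence faithful flatness, of the relative Frobenius for a regular map) to produce the $C_{\p'}$-purity of the relative Frobenius, which is stronger than the $F_* R_\p$-purity one would obtain from merely geometrically reduced fibers via \autoref{thm:Radu-Andre-Dumitrescu}\ref{thm:Radu-Andre-Dumitrescu.2}.
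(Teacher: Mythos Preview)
Your proposal is correct and follows essentially the same approach as the paper: base change along $A \to \widehat{A}$, use the Radu--Andr\'e theorem to show the $F$-pure locus of $R$ pulls back to the $F$-pure locus of $C = R \otimes_A \widehat{A}$, exhibit $C$ as a quotient of a regular FOR (in fact FORT) ring built from a power series ring via Cohen's structure theorem so that \autoref{thm:FPureLocusOpenGeneral} applies, and then descend openness along the faithfully flat quasi-compact map $\Spec C \to \Spec R$. The only cosmetic differences are that the paper packages the FOR step by citing \autoref{thm:descent-F-intersection-flat} and invokes \cite[Cor.\ 2.3.12]{EGA_IV_II} for the descent of openness, whereas you construct $\tilde{S}$ explicitly and argue via submersiveness and saturation; these are the same arguments unwound.
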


\begin{proof}
    Since $A \to \widehat{A}$ is a regular map, the base change map 
    $R \to R \otimes_A \widehat{A}$ is regular as well by \cite[Prop. 6.8.3(iii)]{EGA_IV_II}.
    Let $\p$ be a prime ideal of $R$ and $\fP$ be a prime ideal of 
    $R \otimes_A \widehat{A}$ that lies over $\p$ (such a prime exists because 
    $R \to R \otimes_A \widehat{A}$ is faithfully flat). Then $R_\p \to (R \otimes_A \widehat{A})_{\fP}$ is a faithfully flat regular map. 
    We now claim that $R_\p$ is $F$-pure if and only if 
    $(R \otimes_A \widehat{A})_\fP$ is $F$-pure. Here the `if' implication 
    follows by faithfully flat descent of $F$-purity. For the `only if' implication we use the Radu-Andr\'e theorem (see \autoref{thm:Radu-Andre-Dumitrescu}) which implies that the relative Frobenius 
    \[
    F_{(R \otimes_A \widehat{A})_{\fP}/R_\p} \colon F_*R \otimes_R (R \otimes_A \widehat{A})_\fP  \to F_*(R \otimes_A \widehat{A})_\fP
    \]
    is faithfully flat hence pure. Then the Frobenius on $(R \otimes_A \widehat{A})_\fP$ 
    is pure because it is the composition of the pure maps
    \[
    (R \otimes_A \widehat{A})_\fP \xrightarrow{F_R \otimes_R \id_{(R \otimes_A \widehat{A})_\fP}} F_*R \otimes_R (R \otimes_A \widehat{A})_\fP \xrightarrow{F_{(R \otimes_A \widehat{A})_{\fP}/R_\p} }
    F_*(R \otimes_A \widehat{A})_\fP.
    \]
    Therefore, if $W \subset \Spec(R)$ is the $F$-pure locus of $R$ then the preimage $Z$ of $W$ in $\Spec(R \otimes_A \widehat{A})$ is the $F$-pure locus of $R \otimes_A \widehat{A}$. Now $Z$ is open by  \autoref{thm:FPureLocusOpenGeneral} because $R \otimes_A \widehat{A}$ is the homomorphic image of a FOR regular ring by \autoref{thm:descent-F-intersection-flat}  (this is where we avoid the $\Gamma$-construction). Then $W$ is also open by \cite[Cor.\ 2.3.12]{EGA_IV_II}.
\end{proof}

\subsection{The variants of strong \emph{F}-regularity}
\label{subsec:variant-strong-F-regular}
Hochster and Huneke introduced strong \emph{F}-regularity \cite{HH-strong-F-regularity} as a version of the notion of weak \emph{F}-regularity that is stable under localization. Their definition was originally made in the $F$-finite Noetherian setting. There are some natural ways to generalize this definition outside the $F$-finite setting that we now recall.

\begin{definition}
    \label{def:variants-strongly-F-regular}
    Let $R$ be a ring of prime characteristic $p > 0$. For an element $c \in R$, let
    $
    \lambda_{c,e} \colon R \to F^e_*R
    $
    be the unique map that sends $1 \mapsto F^e_*c$.
    \begin{enumerate}
        \item[$\bullet$] We say that $R$ is \emph{split $F$-regular} if for any nonzerodivisor $c \in R$, there exists an integer $e > 0$ such that $\lambda_{c,e}$ admits a left-inverse in $\Mod_R$. 
        \item[$\bullet$] We say that $R$ is \emph{$F$-pure regular} if for any nonzerodivisor $c \in R$, there exists an integer $e > 0$ such that $\lambda_{c,e}$ is $R$-pure. 
        \item[$\bullet$] We say $R$ is \emph{strongly $F$-regular} if for all $R$-modules $M$ and for all submodules $N$ of $M$, $N^*_M = N$, that is, $N$ is tightly closed in $M$. 
    \end{enumerate}
\end{definition}

\begin{remark}
    \label{rem:variants-strongly-F-regular}
    \mbox{}
\begin{enumerate}
    \item What we call $F$-pure regular was called \emph{very strongly $F$-regular} in \cite[Def.\ 3.4]{HashimotoF-pure-homomorphisms}. The $F$-pure regular terminology appears to have been first used in \cite{DattaSmithValuations}, and we find this terminology to be more descriptive. 
    \item The definition of strong $F$-regularity was suggested by Hochster in unpublished course notes on tight closure theory \cite[pg.\ 166]{HochsterFoundations}. 
    \item It is clear that split $F$-regular implies $F$-pure regular. Moreover, note that $F$-pure regularity localizes. That is, if $R$ is $F$-pure regular, then for any prime ideal $\p$ of $R$, $R_\p$ is also $F$-pure regular. 
    \item Hashimoto showed that strong $F$-regularity is equivalent to being locally $F$-pure regular \cite[Lem.\ 3.6]{HashimotoF-pure-homomorphisms}. Thus, $F$-pure regular implies strongly $F$-regular. 
    \item It was not clear for a long time if strong $F$-regularity is equivalent to $F$-pure regularity for excellent rings of characteristic $p$ in general. A few cases partial cases of interest are proved in \cite{HashimotoF-pure-homomorphisms,DattaMurayamaFsolidity}. However, this problem has been recently settled by work of Hochster and Yao \cite{HochsterYaoGenericLocalDuality} for all excellent ring and more generally in \cite{LyuUniformBounds}. 
    \item The implications split $F$-regularity $\implies$ $F$-pure regular $\implies$ strongly $F$-regular are strict, even for the class of regular rings. For example, any regular local ring is $F$-pure regular (see for instance \cite[Thm.\ 6.2.1]{DattaSmithValuations}). However, there exist excellent Henselian DVRs $R$ of characteristic $p > 0$ such that $\Hom_R(F^e_*R,R) = 0$ for all integers $e > 0$ \cite{DattaMurayamaTate}. Such a ring can clearly never be split $F$-regular. It is also well-known that all regular rings of prime characteristic are strongly $F$-regular. However, there exist non-excellent regular rings that are not $F$-pure regular \cite[Sec.\ 6]{HochsterYaoSFRsmallCM}. In fact, an example can already be constructed via \cite[Rem.\ (2)]{EisenbudHochster} because F-pure regularity of regular rings is intimately related to the uniform Artin-Rees property. See \autoref{subsec:Artin-Rees}.
\end{enumerate}
\end{remark}

We now explore the variants of strong $F$-regularity for FOR and FORT regular rings and their quotients. We also show that the equivalence between $F$-pure regularity and strong $F$-regularity holds for quotients of all J-2 regular rings. Note that such rings need not be excellent.

\begin{theorem}
    \label{thm:strong-F-regularity-FOR-FORT}
    Let $S$ be a regular ring of prime characteristic $p > 0$. Let $R \coloneqq S/I$ be a quotient of $S$. Then we have the following:
    \begin{enumerate}[label=\textnormal{(\arabic*)}]
        \item Suppose $S$ is FOR. Then $R$ is $F$-pure regular if and only if $R$ is strongly $F$-regular. \label{thm:strong-F-regularity-FOR-FORT.1}
        \item Suppose for all prime ideals $\p$ of $S$, $S/\p$ is J-0. Then $R$ is $F$-pure regular if and only if $R$ is strongly $F$-regular. \label{thm:strong-F-regularity-FOR-FORT.2}
          
        \item \label{thm:strong-F-regularity-FOR-FORT.3} Suppose $S$ is FORT. Then the following are equivalent:
        \begin{enumerate}[label=\textnormal{(\alph*)}]
            \item $R$ is split $F$-regular.\label{thm:strong-F-regularity-FOR-FORT.3a}
            \item $R$ is $F$-pure regular.\label{thm:strong-F-regularity-FOR-FORT.3b}
            \item $R$ is strongly $F$-regular.\label{thm:strong-F-regularity-FOR-FORT.3c}   
        \end{enumerate}  
    \end{enumerate}
\end{theorem}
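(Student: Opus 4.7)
The plan is to handle the common implication FPR $\Rightarrow$ SFR, then the reverse direction for parts (1) and (2), and finally the additional split-FR equivalence in (3). For FPR $\Rightarrow$ SFR (common to all parts), I will observe that $F$-pure regularity is preserved under localization (given a nonzerodivisor $c/s \in R_\p$, a suitable global lift admits a pure map by FPR that localizes appropriately), so $R$ being FPR implies each $R_\p$ is FPR. By Hashimoto's characterization \cite[Lem.\ 3.6]{HashimotoF-pure-homomorphisms}, this is precisely SFR.

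For SFR $\Rightarrow$ FPR in parts (1) and (2), fix a nonzerodivisor $c \in R$. For each $\p \in \Spec R$, the local FPR hypothesis (via SFR $=$ locally FPR) yields $e_\p > 0$ with $(\lambda_{c, e_\p})_\p$ pure, and \autoref{thm:opennessforquotientsofregular} under the respective hypothesis on $S$ ensures the pure locus $U_{e_\p}$ of $\lambda_{c, e_\p}$ is open in $\Spec R$; quasi-compactness yields a finite subcover $U_{e_1}, \dots, U_{e_n}$. The key lemma I plan to prove is: if $(\lambda_{c, e'})_\p$ is $R_\p$-pure and $R_\p$ is $F$-pure (automatic since $R_\p$ is FPR), then $(\lambda_{c, e})_\p$ is $R_\p$-pure for every $e \geq e'$. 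This will follow from the factorization $\lambda_{c, e} = (F^{e-e'}_* \lambda_{c, e'}) \circ F^{e-e'}$: the iterated Frobenius $F^{e-e'}$ is pure by $F$-purity, and $F^{e-e'}_* \lambda_{c, e'}$ inherits purity because restriction of scalars $F^k_*$ preserves $R$-purity of maps (provable from the natural identification $F^k_* M \otimes_R P \cong F^k_*(M \otimes_R F^{k,*} P)$, valid for all $R$-modules $M, P$ by reduction to free $M$ via presentations, together with exactness of $F^k_*$). Setting $e = \max_i e_i$ then yields $\lambda_{c, e}$ pure at every prime in the cover, hence globally pure.

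For part (3), FORT implies FOR, so parts (1) and (2) already deliver FPR $\Leftrightarrow$ SFR, and split-FR $\Rightarrow$ FPR is immediate because split maps are pure. The remaining implication FPR $\Rightarrow$ split-FR is where FORT enters. Given a nonzerodivisor $c$ and $e > 0$ with $\lambda_{c, e}$ pure, \autoref{lem:extnofFedder} applied at each maximal $\fm \supseteq I$ of $S$ gives $(I^{[p^e]}: I) \tilde c \not\subseteq \fm^{[p^e]}$, where $\tilde c \in S$ lifts $c$. Using FORT of $S$ together with \autoref{lem:trace-equals-content}, I identify $\sum_\phi \phi(F^e_*((I^{[p^e]}: I) \tilde c))$ with the content ideal $c_{F^e_*S}(F^e_*((I^{[p^e]}: I) \tilde c))$, and the FORT containment $(I^{[p^e]}:I)\tilde c \subseteq c_{F^e_*S}(F^e_*((I^{[p^e]}:I)\tilde c))^{[p^e]}$ forces this content ideal to avoid every maximal $\fm \supseteq I$ of $S$. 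Hence the ideal sum together with $I$ equals $S$, yielding $\phi_j \in \Hom_S(F^e_*S, S)$ and $a_j \in (I^{[p^e]}: I)$ with $\sum_j \phi_j(F^e_*(a_j \tilde c)) \equiv 1 \pmod I$; the twisted maps $\psi_j \coloneqq F^e_* a_j \cdot \phi_j$ send $F^e_*I$ into $I$ (using $a_j I \subseteq I^{[p^e]}$) and descend to $R$-linear maps $\bar\psi_j \colon F^e_*R \to R$ whose sum splits $\lambda_{c, e}$. The main obstacles I anticipate are establishing the iteration-compatibility lemma (especially that $F^k_*$ preserves $R$-purity without regularity hypotheses on $R$) and carefully tracking the FORT/trace-content bookkeeping in part (3).
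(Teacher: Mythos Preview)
Your proposal is correct and follows essentially the same approach as the paper's proof. The paper is terser in parts (1) and (2), compressing your iteration-compatibility lemma and finite-subcover step into the single phrase ``a spreading out argument afforded by \autoref{thm:opennessforquotientsofregular} and the quasicompactness of $\Spec(R)$''; your explicit factorization $\lambda_{c,e} = (F^{e-e'}_*\lambda_{c,e'})\circ F^{e-e'}$ together with the observation that restriction of scalars preserves purity is exactly the content implicit in that phrase, and your part (3) argument via \autoref{lem:extnofFedder} and \autoref{lem:trace-equals-content} matches the paper's almost line for line.
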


\begin{proof}
    We will use throughout the fact that $R$ being strongly $F$-regular is equivalent to all local rings of $R$ being $F$-pure regular \cite[Lem.\ 3.6]{HashimotoF-pure-homomorphisms}. The same result then implies that any $F$-pure regular ring is also strongly $F$-regular. 

    By the above discussion, it remains to show in both \ref{thm:strong-F-regularity-FOR-FORT.1} and \ref{thm:strong-F-regularity-FOR-FORT.2} that if $R_{\mathfrak q}$ is $F$-pure regular for all prime ideals $\mathfrak q$ of $R$, then $R$ is $F$-pure regular. But this follows by a spreading out argument afforded by \autoref{thm:opennessforquotientsofregular} and the quasicompactness of $\Spec(R)$.

    \ref{thm:strong-F-regularity-FOR-FORT.3} We already know that \ref{thm:strong-F-regularity-FOR-FORT.3a} $\implies$ \ref{thm:strong-F-regularity-FOR-FORT.3b} $\implies$ \ref{thm:strong-F-regularity-FOR-FORT.3c} holds unconditionally for any Noetherian ring. Since FORT rings are FOR, we also have \ref{thm:strong-F-regularity-FOR-FORT.3c} $\implies$ \ref{thm:strong-F-regularity-FOR-FORT.3b} by \ref{thm:strong-F-regularity-FOR-FORT.1}. Thus, it remains to show \ref{thm:strong-F-regularity-FOR-FORT.3b} $\implies$ \ref{thm:strong-F-regularity-FOR-FORT.3a}. It suffices to show that if $r \in R$ such that 
    \begin{align*}
    \lambda_{r,e} \colon R &\to F^e_*R\\
    1 &\mapsto F^e_*r
    \end{align*}
    is pure as a map of $R$-modules, then $\lambda_{r,e}$ admits a left-inverse (i.e. it splits). 
    But if $\lambda_{r,e}$ is $R$-pure, by \autoref{lem:extnofFedder} for all prime ideals $\bq$ of $S$ such that $I \subseteq \bq$, $(I^{[p^e]}\colon I)\tilde{r} \nsubseteq \bq^{[p^e]}$. But this means that for all such $\bq$,
    $
    \Tr_{F^e_*S}(F^e_*(I^{[p^e]}\colon I)\tilde{r}) \nsubseteq \bq   
    $
    by \autoref{lem:trace-equals-content}. Thus, 
    $
        \Tr_{F^e_*S}(F^e_*(I^{[p^e]}\colon I)\tilde{r}) + I = S.   
    $
    Choose $c \in (I^{[p^e]}\colon I)$ and $\varphi \colon F^e_*S \to S$ such that 
    $
    \varphi(F^e_*c\tilde{r}) + I = 1 + I    
    $
    in $R$. Since $c \in (I^{[p^e]}:I)$, we have seen that $\varphi(F^e_*c \cdot)$ induces an $R$-linear map $\overline{\varphi} \colon F^e_*R \to R$ such that the following diagram commutes:
    \[
        \xymatrix@R+1pc@C+1pc{F^e_*S \ar[r]^{\varphi(F^e_*c \cdot)} \ar[d]_{F^e_*\pi} &  S \ar[d]^{\pi} \\  F^e_*{R} \ar[r]_{\overline{\varphi}} & {R}.}  
\]   
Here the vertical maps are induced by the canonical projection $\pi \colon S \to S/I =: R$. 
Then
$
\overline{\varphi}(F^e_*r) = \overline{\varphi} \circ F^e_*\pi(F^e_*\tilde{r}) = \pi \circ \varphi(F^e_*c\tilde{r}) = \varphi(F^e_*c\tilde{r}) + I = 1 + I.
$
Thus, $\overline{\varphi}$ is a left-inverse of $\lambda_{r,e}$.
\end{proof}

\subsection{An application of uniform Artin-Rees to \texorpdfstring{$F$}{F}-pure regularity}
\label{subsec:Artin-Rees}
One can use the ideal-theoretic purity criterion for maps $\lambda_{c,e} \colon S \to F^e_*S$ of commutative rings $S$ with flat Frobenius (\autoref{prop:regular-bracket-powers}) to give a different proof of the $F$-pure regularity of regular rings $S$ of prime characteristic $p > 0$ with J-0 (equivalently, J-1 by \autoref{prop:J0-J1-equivalence}) quotients $S/\p$ for all prime ideals $\p$. The proof relies on Duncan and O'Carroll's uniform Artin-Rees theorem, which we now recall.

\begin{citedthm}[{\cite[Thm.\ on p.\ 203]{DuncanO'CarrollArtin-Rees}}]
    \label{thm:uniform-Artin-Rees}
    Let $R$ be a Noetherian ring such that for all prime ideals $\p \in \Spec(R)$,
    $R/\p$ is J-0. 
    Let $M$ be a finitely generated $R$-module and
    $N$ a submodule of $M$. There exists a integer $k > 0$ depending only on $M$ and $N$
    such that for all maximal ideals $\fm \in \Spec(R)$ and for all integers $n \geq 0$, 
    $
    N \cap \fm^{n+k}M  = \fm^n(N \cap \fm^kM).
    $
    \end{citedthm}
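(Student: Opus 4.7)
My plan is to prove this by Noetherian induction on closed subsets of $\Spec R$, using the J-0 hypothesis to extract a uniform Artin-Rees constant on a generic Zariski open subset and then handling the complement by induction.

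First, by \autoref{prop:J0-J1-equivalence}, the hypothesis implies that $R$ itself is J-1, so the regular locus $U := \Reg(R)$ is a non-empty open subset of $\Spec R$. After shrinking $U$ if necessary using generic freeness, I would arrange that the inclusion $N \hookrightarrow M$ has a particularly simple form over $U$---for instance, that $N$ and $M$ are locally free of constant ranks on $U$ and that the cokernel $M/N$ is also locally free there. On $U$, each local ring $R_\fm$ is regular, and the modules $N_\fm$ and $M_\fm$ admit finite free resolutions of length at most $\dim R_\fm$. The main technical step is to show that a single uniform Artin-Rees constant $k_1$ works for all maximal ideals in $U$. One angle is to study the Rees module $\bigoplus_n (N \cap \fm^n M) t^n$ as a module over the Rees algebra $R[\fm t]$: its top degree of generators is precisely the Artin-Rees constant, and one can hope to bound this uniformly using the explicit description afforded by the free resolutions of $N$ and $M$, together with generic freeness of the associated graded modules.

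For the inductive step, let $Z := \Spec R \setminus U$, a proper closed subset with defining radical ideal $I$. The quotient $R/I$ still satisfies the hypothesis that every further quotient by a prime is J-0, and $Z$ has strictly smaller dimension than $\Spec R$. Applying Noetherian induction to an appropriate pair of submodules over $R/I$ derived from $N \subseteq M$ yields a constant $k_2$ handling all maximal ideals in $Z$, and taking $k := \max(k_1, k_2)$ finishes. The principal obstacle is the generic step: producing a single $k_1$ uniformly over the regular locus $U$ requires non-trivial control beyond what follows from applying the classical Artin-Rees lemma pointwise, since the naive constant depends on each $\fm$ separately. A secondary subtlety is the inductive step itself: Artin-Rees relations over $R$ do not descend cleanly to $R/I$, so one may need to strengthen the inductive hypothesis---for example, to apply simultaneously to a whole family of pairs of submodules---in order to make the induction close properly.
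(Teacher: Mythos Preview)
The paper does not give its own proof of this statement: it is a \texttt{citedthm} attributed to Duncan--O'Carroll \cite{DuncanO'CarrollArtin-Rees}, and the paper simply invokes it without argument (the remark following it only notes that the J-2 hypothesis in the original can be weakened to the J-0 condition on prime quotients). So there is no in-paper proof to compare against.

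That said, your outline is broadly in the spirit of how uniform Artin--Rees results are actually established---Noetherian induction on the support, peeling off a generic open where things are controlled, and recursing on the complement---but as you yourself flag, it is a plan rather than a proof. The two places you identify as obstacles are exactly where the real content lies: (i) producing a single $k_1$ valid across all maximal ideals in the generic open $U$ is the heart of the matter and does not follow from pointwise Artin--Rees plus generic freeness alone, and (ii) the inductive step does not reduce cleanly to the same statement over $R/I$ for the same pair $(N,M)$. In the Duncan--O'Carroll argument these are handled by a careful choice of what to induct on (closer to a relation-module formulation) together with a concrete bound coming from the structure of $M/N$ over the generic locus; your sketch gestures at this but does not supply it. If you want to flesh this out, the key is to reformulate so that the statement being proved by induction is self-strengthening---for instance, by tracking not just $N\subseteq M$ but an entire finite presentation and bounding degrees of relations uniformly.
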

    
    In other words, the bound $k$ in the usual Artin--Rees lemma can be chosen independently
    of the choice of the maximal ideal $\fm$. We note that the uniform Artin-Rees property has intimate connections with the theory of test elements in tight closure \cite{HunekeUniformArtinRees}.
    
    \begin{remark}
        Duncan and O'Carroll originally stated their theorem when $R$ is J-2 in
      \cite{DuncanO'CarrollArtin-Rees}. In \cite[Rem.\ on p.\ 49]{DuncanO'CarrollZariskiRegularity}, they noted that the
      weaker hypothesis that $R/\p$ is J-1 for all prime ideals $\p$ of $R$ suffices in order for the conclusion of \autoref{thm:uniform-Artin-Rees} to hold. However, as observed, the J-1 assertion on the prime cyclic quotients of $R$ is equivalent to $R/\p$ being J-0 for all prime ideals $\p$ by \autoref{prop:J0-J1-equivalence}.
    \end{remark}

    \begin{corollary}
        \label{cor:Artin-Rees-elements}
        Let $R$ be a Noetherian ring of prime characteristic $p > 0$ such that $R/\p$ is J-0 for all prime ideals $\p$ of $R$. Let $f \in R$ be a nonzerodivisor that
        is not a unit. Then there exists an integer $e > 0$ (that only depends on $f$) such that for
        all proper ideals $I$ of $R$, $f \notin I^{[p^e]}$.
        \end{corollary}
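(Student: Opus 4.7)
The plan is to reduce the statement to bounding $f$ away from the $[p^e]$-th powers of maximal ideals, and then exploit the trivial containment $\fm^{[p^e]} \subseteq \fm^{p^e}$ to feed into the uniform Artin--Rees bound on ordinary powers. First, since $R$ is nonzero (as $f$ is a nonzerodivisor that is not a unit), every proper ideal $I$ of $R$ is contained in some maximal ideal $\fm$, and bracket powers preserve inclusions. So it suffices to produce an integer $e > 0$, depending only on $f$, such that $f \notin \fm^{[p^e]}$ for every maximal ideal $\fm$ of $R$.

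Next I would invoke the Duncan--O'Carroll theorem (\autoref{thm:uniform-Artin-Rees}) applied to the submodule $fR \subseteq R$: this produces an integer $k > 0$ (depending only on $f$, which is the whole point of \emph{uniform} Artin--Rees) such that
\[
fR \cap \fm^{n+k} = \fm^n(fR \cap \fm^k)
\]
for every maximal ideal $\fm$ of $R$ and every $n \geq 0$. Specializing to $n = 1$, I claim $f \notin \fm^{k+1}$ for any maximal $\fm$. Indeed, if $f \in \fm^{k+1}$, then $f \in fR \cap \fm^{k+1} = \fm(fR \cap \fm^k) \subseteq \fm \cdot fR$, so we can write $f = f \cdot \sum_i m_i b_i$ with $m_i \in \fm$ and $b_i \in R$; cancelling $f$ (which is legitimate since $f$ is a nonzerodivisor) yields $1 \in \fm$, contradicting that $\fm$ is proper.

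To finish, pick any $e \in \mathbb{Z}_{>0}$ with $p^e \geq k+1$. Then for every maximal ideal $\fm$ of $R$,
\[
\fm^{[p^e]} \subseteq \fm^{p^e} \subseteq \fm^{k+1},
\]
so $f \notin \fm^{[p^e]}$, and the reduction at the start completes the argument. The only substantive ingredient is \autoref{thm:uniform-Artin-Rees}; the rest is a short chain of inclusions together with the nonzerodivisor hypothesis. There is no real obstacle beyond remembering the correct direction of the containment $\fm^{[p^e]} \subseteq \fm^{p^e}$ and checking that the Artin--Rees constant $k$ genuinely depends only on $f$ (and not on the choice of $\fm$), which is exactly the uniform content of the cited theorem.
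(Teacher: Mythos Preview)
Your proof is correct and follows essentially the same approach as the paper: both apply the Duncan--O'Carroll uniform Artin--Rees theorem to $fR \subseteq R$, specialize to $n=1$ to show $f \notin \fm^{k+1}$ for any maximal ideal $\fm$ via the nonzerodivisor hypothesis, and then use the inclusion $\fm^{[p^e]} \subseteq \fm^{p^e} \subseteq \fm^{k+1}$ for $p^e \geq k+1$. The only cosmetic difference is that you reduce to maximal ideals at the outset while the paper does so inside the argument.
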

        
        \begin{proof}
        Choose $k$ as in Theorem \ref{thm:uniform-Artin-Rees} for $M = R$ and $N = (f)$.
        We claim that for $s = k + 1$, $f$ is not contained in $I^s$, for any proper
        ideal $I$ of $R$. Indeed, assume otherwise. Then without loss of generality,
        we may assume that $I = \fm$, for a maximal ideal $\fm$ and $f \in \fm^s$. Then by
        Theorem \ref{thm:uniform-Artin-Rees},
        $
        (f) = (f) \cap \fm^s = (f) \cap \fm^{k+1} = \fm((f) \cap \fm^k) = \fm(f).
        $
        Thus, there exists $x \in \fm$ such that $f = xf$. Then $(1-x)f = 0$, and since
        $f$ is a nonzerodivisor, this implies $1 - x = 0$. However, this is impossible
        because $x$ is not a unit. Now choose $e \gg 0$ such that $p^e > s$.
        Then for all proper ideals $I$, we have
        $
        I^{[p^e]} \subseteq I^s,
        $
        and so, $f \notin I^{[p^e]}$. \qedhere
        \end{proof}

        We can now easily deduce the main result on $F$-pure regularity.

        \begin{proposition}
            \label{prop:uniform-Artin-Rees-F-pure-regular}
            Let $S$ be a regular ring of prime characteristic $p > 0$ such that $S/\p$ is J-0 for all prime ideals $\p$ of $S$. Then $S$ is $F$-pure regular.
        \end{proposition}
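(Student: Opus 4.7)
The plan is to combine the ideal-theoretic purity criterion (\autoref{prop:regular-bracket-powers}\ref{prop:regular-bracket-powers.2}) with the Duncan--O'Carroll uniform Artin--Rees consequence recorded in \autoref{cor:Artin-Rees-elements}. Fix a nonzerodivisor $c \in S$. If $c$ is a unit, then $\lambda_{c,e}$ is split (and hence pure) for every $e \geq 1$, so we may assume $c$ is a nonunit.

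Since $S$ is regular, Frobenius on $S$ is flat by Kunz. Therefore \autoref{prop:regular-bracket-powers}\ref{prop:regular-bracket-powers.2} applies, and the purity of $\lambda_{c,e} \colon S \to F^e_*S$ is equivalent to the statement that $c \notin \frm^{[p^e]}$ for every maximal ideal $\frm$ of $S$. Thus it suffices to exhibit a single $e > 0$ for which $c$ lies in no $\frm^{[p^e]}$ as $\frm$ ranges over $\mSpec(S)$.

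The hypothesis that $S/\p$ is J-0 for every prime $\p$ is precisely the input of \autoref{cor:Artin-Rees-elements}. Applying that corollary to the nonzerodivisor nonunit $c$, we obtain an integer $e > 0$, depending only on $c$, such that $c \notin I^{[p^e]}$ for every proper ideal $I$ of $S$. Specializing $I$ to each maximal ideal $\frm$ produces $c \notin \frm^{[p^e]}$ for all $\frm \in \mSpec(S)$, and combining this with the previous paragraph yields that $\lambda_{c,e}$ is $S$-pure, completing the proof.

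There is essentially no obstacle here beyond the bookkeeping: the two tools \autoref{prop:regular-bracket-powers} and \autoref{cor:Artin-Rees-elements} were set up in exactly this section with this application in mind, and the only thing to check is that one may assume $c$ is a nonunit (otherwise $c \in \frm^{[p^e]}$ is automatically false since $\frm^{[p^e]} \subseteq \frm$ is proper and contains no unit). Note that no regularity of $S/\p$ or excellence is used beyond what is already packaged into the Duncan--O'Carroll hypothesis, so no further reductions (e.g.\ to the local or complete case) are needed.
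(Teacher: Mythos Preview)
Your proof is correct and follows essentially the same route as the paper: invoke \autoref{cor:Artin-Rees-elements} to obtain $e$ with $c \notin I^{[p^e]}$ for all proper $I$, then apply \autoref{prop:regular-bracket-powers}\ref{prop:regular-bracket-powers.2}. One small caveat: the claim that $\lambda_{c,e}$ is \emph{split} when $c$ is a unit is not generally true (regular rings need not be $F$-split, cf.\ the examples in \cite{DattaMurayamaTate}), but purity---which is all you need---does hold, and indeed your own final paragraph gives the correct justification via $c \notin \fram^{[p^e]}$.
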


        \begin{proof}
            Pick $x \in S$ a non-zero divisor and choose $e > 0$ such that for all proper ideals $I$ of $S$, $x \notin I^{[p^e]}$ by \autoref{cor:Artin-Rees-elements}. Then the map
            \begin{align*}
                \lambda_{x,e} \colon S &\to F^e_*S\\
                1 &\mapsto F^e_*x
            \end{align*} 
            is $S$-pure by \autoref{prop:regular-bracket-powers}~\ref{prop:regular-bracket-powers.2}. Thus, $S$ is $F$-pure regular by definition.
        \end{proof}

        \begin{remark}
            The argument in the proof of \autoref{prop:uniform-Artin-Rees-F-pure-regular} is based on unpublished work of the first author and Takumi Murayama. The argument gives an alternate proof of \cite[Cor.\ 2.18]{HochsterYaoSFRsmallCM} and of the special case of \autoref{thm:strong-F-regularity-FOR-FORT}~\ref{thm:strong-F-regularity-FOR-FORT.2} when $R = S$.
        \end{remark}

    \subsection{Bypassing the \texorpdfstring{$\Gamma$}{Gamma}-construction}
    \label{subsec:Kevin-hates-Gamma}
The FOR and FORT properties can be used to give different proofs of results that have relied on the $\Gamma$-construction \cite{HochsterHunekeFRegularityTestElementsBaseChange}. One instance of this has already been illustrated in \autoref{cor:F-pure-locus-eft-G-ring}. For another example, we show that \autoref{thm:strong-F-regularity-FOR-FORT} immediately implies the following result which was proved by the first author and Murayama by reducing to the $F$-finite setting.

\begin{corollary}\cite[Thm.\ 3.1.1]{DattaMurayamaFsolidity}
    \label{cor:deducing-F-regularity-Gamma-alternative}
    Let $(A,\fm, \kappa)$ be a Noetherian complete local of prime characteristic $p > 0$ and let $R$ be an essentially of finite type $A$-algebra. Then the following are equivalent:
        \begin{enumerate}[label=\textnormal{(\arabic*)}]
            \item $R$ is split $F$-regular.\label{cor:deducing-F-regularity-Gamma-alternative.1}
            \item $R$ is $F$-pure regular.\label{cor:deducing-F-regularity-Gamma-alternative.2}
            \item $R$ is strongly $F$-regular.\label{cor:deducing-F-regularity-Gamma-alternative.3}  
        \end{enumerate}
\end{corollary}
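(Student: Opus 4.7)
The plan is to express $R$ as a quotient of a FORT regular ring and then invoke \autoref{thm:strong-F-regularity-FOR-FORT}\ref{thm:strong-F-regularity-FOR-FORT.3} verbatim. The implications \ref{cor:deducing-F-regularity-Gamma-alternative.1}$\Rightarrow$\ref{cor:deducing-F-regularity-Gamma-alternative.2}$\Rightarrow$\ref{cor:deducing-F-regularity-Gamma-alternative.3} are standard and hold for any Noetherian ring, so the content is in \ref{cor:deducing-F-regularity-Gamma-alternative.3}$\Rightarrow$\ref{cor:deducing-F-regularity-Gamma-alternative.1}.

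First I would apply Cohen's structure theorem to write the complete local ring $A$ as a quotient $A = S_0/I_0$ of a complete regular local ring $S_0$ of prime characteristic $p > 0$ (namely a power series ring $\kappa\llbracket z_1,\dots,z_d\rrbracket$ over the residue field $\kappa$). Since $R$ is essentially of finite type over $A$, it is also essentially of finite type over $S_0$; hence there exist an integer $n \geq 0$, an ideal $J$ of the polynomial ring $T \coloneqq S_0[y_1,\dots,y_n]$, and a multiplicative subset $W \subseteq T$ (obtained by lifting a multiplicative set in $T/J$) so that
\[
R \;\cong\; W^{-1}(T/J) \;\cong\; W^{-1}T / (JW^{-1}T).
\]

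Next I would verify that $W^{-1}T$ is a FORT regular ring. By \autoref{cor:variables-FORT}\ref{cor:variables-FORT.4} (equivalently, by \autoref{prop:polynomialmaps} applied to the complete regular local FORT ring $S_0$ from \autoref{thm:FORT-F-intersection-flat}\ref{thm:FORT-F-intersection-flat.6}), the polynomial ring $T = S_0[y_1,\dots,y_n]$ is FORT. Regularity is preserved by localization and passage to polynomial rings, so $W^{-1}T$ is regular; and FORT is preserved by localization via \autoref{cor:FORT-localization}. Thus $R$ is a quotient of the FORT regular ring $S \coloneqq W^{-1}T$.

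With $R = S/(JS)$ displayed in the form required by \autoref{thm:strong-F-regularity-FOR-FORT}\ref{thm:strong-F-regularity-FOR-FORT.3}, the equivalence of split $F$-regularity, $F$-pure regularity, and strong $F$-regularity for $R$ follows immediately. The essential point is that the hard ring-theoretic input (that quotients of FORT regular rings have coincident $F$-regularity variants) has already been bundled into \autoref{thm:strong-F-regularity-FOR-FORT}; the only step that needed the hypothesis that $A$ is complete and local was the reduction via Cohen's structure theorem, which supplies a FORT regular presenting ring $S$ without invoking the $\Gamma$-construction. There is no serious obstacle here, as each ingredient is already in place.
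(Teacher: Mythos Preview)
Your proposal is correct and follows essentially the same route as the paper: present $R$ as a quotient of a localization of a polynomial ring over a power series ring over $\kappa$, observe this presenting ring is FORT via \autoref{cor:variables-FORT} and \autoref{cor:FORT-localization}, and then apply \autoref{thm:strong-F-regularity-FOR-FORT}\ref{thm:strong-F-regularity-FOR-FORT.3}. The paper's proof is terser but logically identical.
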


\begin{proof}
    $R$ is a homomorphic image of the localization of a polynomial ring over a power series ring over $\kappa$. The latter ring is FORT by \autoref{cor:variables-FORT} and \autoref{cor:FORT-localization}, and so, we get the desired result by \autoref{thm:strong-F-regularity-FOR-FORT}~\ref{thm:strong-F-regularity-FOR-FORT.3}.
\end{proof}

The $\Gamma$-construction allows one to reduce questions about the Frobenius endomorphism for rings that are essentially of finite type over a local $G$-ring of prime characteristic to the $F$-finite setting \cite{HochsterHunekeFRegularityTestElementsBaseChange,HashimotoF-pure-homomorphisms,Murayama:TheGammaConstructionAndAsymptotic,DattaMurayamaFsolidity}. The advantage of working with the FOR and FORT properties is that one can completely bypass the $\Gamma$-construction and work directly with rings that are quotients of FOR and FORT regular rings. This circle of ideas will be pursued further, especially in connection with the theory of test ideals and $F$-compatible ideals, in forthcoming work \cite{DESTPhantom}. We will show that one can build a robust theory of test ideals for quotients of FOR regular rings. 

\section{Tate algebras over non-Archimedean fields}

\subsection{Background on non-Archimedean fields, Banach spaces and Tate algebras}
\label{subsec.BackgroundOnTate}
We begin by introducing the notion of Tate algebras over non-Archimedean fields. A reference for many of the basic notions introduced in this section is \cite{Boschrigid}. Recall that a \emph{non-Archimedean norm} on a field $k$ is a function
$
|\cdot| \colon k \to \mathbb{R}_{\geq 0}    
$
that satisfies the following properties:
\begin{enumerate}
    \item[$\bullet$] $|x| = 0$ if and only if $x = 0$,
    \item[$\bullet$] $|xy| = |x||y|$, and
    \item[$\bullet$] $|x + y| \leq \max\{|x|,|y|\}$.
\end{enumerate}
In other words, $|\cdot|$ is a non-Archimedean multiplicative valuation of $k$ of rank $1$.
A field $k$ equipped with a non-Archimedean norm $|\cdot|$ is called a \emph{real-valued field} and is denoted $(k,|\cdot|)$.

The valuation ring of $k$ is the subring $k^\circ \coloneqq \{x \in k \colon |x| \leq 1\}$. This is a local ring with maximal ideal $k^{\circ \circ} \coloneqq \{x \in k \colon |x| < 1\}$. Note that $k^\circ$ has Krull dimension $1$. 

The following well-known Lemma will be useful.

\begin{lemma}\cite[Prop.\ 2.1/2]{Boschrigid}
    \label{lem:norm-becomes-max}
    Let $(k,|\cdot|)$ be a real-valued field. If $x, y \in k$ such that $|x| \neq |y|$, then $|x+y| = \max\{|x|,|y|\}$.
\end{lemma}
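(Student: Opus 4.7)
The plan is to use the non-Archimedean (ultrametric) triangle inequality twice, once in each direction, exploiting the asymmetry caused by the strict inequality $|x| \neq |y|$. Without loss of generality I would assume $|x| > |y|$, since the roles of $x$ and $y$ are symmetric and $\max\{|x|,|y|\} = |x|$ in this case.

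The upper bound $|x+y| \leq |x|$ is immediate from the defining ultrametric property:
\[
|x+y| \leq \max\{|x|, |y|\} = |x|.
\]
For the reverse inequality, I would write $x = (x+y) + (-y)$ and apply the ultrametric inequality again, together with the fact that $|-y| = |y|$ (which follows from multiplicativity applied to $|(-1) \cdot y| = |-1||y|$ and $|-1|^2 = |1| = 1$, forcing $|-1| = 1$). This yields
\[
|x| = |(x+y) + (-y)| \leq \max\{|x+y|, |y|\}.
\]
Since by hypothesis $|x| > |y|$, the maximum on the right cannot be $|y|$, so it must equal $|x+y|$. Hence $|x| \leq |x+y|$, and combining with the upper bound gives $|x+y| = |x| = \max\{|x|,|y|\}$.

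There is essentially no obstacle here — this is a one-line consequence of applying the ultrametric inequality twice, and is the standard proof given in every reference on non-Archimedean analysis (including the cited \cite[Prop.\ 2.1/2]{Boschrigid}). The only subtle point worth noting is that strict inequality $|x| > |y|$ is essential for concluding that the max equals $|x+y|$ rather than $|y|$; if $|x| = |y|$ one can only conclude $|x+y| \leq |x|$ with possibly strict inequality (e.g., $x + (-x) = 0$).
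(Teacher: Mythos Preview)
Your proof is correct and is precisely the standard argument; the paper itself does not supply a proof but simply cites \cite[Prop.\ 2.1/2]{Boschrigid}, where exactly this two-line application of the ultrametric inequality appears.
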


\begin{definition}
    \label{def:NA-field}
    A \emph{non-Archimedean} (abbrev. NA) field is a real-valued field $(k,|\cdot|)$ such that $k$ is complete with respect to the metric $|x - y|$ that is induced by $|\cdot|$ and such that $|k^\times| \neq {1}$, that is, $k$ is non-trivially valued.
\end{definition}

All real-valued fields will be NA in what follows. 

\begin{definition}
    \label{def:normed-space}
    Let $(k,|\cdot|)$ be a NA field. A \emph{normed space} $(E,||\cdot||)$ over $k$ is a $k$-vector space along with a norm $||\cdot|| \colon E \to \mathbb{R}_{\geq 0}$ that satisfies the following properties:
    \begin{enumerate}
        \item[$\bullet$] $||x|| = 0$ if and only if $x = 0$,
        \item[$\bullet$] $||x+y|| \leq \max\{||x||,||y||\}$, and
        \item[$\bullet$] if $c \in k$ and $x \in E$, then $||cx|| = |c|\cdot||x||$.
    \end{enumerate}
    If $E$ is complete with respect to the metric induced by $||\cdot||$, then $E$ is called a \emph{Banach space over $k$} or a \emph{$k$-Banach space}.
\end{definition}

    \begin{remarks}
        \label{rem:finite-dim-Banach-spaces}
        Let $(k,|\cdot|)$ be a NA field.
        \begin{enumerate}
            \item \label{rem:unique-norm-finite-dim} Let $E$ be a finite dimensional $k$-vector space. Then $E$ can be given the structure of a $k$-Banach space as follows: if we fix a basis $\{x_1,\dots,x_n\}$ of $E$ and express any $x \in E$ in terms of the basis as $x = \sum_{i=1}^n a_ix_i$, then one can define
            \[
            ||x|| \coloneqq \max\{|a_i| \colon i = 1,\dots,n\}.     
            \]
            Even though this norm depends on the choice of a basis of $E$, one can show that every norm on $E$ that gives $E$ the structure of a $k$-Banach space is equivalent to the one just defined \cite[Appendix A, Thm.\ 1]{Boschrigid}.

            \item \label{rem:unique-norm-algebraic-ext} Let $\ell$ be an algebraic extension of $k$. Expressing $\ell$ as a filtered colimit of its finite subextensions, one can show that there exists a unique (not just equivalent) norm on $\ell$ that extends the norm on $k$ \cite[Appendix A, Thm.\ 3]{Boschrigid}. However, if $[\ell \colon k] = \infty$, then $\ell$ need not be complete with respect to this norm, that is, $\ell$ need not be a $k$-Banach algebra.
        \end{enumerate}
    \end{remarks}

    \begin{definition}
        \label{def:Banach-algebra}
        A \emph{Banach $k$-algebra} $(A,||\cdot||)$ is a $k$-algebra $A$ such that $A$ is a $k$-Banach space and such that the norm $||\cdot||$ on $A$ satisfies the following additional property: for all $x,y \in A$,
    \[
    ||xy|| \leq ||x||\cdot||y||.    
    \]
    This last property insures that the multiplication operation on $A$ is continuous. The norm $||\cdot||$ is \emph{multiplicative} if equality holds in the above inequality.
    \end{definition}

All $k$-Banach algebras in this paper have multiplicative norms. The main example of a $k$-Banach algebra for us is a Tate algebra.

\begin{definition}
    \label{def:Tate-algebra}
    Let $(k,|\cdot|)$ be a NA field. For every positive integer $n > 0$, the \emph{Tate algebra} in $n$ indeterminates over $k$, denoted, $T_n(k)$, is the $k$-subalgebra of the formal power series ring $k\llbracket X_1,\dots,X_n\rrbracket $ consisting of those power series
    $
    \sum_{\nu \in \mathbb{Z}_{\geq 0}^n} a_\nu X^{\nu}
    $
    (written in multi-index notation) such that $|a_\nu| \to 0$ as $\nu_1 + \dots + \nu_n \to \infty$. An element of $T_n(k)$ is called a \emph{restricted power series}.  A homomorphic image of $T_n(k)$ is called an \emph{affinoid algebra}.
\end{definition}

The Tate algebra becomes a $k$-Banach algebra equipped with the \emph{Gauss norm} \cite[Prop.\ 2.2/3]{Boschrigid}, which is defined as follows: for all $\sum_{\nu \in \mathbb{Z}_{\geq 0}^n} a_\nu X^{\nu} \in T_n(k)$, 
$
\bigg|\bigg|\sum_{\nu \in \mathbb{Z}_{\geq 0}^n} a_\nu X^{\nu}\bigg|\bigg|   \coloneqq \max\{|a_\nu| \colon \nu \in \mathbb{Z}_{\geq 0}^n\}.
$
The Gauss norm on $T_n(k)$ is multiplicative \cite[pp. 13-14]{Boschrigid}. We will always consider $T_n(k)$ as a $k$-Banach algebra with respect to the Gauss norm.

Remarkably, $T_n(k)$ shares many of the properties of the polynomial ring $k[X_1,\dots,X_n]$. We summarize some of these properties below for the reader's convenience.

\begin{theorem}
    \label{thm:Tate-algebras-properties}
    Let $(k,|\cdot|)$ be a NA field, and let $n$ be a positive integer. Then the Tate algebra $T_n(k)$ satisfies the following properties:
    \begin{enumerate}[label=\textnormal{(\alph*)}]
        \item \label{thm:Tate-Noetherian} $T_n(k)$ is Noetherian.
        \item \label{thm:Tate-UFD} $T_n(k)$ is a unique factorization domain.
        \item \label{thm:Tate-Jacobson} $T_n(k)$ is Jacobson, that is, every radical ideal is the intersection of the maximal ideals containing it.
        \item \label{thm:Tate-dimension} All maximal ideals of $T_n(k)$ have height $n$ and are generated by $n$ elements. In particular, $T_n(k)$ has Krull dimension $n$.
        \item \label{thm:Tate-regular} $T_n(k)$ is regular.
        \item \label{thm:Tate-Euclidean-dim1} $T_1(k)$ is a Euclidean domain with associated Euclidean function $T_1(k)\setminus\{0\} \to \mathbb{Z}_{\geq 0}$ given by mapping a restricted power series $f = \sum_{i=0}^\infty a_iX^i$ to the largest index $N$ such that $|a_N| = ||f||$.
        \item \label{thm:Tate-maximal-ideal-residue} If $\mathfrak m$ is a maximal ideal of $T_n(k)$, then $T_n(k)/\mathfrak{m}$ is a finite extension of $k$.
        \item \label{thm:Tate-excellent} $T_n(k)$ is excellent.
        \item \label{thm:Tate-ideals-closed} Every ideal of $T_n(k)$ is closed in the topology on $T_n(k)$ that is induced by the Gauss norm.
    \end{enumerate}
\end{theorem}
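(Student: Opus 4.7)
The plan is to assemble these well-known properties by citing the relevant classical references from \cite{Boschrigid} and \cite{Kiehl-Tate}, with the Weierstrass preparation and division theorems serving as the foundational technical input. Since this is essentially a collection of standard facts gathered for later use, the ``proof'' is really a reference road-map together with a few short deductions. I would organize the items into clusters by which classical machinery produces them.

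First, I would handle \ref{thm:Tate-Euclidean-dim1} directly from the Weierstrass division theorem in one variable: given $f = \sum a_i X^i \in T_1(k) \setminus \{0\}$, the largest index $N$ with $|a_N| = \|f\|$ controls the Weierstrass degree, and division against such an $f$ produces a quotient and remainder whose remainder has strictly smaller associated index. Next, the cluster \ref{thm:Tate-Noetherian}, \ref{thm:Tate-dimension}, \ref{thm:Tate-maximal-ideal-residue} I would deduce in one sweep via Noether normalization for affinoid algebras: after a suitable change of coordinates (automorphism of $T_n(k)$), any nonzero proper ideal contains a Weierstrass polynomial in the last variable over $T_{n-1}(k)$, reducing inductively to module-finite extensions $T_m(k) \hookrightarrow T_n(k)/\mathfrak{a}$. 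Noetherianity follows by induction on $n$ together with Hilbert's basis theorem for the polynomial quotient, the residue field statement in \ref{thm:Tate-maximal-ideal-residue} follows because $T_n(k)/\mathfrak{m}$ is a field module-finite over $k$ via the normalization, and the height computation in \ref{thm:Tate-dimension} then follows from the dimension formula for module-finite extensions together with showing that $\mathfrak{m}$ is generated by $n$ elements (one in each variable after an appropriate change of coordinates). Standard citations for this cluster are \cite[\S2.2 and \S3.1]{Boschrigid}.

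With \ref{thm:Tate-dimension} in hand, \ref{thm:Tate-regular} is immediate: each localization $T_n(k)_{\mathfrak{m}}$ is a Noetherian local ring of dimension $n$ whose maximal ideal is generated by $n$ elements, hence regular; and a Noetherian Jacobson domain that is regular at every maximal ideal is regular everywhere, since the regular locus is open and contains all closed points. For \ref{thm:Tate-UFD}, I would cite \cite[\S2.2]{Boschrigid}: the UFD property can be bootstrapped from the Euclidean case \ref{thm:Tate-Euclidean-dim1} by induction, using that a Weierstrass polynomial is (up to a unit) its canonical polynomial-ring representative, reducing the factorization theory in $T_n(k)$ to that in $T_{n-1}(k)[X_n]$ (a UFD by Gauss's lemma). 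Property \ref{thm:Tate-Jacobson} follows from \ref{thm:Tate-maximal-ideal-residue} and the Nullstellensatz-style density of $\Max(T_n(k))$ in $\Spec(T_n(k))$, again standard from \cite[\S3.2]{Boschrigid}.

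The two remaining items are \ref{thm:Tate-excellent} and \ref{thm:Tate-ideals-closed}. Property \ref{thm:Tate-ideals-closed} is a consequence of the fact that every finitely generated $T_n(k)$-module admits a canonical Banach structure (up to equivalence) and every $T_n(k)$-linear map between such is automatically continuous with closed image, which follows from the open mapping theorem for Banach modules over a complete base; see \cite[\S3.7]{Boschrigid}. Finally, \ref{thm:Tate-excellent} is the deepest statement and is exactly the content of \cite{Kiehl-Tate}, which I would cite directly. The main obstacle in this proof is entirely bibliographic rather than mathematical for \ref{thm:Tate-Noetherian}--\ref{thm:Tate-ideals-closed}, but \ref{thm:Tate-excellent} genuinely requires Kiehl's theorem, whose proof (establishing that the formal fibers of $T_n(k)$ are geometrically regular) is not something I would attempt to reconstruct here.
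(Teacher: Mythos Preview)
Your proposal is correct and takes essentially the same approach as the paper: both treat this theorem as a catalogue of classical facts to be cited from \cite{Boschrigid} and \cite{Kiehl-Tate}, with (e) deduced from (d). The paper's proof is even more compressed than yours, giving only precise pinpoint citations (e.g., \cite[Props.\ 2.2/14--17, Cor.\ 2.2/10, Cor.\ 2.2/12, Cor.\ 2.3/8]{Boschrigid} and \cite[Thm.\ 3.3]{Kiehl-Tate}) without sketching the Weierstrass-based arguments you outline.
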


\begin{proof}[Indication of proof]
    \ref{thm:Tate-Noetherian}--\ref{thm:Tate-dimension} are proved in \cite[Props. 2.2/14--17]{Boschrigid}, while \ref{thm:Tate-regular} follows from \ref{thm:Tate-dimension}. Property \ref{thm:Tate-Euclidean-dim1} is proved in \cite[Cor.\ 2.2/10]{Boschrigid} while \ref{thm:Tate-maximal-ideal-residue} follows by \cite[Cor.\ 2.2/12]{Boschrigid}. The most difficult property to prove is \ref{thm:Tate-excellent}, which is shown in \cite[Thm.\ 3.3]{Kiehl-Tate}. Finally, \ref{thm:Tate-ideals-closed} follows by \cite[Cor.\ 2.3/8]{Boschrigid}.
\end{proof}

\subsection{Continuous maps and the Hahn-Banach extension property} Let $(k,|\cdot|)$ be a NA field and $(E,||\cdot||)$ be a non-trivial normed space over $k$. We will use 
$
\Hom^{\cont}_k(E,k)$
to denote the submodule of the $k$-dual space of $E$ consisting of \emph{continuous} functions $E \to k$. In general, it is not always true that $\Hom^{\cont}_k(E,k) \neq 0$. For example, for an arbitrary NA field $k$ of characteristic $p > 0$, this may fail to hold even for $E = k^{1/p}$ \cite[Section 5]{DattaMurayamaTate}. The existence of such pathological NA fields in prime characteristic was used by Datta and Murayama to show that Tate algebras over such fields have no non-zero $T_n(k)$-linear maps $F_*T_n(k) \to T_n(k)$, thereby giving a negative answer to a folklore conjecture in prime characteristic commutative algebra about the existence of Frobenius splittings for excellent $F$-pure rings.

In this section, we recall a class of NA fields, called \emph{spherically complete} fields, for which there always exist non-zero continuous linear functions $E \to k$. First, we recall some basic facts about continuous maps of normed spaces over NA fields. We will say that a subset $S$ of $E$ is \emph{bounded} if there exists $a \in \mathbb{R}_{\geq 0}$ such that $S$ is contained in the closed ball $B_a(0)$ of radius $a$ centered at $0 \in E$. A sequence $(x_n)_{n \in \mathbb{Z}_{\geq 0}}$ in $E$ is bounded if it is bounded as set. Similarly,  $(x_n)_{n \in \mathbb{Z}_{\geq 0}}$ is \emph{null} if $||x_n|| \to 0$ and $n \to \infty$. With these definitions, we have the following characterization of continuous maps of normed spaces over NA fields. Recall that by our convention, all NA fields are non-trivially valued.

\begin{lemma}\cite[Lem.\ 2.11]{DattaMurayamaTate}
    \label{lem:continuous-maps-normed-spaces}
    Let $(k,|\cdot|)$ be a non-Archimedean field and $(E,{||\cdot||}_E)$, $(F,{||\cdot||}_F)$
    be normed spaces over $k$. Then, for a $k$-linear map $f\colon E \rightarrow F$, the 
    following are equivalent:
    \begin{enumerate}[label=\textnormal{(\alph*)}]
      \item\label{lem:continuouscont} $f$ is continuous.
      \item\label{lem:continuousnull} $f$ maps null sequences to null sequences.
      \item\label{lem:null-to-bounded} $f$ maps null sequences to bounded sequences.
      \item\label{lem:bounded} $f$ maps bounded sets to bounded sets.
      \item\label{lem:one-bounded} There exists $a, b \in \mathbb{R}_{> 0}$ such that 
      $f(B_a(0)) \subseteq B_b(0)$.
      \item\label{lem:continuousconstb} There exists $B \in \mathbb{R}_{> 0}$ such that for
        all $x \in E$, we have 
        $||f(x)||_F \leq B\cdot ||x||_E$.
    \end{enumerate}
    \end{lemma}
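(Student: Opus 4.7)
I will prove the equivalences by establishing the cycle
\ref{lem:continuouscont}$\Rightarrow$\ref{lem:continuousnull}$\Rightarrow$\ref{lem:null-to-bounded}$\Rightarrow$\ref{lem:bounded}$\Rightarrow$\ref{lem:one-bounded}$\Rightarrow$\ref{lem:continuousconstb}$\Rightarrow$\ref{lem:continuouscont}. The two implications \ref{lem:continuouscont}$\Rightarrow$\ref{lem:continuousnull} and \ref{lem:continuousnull}$\Rightarrow$\ref{lem:null-to-bounded} are immediate: continuous linear maps preserve convergence to $0$, and every null sequence (in any normed space) is bounded. The implication \ref{lem:continuousconstb}$\Rightarrow$\ref{lem:continuouscont} is also straightforward because the bound $\|f(x)-f(y)\|_F \leq B\|x-y\|_E$ coming from $k$-linearity shows $f$ is Lipschitz and therefore continuous. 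The implication \ref{lem:bounded}$\Rightarrow$\ref{lem:one-bounded} is automatic, taking $S=B_a(0)$ for any $a>0$.

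The step where I must use that $k$ is non-trivially valued is \ref{lem:null-to-bounded}$\Rightarrow$\ref{lem:bounded} (and similarly \ref{lem:one-bounded}$\Rightarrow$\ref{lem:continuousconstb}). Here is the plan for \ref{lem:null-to-bounded}$\Rightarrow$\ref{lem:bounded}: fix $\pi \in k^\times$ with $0 < |\pi| < 1$, which exists because $|k^\times| \neq \{1\}$. Suppose some bounded set $S \subseteq B_a(0)$ has unbounded image. Extract a sequence $(x_n)\subseteq S$ with $\|f(x_n)\|_F \to \infty$; after passing to a subsequence we may assume $\|f(x_n)\|_F \geq n \cdot |\pi|^{-n}$. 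Set $y_n = \pi^n x_n$. Then
\[
\|y_n\|_E = |\pi|^n \|x_n\|_E \leq |\pi|^n a \longrightarrow 0,
\]
so $(y_n)$ is null, while
\[
\|f(y_n)\|_F = |\pi|^n \|f(x_n)\|_F \geq n,
\]
contradicting \ref{lem:null-to-bounded}. Thus \ref{lem:null-to-bounded}$\Rightarrow$\ref{lem:bounded}.

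Finally, for \ref{lem:one-bounded}$\Rightarrow$\ref{lem:continuousconstb}, assume $f(B_a(0)) \subseteq B_b(0)$ and again fix $\pi \in k^\times$ with $0 < |\pi| < 1$. Given a nonzero $x \in E$, choose the integer $m \in \mathbb{Z}$ so that $|\pi|^m \leq a/\|x\|_E < |\pi|^{m-1}$; then $\pi^m x \in B_a(0)$, so
\[
|\pi|^m \|f(x)\|_F = \|f(\pi^m x)\|_F \leq b,
\]
and the lower bound $|\pi|^m > |\pi|\cdot a/\|x\|_E$ gives $\|f(x)\|_F \leq (b/(a|\pi|))\cdot\|x\|_E$. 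Hence $B = b/(a|\pi|)$ works. The main obstacle in this argument is precisely the rescaling trick required in the two steps above; it is exactly the place where one cannot drop the standing assumption that NA fields are non-trivially valued, because over a trivially valued field the only null sequences are eventually $0$ and every linear map sends bounded sets to bounded sets vacuously, without implying a linear norm bound.
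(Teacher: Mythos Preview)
Your proof is correct. Note that the paper does not actually prove this lemma; it is imported wholesale from \cite[Lem.\ 2.11]{DattaMurayamaTate} and stated without argument, so there is no in-paper proof to compare against. Your cycle of implications is the standard one, and the two places where you invoke the non-triviality of the valuation (the rescaling by powers of $\pi$ in \ref{lem:null-to-bounded}$\Rightarrow$\ref{lem:bounded} and \ref{lem:one-bounded}$\Rightarrow$\ref{lem:continuousconstb}) are exactly where it is needed. One small quibble: your closing parenthetical about trivially valued fields is slightly off, since a normed space over a trivially valued field can still have non-eventually-zero null sequences depending on the norm on $E$; but this does not affect the proof itself.
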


    \autoref{lem:continuous-maps-normed-spaces} shows that for all continuous linear maps $f \colon (E,||\cdot||_E) \to (F,||\cdot||_F)$,
    \[
    ||f|| \coloneqq \sup_{x \neq 0}\bigg\{\frac{||f(x)||_F}{||x||_E}\bigg\}    
    \]
    is finite, that is, all continuous maps of normed spaces over NA fields are \emph{bounded continuous}. We call $||f||$ the \emph{operator norm} or \emph{Lipschitz norm} of $f$. Under this norm, $\Hom_k^{\cont}(E,F)$ also becomes a normed space. 

    We next introduce the Hahn-Banach extension property for normed spaces over NA fields. This is the NA analog of the Hahn-Banach extension property that holds over $\mathbb{R}$ or $\mathbb{C}$.

    \begin{definition}
        \label{def:Hahn-Banach-extension}
        Let $(k,|\cdot|)$ be a NA field and $(E,||\cdot||)$ be a normed space over $k$. Then we say $E$ satisfies the \emph{Hahn-Banach extension property} if for every subspace $D$ of $E$ and every continuous linear functional $f \colon D \to k$, there exists a continuous linear functional $\tilde{f} \colon E \to k$ such that 
        \begin{enumerate}
            \item[$\bullet$] $\tilde{f}|_D = f$, and
            \item[$\bullet$] $||\tilde{f}|| = ||f||$.
        \end{enumerate}
    \end{definition}

Our goal now is to introduce a class of NA fields such that normed spaces over such fields always satisfy the Hahn-Banach extension property.

\begin{definition}
    \label{def:spherically-complete}
    Let $(k,|\cdot|)$ be a real-valued field. We say $k$ is \emph{spherically complete} if, for every decreasing sequence of closed non-empty balls
    \[
    D_1 \supseteq D_2 \supseteq D_3 \supseteq \dots,    
    \]
    the intersection $\bigcap_{i=1}^\infty D_i$ is also non-empty.
\end{definition}

The point here is that we are not assuming that the balls $D_i$ all have a common center. The above condition is equivalent to following: if $\{D_i\}_{i\in I}$ is a collection of closed balls such that $D_i \cap D_j \neq \emptyset$ for $i \neq j$, then $\bigcap_{i \in I} D_i \neq \emptyset$ \cite[Lem.\ 2.3]{vanRooij}.

\begin{remarks}
    \label{rem:spherically-complete-facts}
    In this remark, we assume $(k,|\cdot|_k)$ is a real-valued field such that $|k^\times|_k$ is not the trivial group.
    \begin{enumerate}
        \item If $k$ is spherically complete, then $k$ complete; see \cite[Pg.\ 5]{Perez-Garcia-Schikhof}. That is, by our convention, a non-trivially valued spherically complete field is automatically non-Archimedean.
        \item If $k$ is a NA field such that $|k^\times|_k \cong \mathbb{Z}$, then $k$ is spherically complete \cite[Thm.\ 1.2.13]{Perez-Garcia-Schikhof}. In other words, a NA field whose corresponding valuation ring is a complete discrete valuation ring is spherically complete.
        \item There exist NA fields that are not spherically complete. For example, $\mathbb{C}_p$, the completion of the algebraic closure of the $p$-adic numbers $\mathbb{Q}_p$, is not spherically complete \cite[Thm.\ 1.2.12]{Perez-Garcia-Schikhof}.
        \item Recall that an extension of real-valued fields $(k,|\cdot|_k) \hookrightarrow (\ell, |\cdot|_\ell)$ is \emph{immediate} if $|k^\times|_k = |\ell^\times|_\ell$ and if the induced map on residue fields $k^\circ/k^{\circ\circ} \hookrightarrow \ell^\circ/\ell^{\circ\circ}$ is an isomorphism. If $k$ has no proper immediate extensions, then $k$ is automatically complete because the completion is otherwise a proper immediate extension. Moreover, one can show that such a field is spherically complete \cite[Thm.\ 4.47]{vanRooij} (see also \cite{KaplanskyMaximalValuation}). An arbitrary real-valued field $(k,|\cdot|_k)$ has a maximal immediate extension \cite{KrullMaximalImmediateExtension} (see also \cite[Thm.\ 4.49]{vanRooij}). Such a maximal immediate extension is called a \emph{spherical completion} of $(k,|\cdot|_k)$. Any two spherical completions of $(k,|\cdot|_k)$ are isomorphic as normed spaces over $k$ \cite[Thm.\ 4.43]{vanRooij}. However, there may \emph{not} exist field isomorphism between spherical completions that fix $k$ \cite[Sec.\ 5]{KaplanskyMaximalValuation} (see also \cite[Thm\ 4.59]{vanRooij}).
    \end{enumerate}
\end{remarks}

The main reason why spherically complete fields are important for  us is that normed spaces over such fields satisfy the Hahn-Banach extension property.
\begin{theorem}
    \label{thm:spherically-complete-Hahn-Banach}
    Let $(k,|\cdot|)$ be a NA field.
    \begin{enumerate}[label=\textnormal{(\alph*)}]
        \item \label{thm:spherically-complete-Hahn-Banach:a} If $k$ is spherically complete, then every normed space over $k$ satisfies the Hahn-Banach extension property.
        \item \label{thm:spherically-complete-Hahn-Banach:b} Suppose there exists an infinite dimensional Banach space over $k$ that satisfies the Hahn-Banach extension property. Then $k$ is spherically complete.
    \end{enumerate}
\end{theorem}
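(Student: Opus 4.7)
The plan is to prove part (a) by the classical Ingleton-style one-step-extension-plus-Zorn argument, and to prove part (b) by contrapositive, encoding a decreasing sequence of closed balls in $k$ with empty intersection as an obstruction to the Hahn-Banach extension property inside any infinite-dimensional Banach space over $k$.

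For part (a), I first reduce by Zorn's lemma to the case of extending $f \colon D \to k$ by one extra dimension: given $x_0 \in E \setminus D$, I need to find $c \in k$ so that $\tilde f(d + \lambda x_0) := f(d) + \lambda c$ is an extension of $f$ to $D + kx_0$ satisfying $||\tilde f|| = ||f||$. The norm condition for $\lambda \neq 0$, after dividing by $|\lambda|$ and writing $d' = d/\lambda$, is exactly that $c$ lie in the closed ball
\[
B_{d'} := \{ c \in k : |c - (-f(d'))| \leq ||f|| \cdot ||d' + x_0|| \}
\]
for every $d' \in D$. The crucial calculation checks pairwise intersection: for $d_1', d_2' \in D$, the ultrametric triangle inequality applied to $d_2' - d_1' = (d_2' + x_0) - (d_1' + x_0)$ gives
\[
|(-f(d_1')) - (-f(d_2'))| \leq ||f|| \cdot ||d_2' - d_1'|| \leq ||f|| \max(||d_1' + x_0||, ||d_2' + x_0||),
\]
so the centers of $B_{d_1'}$ and $B_{d_2'}$ are within the larger of the two radii of each other, which forces $B_{d_1'} \cap B_{d_2'} \neq \emptyset$ in an ultrametric. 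Spherical completeness of $k$, in the pairwise-intersection formulation recalled after \autoref{def:spherically-complete}, then provides a common point $c \in \bigcap_{d' \in D} B_{d'}$, and this $c$ yields the desired extension. The Zorn step then concludes.

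For part (b), I prove the contrapositive: if $k$ is not spherically complete, then no infinite-dimensional Banach space $E$ over $k$ satisfies the Hahn-Banach extension property. Fix a decreasing sequence of closed balls $B_n = B_{r_n}[a_n]$ in $k$ with $\bigcap_n B_n = \emptyset$. Completeness of $k$ forces $r_n \not\to 0$ (otherwise the $(a_n)$ form a Cauchy sequence whose limit sits in every $B_n$), so $r_n \downarrow r > 0$. The plan is to exploit infinite-dimensionality of $E$ to pick a linearly independent, suitably separated bounded sequence $(x_n)_{n \geq 1}$ in $E$, together with a vector $x_0 \in E$ which plays the role of a ``pseudo-limit'', and then define a subspace $D \subseteq E$ together with a continuous linear functional $f \colon D \to k$ whose values on the $x_n$ force any norm-preserving extension $\tilde f$ to satisfy $\tilde f(x_0) \in \bigcap_n B_n$. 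Since this intersection is empty, no such $\tilde f$ exists.

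The main obstacle is carrying out the encoding in part (b) for an \emph{arbitrary} infinite-dimensional Banach space $E$, rather than a concrete model such as $c_0(k)$ or $\ell^\infty(k)$ where the construction is classical. This is where one uses that any infinite-dimensional $k$-Banach space admits a linearly independent sequence along which the Gauss-type partial-coefficient norms can be controlled well enough to translate the ball geometry in $k$ into a one-codimensional extension obstruction in $E$; the ultrametric structure ensures that this translation preserves norms exactly. Part (a), by contrast, is essentially bookkeeping once the pairwise intersection calculation is in place.
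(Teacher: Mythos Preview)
The paper's own ``proof'' is simply a citation: part (a) to \cite[Thm.\ 4.8]{vanRooij} and part (b) to \cite[Thm.\ 4.54]{vanRooij}, with no argument given. So you are attempting considerably more than the paper does.

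Your argument for part (a) is correct and is essentially the classical Ingleton proof that appears in the cited reference. The reduction to a one-step extension via Zorn, the reformulation of the norm-preservation condition as membership in a family of closed balls indexed by $D$, the pairwise-intersection check via the ultrametric inequality, and the appeal to spherical completeness in its pairwise-intersection formulation are all standard and sound.

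Part (b), however, has a genuine gap that you yourself flag. You correctly identify the strategy (contrapositive; encode a nested sequence of balls with empty intersection as an obstruction to norm-preserving extension), and you correctly observe that $r_n \not\to 0$ by completeness of $k$. But the sentence beginning ``This is where one uses that any infinite-dimensional $k$-Banach space admits a linearly independent sequence along which the Gauss-type partial-coefficient norms can be controlled\ldots'' is not a proof; it is a statement of what would need to be true. The actual work here is nontrivial: one must produce, inside an \emph{arbitrary} infinite-dimensional Banach space $E$, a sequence $(x_n)$ together with a vector $x_0$ and a subspace $D$ such that the norm geometry of $D + kx_0$ faithfully reflects the ball configuration $(B_n)$ in $k$. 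This requires either a $t$-orthogonal sequence argument or an embedding of a suitable model space, and neither is supplied. As written, your part (b) is a plan, not an argument; to complete it you would need to actually construct the sequence and verify that any norm-preserving extension $\tilde f$ is forced to satisfy $\tilde f(x_0) \in \bigcap_n B_n$.
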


\begin{proof}
    \ref{thm:spherically-complete-Hahn-Banach:a} follows by \cite[Thm.\ 4.8]{vanRooij}, and \ref{thm:spherically-complete-Hahn-Banach:b} follows by \cite[Thm.\ 4.54]{vanRooij}.
\end{proof}

As a consequence, one can deduce the following:

\begin{corollary}
    \label{cor:extending-arbitrary-functionals-finite-dim}
    Let $(k,|\cdot|)$ be a spherically complete NA field and $(E,||\cdot||)$ be a normed space over $k$. If $F$ is a finite dimensional subspace of $E$, then \emph{any} linear functional 
    $
    F \to k    
    $
    extends to a continuous linear functional $E \to k$.
\end{corollary}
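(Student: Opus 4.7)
The plan is to reduce the statement to the Hahn--Banach extension property (\autoref{thm:spherically-complete-Hahn-Banach}\ref{thm:spherically-complete-Hahn-Banach:a}) by showing that every linear functional on the finite-dimensional subspace $F$ is automatically continuous with respect to the restriction of $\|\cdot\|$ to $F$.

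First, I would invoke \autoref{rem:finite-dim-Banach-spaces}\ref{rem:unique-norm-finite-dim}: since $F$ is finite-dimensional over $k$, fixing a basis $x_1,\dots,x_n$ of $F$ gives a ``sup'' norm $\|\sum a_i x_i\|_{\sup} := \max_i |a_i|$, and the restriction $\|\cdot\|\big|_F$ is equivalent to $\|\cdot\|_{\sup}$. Given an arbitrary $k$-linear functional $f\colon F \to k$, setting $B := \max_i |f(x_i)|$ yields $|f(\sum a_i x_i)| \le \max_i |a_i f(x_i)| \le B \cdot \|\sum a_i x_i\|_{\sup}$, so $f$ is bounded in the sup norm and therefore continuous in $\|\cdot\|\big|_F$ by \autoref{lem:continuous-maps-normed-spaces}\ref{lem:continuousconstb}.

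Then I would apply \autoref{thm:spherically-complete-Hahn-Banach}\ref{thm:spherically-complete-Hahn-Banach:a}: since $k$ is spherically complete, the normed space $(E,\|\cdot\|)$ satisfies the Hahn--Banach extension property. Taking $D = F$ and using the continuous linear functional $f$ just obtained, this gives a continuous linear functional $\tilde f\colon E \to k$ with $\tilde f|_F = f$, as desired.

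No serious obstacle is expected here; the only mild subtlety is that the Hahn--Banach statement as formulated in \autoref{def:Hahn-Banach-extension} requires the functional on the subspace to be continuous, which is why the first step, using equivalence of norms in finite dimension, is needed before invoking \autoref{thm:spherically-complete-Hahn-Banach}.
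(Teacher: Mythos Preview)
Your proposal is correct and follows essentially the same two-step approach as the paper: first argue that any linear functional on the finite-dimensional subspace $F$ is automatically continuous, then apply the Hahn--Banach extension property from \autoref{thm:spherically-complete-Hahn-Banach}\ref{thm:spherically-complete-Hahn-Banach:a}. The only cosmetic difference is that the paper dispatches the first step by citing \cite[Thm.\ 3.15, part ii.]{vanRooij} directly, whereas you unpack it via equivalence of norms and an explicit bound; both are fine.
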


\begin{proof}
    Since $F$ is finite dimensional, $F$ is a Banach space and all linear functionals $F \to k$ are continuous by \cite[Thm.\ 3.15, part ii.]{vanRooij}. Since $E$ satisfies the Hahn-Banach extension property by \autoref{thm:spherically-complete-Hahn-Banach}~\ref{thm:spherically-complete-Hahn-Banach:a}, we then see that any functional $F \to k$ extends to a continuous functional $E \to k$.
\end{proof}

\begin{remark}
    \label{rem:things-dont-work-not-spherically-complete}
    Let $(k,|\cdot|)$ be a NA field. If $k$ is not spherically complete, then there always exists a two dimensional normed space $(E,||\cdot||)$ over $k$ and a subspace $D$ of $E$ with a continuous functional $f \colon D \to k$ such that $f$ does \emph{not} admit an extension $\tilde{f} \colon E \to k$ with $||f|| = ||\tilde{f}||$. See \cite[Ex.\ 4.2.9]{Perez-Garcia-Schikhof}. Said differently, if every two-dimensional normed space over a NA field satisfies the Hahn-Banach extension property, then the field is spherically complete.
\end{remark}

In the Hahn-Banach extension property, we required the extension of the functional to have the same (Lipschitz) norm as the functional. It is natural to ask what happens if we relax this latter requirement. For example, can we extend continuous linear functionals over fields that are not spherically complete if we do not require the extension to be norm preserving? In order to answer this question we define a modification of the Hahn-Banach property. The notion is inspired by \autoref{thm:+-epsilon-Hahn-Banach} and was introduced in \cite[Definition 2.12]{DattaMurayamaTate}.

\begin{definition}
    \label{def:1+t-Hahn-Banach}
    Let $(k,|\cdot|)$ be a NA field and $(E,||\cdot||)$ be a normed space over $k$. We say that $E$ satisfies the \emph{$(1+\epsilon)$-Hahn-Banach extension property} if for every subspace $D$ of $E$, every continuous linear functional $f \colon D \to k$ and every real number $\epsilon > 0$, there exists a continuous linear functional $\tilde{f}_\epsilon \colon E \to k$ such that 
    \begin{enumerate}
        \item[$\bullet$] $\tilde{f_\epsilon}|_D = f$, and
        \item[$\bullet$] $||\tilde{f_\epsilon}|| \leq (1+\epsilon)||f||$.
    \end{enumerate}
\end{definition}

With this relaxation, we have the following extension theorem for continuous linear functionals over an arbitrary NA field.

\begin{theorem}
    \label{thm:+-epsilon-Hahn-Banach}
    Suppose $(E,||\cdot||)$ is a normed space over a NA field $(k,|\cdot|)$ that is of \emph{countable type}, that is, $E$ has a dense subspace with a countable $k$-linear basis. Then $E$ satisfies the $(1+\epsilon)$-Hahn-Banach extension property.
\end{theorem}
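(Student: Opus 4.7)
The plan is to establish the $(1+\epsilon)$-Hahn-Banach property by first proving a one-dimensional extension lemma with a prescribed loss, then iterating it countably many times along a countable set whose span is dense in $E$, and finally extending the resulting functional to all of $E$ by continuity and the completeness of $k$. After normalizing to $\|f\| \leq 1$ and replacing $D$ by its closure in $E$ (to which $f$ extends continuously with the same norm by completeness of $k$), I may assume $D$ is closed in $E$.

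The key technical step is the following one-dimensional extension. Let $D \subseteq E$ be closed, $f\colon D \to k$ continuous with $\|f\| \leq 1$, $x \in E \setminus D$, and $\delta > 0$; then there exists a continuous linear extension $\tilde f\colon D + kx \to k$ with $\|\tilde f\| \leq 1+\delta$. Writing $\tilde f(z + tx) := f(z) + ta$, the desired bound is equivalent (after rescaling in $t$) to choosing $a \in k$ with $|a - f(z)| \leq (1+\delta)\|z - x\|$ for every $z \in D$. Since $D$ is closed and $x \notin D$, the distance $r := \inf_{z \in D}\|z - x\|$ is strictly positive. Pick any $z_0 \in D$ with $\|z_0 - x\| \leq (1+\delta) r$, and set $a := f(z_0)$. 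Using the strong triangle inequality and $\|f\| \leq 1$, for every $z \in D$ one computes
\[
|a - f(z)| = |f(z_0 - z)| \leq \|z_0 - z\| \leq \max\bigl(\|z_0 - x\|, \|z - x\|\bigr) \leq (1+\delta)\|z - x\|,
\]
where the last inequality uses $\|z_0 - x\| \leq (1+\delta)r \leq (1+\delta)\|z - x\|$. This lemma relies only on the ultrametric inequality and requires no sphericality hypothesis on $k$.

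With the lemma in hand, fix a sequence $(x_n)_{n \geq 1}$ in $E$ whose $k$-linear span is dense in $E$ (possible since $E$ is of countable type), and choose $\delta_n > 0$ with $\prod_{n \geq 1}(1 + \delta_n) \leq 1 + \epsilon$. Inductively construct closed subspaces $D = D_0 \subseteq D_1 \subseteq \cdots$ of $E$ and continuous extensions $f_n \colon D_n \to k$ of $f$ with $\|f_n\| \leq \prod_{i \leq n}(1+\delta_i)$: if $x_n \in D_{n-1}$ take $D_n := D_{n-1}$ and $f_n := f_{n-1}$, otherwise apply the lemma to extend $f_{n-1}$ across $x_n$ with the factor $(1+\delta_n)$, then pass to the closure (which preserves the norm). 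The union $\bigcup_n D_n$ contains every $x_n$ and all of $D$, so it is dense in $E$, and the $f_n$ glue to a continuous functional $f_\infty$ on $\bigcup_n D_n$ with $\|f_\infty\| \leq 1+\epsilon$. Completeness of $k$ (part of the definition of an NA field) then allows $f_\infty$ to be extended uniquely by continuity to a functional $\tilde f_\epsilon \colon E \to k$ of the same norm, restricting to $f$ on $D$.

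The main obstacle is the one-dimensional extension lemma: in the spherically complete case (\autoref{thm:spherically-complete-Hahn-Banach}\ref{thm:spherically-complete-Hahn-Banach:a}) the family of closed balls $\{B(f(z), \|z-x\|) : z \in D\}$ has a common point and one can take $\delta = 0$, but in general such a common point need not exist. The $(1+\delta)$ slack is precisely what lets a single near-minimizer $z_0$ of $\|z - x\|$ solve the system of ball constraints after inflation. The countable-type hypothesis is what then ensures only countably many one-dimensional steps are needed to reach a dense subspace, so the cumulative norm inflation is controlled by a convergent infinite product.
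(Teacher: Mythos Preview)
Your argument is correct and complete; the paper itself does not prove this statement but simply cites \cite[Cor.\ 4.2.5]{Perez-Garcia-Schikhof} and \cite[Thm.\ 3.16, vi.]{vanRooij}, so you have supplied the standard proof that those references contain: a one--step extension lemma with controlled norm loss, iterated along a countable spanning set, followed by extension by density.

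One small imprecision: the reduction ``normalizing to $\|f\|\le 1$'' does not literally make sense over a non-Archimedean field, since $\|f\|$ is a real number and need not lie in $|k^\times|$, so you cannot scale $f$ to have norm exactly $1$. Fortunately your one--dimensional lemma already yields the multiplicative bound $\|\tilde f\|\le (1+\delta)\|f\|$ with the same proof (replace each ``$\le \|z_0-z\|$'' by ``$\le \|f\|\cdot\|z_0-z\|$''), and then the iteration gives $\|f_n\|\le \|f\|\prod_{i\le n}(1+\delta_i)$ directly, with no normalization needed. With that adjustment the conclusion $\|\tilde f_\epsilon\|\le (1+\epsilon)\|f\|$ follows exactly as you wrote.
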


 \begin{proof}
    This follows by \cite[Cor.\ 4.2.5]{Perez-Garcia-Schikhof} (see also \cite[Thm.\ 3.16, vi.]{vanRooij}). 
 \end{proof}

\subsection{Orthogonal bases}
Another reason why spherically complete fields are nice has to do with the concept of (norm) orthogonality (see also a generalization in \autoref{def:t-orthogonality}). We will perhaps use a non-standard, but equivalent, definition of this notion.

\begin{definition}\cite[Thm.\ 2.2.3, Def.\ 2.2.4, Def.\ 2.2.6]{Perez-Garcia-Schikhof}
    \label{def:orthogonality}
    Let $(E,||\cdot||)$ be a normed space over a NA field $(k,|\cdot|)$. For $x,y \in E$, we say $x$ and $y$ are \emph{orthogonal}, denoted $x\perp y$, if for all $a, b \in k$,
    $
    ||ax + by|| = \max\{||ax||,||by||\}.    
    $
    Two subsets $C, D \subseteq E$ are \emph{orthogonal} if for all $x \in C, y \in D$, we have $x \perp y$. This is denoted $C \perp D$. A subset $X \subseteq E$ is an \emph{orthogonal system} if $0 \notin X$ and if for all $x \in X$, we have $\{x\} \perp \Span(X\setminus\{x\})$.
\end{definition}

We now make a few observations about this concept.

\begin{remarks}
    \label{rem:orthogonality}
    Let $(E,||\cdot||)$ be a normed space over a NA field $(k,|\cdot|)$ and $x, y \in E$. We have the following:
    \begin{enumerate}
        \item \label{rem:orthogonality.1} If $x \perp y$, then $y \perp x$. That is, the notion of orthogonality is symmetric. Moreover, $0 \perp x$ for all $x \in E$ and $x \perp x$ precisely when $x = 0$. Indeed, for the last assertion, one has 
        $
        0 = ||x + (-x)|| = \max\{||x||,||-x||\} = ||x||.    
        $
        If $x \perp y$, then for all $a, b \in k$, one has $ax \perp by$.

        \item \label{rem:orthogonality.2} By induction on $n$ it follows that a finite set $\{x_1,\dots,x_n\}$ is an orthogonal system if and only if for all $a_1,\dots,a_n \in k$, one has 
        $
        ||a_1x_1 + \dots + a_nx_n|| = \max\{||a_1x_1||,\dots,||a_nx_n||\}.    
        $
        Consequently, if the $x_i$'s are all non-zero, the set $\{x_1,\dots,x_n\}$ is $k$-linearly independent.

        \item \label{rem:orthogonality.3} $X \subset E$ is an orthogonal system if and only if every finite subset of $X$ is an orthogonal system. By \autoref{rem:orthogonality.2}, this implies that orthogonal systems are always linearly independent. By Zorn's Lemma, every orthogonal system is contained in a maximal orthogonal system. Moreover, one can show that maximal orthogonal systems in a Banach space over $k$ have the same cardinality. See \cite[Thm.\ 5.4]{vanRooij}.

        \item \label{rem:orthogonality.4} If $\{e_i\}_{i \in I} \subset E$ is an orthogonal system and if for all $i$ we choose $t_i \in k^\times$, then $\{t_ie_i\}_{i \in I}$ is also an orthogonal system.

        \item \label{rem:orthogonality.5} The notion of an orthogonal system, as defined, is not equivalent to saying that any two distinct elements of the set are pairwise orthogonal. For instance, consider $k^2$ with the norm 
        $
        ||(a,b)|| = \max\{|a|,|b|\}.    
        $
        Then one can check that the elements $(1,0),(0,1),(1,1)$ are pairwise orthogonal. However, the set $X \coloneqq \{(1,0),(0,1),(1,1)\}$ is not an orthogonal system because $(1,1) \in \Span(X\setminus\{(1,1)\})$, and a nonzero vector cannot be orthogonal to itself by \autoref{rem:orthogonality.1}.
    \end{enumerate}
\end{remarks}

The previous remarks raise the natural question of when a normed space over a field has a $k$-vector space basis that is also an orthogonal system.

\begin{theorem}
    \label{thm:finite-dimensional-orthogonal-basis}
    Let $(E,||\cdot||)$ be a finite dimensional normed space over a NA $(k,|\cdot|)$ that is spherically complete. Then $E$ has a $k$-vector space basis that is an orthogonal system, that is, there exists a $k$-basis $\{x_1,\dots,x_n\}$ of $E$ such that for all $a_1,\dots,a_n \in k$, 
    $
    ||a_1x_1 + \dots + a_nx_n|| = \max\{||a_1x_1||,\dots,||a_nx_n||\}.    
    $
\end{theorem}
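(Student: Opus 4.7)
Proof plan: The approach is induction on $n = \dim_k E$. The base case $n = 1$ is immediate, since any single nonzero vector forms an orthogonal system by \autoref{rem:orthogonality}\ref{rem:orthogonality.1}. For the inductive step with $n \geq 2$, choose any hyperplane $F \subset E$ of dimension $n-1$ and apply the inductive hypothesis to obtain an orthogonal basis $\{x_1, \ldots, x_{n-1}\}$ of $F$ (equipped with the restricted norm). The entire problem then reduces to producing a vector $x_n \in E \setminus F$ with $\{x_n\} \perp F$; together with the inductive basis this yields an orthogonal basis of $E$, because the combined system remains orthogonal by the compatibility between $\{x_n\} \perp F$ and the orthogonality of $\{x_1,\ldots,x_{n-1}\}$, and is automatically a $k$-basis by \autoref{rem:orthogonality}\ref{rem:orthogonality.2}--\ref{rem:orthogonality.3}.

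The first key step is the characterization that $\{w\} \perp F$ if and only if $d(w, F) := \inf_{z \in F} \|w - z\| = \|w\|$. One direction is immediate (take $a = 1, b = -1$ in the definition of orthogonality). The converse is a short case analysis based on the ultrametric inequality: given $\|w + z\| \geq \|w\|$ for every $z \in F$, one verifies $\|aw + bz\| = \max\{\|aw\|, \|bz\|\}$ for all $a, b \in k, z \in F$ by splitting on whether $\|aw\| > \|bz\|$, $\|aw\| < \|bz\|$, or $\|aw\| = \|bz\|$, using \autoref{lem:norm-becomes-max} in the unequal cases and exploiting the hypothesis $\|w + (b/a)z\| \geq \|w\|$ in the equal case (after scaling by $a^{-1}$). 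Consequently, if I can find $y_0 \in E \setminus F$ and $z_0 \in F$ attaining $d(y_0, F)$, then $x_n := y_0 - z_0$ is the desired vector.

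The main step is thus to establish the attainment of $d(y, F)$ for every $y \in E$, which I prove by a secondary induction on $m = \dim F$ using the orthogonal basis. The case $m = 0$ is trivial. For $m \geq 1$, set $F' := \Span(x_1, \ldots, x_{m-1})$; then $d(y, F) = \inf_{a \in k} \phi(a)$ where $\phi(a) := d(y - a x_m, F')$, and by the inductive hypothesis $\phi(a)$ is itself attained for every $a$. Set $\delta := \inf_{a \in k} \phi(a)$. If $\delta = 0$ then $y \in \overline{F} = F$ (using that $F$ is closed as a finite-dimensional subspace over the complete field $k$, by \autoref{rem:finite-dim-Banach-spaces}\ref{rem:unique-norm-finite-dim}) and the infimum is trivially attained by $z_0 = y$. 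Otherwise, for each integer $r \geq 1$ consider $B_r := \{a \in k : \phi(a) \leq \delta + 1/r\}$, which is non-empty by definition of $\delta$. The key claim is that $B_r$ is a closed ball in $k$. Indeed, given $a, a' \in B_r$ with respective minimizers $z, z' \in F'$ for $\phi(a), \phi(a')$, the identity
\[
(a' - a) x_m + (z - z') = (y - a x_m - z) - (y - a' x_m - z')
\]
combined with the ultrametric inequality yields $\|(a' - a) x_m + (z - z')\| \leq \delta + 1/r$; since $\{x_m\} \perp F'$ (inherited from the orthogonality of $\{x_1,\ldots,x_m\}$ in $F$), the left side equals $\max\{|a'-a| \cdot \|x_m\|, \|z - z'\|\}$, forcing $|a - a'| \cdot \|x_m\| \leq \delta + 1/r$. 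The reverse inclusion, namely that every $a' \in k$ with $|a' - a_0| \cdot \|x_m\| \leq \delta + 1/r$ (for any fixed $a_0 \in B_r$ with minimizer $z_0 \in F'$) lies in $B_r$, follows from the estimate $\phi(a') \leq \|y - a' x_m - z_0\| \leq \max\{\phi(a_0), |a' - a_0| \cdot \|x_m\|\} \leq \delta + 1/r$. Hence $B_r$ is precisely the closed ball of radius $(\delta + 1/r)/\|x_m\|$ around any of its points. The $B_r$ are nested and nonempty, so by spherical completeness of $k$ the intersection $\bigcap_r B_r$ contains some $a^*$, which satisfies $\phi(a^*) = \delta$; writing $\phi(a^*) = \|y - a^* x_m - z^*\|$ via the inductive hypothesis produces $z^* + a^* x_m \in F$ realizing $d(y, F)$.

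The principal obstacle is the concluding nested-ball argument, where spherical completeness is essential. Without it, the level sets $B_r$ still have the structure of nested closed balls in $k$, but their intersection can be empty, and only approximate orthogonality is achievable --- this is exactly parallel to the $(1+\epsilon)$-Hahn-Banach situation of \autoref{thm:+-epsilon-Hahn-Banach}. Once the distance attainment is in hand, the outer induction on $n$ assembles an orthogonal basis of $E$ with no further difficulty.
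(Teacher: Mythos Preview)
Your proof is correct. The paper itself does not give a self-contained argument; it simply cites the result from Bosch--G\"untzer--Remmert's \emph{Non-Archimedean Analysis}, noting that in their terminology a normed space is called $k$-Cartesian precisely when every finite-dimensional subspace admits an orthogonal basis. Your argument---induction on dimension, reduction to finding a vector orthogonal to a hyperplane via the best-approximation characterization $\{w\}\perp F \Longleftrightarrow d(w,F)=\|w\|$, and then a nested-balls argument using spherical completeness to show that $d(y,F)$ is always attained---is essentially the standard proof that underlies the cited reference. The advantage of your write-up is that it is fully self-contained and makes transparent exactly where spherical completeness enters (namely, in guaranteeing $\bigcap_r B_r \neq \emptyset$ for the descending chain of level sets); the paper simply outsources this to the literature.
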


\begin{proof}
    This follows by \cite[2.4.4/2]{BGR}. In their terminology, saying that a normed space is $k$-Cartesian means that every finite dimensional subspace has a $k$-vector space basis that is an orthogonal system \cite[2.4.3/2, 2.4.1/1]{BGR}.
\end{proof}

    There is a variant of \autoref{thm:finite-dimensional-orthogonal-basis} that holds for finite dimensional normed spaces over an \emph{arbitrary} NA field $(k,|\cdot|)$. We first introduce the relevant definition.
    
    \begin{definition}\cite[Pg.\ 27]{Perez-Garcia-Schikhof}
        \label{def:t-orthogonality}
        Let $(E,||\cdot||)$ be a normed space over a NA field $(k,|\cdot|)$. Fix a $t \in (0,1]$. A subset $X \subset E \setminus \{0\}$ is a \emph{$t$-orthogonal system} if for all $n \in \mathbb{Z}_{> 0}$, for all distinct $x_1,\dots,x_n \in X$ (if they exist) and for all $a_1,\dots,a_n \in k$, 
    $
    ||a_1x_1+\dots+a_nx_n|| \geq t\cdot\max\{||a_1x_1||,\dots,||a_nx_n||\}.    
    $
    \end{definition}
    
    Since normed spaces satisfy the NA-triangle inequality, a $1$-orthogonal system is precisely an orthogonal system in our earlier terminology. Like the orthogonality property, $t$-orthogonality is preserved under scaling elements of $X$ by elements of $k^\times$ \cite[Rem.\ 2.2.17]{Perez-Garcia-Schikhof}.  Furthermore, $t$-orthogonal systems are also $k$-linearly independent. 
    
    The relevant result about $t$-orthogonal systems is: 
    \begin{theorem}\cite[Thm.\ 3.15,~iii.]{vanRooij}
        \label{thm:t-orthogonal-basis}
        Let $(k,|\cdot|)$ be a NA field, $(E,||\cdot||)$ be a finite dimensional normed space over $k$, and $t \in (0,1)$. Then $E$ has a $k$-vector space basis that is a $t$-orthogonal system, that is, there exists a $k$-basis $\{x_1,\dots,x_n\}$ of $E$ such that for all $a_1,\dots,a_n \in k$,
        $
        ||a_1x_1+ \dots + a_nx_n|| \geq t\cdot\max\{||a_1x_1||,\dots,||a_nx_n||\}.
        $
    \end{theorem}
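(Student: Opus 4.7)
The plan is to induct on $n = \dim_k E$. The base case $n=1$ is trivial: any nonzero vector is a $1$-orthogonal (hence $t$-orthogonal) system. For the inductive step, I would construct the basis greedily one vector at a time, using a non-Archimedean approximation lemma (a Riesz-type lemma for NA normed spaces).

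The first step I would carry out is the auxiliary lemma: for any proper finite-dimensional subspace $F \subsetneq E$, any $x \in E \setminus F$, and any $s \in (0,1)$, there exists $v \in F$ such that $\|(x-v) + w\| \geq s\,\|x-v\|$ for every $w \in F$. To prove this, I would set $d := \inf_{f \in F}\|x - f\|$; since $F$ is finite-dimensional over the complete NA field $k$, it is complete (by \autoref{rem:finite-dim-Banach-spaces}(1)), hence closed in $E$, so $d > 0$. Choosing $v \in F$ with $\|x-v\| \leq d/s$, one gets $\|(x-v)+w\| = \|x - (v-w)\| \geq d \geq s\,\|x-v\|$, as $v - w \in F$.

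Next I would fix parameters $s_1, \ldots, s_{n-1} \in (0,1)$ with $\prod_{k=1}^{n-1} s_k \geq t$ (possible since $t < 1$), pick any $x_1 \in E \setminus \{0\}$, and inductively, having built a basis $\{x_1,\ldots,x_k\}$ of the subspace $F_k$ with $k < n$, apply the auxiliary lemma with parameter $s_k$ and any $x \in E \setminus F_k$ to produce $x_{k+1}$ satisfying $\|x_{k+1} + w\| \geq s_k\,\|x_{k+1}\|$ for all $w \in F_k$. It remains to verify that $\{x_1,\ldots,x_k\}$ is $u_k$-orthogonal where $u_k := \prod_{j < k} s_j$. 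Writing $w := a_1 x_1 + \cdots + a_k x_k \in F_k$, a case analysis based on \autoref{lem:norm-becomes-max} (splitting on whether $\|w\| \leq \|a_{k+1} x_{k+1}\|$ or not) yields both $\|w + a_{k+1} x_{k+1}\| \geq s_k\,\|a_{k+1} x_{k+1}\|$ (directly from the choice of $x_{k+1}$) and $\|w + a_{k+1} x_{k+1}\| \geq s_k\,\|w\|$. Combining with the inductive bound $\|w\| \geq u_k \max_{i \leq k}\|a_i x_i\|$ then gives $u_{k+1} = s_k u_k$-orthogonality of $\{x_1,\ldots,x_{k+1}\}$. Since $u_n = \prod_{k=1}^{n-1} s_k \geq t$, the result follows.

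The main obstacle is the compounding loss of orthogonality: each application of the auxiliary lemma costs a multiplicative factor strictly less than $1$, and these factors accumulate over $n-1$ inductive steps. The resolution exploits the strict inequality $t < 1$ in the hypothesis to distribute the slack across all steps by choosing each $s_k$ sufficiently close to $1$. This is also the conceptual reason one cannot hope to achieve $t = 1$ in this generality: that case (\autoref{thm:finite-dimensional-orthogonal-basis}) requires the infimum $d$ in the auxiliary lemma to be attained, which forces spherical completeness of $k$.
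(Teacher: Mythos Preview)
The paper does not prove this statement; it is quoted directly from van Rooij's book \cite[Thm.\ 3.15, iii.]{vanRooij} and used as a black box. Your proposed proof is correct and is essentially the standard Riesz-lemma argument one finds in the non-Archimedean functional analysis literature (including van Rooij).

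Two minor remarks on presentation. First, \autoref{lem:norm-becomes-max} as stated in the paper is only for elements of a real-valued field, not for vectors in a normed space; however, the identical proof works in any non-Archimedean normed space, so your case analysis goes through unchanged. Second, \autoref{rem:finite-dim-Banach-spaces}(1) is phrased in terms of norms that already make $E$ a Banach space, whereas what you need is that \emph{every} norm on a finite-dimensional space over a complete non-trivially valued field is automatically complete (hence the subspace $F$ is closed and $d>0$); this is the actual content of \cite[Appendix A, Thm.\ 1]{Boschrigid} cited there, so the reference is appropriate even if the paraphrase in the remark is slightly narrower than what you invoke.
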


\subsection{Tate algebras and the Ohm-Rush trace property} Recall that the Ohm-Rush trace property encapsulates when an $R$-module $M$ has `sufficiently many' $R$-linear maps $M \to R$ (see \cite[Section 4.1]{DattaEpsteinTucker}). We now exhibit certain classes of NA extensions for which the corresponding extensions of Tate algebras are Ohm-Rush trace.

\begin{theorem}
    \label{thm:ORT-spherically-complete-countable-type}
    Let $(k,|\cdot|_k) \hookrightarrow (\ell, |\cdot|_\ell)$ be an extension of NA fields such that one of the following conditions is satisfied:
    \begin{itemize}
        \item $k$ is spherically complete, or
        \item $\ell$ is of countable type over $k$, that is, $\ell$ has a dense subspace over $k$ with a countable basis. 
    \end{itemize}
     Then for all integers $n > 0$, the extension of Tate algebras
    $
    T_n(k) \hookrightarrow T_n(\ell)    
    $
    is Ohm-Rush trace. Moreover, $T_n(k) \hookrightarrow T_n(\ell)$ is a split faithfully flat extension.
\end{theorem}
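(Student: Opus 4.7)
The plan proceeds in three stages: verifying faithful flatness, constructing the splitting, and establishing the ORT property.

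First, I would verify faithful flatness. The natural map $T_n(k)\otimes_k\ell\to T_n(\ell)$ is an injection exhibiting $T_n(\ell)$ as the Banach completion of the free $T_n(k)$-module $T_n(k)\otimes_k\ell$ with respect to the extended Gauss norm. Since $\ell$ is $k$-flat (being a vector space) and standard Tate algebra theory (\cite{Boschrigid,Kiehl-Tate}) shows that this completion preserves flatness, $T_n(\ell)$ is $T_n(k)$-flat. Faithful flatness then follows because each maximal ideal $\fram$ of $T_n(k)$ has finite residue field extension $T_n(k)/\fram \hookrightarrow T_n(\ell)/\fram T_n(\ell)$, so $\fram T_n(\ell) \neq T_n(\ell)$.

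For the splitting, I would apply the Hahn-Banach-type extension theorems. In the spherically complete case, \autoref{thm:spherically-complete-Hahn-Banach} extends the identity functional on $k\cdot 1 \subseteq \ell$ to a bounded $k$-linear map $\phi_0\colon\ell\to k$ with $\phi_0(1)=1$ and $\|\phi_0\|=1$. In the countable-type case, \autoref{thm:+-epsilon-Hahn-Banach} yields the same with $\|\phi_0\|\leq 1+\epsilon$ for arbitrarily small $\epsilon>0$. The coefficient-wise extension $\tilde\phi_0\colon T_n(\ell)\to T_n(k)$, defined by $\sum_\nu a_\nu X^\nu \mapsto \sum_\nu\phi_0(a_\nu)X^\nu$, is well-defined because $|\phi_0(a_\nu)|\leq\|\phi_0\|\cdot|a_\nu|\to 0$; it is $T_n(k)$-linear and restricts to the identity on $T_n(k)$, giving the splitting.

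For the ORT property, given $f=\sum_\nu a_\nu X^\nu \in T_n(\ell)$, let $V\subseteq\ell$ denote the closed $k$-subspace generated by the (countable and null) coefficient set $\{a_\nu\}$. Then $V$ is a $k$-Banach space of countable type, and by van Rooij's theorem on $t$-orthogonal Schauder bases for countable-type Banach spaces (generalizing \autoref{thm:t-orthogonal-basis} to the infinite-dimensional setting), $V$ admits such a basis $\{e_j\}_{j\geq 1}$ for any fixed $t\in(0,1)$. The associated coordinate functionals $\phi_j\colon V\to k$ are continuous, extend to continuous $k$-linear maps $\ell\to k$ via Hahn-Banach (or its $(1+\epsilon)$ version), and then lift coefficient-wise to $T_n(k)$-linear maps $\tilde\phi_j\colon T_n(\ell)\to T_n(k)$. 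Setting $h_j := \tilde\phi_j(f) \in T_n(k)$, one obtains a convergent expansion $f=\sum_j h_j e_j$ in $T_n(\ell)$, with each $h_j \in \Tr_{T_n(\ell)}(f)$.

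The main obstacle is converting this infinite convergent series into the \emph{finite} sum $f=\sum_{i=1}^m r_i g_i$ with $r_i\in\Tr_{T_n(\ell)}(f)$ and $g_i\in T_n(\ell)$ demanded by ORT. The resolution exploits Noetherianity of $T_n(k)$: the ideal $\Tr_{T_n(\ell)}(f)$ is finitely generated, say by $r_1,\ldots,r_m$ with $r_i=\tilde\phi_{j_i}(f)$. The plan is then to show that the tail $\sum_{j\notin\{j_1,\ldots,j_m\}} h_j e_j$ belongs to $\Tr_{T_n(\ell)}(f)\cdot T_n(\ell)$, by expressing each remaining $h_j$ as a $T_n(k)$-linear combination of the $r_i$'s and reordering the resulting double series using the $t$-orthogonality of $\{e_j\}$ together with the norm estimates governing convergence in $T_n(\ell)$. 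Combined with the automatically-in-$\Tr(f)\cdot T_n(\ell)$ main part $\sum_{i=1}^m h_{j_i}e_{j_i}$, this yields the required finite expression. Making this rearrangement rigorous, so that the interchange of summation produces elements of $T_n(\ell)$ with controlled norms, is the technically most delicate step of the argument.
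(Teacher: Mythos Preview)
Your splitting argument matches the paper's. Your approach to the ORT property, however, differs substantially and leaves the crucial step incomplete.

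The paper never attempts a direct finite expression for $f$. Instead, it exploits that ideals in Tate algebras are closed (\autoref{thm:Tate-algebras-properties}\ref{thm:Tate-ideals-closed}): it suffices to show $f$ lies in the closure of $\Tr(f)T_n(\ell)$. For each $\epsilon>0$, the paper truncates $f$ to the polynomial $f_\epsilon$ with coefficients of norm $\geq\epsilon$, takes a $t$-orthogonal basis of the \emph{finite-dimensional} $k$-span of those coefficients (via \autoref{thm:t-orthogonal-basis}), extends the dual functionals to $\ell$ by Hahn-Banach, lifts them coefficient-wise to maps $T_n(\ell)\to T_n(k)$, and forms $g_\epsilon\in\Tr(f)T_n(\ell)$ with $\|f-g_\epsilon\|<C\epsilon$. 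No Schauder bases, no rearrangement.

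Your route via an infinite $t$-orthogonal Schauder basis for the closed span $V$ of the coefficients can be made to work, but the step you flag as ``technically most delicate'' is where the real content lies, and you have not supplied it. There are two ways out. First, observe that your partial sums $\sum_{j\leq N} h_j e_j$ already lie in $\Tr(f)T_n(\ell)$ and converge to $f$; since $\Tr(f)T_n(\ell)$ is a closed ideal of $T_n(\ell)$, you are done immediately---no rearrangement needed. This is exactly the closedness insight the paper uses. Second, if you insist on the rearrangement, you need the open mapping theorem for non-Archimedean Banach spaces: the surjection $T_n(k)^m\twoheadrightarrow\Tr(f)$ is open, so one can choose coefficients $c_{ij}$ with $\|c_{ij}\|\leq C\|h_j\|$, making $\sum_j c_{ij}e_j$ converge. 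Without invoking one of these, your argument has a genuine gap, since writing $h_j=\sum_i c_{ij}r_i$ gives no a priori control on $\|c_{ij}\|$.
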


\begin{remark}
    \label{rem:faithful-flatness-extensions-Tate}
    Note that for any extension of NA fields $(k,|\cdot|_k) \hookrightarrow (\ell, |\cdot|_\ell)$, the induced extension of Tate algebras $T_n(k) \hookrightarrow T_n(\ell)$ is known to be faithfully flat. For instance, this follows by \cite[Lem.\ 2.1.2]{BerkovichEtale} and the fact that $T_n(\ell) = T_n(k) \widehat{\otimes}_k \ell$ \cite[Appendix B, Prop.\ 5]{Boschrigid}. However, the point of the final assertion of \autoref{thm:ORT-spherically-complete-countable-type} is that faithful flatness in the theorem's setup is also a consequence of the Ohm-Rush trace property and the fact that the extension is split.
\end{remark}

We will utilize the following lemma in the proof of \autoref{thm:ORT-spherically-complete-countable-type}.

\begin{lemma}
    \label{lem:uniform-scaling}
    Let $(k,|\cdot|)$ be a real-valued field such that $|k^\times|$ is not the trivial group. Let $(E,||\cdot||)$ be a normed space over $k$. Fix any $M \in |k^\times|$ such that $M > 1$. Then for any $x \in E \setminus \{0\}$, there exists $c \in k^{\times}$ such that $1 \leq ||cx|| < M$.
\end{lemma}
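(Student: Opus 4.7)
The plan is a direct computation. Since $x \neq 0$, the norm axioms imply $\|x\| > 0$, so it is enough to produce $c \in k^\times$ with
\[
\frac{1}{\|x\|} \leq |c| < \frac{M}{\|x\|},
\]
because then $\|cx\| = |c|\cdot \|x\|$ will lie in $[1, M)$ as required.

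First I would fix $\alpha \in k^\times$ with $|\alpha| = M$, which exists by hypothesis since $M \in |k^\times|$. Because $M > 1$, the real numbers $\{M^n : n \in \mathbb{Z}\} = \{|\alpha^n| : n \in \mathbb{Z}\}$ are unbounded above and accumulate at $0$. In particular, setting $r := 1/\|x\| > 0$, there exists a (unique) integer $n$ with $M^n \leq r < M^{n+1}$, equivalently $r \leq M^{-n} \cdot M < M r$ after rewriting; more precisely, choosing $n$ to be the unique integer in the half-open interval $[\log_M r,\ \log_M r + 1)$ gives $r \leq M^n < Mr$.

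Set $c := \alpha^n \in k^\times$. Then $|c| = M^n$ lies in $[r, Mr) = [1/\|x\|,\ M/\|x\|)$, so
\[
1 \leq |c|\cdot \|x\| < M,
\]
i.e., $1 \leq \|cx\| < M$. There is no real obstacle here; the only input beyond the definition of a normed space is that $M > 1$ lies in the value group $|k^\times|$, which makes $|k^\times|$ contain arbitrarily large and arbitrarily small positive real numbers of the form $M^n$, and hence dense enough modulo the multiplicative ratio $M$ to hit any half-open interval of the form $[r, Mr)$.
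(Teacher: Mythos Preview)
Your argument is correct and is essentially the paper's proof: both fix $\alpha \in k^\times$ with $|\alpha|=M$, use that $\mathbb{R}_{>0}=\bigsqcup_{e\in\mathbb{Z}}[M^e,M^{e+1})$ to locate the relevant norm, and take $c$ to be the appropriate integer power of $\alpha$ (the paper finds $e$ with $M^e\le\|x\|<M^{e+1}$ and sets $c=\alpha^{-e}$, which is your $c=\alpha^n$ with $n=-e$). One cosmetic slip to clean up: the condition ``$M^n\le r<M^{n+1}$'' you first write is \emph{not} equivalent to the condition ``$r\le M^n<Mr$'' you actually use and need, so just delete the former and keep the latter.
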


\begin{proof}[\textbf{Proof of \autoref{lem:uniform-scaling}}]
    The existence of an $M \in |k^\times|$ such that $M > 1$ follows because $|k^\times|$ is not the trivial group. Let $\alpha \in k$ such that $|\alpha| = M$. Since $||x||\neq 0$ and
    $
    \displaystyle \mathbb{R}_{> 0} = \bigsqcup_{e \in \mathbb{Z}} [M^e, M^{e+1}),    
    $
    there exists a unique $e \in \mathbb{Z}$ such that $M^e \leq ||x|| < M^{e+1}$. Then taking $c \coloneqq \alpha^{-e}$ we get the desired result.
\end{proof}

\begin{proof}[\textbf{Proof of \autoref{thm:ORT-spherically-complete-countable-type}}] 
    If we can show that $T_n(k) \hookrightarrow T_n(\ell)$ is Ohm-Rush trace, then flatness will follow because Ohm-Rush trace modules are flat; see \cite[Rem.\ 5.1.2(g)]{DattaEpsteinTucker} or \cite[Sec.\ 7, Pg.\ 66]{OhmRu-content}. For faithfulness, it suffices to show that $T_n(k) \hookrightarrow T_n(\ell)$ is a split extension. Using \autoref{thm:spherically-complete-Hahn-Banach}~\ref{thm:spherically-complete-Hahn-Banach:a} or \autoref{thm:+-epsilon-Hahn-Banach} one observes that $\id_k \colon k \to k$ extends to a continuous functional $\phi \colon \ell \to k$. Note that $\phi(1) = 1$. Since $\phi$ is continuous, it maps null sequences to null sequences by \autoref{lem:continuous-maps-normed-spaces}~\ref{lem:continuousnull}, and so, it induces a $T_n(k)$-linear map 
    \begin{align*}
    \overline{\phi} \colon T_n(\ell) &\to T_n(k)\\
    \sum_{\nu \in \mathbb{Z}^n_{\geq 0}} c_\nu X^\nu &\mapsto \sum_{\nu \in \mathbb{Z}^n_{\geq 0}} \phi(c_\nu)X^\nu.
    \end{align*}
    By construction, $\overline{\phi}$ sends $1 \mapsto 1$, that is, $\overline{\phi}$ is a left-inverse of $T_n(k) \hookrightarrow T_n(\ell)$.
    
    We will prove both cases simultaneously. We fix at the outset an $M \in |k^\times|$ such that $M > 1$ (note $k$ is non-trivially valued by definition of a NA field).

    Let $f \in T_n(\ell)$ and let
    \[
    \Tr(f) \coloneqq \im\left(\Hom_{T_n(k)}(T_n(\ell), T_n(k)) \xrightarrow{\ev @ f} T_n(k)\right).    
    \]
    We want to show that $f \in \Tr(f)T_n(\ell)$. 
    
    Recall that ideals in Tate algebras are closed (\autoref{thm:Tate-algebras-properties}~\ref{thm:Tate-ideals-closed}).  Thus, we would be done if we can show that there exists a constant $C > 0$ such that for all real numbers $\epsilon > 0$, there exists a $g_\epsilon \in \Tr(f)T_n(\ell)$ that satisfies 
    \begin{equation}
        \label{eq:bound-to-establish}
        ||f - g_\epsilon|| < C\epsilon.
    \end{equation}

    Fix $\epsilon > 0$. Suppose $f = \sum_{\nu \in \mathbb{Z}_{\geq 0}^n} a_\nu X^{\nu}$, and consider the polynomial 
    $
    f_\epsilon \coloneqq \sum_{|a_\nu|_\ell \geq \epsilon} a_\nu X^\nu.
    $
    By definition of the Gauss norm on Tate algebras, 
    $
    ||f - f_{\epsilon}|| < \epsilon.
    $ 
    Let $\ell_\epsilon$ be the $k$-subspace of $\ell$ spanned by the (finitely many) coefficients of $f_\epsilon$, that is, 
    $
    \ell_\epsilon = \Span_k\{a_\nu \colon |a_\nu|_\ell \geq \epsilon\}.    
    $
    Then $\ell_\epsilon$ is a finite dimensional $k$-vector space. Using \autoref{thm:t-orthogonal-basis}, fix a $k$-vector space basis 
    $
    \{x_{\epsilon, 1},\dots,x_{\epsilon,n_{\epsilon}}\}    
    $
    of $\ell_\epsilon$ that is a $1/2$-orthogonal system\footnote{Instead of $1/2$ one can choose any $t \in (0,1)$. When $k$ is spherically complete, one can take $t=1$ by \autoref{thm:finite-dimensional-orthogonal-basis}.}. Recall that this means that for all $b_1,\dots,b_{n_\epsilon} \in k$, 
    \begin{equation}
        \label{eq:bound-0}
    |b_1x_{\epsilon, 1} + \dots + b_{n_\epsilon}x_{\epsilon,n_\epsilon}|_\ell \geq \frac{1}{2}\max\{|b_1x_{\epsilon, 1}|_\ell,\dots,|b_{n_\epsilon}x_{\epsilon,n_\epsilon}|_\ell\}.    
    \end{equation}
     
    Since scaling by elements of $k^\times$ preserves $1/2$-orthogonality (see \autoref{def:t-orthogonality}), we may assume using \autoref{lem:uniform-scaling} that for all $i = 1, \dots, n_\epsilon$, 
    \begin{equation}
        \label{eq:bound-1}
        1 \leq |x_{\epsilon,i}|_\ell < M.    
    \end{equation}
    Note that $M$ is independent of $\epsilon$. Let $\{x^*_{\epsilon, 1},\dots,x^*_{\epsilon,n_{\epsilon}}\}$ denote the dual basis. We claim that
    \begin{equation}
        \label{eq:bound-1.5}
        ||x^*_{\epsilon, i}|| \leq 2.    
    \end{equation}
    Indeed, suppose $y = b_1x_{\epsilon,1} + \dots + b_{n_\epsilon}x_{\epsilon,n_\epsilon} \in \ell_\epsilon$, where the $b_i \in k$. Then 
    \[
       \frac{|x^*_{\epsilon,i}(y)|_k}{|y|_\ell} \stackrel{\autoref{eq:bound-0}}{\leq} 2\cdot\frac{|b_i|_k}{\max_{1 \leq j \leq n_\epsilon}\{|b_j|_k|x_{\epsilon,j}|_\ell\}} \stackrel{\autoref{eq:bound-1}}{\leq}2\cdot\frac{|b_i|_k}{\max_{1\leq j \leq n_\epsilon}\{|b_j|_k\}} \leq 2,
    \]
    proving our claim. 
    
  Now, if $k$ is spherically complete or if $\ell$ is of countable type over $k$, one uses \autoref{thm:spherically-complete-Hahn-Banach}~\ref{thm:spherically-complete-Hahn-Banach:a} or \autoref{thm:+-epsilon-Hahn-Banach} respectively to see that each $x^*_{\epsilon, i} \colon \ell_\epsilon \to k$ extends to a continuous functional $\widetilde{x^*_{\epsilon, i}} \colon \ell \to k$ with the property that 
  \[
  ||\widetilde{x^*_{\epsilon, i}}|| \leq 2||x^*_{\epsilon, i}|| \stackrel{\autoref{eq:bound-1.5}}{\leq} 4.
  \]
  In other words, for all $y \in \ell$, 
   \begin{equation}
    \label{eq:bound-2}
    |\widetilde{x^*_{\epsilon, i}}(y)|_k \leq 4|y|_\ell.
   \end{equation}
    Since continuous maps send null sequences to null sequences (\autoref{lem:continuous-maps-normed-spaces}), each $\widetilde{x^*_{\epsilon,i}} \colon \ell_\epsilon \to k$ induces a $T_n(k)$-linear map 
    \begin{align*}
        \varphi_{\epsilon,i} \colon T_n(\ell) &\to T_n(k)\\
        \sum_{\nu \in \mathbb{Z}_{\geq 0}^n} c_\nu X^{\nu} &\to \sum_{\nu \in \mathbb{Z}_{\geq 0}^n} \widetilde{x^*_{\epsilon,i}}(c_\nu) X^{\nu}.
    \end{align*}
    Using \autoref{eq:bound-2} and the Gauss norm on Tate algebras, we further get that for all $g \in T_n(\ell)$,
    \begin{equation}
        \label{eq:bound-3}
        ||\varphi_{\epsilon,i}(g)|| \leq 4||g||.
    \end{equation}
    Define
    $
    g_\epsilon \coloneqq \sum_{i=1}^{n_\epsilon} \varphi_{\epsilon,i}(f)x_{\epsilon,i}.    
    $
    By construction, $g_{\epsilon} \in \Tr(f)T_n(\ell)$. In addition, since we have
    \begin{align*}
    \varphi_{\epsilon,i}(f)  
    &= \varphi_{\epsilon,i}\left(\sum_{|a_\nu|_\ell \geq \epsilon}a_\nu X^\nu\right) + \varphi_{\epsilon,i}\left(\sum_{|a_\nu|_\ell < \epsilon}a_\nu X^\nu\right)\\
    &= \sum_{|a_\nu|_\ell\geq\epsilon}\widetilde{x^*_{\epsilon,i}}(a_\nu)X^\nu + \varphi_{\epsilon,i}\left(\sum_{|a_\nu|_\ell < \epsilon}a_\nu X^\nu\right)\\
    &\stackrel{\widetilde{x^*_{\epsilon,i}}|_{\ell_\epsilon} = x^*_{\epsilon,i}}{=} \sum_{|a_\nu|_\ell\geq\epsilon}x^*_{\epsilon,i}(a_\nu)X^\nu + \varphi_{\epsilon,i}\left(\sum_{|a_\nu|_\ell < \epsilon}a_\nu X^\nu\right),
    \end{align*}
    we then get
    \begin{align*}
        g_\epsilon &= \sum_{i=1}^{n_\epsilon}x_{\epsilon,i}\sum_{|a_\nu|_\ell\geq\epsilon}x^*_{\epsilon,i}(a_\nu)X^\nu + \sum_{i=1}^{n_\epsilon}x_{\epsilon,i}\varphi_{\epsilon,i}\left(\sum_{|a_\nu|_\ell < \epsilon}a_\nu X^\nu\right)\\
        &= \sum_{|a_\nu|_\ell\geq\epsilon}\left(\sum_{i=1}^{n_\epsilon} x^*_{\epsilon,i}(a_\nu)x_{\epsilon,i}\right)X^\nu + \sum_{i=1}^{n_\epsilon}x_{\epsilon,i}\varphi_{\epsilon,i}\left(\sum_{|a_\nu|_\ell < \epsilon}a_\nu X^\nu\right)\\
        &= \sum_{|a_\nu|_\ell\geq\epsilon} a_\nu X^\nu + \sum_{i=1}^{n_\epsilon}x_{\epsilon,i}\varphi_{\epsilon,i}\left(\sum_{|a_\nu|_\ell < \epsilon}a_\nu X^\nu\right)\\
        &= f_\epsilon + \sum_{i=1}^{n_\epsilon}x_{\epsilon,i}\varphi_{\epsilon,i}\left(\sum_{|a_\nu|_\ell < \epsilon}a_\nu X^\nu\right).
    \end{align*}
    Hence,
    \begin{align*}
    ||g_\epsilon - f_\epsilon|| &\leq \max_{1 \leq i \leq n_\epsilon}\left\{|x_{\epsilon,i}|_\ell\left|\left| \varphi_{\epsilon,i}\bigg(\sum_{|a_\nu|_\ell < \epsilon}a_\nu X^\nu\bigg)\right|\right|\right\} \\
    &\stackrel{\autoref{eq:bound-3}}{\leq} \max_{1 \leq i \leq n_\epsilon}\left\{|x_{\epsilon,i}|_\ell\cdot4\cdot\left|\left| \sum_{|a_\nu|_\ell < \epsilon}a_\nu X^\nu\right|\right|\right\}\\
    &\stackrel{\autoref{eq:bound-1}}{<} 4M\epsilon.
    \end{align*}
    Then 
    $
    ||f - g_\epsilon|| \leq \max\{||f-f_\epsilon||, ||g_\epsilon-f_\epsilon||\} < \max\{\epsilon,4M\epsilon\} = 4M\epsilon.    
    $
    Here we are using the fact that $M > 1$. Taking $C = 4M$, this establishes \autoref{eq:bound-to-establish}, thereby completing the proof of the Theorem.
\end{proof}

\subsection{Tate algebras and intersection flatness} Let $(k,|\cdot|_k)$ be a NA field. Recall that if $\ell$ is an algebraic field extension of $k$, then there is a unique extension of $|\cdot|_k$ to $\ell$ that makes $\ell$ into a normed space over $k$ \cite[Appendix A, Cor.\ 2, Thm.\ 3]{Boschrigid}. The completion, $\widehat{\ell}$, of $\ell$ with respect to this extended norm then becomes a NA field with a canonical norm that extends the norm on $k$. In particular, one can take $\ell = \overline{k}$, an algebraic closure of $k$. Note that $\overline{k}$ may not be complete with respect to its unique norm extension. Thus, in order to get a NA field, one takes $\widehat{\overline{k}}$. A remarkable result, due to Krasner, is that $\widehat{\overline{k}}$ is also algebraically closed \cite[Appendix A, Lem.\ 6]{Boschrigid}. Thus, if $(k,|\cdot|_k)\hookrightarrow (\ell,|\cdot|_\ell)$ is any algebraic extension of NA fields, then one can always find a $k$-embedding $\ell \hookrightarrow \widehat{\overline{k}}$, which has to be norm preserving because the restriction of the norm on $\widehat{\overline{k}}$ yields a norm on $\ell$ extending $|\cdot|_k$, and so, must coincide with $|\cdot|_\ell$ by uniqueness. As a consequence, there is a faithfully flat $T_n(k)$-algebra homomorphism $T_n(\ell) \hookrightarrow T_n(\widehat{\overline{k}})$ (see \autoref{rem:faithful-flatness-extensions-Tate}). In particular, by restriction of scalars, $T_n(\ell) \hookrightarrow T_n(\widehat{\overline{k}})$ is a pure map of $T_n(k)$-modules. Thus, if we can show that $T_n(\widehat{\overline{k}})$ satisfies good properties as a $T_n(k)$-algebra, one can often use pure descent to show that $T_n(\ell)$ also satisfies similar properties. Our main result then is the following:

\begin{theorem}
    \label{thm:IF-completion-algebraic-closure}
    Let $(k,|\cdot|_k)$ be a NA field. Let $\overline{k}$ be an algebraic closure of $k$. Then, $T_n(\widehat{\overline{k}})$ is an intersection flat $T_n(k)$-algebra.
\end{theorem}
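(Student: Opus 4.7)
The plan is to reduce the claim to \autoref{thm:ORT-spherically-complete-countable-type} by sandwiching $\widehat{\overline{k}}$ inside a spherically complete NA field and then using pure descent of intersection flatness. First, I would let $K$ be a spherical completion of $k$, so that $K$ is a spherically complete NA field and $k \hookrightarrow K$ is an immediate extension. Fixing an algebraic closure $\overline{K}$ of $K$, I would embed $\overline{k}$ into $\overline{K}$ over $k$; this embedding is automatically isometric by uniqueness of norm extensions on algebraic extensions (\autoref{rem:finite-dim-Banach-spaces}). Taking norm-preserving completions then yields an isometric $k$-embedding $\widehat{\overline{k}} \hookrightarrow \widehat{\overline{K}}$ of NA fields.

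Next, I would observe that the induced ring homomorphism $T_n(\widehat{\overline{k}}) \hookrightarrow T_n(\widehat{\overline{K}})$ is faithfully flat by \autoref{rem:faithful-flatness-extensions-Tate}, hence pure as a map of $T_n(\widehat{\overline{k}})$-modules, and by restriction of scalars pure as a map of $T_n(k)$-modules. By pure descent of intersection flatness \cite[Corollary 4.3.2]{DattaEpsteinTucker}, it therefore suffices to prove that $T_n(\widehat{\overline{K}})$ is intersection flat over $T_n(k)$. I would factor $T_n(k) \to T_n(\widehat{\overline{K}})$ as $T_n(k) \to T_n(K) \to T_n(\widehat{\overline{K}})$. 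The second map is Ohm-Rush trace---hence intersection flat by \cite[Proposition 4.3.8]{DattaEpsteinTucker}---thanks to \autoref{thm:ORT-spherically-complete-countable-type} applied to the spherically complete field $K$. For the first map $T_n(k) \to T_n(K)$, I would aim to establish intersection flatness directly; once this is done, the composition of intersection flat maps \cite[Prop.\ 5.7(a)]{HochsterJeffriesintflatness} delivers the desired intersection flatness of $T_n(k) \to T_n(\widehat{\overline{K}})$.

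The hard part will be establishing intersection flatness of $T_n(k) \hookrightarrow T_n(K)$ for the immediate extension $k \hookrightarrow K$. Since neither $k$ is spherically complete nor $K$ need be of countable type over $k$, \autoref{thm:ORT-spherically-complete-countable-type} does not apply verbatim. My intention is to verify the Mittag-Leffler characterization of intersection flatness \cite[Theorem 4.3.1]{DattaEpsteinTucker} by adapting the Banach-theoretic argument of \autoref{thm:ORT-spherically-complete-countable-type}; the key technical inputs are the existence of $t$-orthogonal $k$-bases for finite-dimensional $k$-subspaces of $K$ (\autoref{thm:t-orthogonal-basis}) together with the spherical completeness of $K$, which together should provide sufficient norm control to yield the Mittag-Leffler property even though the full Hahn-Banach extension property of $K$ as a $k$-Banach space may fail. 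An alternative route, which may also work, is to exploit that $\widehat{\overline{k}}$ is locally of countable type over $k$ (every finite set of restricted power series has coefficients lying in a single complete countable-type subfield $L \subseteq \widehat{\overline{k}}$), writing $T_n(\widehat{\overline{k}})$ as a filtered union of the ORT modules $T_n(L)$ along pure transition maps and invoking a filtered-colimit stability of Mittag-Leffler.
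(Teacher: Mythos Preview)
Your main route stalls at the step $T_n(k) \to T_n(K)$. The spherical completeness of $K$ is a red herring here: in the proof of \autoref{thm:ORT-spherically-complete-countable-type}, what is needed is either the Hahn-Banach property for $k$-Banach spaces (which requires the \emph{base} field $k$ to be spherically complete, so that functionals with values in $k$ extend) or the $(1+\epsilon)$-Hahn-Banach property (which requires the extension to be of countable type over $k$). For the immediate extension $k \hookrightarrow K$ neither hypothesis is available in general, and spherical completeness of the \emph{target} $K$ gives no leverage for extending $k$-valued functionals $D \to k$ defined on subspaces $D \subseteq K$ to all of $K$. The $t$-orthogonal bases of \autoref{thm:t-orthogonal-basis} are already used in the proof of \autoref{thm:ORT-spherically-complete-countable-type} and do not by themselves supply the missing extension step. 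I do not see how to establish intersection flatness of $T_n(k) \to T_n(K)$ without essentially falling back on your alternative.

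Your alternative route, by contrast, is exactly the paper's argument and is the one that works. One writes $T_n(\widehat{\overline{k}})$ as the filtered union $\bigcup_{\ell \in \Sigma'} T_n(\ell)$, where $\Sigma'$ consists of the closures in $\widehat{\overline{k}}$ of the subfields of $\overline{k}$ having countable $k$-basis; each $T_n(\ell)$ is ORT over $T_n(k)$ by the countable-type case of \autoref{thm:ORT-spherically-complete-countable-type}, hence intersection flat, and the transition maps are pure since they are faithfully flat by \autoref{rem:faithful-flatness-extensions-Tate}. Then \cite[Corollary 4.3.2]{DattaEpsteinTucker} gives intersection flatness of the colimit. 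The only point you left implicit is the equality $\bigcup_{\ell \in \Sigma'} T_n(\ell) = T_n(\widehat{\overline{k}})$, which is a direct density argument: approximate each coefficient of a given restricted power series by a sequence in $\overline{k}$, and take the closure of the countable subfield of $\overline{k}$ generated by all those approximants. So you should drop the detour through the spherical completion and run the alternative route directly.
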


Using descent of intersection flatness along pure maps \cite[Corollary 4.3.2]{DattaEpsteinTucker}, the following is an immediate consequence of \autoref{thm:IF-completion-algebraic-closure} and the above discussion.

\begin{corollary}
    \label{cor:IF-algebraic-extensions}
Let $(k,|\cdot|_k) \hookrightarrow (\ell, |\cdot|_\ell)$ be an algebraic extension of NA fields. Then $T_n(\ell)$ is an intersection flat $T_n(k)$-algebra.
\end{corollary}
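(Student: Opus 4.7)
The plan is to combine \autoref{thm:IF-completion-algebraic-closure} with pure descent of intersection flatness, using the paragraph immediately preceding the corollary as the template. First, I would fix an algebraic closure $\overline{k}$ of $k$ together with the induced NA field $\widehat{\overline{k}}$, which is itself algebraically closed by Krasner's theorem. Since $(\ell, |\cdot|_\ell)$ is an algebraic extension of $k$, the norm on $\ell$ is the unique extension of $|\cdot|_k$, and one can choose a $k$-embedding $\ell \hookrightarrow \widehat{\overline{k}}$; by uniqueness of the norm extension on an algebraic extension, this embedding is automatically norm preserving.

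Next, I would pass to Tate algebras. The norm-preserving inclusion $\ell \hookrightarrow \widehat{\overline{k}}$ induces a $T_n(k)$-algebra homomorphism $T_n(\ell) \hookrightarrow T_n(\widehat{\overline{k}})$, which is faithfully flat (see \autoref{rem:faithful-flatness-extensions-Tate}, using $T_n(\widehat{\overline{k}}) = T_n(\ell)\widehat{\otimes}_\ell \widehat{\overline{k}}$). In particular, restricting scalars, $T_n(\ell) \hookrightarrow T_n(\widehat{\overline{k}})$ is pure as a map of $T_n(k)$-modules.

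Finally, I would invoke \autoref{thm:IF-completion-algebraic-closure}, which gives that $T_n(\widehat{\overline{k}})$ is intersection flat as a $T_n(k)$-module. Since intersection flatness descends along pure module maps by \cite[Corollary 4.3.2]{DattaEpsteinTucker}, the pure inclusion $T_n(\ell) \hookrightarrow T_n(\widehat{\overline{k}})$ of $T_n(k)$-modules forces $T_n(\ell)$ to be intersection flat over $T_n(k)$ as well.

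No real obstacle is expected here: the substantive content lies entirely in \autoref{thm:IF-completion-algebraic-closure} and the descent result, and the corollary is a direct formal consequence once the embedding $\ell \hookrightarrow \widehat{\overline{k}}$ is set up correctly.
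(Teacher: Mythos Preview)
Your proposal is correct and follows exactly the paper's approach: embed $\ell$ into $\widehat{\overline{k}}$, use faithful flatness of $T_n(\ell) \hookrightarrow T_n(\widehat{\overline{k}})$ to get $T_n(k)$-purity, then descend intersection flatness from \autoref{thm:IF-completion-algebraic-closure} via \cite[Corollary 4.3.2]{DattaEpsteinTucker}.
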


\begin{proof}[\textbf{Proof of \autoref{thm:IF-completion-algebraic-closure}}]
    Define $\Sigma$ to be the collection of intermediate field extensions $k \subseteq F \subseteq \overline{k}$ such that $F$ has a countable basis over $k$. Note that $\Sigma$ is closed under compositum of field extensions. Indeed, if $E, F \in \Sigma$, then the compositum $E.F$ is a quotient of $E \otimes_k F$, and the latter has a countable basis over $k$ because both $E, F$ do. Thus, $\Sigma$ is filtered under inclusion. 
    
    Now take $\Sigma'$ to be the collection of topological closures in $\widehat{\overline{k}}$ of the elements of $\Sigma$. Note that if $F \in \Sigma$, then its topological closure, $F^{\cl}$, in $\widehat{\overline{k}}$ is the completion of $F$ with respect to the unique extension of the norm on $k$ to $F$ (this norm also coincides with the restriction to $F$ of the norm on $\widehat{\overline{k}}$). Indeed, by the universal property of completions of normed fields, the completion $\widehat{F}$ of $F$ embeds isometrically in $\widehat{\overline{k}}$, so we may regard it as a subfield of $\widehat{\overline{k}}$. But $\widehat{F}$ must be closed in $\widehat{\overline{k}}$ since it is a complete subspace of a complete space. Since $F$ is dense in $\widehat{F}$, it then follows that we must have $F^{\cl} = \widehat{F}$. Thus, the elements of $\Sigma'$ are intermediate NA subfields of $\widehat{\overline{k}}/k$ that are of countable type over $k$ by construction. Moreover, $\Sigma'$ is filtered under inclusion because $\Sigma$ is. 
    
    We then get a system of Tate algebras $\{T_n(\ell) \colon \ell \in \Sigma'\}$ filtered under inclusion, where for $\ell \subseteq \ell'$, $T_n(\ell) \hookrightarrow T_n(\ell')$ is faithfully flat by \autoref{rem:faithful-flatness-extensions-Tate} (or one can use \autoref{thm:ORT-spherically-complete-countable-type} because $\ell'$ is of countable type over $\ell$). Thus, by restriction of scalars, $T_n(\ell) \hookrightarrow T_n(\ell')$ is a pure map of $T_n(k)$-modules. By \autoref{thm:ORT-spherically-complete-countable-type}, each $T_n(\ell)$ is an Ohm-Rush trace $T_n(k)$-algebra. Thus, for all $\ell \in \Sigma'$, $T_n(\ell)$ is an intersection flat $T_n(k)$-algebra by \cite[Proposition 4.3.8]{DattaEpsteinTucker}. By \cite[Corollary 4.3.2]{DattaEpsteinTucker}, it then follows that 
    $
    \colim_{\ell \in \Sigma'} T_n(\ell)    
    $
    is an intersection flat $T_n(k)$-algebra. 

    To finish the proof, it is enough to show that $\colim_{\ell \in \Sigma'} T_n(\ell) = T_n(\widehat{\overline{k}})$. Indeed, each $T_n(\ell)$ is a subring of $T_n(\widehat{\overline{k}})$ by construction. Since the system $\{T_n(\ell) \colon \ell \in \Sigma'\}$ is filtered under inclusion, we then have
    $
        \colim_{\ell \in \Sigma'} T_n(\ell)  = \bigcup_{\ell \in \Sigma'} T_n(\ell) \subseteq T_n(\widehat{\overline{k}}).    
    $
    Now let 
    $
    \sum_{\nu \in \mathbb{Z}^n_{\geq 0}} c_\nu X^\nu \in T_n(\widehat{\overline{k}}).    
   $
    Since $\overline{k}$ is dense in $\widehat{\overline{k}}$, for each $\nu \in \mathbb{Z}^n_{\geq 0}$, one can choose a sequence $(a_{\nu,n})_{n \in \mathbb{Z}_{\geq 0}}$ of elements in $\overline{k}$ such that 
    \[
    \lim_{n \mapsto \infty} a_{\nu,n} = c_\nu.    
    \]
    Then $S \coloneqq \bigcup_{\nu \in \mathbb{Z}^n_{\geq 0}} \{a_{\nu,n} \colon n \in \mathbb{Z}_{\geq 0}\}$ is a countable subset of $\overline{k}$, and so, $k(S) \in \Sigma$. Then $k(S)^{\cl} \in \Sigma'$, and by construction, $c_\nu \in k(S)^{\cl}$ for all $\nu$. This shows that $\sum_{\nu \in \mathbb{Z}^n_{\geq 0}} c_\nu X^\nu \in T_n(k(S)^{\cl})$, proving that $\bigcup_{\ell \in \Sigma'} T_n(\ell) = T_n(\widehat{\overline{k}})$.
\end{proof}

\begin{corollary}
    \label{cor:Tate-char-p-FIF-FORT}
    Let $(k,|\cdot|_k)$ be a NA field of characteristic $p > 0$. Consider the Tate algebra $T_n(k)$. Then we have the following:
    \begin{enumerate}[label=\textnormal{(\arabic*)}]
        \item $T_n(k)$ is $F$-intersection flat.\label{cor:Tate-char-p-FIF-FORT.1}
        \item If $k$ is spherically complete or if $k^{1/p}$ has a dense subspace over $k$ with a countable basis, then $T_n(k)$ is FORT.\label{cor:Tate-char-p-FIF-FORT.2}
    \end{enumerate}
\end{corollary}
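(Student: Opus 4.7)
The plan is to imitate the treatment of polynomial rings in \autoref{thm:FORT-F-intersection-flat}~\ref{thm:FORT-F-intersection-flat.7} by factoring the Frobenius endomorphism of $T_n(k)$ through the Tate algebra $T_n(k^{1/p})$. The main technical hurdle is to verify that $k^{1/p}$ is itself a non-Archimedean field, i.e., that it is already complete with respect to the unique extension of $|\cdot|_k$. For any Cauchy sequence $(a_m)$ in $k^{1/p}$, the characteristic $p$ identity $(a_m - a_{m'})^p = a_m^p - a_{m'}^p$ together with multiplicativity of the norm gives $|a_m - a_{m'}|^p = |a_m^p - a_{m'}^p|$, whence $(a_m^p)$ is Cauchy in the complete field $k$ and converges to some $c \in k$; therefore $(a_m)$ converges to $c^{1/p} \in k^{1/p}$.

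Granted this, I factor the Frobenius $F \colon T_n(k) \to F_*T_n(k)$ as
\[
T_n(k) \xrightarrow{\iota} T_n(k^{1/p}) \xrightarrow{\psi} F_*T_n(k),
\]
where $\iota$ is induced by the inclusion $k \hookrightarrow k^{1/p}$ and $\psi$ sends $b \mapsto F_*(b^p)$. This is well-defined because if $b = \sum_\nu b_\nu X^\nu \in T_n(k^{1/p})$ then $b^p = \sum_\nu b_\nu^p X^{p\nu} \in T_n(k)$, and the composition is visibly the Frobenius of $T_n(k)$. Via $\psi$, the module $F_*T_n(k)$ becomes a free $T_n(k^{1/p})$-module of rank $p^n$ with basis $\{F_*X^\alpha\}_{0 \leq \alpha_i \leq p-1}$: given $a = \sum_\nu c_\nu X^\nu \in T_n(k)$, decompose each $\nu = p\mu + \alpha$ uniquely with $0 \leq \alpha_i < p$ and set $b_\alpha \coloneqq \sum_\mu c_{p\mu+\alpha}^{1/p} X^\mu \in T_n(k^{1/p})$; then $F_*a = \sum_\alpha b_\alpha \cdot F_*X^\alpha$, and uniqueness of the coefficients follows from uniqueness of the decomposition.

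For part (1), the inclusion $\iota$ is intersection flat by \autoref{cor:IF-algebraic-extensions} applied to the algebraic (purely inseparable) extension $k \hookrightarrow k^{1/p}$, while $\psi$ exhibits $F_*T_n(k)$ as a free, hence intersection flat, $T_n(k^{1/p})$-module. Composition of intersection flat structures (\cite[Prop.~5.7]{HochsterJeffriesintflatness}; see also \cite[Rem.~4.2.3]{DattaEpsteinTucker}) then yields that $F_*T_n(k)$ is intersection flat over $T_n(k)$, so $T_n(k)$ is $F$-intersection flat. For part (2), under either hypothesis on $k$ (noting the isomorphism $F_*k \cong k^{1/p}$ of $k$-modules to translate the countable-type condition), the inclusion $\iota$ is Ohm-Rush trace by \autoref{thm:ORT-spherically-complete-countable-type}; together with $\psi$ being free, hence ORT, composition of ORT extensions (\cite[Lemma~4.1.7]{DattaEpsteinTucker}) gives that $F_*T_n(k)$ is ORT over $T_n(k)$, so $T_n(k)$ is FORT.
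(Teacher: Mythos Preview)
Your proof is correct and follows essentially the same approach as the paper: factor the Frobenius of $T_n(k)$ as $T_n(k) \hookrightarrow T_n(k^{1/p}) \hookrightarrow (T_n(k))^{1/p}$, use that the second map is free of rank $p^n$, and then apply \autoref{cor:IF-algebraic-extensions} (resp.\ \autoref{thm:ORT-spherically-complete-countable-type}) to the first map together with the composition properties of intersection flatness (resp.\ ORT). You even supply the detail that $k^{1/p}$ is itself complete, which the paper leaves implicit.
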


\begin{proof}
    For an arbitrary NA field $k$ of characteristic $p$, the Frobenius map of $T_n(k)$ can be identified with the composition $T_k(k) \hookrightarrow T_k(k^{1/p}) \hookrightarrow (T_n(k))^{1/p}$. The extension $T_k(k^{1/p}) \hookrightarrow (T_n(k))^{1/p}$ is free with basis $\{X_1^{\alpha_1/p}\cdots X_n^{\alpha_n/p}\colon 0 \leq \alpha_i \leq p-1\}$ (to see this, one can adapt the argument in \cite[Lemma 3.3.3]{DattaMurayamaFsolidity}), and is hence ORT, and thus, also intersection flat. Since the ORT and intersection flatness properties are preserved under composition \cite[Lemma 4.1.7, Remark 4.2.3 (d)]{DattaEpsteinTucker}, \ref{cor:Tate-char-p-FIF-FORT.1} follows by \autoref{cor:IF-algebraic-extensions} and \ref{cor:Tate-char-p-FIF-FORT.2} follows by \autoref{thm:ORT-spherically-complete-countable-type}
\end{proof}

Furthermore, as a consequence of \cite{SharpBigTestElements} we obtain the following new class of rings that admit test elements.

\begin{corollary}
    \label{cor:affinoid-rings-test-elements}
    Let $(k,|\cdot|_k)$ be a NA field of characteristic $p > 0$. Let $R$ be essentially of finite type over $T_n(k)$ (for example, an affinoid algebra) and regular in codimension $0$. Then $R$ has big test elements. In fact, if $c \in R$ is not contained in any minimal primes of $R$ and $R_c$ is regular, then some power of $c$ is a big test element.
\end{corollary}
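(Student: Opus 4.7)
The plan is to apply Sharp's test element theorem from \cite{SharpBigTestElements}, which requires exhibiting $R$ as a quotient of an excellent regular $F$-intersection flat ring. First I would write $R$ in a concrete form: since $R$ is essentially of finite type over $T_n(k)$, there is a surjection $S \twoheadrightarrow R$ where $S = W^{-1}(T_n(k)[x_1, \ldots, x_m])$ for some multiplicative set $W$ of $T_n(k)[x_1, \ldots, x_m]$. The key properties needed of $S$ are that it is Noetherian, regular, excellent, and $F$-intersection flat.

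Regularity and the Noetherian property of $S$ are standard (Tate algebras are regular and Noetherian by \autoref{thm:Tate-algebras-properties}, and both properties ascend to polynomial extensions and localizations). Excellence of $S$ follows from excellence of $T_n(k)$ (\autoref{thm:Tate-algebras-properties}~\ref{thm:Tate-excellent}) together with the fact that excellence is preserved under polynomial extensions and localization \cite[\href{https://stacks.math.columbia.edu/tag/07QW}{Tag 07QW}]{stacks-project}. The $F$-intersection flatness of $S$ is where the main content of this paper enters: \autoref{cor:Tate-char-p-FIF-FORT}~\ref{cor:Tate-char-p-FIF-FORT.1} gives that $T_n(k)$ is $F$-intersection flat, and then \autoref{thm:FORT-F-intersection-flat}~\ref{thm:FORT-F-intersection-flat.7} upgrades this to the ring $S$.

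With these properties of $S$ in hand, the final step is to invoke Sharp's theorem. Since $R$ is regular in codimension $0$, the image of $c$ in each localization $R_\p$ at a minimal prime $\p$ is a unit. Together with the hypothesis that $R_c$ is regular, this matches exactly the setting of \cite[Thm.\ 3.5 (or the appropriate analogous result on big test elements)]{SharpBigTestElements}: we obtain that $c^N$ is a big test element for $R$ for some $N \geq 1$. The existence of big test elements in general (without any hypothesis on $c$) then follows because one may choose any element $c \in R$ not contained in any minimal prime of $R$ such that $R_c$ is regular; such a $c$ exists since $R$ is generically regular and $S$ (hence $R$) is excellent, so the regular locus of $R$ is a dense open set containing the generic points of all minimal primes.

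The main obstacle is simply matching conventions with Sharp's paper: verifying that Sharp's hypotheses do allow for $R$ generically reduced (rather than reduced) and that the phrase ``big test element'' here has the same meaning as in \cite{SharpBigTestElements}. No further commutative algebra input beyond what has already been established in the paper is needed.
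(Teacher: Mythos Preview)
Your proposal is correct and follows essentially the same approach as the paper: write $R$ as a quotient of $S = W^{-1}(T_n(k)[x_1,\dots,x_m])$, verify $S$ is excellent and $F$-intersection flat via \autoref{cor:Tate-char-p-FIF-FORT}~\ref{cor:Tate-char-p-FIF-FORT.1} and \autoref{thm:FORT-F-intersection-flat}~\ref{thm:FORT-F-intersection-flat.7}, and then invoke Sharp's result (the paper cites \cite[Theorem 10.2]{SharpBigTestElements} specifically). Your additional remarks on regularity, excellence, and the existence of a suitable $c$ are correct elaborations that the paper leaves implicit.
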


\begin{proof}
    By assumption, $R$ is a homomorphic image of a localization of a polynomial ring over $T_n(k)$. The latter ring is excellent and $F$-intersection flat by \autoref{cor:Tate-char-p-FIF-FORT} \ref{cor:Tate-char-p-FIF-FORT.1} and \autoref{thm:FORT-F-intersection-flat} \ref{thm:FORT-F-intersection-flat.7}. Then the result is an immediate application of \cite[Theorem 10.2]{SharpBigTestElements}.
\end{proof}

\begin{corollary}
    \label{cor:ideal-adic-completion-affinoid}
    Let $(k,|\cdot|_k)$ be a NA field of characteristic $p > 0$. Let $y_1,\dots,y_m$ be indeterminates over $T_n(k)$. Then $T_n(k) {\llbracket}y_1,\dots,y_m{\rrbracket}$ is $F$-intersection flat. Thus, ideal adic completions of affinoid $k$-algebras regular in codimension $0$ have big test elements.
\end{corollary}

\begin{proof}
    For the first assertion, Frobenius on $T_n(k){\llbracket}y_1,\dots,y_m{\rrbracket}$ factorizes as 
    \[T_n(k){\llbracket}y_1,\dots,y_m{\rrbracket} \to T_n(k^{1/p}){\llbracket}y_1,\dots,y_m{\rrbracket} \to T_n(k^{1/p}){\llbracket}y_1^{1/p},\dots,y_m^{1/p}{\rrbracket},\] 
    where the second map is a free extension. Thus, it suffices to show that the first map $T_n(k){\llbracket}y_1,\dots,y_m{\rrbracket} \to T_n(k^{1/p}){\llbracket}y_1,\dots,y_m{\rrbracket}$ is intersection flat. As in the proof of \autoref{thm:IF-completion-algebraic-closure}, any $f \in T_n(k^{1/p}){\llbracket}y_1,\dots,y_m{\rrbracket}$ is in $T_n(\ell){\llbracket}y_1,\dots,y_m{\rrbracket}$, where $k \subset \ell \subset k^{1/p}$ is an intermediate NA field of countable type over $k$. Indeed, as before, one can take $\ell$ to just be the topological closure in $k^{1/p}$ of the field obtained by adjoining to $k$ all the coefficients in $k^{1/p}$ of all the (countably many) terms of $f$. The upshot is that $T_n(k^{1/p}){\llbracket}y_1,\dots,y_m{\rrbracket}$ is a filtered union of the $T_n(k){\llbracket}y_1,\dots,y_m{\rrbracket}$-algebras $T_n(\ell){\llbracket}y_1,\dots,y_m{\rrbracket}$, where $k \subset \ell \subset k^{1/p}$ is an intermediate NA field of countable type over $k$. Since $T_n(k) \to T_n(\ell)$ is Ohm-Rush trace by \autoref{thm:ORT-spherically-complete-countable-type}, so is $T_n(k){\llbracket}y_1,\dots,y_m{\rrbracket} \to T_n(\ell){\llbracket}y_1,\dots,y_m{\rrbracket}$ by \cite[Proposition 4.1.9 (3)]{DattaEpsteinTucker}. Moreover, if $k \subset \ell_1 \subset \ell_2 \subset k^{1/p}$ are two such intermediate extensions, then $T_n(\ell_1) \hookrightarrow T_n(\ell_2)$ is split faithfully flat by \autoref{thm:ORT-spherically-complete-countable-type}, and so, $T_n(\ell_1){\llbracket}y_1,\dots,y_m{\rrbracket} \hookrightarrow T_n(\ell_2){\llbracket}y_1,\dots,y_m{\rrbracket}$ is also split faithfully flat. In other words, the transition maps of the above filtered system are all split faithfully flat. Then $T_n(k^{1/p}){\llbracket}y_1,\dots,y_m{\rrbracket}$ is intersection flat over $T_n(k){\llbracket}y_1,\dots,y_m{\rrbracket}$ by \cite[Corollary 4.3.2 (3)]{DattaEpsteinTucker}, proving the first assertion.

    Now suppose $A$ is an affinoid $k$-algebra regular in codimension $0$ (i.e. $A$ satisfies $R_0$) and $I$ is an ideal of $A$. Since $A$ is excellent, $A \to \widehat{A}^I$ is a regular map by \cite[\href{https://stacks.math.columbia.edu/tag/0AH2}{Tag 0AH2}]{stacks-project}. Hence $\widehat{A}^I$ is also $R_0$ by \cite[\href{https://stacks.math.columbia.edu/tag/033A}{Tag 033A}]{stacks-project}. Choose $n > 0$ such that we have a surjection $T_n(k) \twoheadrightarrow A$. If $I$ is generated by $m$ elements, we then get a surjection $T_n(k){\llbracket}y_1,\dots,y_m{\rrbracket} \twoheadrightarrow \widehat{A}^I$, and so, $\widehat{A}^I$ has big test elements by \cite[Theorem 10.2]{SharpBigTestElements}.
\end{proof}

\begin{remarks}
    \label{rem:adjoining-powerseries-variables}
    {\*}
    \begin{enumerate}
        \item By a proof similar to \autoref{cor:affinoid-rings-test-elements} and \autoref{cor:ideal-adic-completion-affinoid} one can deduce that if $R$ is $R_0$ and essentially of finite type over $T_n(k)$, where $k$ has characteristic $p > 0$, then any ideal-adic completion of $R$ has a big test element. We leave the details to the reader.

        \item We do not know if a more general version of \autoref{cor:ideal-adic-completion-affinoid} holds. Namely, suppose $R$ is an excellent regular ring of prime characteristic $p > 0$ that is $F$-intersection flat. Then we do not know if $R{\llbracket}x{\rrbracket}$ is also $F$-intersection flat. The problem again reduces to showing that $R{\llbracket}x{\rrbracket} \to R^{1/p}{\llbracket}x{\rrbracket}$ is intersection flat, which we do not know how to show.
    \end{enumerate}
\end{remarks}

In the proof of \autoref{thm:IF-completion-algebraic-closure} we showed that $T_n(\widehat{\overline{k}})$ is a filtered colimit of ORT $T_n(k)$-algebras whose transition maps are split faithfully flat extensions (by \autoref{thm:ORT-spherically-complete-countable-type}). Thus, one may wonder if $T_n(\widehat{\overline{k}})$ is an ORT $T_n(k)$-algebra and not just intersection flat. We now give an example to illustrate that there exist NA fields $k$ for which 
$
\Hom_{T_n(k)}(T_n(\widehat{\overline{k}}), T_n(k))) = 0,
$
and so, $T_n(\widehat{\overline{k}})$ cannot always be an ORT $T_n(k)$-algebra. Our example essentially relies on the construction given in \cite[Sec.\ 5]{DattaMurayamaTate} of a NA field $k$ of characteristic $p > 0$ for which there exist no non-zero continuous functionals $k^{1/p} \to k$. Indeed, we first claim that such a field $k$ cannot have non-zero continuous functionals $\widehat{\overline{k}} \to k$. For suppose a continuous functional $f \colon \widehat{\overline{k}} \to k$ exists and $f(x) \neq 0$. Then the composition 
\[
\widehat{\overline{k}} \xrightarrow{\qquad x \cdot \qquad} \widehat{\overline{k}}  \xrightarrow{\qquad f \qquad} k  
\]
is a continuous functional that sends $1 \mapsto f(x) \neq 0$. Since $k^{1/p}$ can be realized as a subfield of $\widehat{\overline{k}}$, one can then restrict the above functional to $k^{1/p}$ giving a continuous functional $k^{1/p} \to k$ that sends $1 \mapsto f(x) \neq 0$. But this contradicts our choice of $k$. Now the fact that 
$\Hom_{T_n(k)}(T_n(\widehat{\overline{k}}), T_n(k))) = 0$
for this choice of $k$ follows from the following observation.

\begin{proposition}(c.f. \cite[Thm.\ 3.1]{DattaMurayamaTate})
    \label{prop:Tate-solidity}
    Let $(k,|\cdot|_k) \hookrightarrow (\ell,|\cdot|_\ell)$ be an extension of NA fields. Then the following are equivalent:
    \begin{enumerate}[label=\textnormal{(\alph*)}]
        \item \label{prop:Tate-solidity:a} For all integers $n > 0$, $T_n(k) \hookrightarrow T_n(\ell)$ splits.
        \item \label{prop:Tate-solidity:b} For all integers $n > 0$, there exists a non-zero $T_n(k)$-linear map $T_n(\ell) \to T_n(k)$.
        \item \label{prop:Tate-solidity:c} There exists a non-zero $T_1(k)$-linear map $T_1(\ell) \to T_1(k)$.
        \item \label{prop:Tate-solidity:d} There exists a non-zero continuous functional $\ell \to k$.
    \end{enumerate}
\end{proposition}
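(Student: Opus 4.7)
The proof goes cyclically as (a) $\Rightarrow$ (b) $\Rightarrow$ (c) $\Rightarrow$ (d) $\Rightarrow$ (a). The first two implications are immediate: a splitting $T_n(\ell) \to T_n(k)$ is itself a non-zero $T_n(k)$-linear map, and (b) $\Rightarrow$ (c) specializes to $n=1$. For (d) $\Rightarrow$ (a), given a non-zero continuous functional $\mu \colon \ell \to k$, pick $b \in \ell$ with $\mu(b) \neq 0$ and define $\lambda(a) := \mu(ba)/\mu(b)$, a continuous $k$-linear functional with $\lambda(1)=1$. Applying $\lambda$ coefficient-wise yields $\tilde\lambda \colon T_n(\ell) \to T_n(k)$, $\sum_\nu a_\nu X^\nu \mapsto \sum_\nu \lambda(a_\nu) X^\nu$, which is well-defined by \autoref{lem:continuous-maps-normed-spaces} (continuity sends null sequences to null sequences), is $T_n(k)$-linear, and sends $1 \mapsto 1$, hence splits $T_n(k) \hookrightarrow T_n(\ell)$.

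The main work is (c) $\Rightarrow$ (d). Given a non-zero $T_1(k)$-linear $\phi \colon T_1(\ell) \to T_1(k)$, set $\psi := \phi|_\ell \colon \ell \to T_1(k)$. First I will verify $\psi \neq 0$: if $\psi = 0$, then for any $f = \sum_i a_i X^i \in T_1(\ell)$ and any $N \geq 0$, writing $f = f_N + X^{N+1} g_N$ with $f_N = \sum_{i \leq N} a_i X^i$ polynomial and $g_N \in T_1(\ell)$, $T_1(k)$-linearity gives $\phi(f) = \sum_{i=0}^N X^i \psi(a_i) + X^{N+1}\phi(g_N) = X^{N+1}\phi(g_N) \in X^{N+1} T_1(k)$, so $\phi(f) \in \bigcap_N X^N T_1(k) = 0$ (using that $X$ is a prime in the UFD $T_1(k)$ by \autoref{thm:Tate-algebras-properties}), contradicting $\phi \neq 0$.

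The crux is proving $\psi$ is continuous. Suppose for contradiction $\psi$ is unbounded: there exist $a_n \in \ell$ with $|a_n|_\ell \leq 1$ and $\|\psi(a_n)\| \geq n$. Write $\psi(a) = \sum_i \psi_i(a) X^i$ and let $i_n$ attain the Gauss norm, so $|\psi_{i_n}(a_n)|_k = \|\psi(a_n)\|$. Choose $\alpha_n \in k^\times$ with $|\alpha_n|_k$ approximately $\sqrt n$ (possible since $|k^\times|$ is nontrivial) and set $b_n := a_n/\alpha_n$, so $|b_n|_\ell \to 0$ while $|\psi_{i_n}(b_n)|_k \gtrsim \sqrt n \to \infty$. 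For each $n$, pick $L_n$ with $|\psi_i(b_n)|_k < 1/n^2$ for $i > L_n$, which exists since $\psi(b_n) \in T_1(k)$. Then choose $M_n$ inductively with $M_n > M_{n-1} + L_{n-1}$ and form $f := \sum_n b_n X^{M_n} \in T_1(\ell)$ (valid since $|b_n|_\ell \to 0$). Using $\phi(f) \equiv \sum_{n \leq N} X^{M_n} \psi(b_n) \pmod{X^{M_{N+1}}T_1(k)}$, the coefficient of $X^{M_n + i_n}$ in $\phi(f)$ equals $\psi_{i_n}(b_n) + \sum_{m<n} \psi_{M_n + i_n - M_m}(b_m)$. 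The telescoping condition $M_n - M_m > L_m$ forces each error term to satisfy $|\psi_{M_n+i_n-M_m}(b_m)|_k < 1/m^2 \leq 1$, so by \autoref{lem:norm-becomes-max} the coefficient has norm equal to $|\psi_{i_n}(b_n)|_k$, which tends to infinity. This contradicts $\phi(f) \in T_1(k)$, whose coefficients must tend to zero.

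Once $\psi$ is continuous, each coefficient functional $\psi_i \colon \ell \to k$ (the composition of $\psi$ with continuous coefficient extraction $T_1(k) \to k$) is continuous and $k$-linear, and since $\psi \neq 0$ at least one $\psi_i$ is non-zero, providing the functional for (d). The hard part is the continuity argument: without continuity a priori, one must carefully construct a restricted power series whose image under $\phi$ isolates the unboundedness of $\psi$ in a single coefficient, exploiting the rapid growth of the exponents $M_n$ to separate the contribution from $b_n$ from all lower-order contributions.
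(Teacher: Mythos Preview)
Your proof is correct and follows the same overall cycle as the paper, but the argument for (c) $\Rightarrow$ (d) is organized differently. The paper first upgrades the nonzero map $\phi$ to a \emph{splitting} using that $T_1(k)$ is a PID (\autoref{thm:Tate-algebras-properties}\ref{thm:Tate-Euclidean-dim1}): the image of $\phi$ is principal, so after rescaling and precomposing one may assume $\phi(1)=1$. It then shows that the single constant-term functional $f = \psi_0$ (in your notation) is continuous, building the contradicting series as $\sum_r a_{m_r} X^r$ with \emph{consecutive} exponents and a carefully chosen subsequence $(m_r)$. You instead bypass the PID step entirely, prove directly that $\psi = \phi|_\ell \colon \ell \to T_1(k)$ is nonzero and continuous for the Gauss norm, and then extract any nonzero coordinate $\psi_i$. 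Your contradiction uses \emph{sparse} exponents $M_n$ to separate contributions. Both routes share the same core idea; yours trades the algebraic PID reduction for a slightly more intricate bookkeeping in the analytic step, with the benefit of not needing the Euclidean structure of $T_1(k)$.

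One point to tighten: when you claim the coefficient of $X^{M_n+i_n}$ in $\phi(f)$ equals $\psi_{i_n}(b_n) + \sum_{m<n}\psi_{M_n+i_n-M_m}(b_m)$, you are implicitly using that terms with $m>n$ do not contribute, i.e.\ $M_n + i_n < M_{n+1}$. This holds because $|\psi_{i_n}(b_n)|_k \gtrsim \sqrt{n} > 1/n^2$ for large $n$ forces $i_n \le L_n$, whence $M_n + i_n \le M_n + L_n < M_{n+1}$ by your inductive choice; it is worth saying so explicitly.
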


\begin{proof}
    The proof is very similar to that of \cite[Thm.\ 3.1]{DattaMurayamaTate}. We clearly have $\ref{prop:Tate-solidity:a} \implies \ref{prop:Tate-solidity:b} \implies\ref{prop:Tate-solidity:c}$. We next show $\ref{prop:Tate-solidity:d} \implies\ref{prop:Tate-solidity:a}$. Let $f \colon \ell \to k$ be a non-zero continuous linear functional. Say $f(x) \neq 0$. Then the composition
    \[
        \ell \xrightarrow{\qquad x \cdot \qquad} \ell \xrightarrow{\qquad f \qquad} k \xrightarrow{\qquad f(x)^{-1}\cdot \qquad} k
    \]
    is a continuous functional that maps $1 \mapsto 1$. Thus, we may assume without loss of generality that $f$ maps $1 \mapsto 1$. Then the map
    \begin{align*}
    T_n(\ell) &\to T_n(k)\\
    \sum_{\nu \in \mathbb{Z}^n_{\geq 0}} a_\nu X^\nu &\mapsto  \sum_{\nu \in \mathbb{Z}^n_{\geq 0}} f(a_\nu) X^\nu  
    \end{align*}
    gives a splitting of $T_n(k) \hookrightarrow T_n(\ell)$.

    It remains to show $\ref{prop:Tate-solidity:c} \implies \ref{prop:Tate-solidity:d}$. We first claim that $T_1(k) \hookrightarrow T_1(\ell)$ splits. Indeed, let $\varphi \colon T_1(\ell) \to T_1(k)$ be a non-zero $T_1(k)$-linear map. Since $T_1(k)$ is a Euclidean domain (\autoref{thm:Tate-algebras-properties}~\ref{thm:Tate-Euclidean-dim1}), it is a PID. Thus, $\im(\varphi) = HT_1(k)$, for some non-zero $H \in T_1(k)$. But there is a $T_1(k)$-linear isomorphism $HT_1(k) \cong T_1(k)$ that sends $H \mapsto 1$. The upshot is that we can restrict the codomain of $\varphi$ and use this isomorphism to assume $1 \in \im(\varphi)$. Then choosing $G \in T_1(\ell)$ such that $\varphi(G) = 1$, we can pre-compose $\varphi$ by multiplication by $G$ on $T_1(\ell)$ to further get a $T_1(k)$-linear map $T_1(\ell) \to T_1(k)$ that sends $1 \mapsto 1$, that is, we get a splitting of $T_1(k)\hookrightarrow T_1(\ell)$.

    Thus, fix a splitting $\phi \colon T_1(\ell) \to T_1(k)$ of $T_1(k) \hookrightarrow T_1(\ell)$. Then consider the composition
    \[
    f \coloneqq \ell \hookrightarrow T_1(\ell) \xlongrightarrow{\phi} T_1(k) \twoheadrightarrow k,    
    \]
    where the map $T_1(k) \twoheadrightarrow k$ is the $k$-algebra homomorphism obtained by sending $X \mapsto 0$. The map $f$ is $k$-linear, and, by construction, $f(1) = 1$. Hence, it suffices to show $f$ is continuous. Assume that it is not. Then using \autoref{lem:continuous-maps-normed-spaces}, there exists a sequence $(a_i)_{i \in \mathbb{Z}_{\geq 0}}$ in $\ell$ such that $|a_i|_\ell \to 0$ as $i \to \infty$, but such that, if 
    \begin{equation}
        \label{eq:!!}
    \phi(a_i) \coloneqq \sum_{j = 0}^\infty b_{i,j}X^j,    
    \end{equation}
    then 
    \begin{equation}
        \label{eq:!!!}
    |f(a_i)|_k = |b_{i,0}|_k \geq i!.
\end{equation} 
Using the sequence $(a_i)_i$, we will now construct an element of $T_1(\ell)$ whose image under $\phi$ cannot be in $T_1(k)$, thereby giving a contradiction.

    Let $m_0 \coloneqq 0$, and for all $i \geq 1$, inductively choose $m_i \gg m_{i-1}$ such that 
    $
     \displaystyle   \max_{0\leq r\leq i-1} \{|b_{m_r,i-r}|\} < |b_{m_i,0}|.
    $
    Note that such $m_i$ exist because $|b_{i,0}| \to \infty$ as $i \to \infty$. By the non-Archimedean triangle inequality, we have
    \begin{equation}
        \label{eq:!}
    \left|\sum_{r=0}^{i-1} b_{m_r,i-r}\right|_k < |b_{m_i,0}|_k.    
    \end{equation}
    Now consider the restricted power series 
    $
    \sum_{r = 0}^\infty a_{m_r}X^r \in T_1(\ell).    
    $
    Applying the splitting $\phi \colon T_1(\ell) \to T_1(k)$ to this power series, we see that 
    \begin{align*}
        \phi\left(\sum_{r = 0}^\infty a_{m_r}X^r\right) &= \phi\left(\sum_{r = 0}^i a_{m_r}X^r\right) + \phi\left(\sum_{r = i+1}^\infty a_{m_r}X^r\right)\\
         &= \sum_{r=0}^i X^r \phi(a_{m_r}) + X^{i+1}\cdot\phi\left(\sum_{r = i+1}^\infty a_{m_r}X^{r-i-1}\right)\\
         & \stackrel{\autoref{eq:!!}}{=} \sum_{r=0}^i \left(X^r \sum_{j=0}^\infty b_{m_r,j}X^j\right) + X^{i+1}\cdot\phi\left(\sum_{r = i+1}^\infty a_{m_r}X^{r-i-1}\right).
    \end{align*}
    Comparing coefficients, we see that the coefficient of $X^i$ in $\phi\left(\sum_{r = 0}^\infty a_{m_r}X^r\right)$ is  
    $
        \sum_{r=0}^{i} b_{m_r,i-r}.
    $
    But then 
    \[
      \left|\sum_{r=0}^{i} b_{m_r,i-r}\right|_k = \left|b_{m_i,0} + \sum_{r=0}^{i-1} b_{m_r,i-r}\right|_k \stackrel{\autoref{eq:!}}{=}  |b_{m_i,0}|_k \stackrel{\autoref{eq:!!!}}{\geq} m_i!.
    \]
    Here the penultimate equality follows by \autoref{eq:!} and \autoref{lem:norm-becomes-max}. Thus, $\phi\left(\sum_{r = 0}^\infty a_{m_r}X^r\right)$ is not a restricted power series, giving us the desired contradiction.
\end{proof}

\section{Acknowledgements}
This project branched off from a project that the authors began with Takumi Murayama, and was facilitated by a SQuaRE at the American Institute of Mathematics (AIM). The authors thank AIM for providing a supportive and mathematically rich environment.

We are grateful to Takumi for allowing us to write this standalone paper and have greatly benefitted from our conversations with him. In addition, the material of \autoref{subsec:Artin-Rees} is based on unpublished results obtained by Takumi and the first author; we thank Takumi for letting us include the results in this paper. We have also benefited from conversations with Karen Smith, Mel Hochster, Yongwei Yao, Gabriel Picavet, Jay Shapiro, Johan de Jong, Alex Perry, Remy van Dobben de Bruyn and Bhargav Bhatt.

\section{Statements and Declarations}

\subsection{Funding and competing interests}

Rankeya Datta is employed by the University of Missouri.  Neil Epstein is employed by George Mason University.  Karl Schwede is employed by the University of Utah.  Kevin Tucker is employed by the University of Illinois at Chicago.  

Rankeya Datta was partially supported by NSF grant DMS \#2502333, an AMS-Simons travel grant and a grant from the Simons Foundation MP-TSM-00002400.  Neil Epstein did not receive support from any organization for the submitted work.
Karl Schwede was supported by NSF FRG Grant \#1952522, NSF Grants DMS \#2101800 and \#2501903, and a Simons Foundation Fellowship and Travel Support for Mathematicians SFI-MPS-TSM00013051.
Kevin Tucker was supported in part by NSF grants DMS \#2200716 and \#2501904.

The authors have no other relevant financial or non-financial interests to disclose.

\bibliographystyle{skalpha}
\bibliography{main,preprints}

\end{document}